\documentclass[11pt, reqno]{amsart}
\setlength{\textheight}{8.00in} \setlength{\oddsidemargin}{0.0in}
\setlength{\evensidemargin}{0.0in} \setlength{\textwidth}{6.4in}
\setlength{\topmargin}{0.18in} \setlength{\headheight}{0.18in}
\setlength{\marginparwidth}{1.0in}
\setlength{\abovedisplayskip}{0.2in} 

\setlength{\belowdisplayskip}{0.2in}

\setlength{\parskip}{0.05in}
\usepackage{multirow} 
\usepackage{tikz}
\setlength\intextsep{3pt}
\usepackage{caption}
\usetikzlibrary{
 % enables the snake paths
 decorations.pathmorphing,%
 % enables bigger arrows
 decorations.markings,%
 decorations.pathreplacing,%
 decorations.text,%
 calc,%
 patterns,%
 shapes.geometric,%
 arrows,
 decorations.shapes,
 positioning,
 plotmarks,
 shadings,
 math,
 intersections
}

\definecolor{jeffColor}{RGB}{102, 0, 204}
\definecolor{yaizaColor}{RGB}{0, 153, 153}

\definecolor{periodColor}{RGB}{255, 167, 105}
\definecolor{dark-green}{RGB}{135, 194, 130}
\tikzset{>=latex} % sets default arrow type in tikz
\tikzset{font=\small}
\tikzset{mark size=1.5pt, mark options=thin}
\tikzset{pin distance=4pt,
 every pin edge/.style={<-, thin, shorten <= -2pt}}
\usepackage{array,booktabs,tabularx}
\usepackage{graphicx}
\usepackage{amssymb}
\usepackage{enumerate}
\usepackage{color}
\usepackage{esint}
\usepackage{mathtools}
\usepackage{dsfont}
\usepackage{ esint }
\usepackage{placeins}
\usepackage[final]{ed}
\usepackage[final]{showkeys}

\newcommand{\smalleq}[1]{\scalebox{.9}{$#1$}}%
\usepackage{comment}
\usepackage{enumitem}
\usepackage{soul}
\usepackage{needspace}
\usepackage[normalem]{ulem}

\renewcommand{\L}{\mathcal{L}}
\usepackage{multicol}
\usepackage{tabto}
\usepackage{hyperref}

\newcommand{\Ell}{\operatorname{Ell}}
\newtheorem{lemma}{Lemma}
\newtheorem{theorem}{Theorem}

\newtheorem{proposition}[lemma]{Proposition}

\theoremstyle{definition}
\newtheorem{definition}[lemma]{Definition}
\newtheorem{remark}[lemma]{Remark}

\usepackage{dsfont}
\newcommand{\1}{\mathds{1}}

\newcommand{\J}{\mc{J}}
\newcommand{\B}{\mc{B}}

\newcommand{\R}{{\mathbb R}}
\newcommand{\Z}{{\mathbb Z}}

\newcommand{\G}{{\mathcal G}}
\newcommand{\T}{\mathcal{T}}
\newcommand{\sub}[1]{_{_{#1}}}

\newcommand{\ep}{\varepsilon}

\newcommand{\cs}{$\clubsuit$}

\newcommand{\x}{\ensuremath{\times}}

\newcommand{\Id}{\operatorname{Id}}
\newcommand{\mc}[1]{\mathcal{#1}}
\newcommand{\re}{\mathbb{R}}

\newcommand{\WFh}{\operatorname{WF_h}}
\newcommand{\MSh}{\operatorname{MS_h}}

\newcommand{\red}[1]{{\color{purple}{#1}}}

\newcommand{\energy}{s}

\newcommand{\TM}{T^*\!M}
\newcommand{\FR}{\mathfrak{I}_{_{\!0}}}
\newcommand{\SNH}{S\!N^*\!H}

\newcommand{\s}{SN}

\newcommand{\SM}{S^*\!M}

\newcommand{\SigH}{\Sigma_{_{\!H\!,p}}}
\newcommand{\comp}{\operatorname{comp}}

\newcommand{\LM}{{_{\!L^2(M)}}}

%e^{-\frac{i}{h}A(#2,#1,\tilde{x},hD_{\tilde{x}})}
%e^{\frac{i}{h}A(#2,#1,\tilde{x},hD_{\tilde{x}})}
\newcommand{\smooth}{H_{\textup{scl}}^{-N}\to H_{\textup{scl}}^N}

\newcommand{\resfun}{resolution function}
\newcommand{\resfuns}{resolution functions}

\renewcommand{\a}{{\bf a}}

\def\XXint#1#2#3{{\setbox0=\hbox{$#1{#2#3}{\int}$} \vcenter{\hbox{$#2#3$}}\kern-.5\wd0}}

\DeclareMathOperator{\vol}{vol}

\DeclareMathOperator{\supp}{supp}
\DeclareMathOperator{\inj}{inj}

\newcommand{\e}{\varepsilon}

\newcommand{\Oph}{Op_h}
\numberwithin{equation}{section}
\numberwithin{lemma}{section}

%%%%%%%%% NEW %%%%%%%%%%%%
\newcommand{\Pt}{{\bf{P}}_t}
\newcommand{\pt}{{\bf{p}}_t}
\newcommand{\p}{{\bf{p}}}
\newcommand{\Fjl}{{F_{{j,\ell}}^{^{A_1,A_2}}(E,h)}}
\newcommand{\HAs}{\Pi_{_{H_1,H_2}}^{^{A_1,A_2}}}

\newcommand{\RAs}{E_{_{H_1,H_2}}^{^{A_1,A_2}}}
\newcommand{\RAst}{E_{_{\tilde H_{1,h}, \tilde H_{2,h}}}^{^{A_1,A_2}}}

\DeclareMathOperator{\dv}{\operatorname{dv}}

\newcommand{\ds}[1]{d\sigma\sub{\!#1}}
\newcommand{\KR}{\mathcal{K}}
\newcommand{\Hm}{{_{\!H_{\operatorname{scl}}^{\!\!\frac{k-2m+1}{2}}\!\!\!(M)}}}

\newcommand{\Sab}{ \Sigma\sub{[a,b]}}
\newcommand{\SE}{ \Sigma\sub{E}}
\newcommand{\scl}{\operatorname{scl}}
\newcommand{\Tt}{\widetilde{T}}
\newcommand{\Cnr}{C\sub{\!\operatorname{nr}}}
\newcommand{\Cnp}{C\sub{\!\operatorname{np}}}
\newcommand{\Cnl}{C\sub{\!\operatorname{nl}}}
\newcommand{\tr}{\operatorname{tr}}

%%%%%%%%% NEW %%%%%%%%%%%%
\newcommand{\Decay}{{\bf{F}}(T_0)}
\newcommand{\ti}{\mathfrak{t}}
\newcommand{\Ti}{{\bf T}}

\newcommand{\JE}{\mc{J}\sub{E}}
\newcommand{\IE}{\mc{I}\sub{E}}
\newcommand{\LE}{\mc{L}\sub{E}}
\newcommand{\BE}{\mc{B}\sub{E}}
\newcommand{\GE}{\mc{G}\sub{E}}
\newcommand{\hyp}{{\mc{Z}}}
\newcommand{\subM}{{W}}

\title[Weyl remainders]{Weyl remainders: an application of geodesic beams}

\author{Yaiza Canzani}
\address{Department of Mathematics,University of North Carolina at Chapel Hill, Chapel Hill, NC, USA}
\email{canzani@email.unc.edu}

\author{Jeffrey Galkowski}
\address{Department of Mathematics, University College London, London, UK}
\email{j.galkowski@ucl.ac.uk}

\begin{document}
%%%%%%%%%%%%%%%%%%%%%%%%%%%%%%%%%%%%%%%%%%%%%%%%%%%%%%%%%%%%%%%%%%%%%%%%%%%%%%%
%%%%%%%%%%%%%%%%%%%%%%%%%%%%%%%%%%%%%%%%%%%%%%%%%%%%%%%%%%%%%%%%%%%%%%%%%%%%%%%
%%%%%%%%%%%%%%%%%%%%%%%%%%%%%%%%%%%%%%%%%%%%%%%%%%%%%%%%%%%%%%%%%%%%%%%%%%%%%%%
%%%%%%%%%%%%%%%%%%%%%%%%%%%%%%%%%%%%%%%%%%%%%%%%%%%%%%%%%%%%%%%%%%%%%%%%%%%%%%%
\begin{abstract}
 We obtain new \emph{quantitative} estimates on Weyl Law remainders under dynamical assumptions on the geodesic flow. On a smooth compact Riemannian manifold {$(M,g)$} of dimension $n$, let $\Pi_\lambda$ denote the kernel of the {spectral projector for the Laplacian,} $\1_{[0,\lambda^2]}(-\Delta_g)$. Assuming \emph{only} that the {set of} near periodic geodesics over $\subM\subset M$ {has} small measure, we prove that as $\lambda \to \infty$
$$
\int_{\subM} \Pi_\lambda(x,x)dx=(2\pi)^{-n}\vol\sub{\mathbb{R}^n}\!(B)\vol_g(\subM)\,\lambda^n+O\Big(\frac{\lambda^{n-1}}{\log \lambda
}\Big),
$$
where $B$ is the unit ball. One consequence of this result is that the improved remainder holds on \emph{all} product manifolds, {in particular giving improved estimates for the eigenvalue counting function in the product setup.}
Our results also include logarithmic gains on asymptotics for the off-diagonal spectral projector $\Pi_\lambda(x,y)$ under the assumption that the {set of} geodesics that pass near both $x$ and $y$ {has} small measure,
{and} quantitative improvements for Kuznecov sums under non-looping type assumptions. The key {technique} used in our study of the spectral projector is that of geodesic beams.
\end{abstract}
%%%%%%%%%%%%%%%%%%%%%%%%%%%%%%%%%%%%%%%%%%%%%%%%%%%%%%%%%%%%%%%%%%%%%%%%%%%%%%%
%%%%%%%%%%%%%%%%%%%%%%%%%%%%%%%%%%%%%%%%%%%%%%%%%%%%%%%%%%%%%%%%%%%%%%%%%%%%%%%
%%%%%%%%%%%%%%%%%%%%%%%%%%%%%%%%%%%%%%%%%%%%%%%%%%%%%%%%%%%%%%%%%%%%%%%%%%%%%%%
%%%%%%%%%%%%%%%%%%%%%%%%%%%%%%%%%%%%%%%%%%%%%%%%%%%%%%%%%%%%%%%%%%%%%%%%%%%%%%%
\vspace*{-1.2cm}
\maketitle

\vspace*{-1cm}

\section{Introduction}

Let $(M,g)$ be a smooth compact Riemannian manifold of dimension $n$, $\Delta_g$ be {the negative definite} Laplace-Beltrami operator acting on $L^2(M)$, and $\{\lambda_j^2\}_{j=0}^\infty$ be {the eigenvalues of $-\Delta_g$}, repeated with multiplicity, $0=\lambda_0^2 < \lambda_1^2 \leq \lambda_2^2 \leq \dots$.
 {In this article we {obtain} improved asymptotics for both pointwise and integrated Weyl Laws. That is,} we study asymptotics for the Schwartz kernel of the spectral projector 
$$
\Pi_\lambda :L^2(M,g)\to \bigoplus_{\lambda_j\leq \lambda}\ker (-\Delta_g-\lambda_j^2),
$$
i.e. $\Pi_\lambda$ is the {orthogonal} projection operator onto functions with frequency at most $\lambda$. 
If $\{\phi_{\lambda_j}\}_{j=1}^\infty$ is an orthonormal basis of eigenfunctions, $-\Delta_g \phi_{\lambda_j}=\lambda_j^2 \phi_{\lambda_j}$, the Schwartz kernel {of $\Pi_\lambda$} is
$$
\Pi_{\lambda}(x,y)=\sum_{\lambda_j\leq \lambda} \phi_{\lambda_j}(x)\overline{\phi_{\lambda_j}}(y),\qquad (x,y)\in M\times M.
$$
Asymptotics for the spectral projector play a crucial role in the study of eigenvalues and eigenfunctions for the Laplacian, with applications to the study of physical phenomena such as wave propagation and quantum evolution. One of the oldest problems in spectral theory is to understand how eigenvalues distribute on the real line.  {Let $N(\lambda):=\#\{{j:\;}\lambda_j\leq \lambda\}$ be the eigenvalue counting function. Motivated by black body radiation, Hilbert conjectured that{, as $\lambda\to \infty$,}}
$$
N(\lambda)={(2\pi)^{-n}}\vol_{\re^n}(B)\vol_g(M)\lambda^n+E(\lambda),\qquad \qquad  {E(\lambda)=o(\lambda^{n}).}
$$
 Here, $\vol_{\re^n}(B)$ is the volume of the unit ball $B \subset \re^n$, $\vol_g(M)$ is the Riemannian volume of $M$, and $\dv_g$ is the volume measure induced by the Riemannian metric. {The conjecture was proved by Weyl ~\cite{We:12} and is known as the Weyl Law. We refer to $E(\lambda)$ as a \emph{Weyl remainder.}} {In 1968, H\"ormander~\cite{Ho68}, provided a framework for the study of $E(\lambda)$ and {generalized the {works} of {Avakumovi\'c~\cite{Ava} and Levitan \cite{Lev}}, who proved $E(\lambda)=O(\lambda^{n-1})$;}  a result {that} is sharp on the round sphere and is thought of as the standard remainder.} %{The} asymptotic expression for $N(\lambda)$ is known as a Weyl Law, and $E(\lambda)$ as a Weyl Remainder.

{{The article} \cite{Ho68} provided a framework for the study of Weyl remainders which led to many advances,} including the work of Duistermaat--Guillemin~\cite{DuGu:75} who showed $E(\lambda)=o(\lambda^{n-1})$ when the {set of} periodic {geodesics} has measure $0$. Recently,~\cite{IoWy:19} {verified this {dynamical} condition on} all product manifolds.
A {striking} application of our main theorem on Weyl remainders is: 
\begin{theorem}
\label{t:product}
Let $(M_i,g_i)$ be smooth compact Riemannian manifolds of dimension $n_i\geq 1$ {for $i=1,2$}. Then, with $M=M_1\times M_2$, $g=g_1\oplus g_2$, and $n:=n_1+n_2$,
$$
N(\lambda)={(2\pi)^{-n}}\vol_{\re^n}(B)\vol_g(M)\lambda^{n}+O\big(\lambda^{n-1}/\,{\log \lambda}\big),\qquad {\lambda\to \infty.}
$$
\end{theorem}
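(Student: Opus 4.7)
The plan is to deduce the theorem directly from the paper's main integrated spectral projector asymptotic (stated in the abstract) applied with $W = M$. Since $N(\lambda) = \int_M \Pi_\lambda(x,x)\,dx$, the Weyl remainder $O(\lambda^{n-1}/\log\lambda)$ follows at once, provided one verifies the standing hypothesis: the set of near-periodic geodesics over $M = M_1 \times M_2$ has small measure at the time scale relevant to the main theorem. The bulk of the work is therefore dynamical, not spectral.

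To verify the hypothesis I would exploit the product structure of the geodesic flow. Coordinatize $S^*M$ by $(x_1,x_2;\,v_1 u_1, v_2 u_2)$ with $(x_i,u_i)\in S^*M_i$ and $v_1^2+v_2^2=1$. The time-$t$ flow splits as $(\phi_1^{v_1 t}(x_1,u_1),\,\phi_2^{v_2 t}(x_2,u_2))$, so a point is $T$-periodic precisely when each $(x_i,u_i)$ lies on a closed geodesic of primitive period $\ell_i$ satisfying $v_iT=k_i\ell_i$ for positive integers $k_i$. This pins $v_1/v_2$ to a countable set of ratios and cuts out a codimension-one locus in the circle parameter $(v_1,v_2)$. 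Even when each $M_i$ has a full-measure periodic set (as for a round sphere), integrating out the factors therefore yields measure zero for the periodic set on $S^*M$; this is the qualitative content of \cite{IoWy:19}.

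The quantitative refinement that the main theorem needs comes from transversality in the circle direction: differentiating the return displacement $\phi^T - \mathrm{Id}$ in $v_i$ brings down a factor of $T$ (via the chain rule applied to $\phi_i^{v_i T}$), so the $\varepsilon$-near-periodic set contributed by a fixed pair of factor orbits has measure $\lesssim \varepsilon/T$ along the circle. Summing over the $O(T^2)$ admissible pairs of factor orbits of length $\lesssim T$ and integrating over $M_1\times M_2$ yields total near-periodic measure of order $\varepsilon T$, which remains small up to the time scale $T \sim c\log\lambda$ required for the logarithmic gain. The main obstacle is keeping this transversality uniform across all factor geometries: on a Zoll factor an entire open set of $S^*M_i$ consists of periodic points with nearly-coincident periods, and one must appeal to the variation of the period function (or a microlocal partition of orbits) to recover the linear-in-$T$ derivative in the circle direction. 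A separate treatment is also needed for the degenerate strips $v_i \approx 0$, where one factor's flow nearly stalls; there sharp growth bounds on the short closed geodesics of the opposite factor take over the role of transversality.
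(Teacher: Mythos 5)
Your reduction — verify that $M=M_1\times M_2$ is $\Ti$ non-periodic with $\Ti(R)=c\log R^{-1}$, then invoke Theorem~\ref{t:laplaceWeyl} with $\subM=M$ — is exactly the paper's. The dynamical verification is where you diverge, and genuinely so: the paper's argument in \S\ref{s:prod} is via conjugate points. By \cite[Lemma~1.1]{CG19a}, on a product no two points are maximally conjugate along any geodesic (in the notation of Appendix~\ref{a:concrete}, $\mc{C}_x^{n-1,r,T}=\emptyset$ for $0<r<|T|$). Lemma~\ref{l:noconj}, which converts absence of maximal conjugacy into the quantitative $(t_0,c\log R^{-1})$ non-looping property and rests on \cite{CG19dyn}, therefore applies at every $x\in M$, and integrating over $M$ upgrades pointwise non-looping to non-periodicity. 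That route makes no case distinction on the factor geometries and handles Zoll factors and accumulating closed geodesics with no extra work.

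Your proposal — read non-periodicity off the splitting of the flow and the resonance $v_iT=k_i\ell_i$, using the factor of $T$ from $\partial_{v_i}\phi_i^{v_iT}$ — is a reasonable plan closer in spirit to \cite{IoWy:19}, and the heuristic arithmetic does work ($O(T^2)$ resonance pairs, each an arc of size $\sim R/T$, gives $\sim RT$, comfortably below the $1/T$ threshold of Definition~\ref{d:non periodic} when $T\sim\log R^{-1}$). But as written there are genuine gaps. (i) The enumeration of ``pairs of factor orbits'' presupposes a controllable closed-geodesic structure in each factor, while Definition~\ref{d:non periodic} concerns near-recurrence of \emph{arbitrary} initial conditions; most $(x_i,u_i)$ lie on no closed factor geodesic, and the measure of those that are $R$-near-recurrent at time $s$ is precisely what the conjugate-point lemma bounds uniformly and what your argument never estimates. (ii) Definition~\ref{d:non periodic} flows the whole ball $B(\rho,R)$, so the tolerance on the return displacement is amplified by $\|d\varphi_t\|$; your $\sim R/T$ arc must survive that amplification, and this is exactly the constraint that forces $\Ti$ to be sub-logarithmic throughout the paper — it cannot be waved away. (iii) You flag the Zoll and degenerate-strip cases but do not resolve them; in particular, ``variation of the period function'' gives nothing on a Zoll factor, where the period is constant. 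The conjugate-point route was adopted in the paper precisely because it sidesteps all three issues simultaneously.
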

{For future reference, we} note that
$
{N(\lambda)=\int_{M}\Pi_{\lambda}(x,x)\dv_g(x)}
$
and thus $N(\lambda)$ can be studied by understanding the kernel of $\Pi_{\lambda}$ restricted to the diagonal. We study both on and off diagonal Weyl remainders in this article. {The} main idea is to adapt the geodesic beam techniques developed by authors \cite{Gdefect,CG17,CG19a} to study Weyl remainders. These techniques were originally used to study averages of quasimodes over submanifolds by decomposing {the quasimodes} into geodesic beams and controlling the averages in terms of {the} $L^2$ norms of these beams.  {In this work t}he key point is to study the {eigenvalue} counting function by viewing it as a sum of quasimodes averaged over the diagonal in $M\times M$. We start our exposition {in the setting of} {the} on diagonal estimates.

\subsection{On diagonal Weyl remainders}

The connection between the spectrum of the Laplacian and the properties of periodic geodesics on $M$ has been known since at least the works~\cite{Cha:74,CdV:73, We:75}, with their {relation} to Weyl remainders first explored in the seminal work~\cite{DuGu:75}. To control $E(\lambda)$ we impose dynamical conditions on the periodicity properties of the {geodesic flow $\varphi_t:T^*M \setminus \{0\} \to T^*M{\setminus \{0\}}$, i.e., the Hamiltonian flow of $(x,\xi)\mapsto|\xi|_{g(x)}$.} For $t_0>0$, $T>0$, and $R>0$, define the set of near periodic directions in $U \subset S^*M$ by
\begin{equation}
\label{e:sadWithoutNumber}
\mc{P}^R\sub{U}(t_0,T):=\bigg\{ \rho \in U\, :\, \bigcup_{t_0\leq |t|\leq T}\varphi_t(B\sub{S^*\!M}(\rho,R))\cap B\sub{S^*\!M}\!(\rho,R)\neq \emptyset\bigg\}.
\end{equation}
%Also, let $\blue{B\sub{S^*\!M}\!\big({\scriptstyle \mc{P}^R(t_0,T)}, S\big)}:=\big\{ \rho\in S^*M\,:\, d\big({\scriptstyle \mc{P}^R(t_0,T)},\rho\big)< S\big\}$ for $S>0$.
Given two sets $U\subset V\subset T^*M$, and $R>0$, we write $B\sub{V}(U,R):=\{\rho \in V:\; d(U, \rho)<R\}$, where $d$ is the {distance induced by some {fixed} metric on $T^*M$},  $B(U,R)=B\sub{T^*\!M}(U,R)$, and {$B\sub{V}(\rho,R)=B\sub{V}(\{\rho\},R)$.}

We phrase our dynamical conditions in terms of a \resfun\, {$\Ti=\Ti(R)$. This is a function of the scale, $R$, at which the manifold is resolved, which increases as $R\to 0^+$. We use $\Ti$ to measure the time for which balls of radius $R$ can be propagated under the geodesic flow while satisfying a given dynamical assumption, e.g. being non periodic.}
\begin{definition}
We say a decreasing, {continuous} function ${\Ti}:(0,\infty)\to (0,\infty)$ is a \emph{\resfun}.
In addition, we say a {\resfun} $\Ti$ is \emph{sub-logarithmic}, if {it is differentiable and} $$
{(\log \log R^{-1})'}=-1\big/R\log R^{-1}\leq [\log \Ti(R)]'\leq 0,\qquad 0<R<1.\,
$$
We {measure how close $\Ti$ is to being logarithmic through}
\begin{equation}\label{e:Omega}
\Omega(\Ti):=\limsup_{R\to 0^+}\Ti(R)\big/\log R^{-1}.
\end{equation}
\end{definition}
\noindent {Simple examples of sub-logarithmic {\resfuns} are $\Ti(R)=\alpha (\log R^{-1})^\beta$ for any $\alpha>0$ and $0<\beta \leq 1$.} {See Remark~\ref{r:rate function} for comments on the use of general \resfuns.}

For improved integrated Weyl remainders, {we need a} condition on the geodesic flow.

\begin{definition}
\label{d:non periodic}
Let $\Ti$ be a \resfun.
Then $U\subset S^*M$ is said to be \emph{$\Ti$ non-periodic} {with constant $\Cnp$} provided there exists $t_0>0$ such that 
\begin{equation}\label{e:volume}
\limsup_{R\to 0^+}\mu\sub{S^*\!M}\!\Big(B\sub{S^*\!M}\!\big({\smalleq{ \mc{P}^R\sub{U}(t_0,\Ti(R))}}, R\big)\Big)\,\Ti(R)\leq \Cnp.
\end{equation}
{We say $U$ is $\Ti$ non-periodic if there is such $\Cnp$}, and $\subM\subset M$ is $\Ti$ non-periodic if $S^*_{\subM}M$ is.
\end{definition}
\noindent{See Appendix~\ref{s:index} for the notation $\mu\sub{\!{S^*_xM}}$ and $\dim_{\textup{box}}$ used below}.
%{{For} ${U}\subset T^*\!M$ we write $\mu\sub{\!{U}}$ for the Liouville measure induced on ${U}$,} and $\dim_{\textup{box}}{U}$ for the Minkowski box dimension of ${U}$ (see e.g.~\cite[Page 333]{StSh:05}). Note that if $\subM\subset M$ is open with smooth boundary then $\dim_{\textup{box}}\partial \subM=n-1$.

%%%%%%%%%%%%%%%%%%%%%%%%%%%%%%%%%%%%%%%%%%%%%%%%%%%%%%%%%%%%%%%%%%%%%%%%%%%%%%%
\begin{theorem}
\label{t:laplaceWeyl}
Let $(M,g)$ be a Riemannian manifold of dimension $n$, $\subM \subset M$ be an open subset with $\dim_{\textup{box}}\partial \subM<n$, {and ${\Omega_0}>0$}. {There exists} $C\sub{0}>0$ such that if $\Ti$ is a sub-logarithmic rate {function} with $\Omega(\Ti)<{\Omega_0}$ and $\subM$ is $\Ti$ non-periodic, then there is { $\lambda_0$ such that for all $\lambda >\lambda_0$}
$$
\Big|\int_{\subM} \Pi_\lambda(x,x) \dv_g(x)-{(2\pi)^{-n}}\vol_{\re^n}(B)\vol_g(\subM)\lambda^n\Big| \leq C\sub{0}\,\lambda^{n-1}\big/\,\Ti \big(\lambda^{-1}\big).
$$
In particular, if $M$ is $\Ti$ non-periodic, {then there is $\lambda_0$ such that for all $\lambda >\lambda_0$}
$$
\Big|N(\lambda)-{(2\pi)^{-n}}\vol_{\re^n}(B)\vol_g(M)\lambda^n\Big| \leq C\sub{0}\,\lambda^{n-1}\big/\,\Ti \big(\lambda^{-1}\big).
$$
\end{theorem}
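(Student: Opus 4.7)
The plan is to rephrase the integrated spectral projector as a smoothed wave trace and to control the long-time part of that trace through the geodesic beam decomposition, with the $\Ti$ non-periodicity hypothesis supplying the quantitative gain.

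Set $T=\Ti(\lambda^{-1})$ and fix an even $\chi\in \mc{S}(\re)$ with $\hat\chi\in C_c^\infty((-1,1))$ and $\hat\chi(0)=1$; write $\chi_T(\tau):=T\chi(T\tau)$ and
$$
\rho(\lambda):=\int_\subM \Pi_\lambda(x,x)\,\dv_g(x).
$$
The function $\rho$ is non-decreasing, so a sharp Tauberian theorem of Safarov type reduces the desired bound for $\rho(\lambda)-(2\pi)^{-n}\vol_{\re^n}(B)\vol_g(\subM)\lambda^n$ to an estimate
$$
\bigl|(\chi_T*d\rho)(\lambda)-n(2\pi)^{-n}\vol_{\re^n}(B)\vol_g(\subM)\lambda^{n-1}\bigr|\le C\lambda^{n-1}/T
$$
uniformly in $\lambda\ge \lambda_0$. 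Expressing the smoothed measure through the half-wave group $U(t):=e^{-it\sqrt{-\Delta_g}}$,
$$
(\chi_T*d\rho)(\lambda)=(2\pi)^{-1}\int_\re \hat\chi(t/T)\,e^{it\lambda}\int_{\subM} U(t)(x,x)\,\dv_g(x)\,dt,
$$
and splitting at a small $t_0>0$, the short-time portion $|t|\le t_0$ is handled by the Hadamard parametrix for $U(t)$: a direct calculation produces the principal contribution $n(2\pi)^{-n}\vol_{\re^n}(B)\vol_g(\subM)\lambda^{n-1}$ plus $O(\lambda^{n-2})$, independently of $T$.

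The long-time portion $t_0\le |t|\le T$ is the core of the argument. Following the geodesic beam framework of \cite{Gdefect,CG17,CG19a}, pick a semiclassical scale $R=R(\lambda)$ and a maximal $R$-separated family $\{\rho_j\}\subset S^*_\subM\!M$, and build a quantized partition of unity $\{\chi_j\}$ with each $\chi_j$ microlocalized to the flowout tube $\mc{T}_{\rho_j,R}:=\bigcup_{|s|\le 1}\varphi_s(B(\rho_j,R))$. Inserting $\sum_j \chi_j$ into the trace,
$$
\int_\subM U(t)(x,x)\,\dv_g(x)=\sum_j \tr\bigl(\chi_j\,U(t)\,\chi_j\bigr)+(\text{negligible cross terms}),
$$
and by Egorov's theorem $\chi_j U(t)\chi_j$ is microlocalized to $\mc{T}_{\rho_j,R}\cap \varphi_t(\mc{T}_{\rho_j,R})$. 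For $\rho_j\notin \mc{P}^R\sub{S^*_\subM\! M}(t_0,T)$ this intersection is empty whenever $t\in[t_0,T]$, so the corresponding trace is negligible. Only beams centered at returning points contribute; each produces a uniformly bounded term, and the $\Ti$ non-periodicity hypothesis caps the Liouville measure of the returning centers by $\Cnp/T$. With the choice $R\simeq \lambda^{-1}$ so that $\Ti(R)=T$, straightforward accounting gives the total long-time contribution as $O(\lambda^{n-1}/T)$.

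The hypothesis $\dbox\partial\subM<n$ enters only when one restricts the partition $\{\chi_j\}$ to a neighborhood of $S^*_\subM\!M$: the $R$-neighborhood of $\partial \subM$ has Liouville measure $o(1)$ as $R\to 0$, so boundary errors feed back at a lower order than $\lambda^{n-1}/T$. The principal obstacle is extending the beam calculus up to the sub-logarithmic time $T=\Ti(\lambda^{-1})$; one must propagate $R$-wide tubes semiclassically while keeping symbol calculus, Egorov estimates, and trace-norm bounds uniform, and the condition $[\log \Ti(R)]'\ge -(R\log R^{-1})^{-1}$ in the definition of sub-logarithmic rate encodes the natural Ehrenfest-type limitation on semiclassical propagation. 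Assembling the short-time Weyl output, the vanishing of non-returning beams, and the measure bound on the returning set supplies the sharp Tauberian input and hence the theorem; the statement for $N(\lambda)$ follows by taking $\subM=M$.
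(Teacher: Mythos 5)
Your outline correctly identifies the broad mechanism (smoothing at time scale $T=\Ti(\lambda^{-1})$, splitting the wave trace at a short time $t_0$, and using a microlocal tube decomposition together with Egorov's theorem and the non-periodicity volume bound for $t_0\le|t|\le T$), and the long-time portion does match the paper's proof of~\eqref{e:sauce}. But two steps, as written, would fail.

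First, the resolution scale $R\simeq\lambda^{-1}$ is below the Ehrenfest threshold. The tube calculus requires $R(h)\ge h^\delta$ for some $0<\delta<\tfrac12$; symbols supported on $h$-wide tubes are not in any $S_\delta$ class with $\delta<\tfrac12$, so symbol composition and Egorov to time $T$ both break down. The correct choice is $R=h^\delta$, which gives the bound in terms of $\Ti(h^\delta)$, and then one invokes the sub-logarithmic property (Lemma~\ref{l:sublog}) to get $\Ti(h^\delta)\ge\delta\,\Ti(h)=\delta\,\Ti(\lambda^{-1})$ up to a harmless constant. You mention the sub-logarithmic condition as "encoding the Ehrenfest limitation," but you never use it; this is precisely where it is needed, and without it your bound as stated does not follow.

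Second, the Tauberian reduction is not free. A Safarov-type Tauberian theorem with spectral resolution $1/T$ requires a density input of the form $(\chi_T*d\rho)(\lambda)\le C\lambda^{n-1}$ uniformly at that scale. For constant $T$ this follows from the short-time parametrix, but for $T=\Ti(\lambda^{-1})\to\infty$ it already encodes a long-time wave-trace bound and thus requires the non-periodicity hypothesis. The paper avoids a black-box Tauberian theorem: it views $\Pi_h(x,y)$ as a quasimode for ${\bf P}=P\otimes 1$ on $M\times M$, integrates over the diagonal $\Delta$, and proves the Lipschitz-scale estimate directly via the beam machinery (Lemmas~\ref{l:taub}, \ref{l:weylDyn}, \ref{l:HS}, and the quadratic-formula argument in the proof of~\eqref{e:ribs}). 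That reformulation is the paper's distinctive technical step and is what gives a self-contained two-sided bound at scale $h/T(h)$; your proposal skips it. The boundary treatment is also more quantitative in the paper: a physical-space cutoff $\chi_h$ supported in an $h^\delta$-neighborhood of $\partial \subM$ with $\vol(\supp\chi_h)\lesssim h^{\delta(n-\dbox\partial \subM)}$, and the exponent $n-\dbox\partial \subM>0$ is exactly what makes the boundary contribution subordinate. "Boundary errors feed back at a lower order" needs to be made precise in this way.
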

%\blue{\cs add examples and mention Volovoy. showcase the strength of the result.\cs}

%%%%%%%%%%%%%%%%%%%%%%%%%%%%%%%%%%%%%%%%%%%%%%%%%%%%%%%%%%%%%%%%%%%%%%%%%%%%%%
We illustrate an application of Theorem~\ref{t:laplaceWeyl} in Figure \ref{f:perturb}. In this example we construct a surface of revolution with both a periodic and a non-periodic set ({see Definition~\ref{d:non periodic}}). In particular, Theorem~\ref{t:laplaceWeyl} applies with $\subM$ contained in the {non-periodic (green)} set. 
{One can obtain little oh improvements for the statement in Theorem \ref{t:laplaceWeyl}, but {this requires} the more general version given in Theorem~\ref{t:genWeyl} instead (see Remark \ref{r:little-oh}).} {See Table \ref{ta:exWeyl} in \S\ref{s:concreteApps} for {some} {additional} examples.}

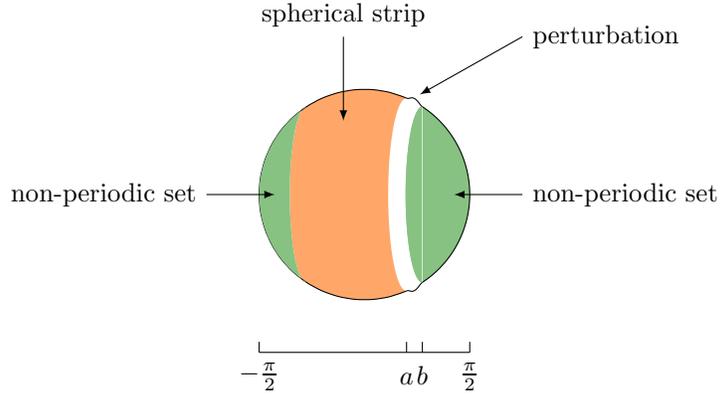
\begin{figure}[htbp]
\captionsetup{width=.9\textwidth}
\begin{tikzpicture}
\def \perspectiveang{45};
\def \perspectiveangb{11};
\def \a{.4};
\def \b{.55};
\def \mult{5};
\def \eps{.001};
\def \anga{{atan2({sqrt(1-\a*\a)},-\a)}};
\def \angb{{atan2({-sqrt(1-\a*\a)},-\a)}};
\def \angd{{360+atan2({-sqrt(1-\b*\b)},-\b)}};
\def \angc{{atan2({sqrt(1-\b*\b)},-\b)}};
\def \ange{{atan2({sqrt(1-\a*\a)},\a)}};
\def \angf{{atan2({-sqrt(1-\a*\a)},\a)}};
\def \angg{{atan2({sqrt(1-\b*\b)},\b)}};
\def \angh{{atan2({-sqrt(1-\b*\b)},\b)}};

\begin{scope}[xscale=-1.4,yscale=1.4]

\fill[periodColor] plot[domain=\anga:\angg, smooth, variable=\x] ({cos(\x)},{sin(\x)})--plot[domain=0:180,smooth, variable =\x] ({\b+sqrt(1-\b*\b)*sin(\perspectiveangb)*sin(\x)},{sqrt(1-\b*\b)*cos(\x)})--plot[domain=\angh:\angb, smooth, variable=\x] ({cos(\x)},{sin(\x)})-- plot[domain=0:180,smooth, variable =\x] ({-\a+sqrt(1-\a*\a)*sin(\perspectiveangb)*sin(\x)},{-sqrt(1-\a*\a)*cos(\x)});

\draw[domain=\anga:\angb, smooth, variable=\x] plot ({cos(\x)},{sin(\x)});

\fill[dark-green] plot[domain=\angc:\angd, smooth, variable=\x] ({cos(\x)},{sin(\x)})-- plot[domain=0:180,smooth, variable =\x,dotted] ({-\b+sqrt(1-\b*\b)*sin(\perspectiveangb)*sin(\x)},{sqrt(1-\b*\b)*cos(\x)});;

\fill[dark-green] plot[domain=\angc:\angd, smooth, variable=\x] ({cos(\x)},{sin(\x)})-- plot[domain=0:180,smooth, variable =\x,dotted] ({-\b+sqrt(1-\b*\b)*sin(\perspectiveangb)*sin(\x)},{sqrt(1-\b*\b)*cos(\x)});

\fill[dark-green] plot[domain=\angg:\angh, smooth, variable=\x] ({cos(\x)},{sin(\x)})-- plot[domain=0:180,smooth, variable =\x,dotted] ({\b+sqrt(1-\b*\b)*sin(\perspectiveangb)*sin(\x)},{-sqrt(1-\b*\b)*cos(\x)});

%\draw[domain=0:180,smooth, variable =\x]plot ({\a+sqrt(1-\a*\a)*sin(\perspectiveangb)*sin(\x)},{sqrt(1-\a*\a)*cos(\x)});
\draw[domain=\angc:\angd, smooth, variable=\x] plot ({cos(\x)},{sin(\x)});
\draw (-\b,-1.5)--(-\b,-1.4);
\draw (-\a,-1.5)--(-\a,-1.4);
\draw (-\a,-1.9)node[above]{$a$};
\draw (-\b,-1.9)node[above]{$b$};
\draw (-1,-1.5)node[below]{$\frac{\pi}{2}$}--(-1,-1.4);
\draw (-1,-1.5)--(1,-1.5);
\draw (1,-1.5)node[below]{$-\tfrac{\pi}{2}$}--(1,-1.4);
\draw[->] (-1.5,1.5)node[right]{perturbation}--({(-\a-\b)/2-.05},{sqrt(1-(\a+\b)*(\a+\b)/4)+.07});

\draw[scale=1, domain=-\b:-\a, smooth, variable=\x] plot ({\x},{sqrt(1-\x*\x)+\mult*pow(2,-1/sqrt(-\x-\a+\eps))*pow(2,-1/sqrt(\x+\b+\eps))});

\draw[scale=1, domain=-\b:-\a, smooth, variable=\x] plot ({\x},-{sqrt(1-\x*\x)-\mult*pow(2,-1/sqrt(-\x-\a+\eps))*pow(2,-1/sqrt(\x+\b+\eps))});
\draw[->] (.2,1.5)node[above]{spherical strip}--(.2,.7);
\draw[->] (-1.5,0) node[right]{non-periodic set}--(-.85,0);
\draw[->] (1.5,0) node[left]{non-periodic set}--(.85,0);
\end{scope}
\end{tikzpicture}
\caption{An example of a perturbation of the sphere with both a non-periodic (green) and a periodic (orange) physical space set. The perturbed metric coincides with the round metric outside the strip $(a,b)$. Trajectories which remain in the spherical strip are $2\pi$ periodic, while those which enter the non-periodic set are mostly non-periodic. See Section~\ref{s:perturbEx} for a precise description of this example.}% In this example we take $f_+\geq 0$ non-trivial and $f_-\equiv 0$ non-trivial.}
\label{f:perturb}
\end{figure}

{The assumptions of Theorem~\ref{t:laplaceWeyl} apply to a wide variety of Riemannian manifolds. Indeed, in addition to the concrete examples in \S\ref{s:concreteApps}, the authors~\cite{CaGa:22} use Theorem~\ref{t:laplaceWeyl} to give a logarithmic improvement in the remainder for the Weyl law that works for `typical' metrics on any smooth manifold. 
%The notion of typicality used there, called predominance, is an analog of full Lebesgue measure in the space of Riemannian metrics. For example, a predominant set of metrics is dense. The authors show that there is $C_n>0$ such that for any $\nu$ large enough, a predominant $C^\nu$ metric on $M$ is $\Ti$ non-periodic with $\Ti(R)=(\log R^{-1})^{1/\alpha_\nu}$, and $\alpha_\nu=C_n+\log_2\nu$. An application of Theorem~\ref{t:laplaceWeyl} then implies that for these metrics
%$$N(\lambda)=(2\pi )^{-n}\vol_{\mathbb{R}^n}(B)\vol_g(M)\lambda^n+O(\lambda^{n-1}/(\log \lambda)^{1/\alpha_\nu}).$$
%This theorem~\cite[Theorerm 1]{CaGa:22}, is made possible by the flexibility provided by the assumptions in Theorem~\ref{t:laplaceWeyl} together with a delicate dynamical systems argument. 
This result is the first \emph{quantitative} estimate for the remainder in Weyl laws that holds for most metrics.
}

 We next discuss $E_\lambda(x)$, the remainder in the {on diagonal} pointwise Weyl law
\begin{equation}
\label{e:pwWeyl}
\Pi_{\lambda}(x,x)={(2\pi)^{-n}}\vol_{\re^n} (B)\lambda^n +E_\lambda(x), \qquad x \in M.
\end{equation}

{The Weyl remainder in~\cite{Ho68} comes from} the estimate $E_{\lambda}(x)=O(\lambda^{n-1})$ for $x\in M$ (again, sharp on the round sphere). The connection between $E_\lambda(x)$ and geodesic loops through $x$ is studied in the {works of Safarov, Sogge--Zelditch}~\cite{Saf88,SoggeZelditch} and often appears in estimates for sup-norms of eigenfunctions. To control the pointwise remainder $ E_\lambda(x)$ we impose dynamical conditions on the looping properties of geodesics joining $x$ with itself. 
For $t_0>0$, $T>0$, {$R>0$,} and $x,y\in M$, define
\begin{equation}
\label{e:L}
 \mathcal{L}_{x,y}^{{R}}(t_0, T):=\bigg\{ {{\rho} \in S_x^*M}:\; \bigcup_{t_0\leq |t|\leq T}\varphi_t(B(\rho,R)) \,\cap\, {B({S_y^*M},R)} \neq \emptyset\bigg\}.
\end{equation}
%Also, let $B\big({\scriptstyle \mathcal{L}_{x,y}^{{R}}(t_0, T)}, S\big):=\big\{ \rho \in S_y^*M:\, d\big({\scriptstyle \mathcal{L}_{x,y}^{{R}}(t_0, T)}, \rho \big)<{R}\big\}$ for $S>0$.
\begin{definition}\label{d:non looping through}
Let $\Ti$ be a \resfun, $t_0> 0$, {$\Cnl>0$,} and $x, y \in M$. Then, \emph{$(x,y)$ is said to be a {$(t_0, \Ti)$}
%$[t_0, T(\lambda)]$ 
non-looping pair with constant $\Cnl$ {when} }
$$
 \limsup_{R\to 0^+} \bigg({\mu\sub{{S^*_xM}}}\Big(B\sub{{S^*_xM}}\!\big({\smalleq{ \mathcal{L}_{x,y}^{{R}}(t_0, \Ti(R))}}, R\big)\Big) {\mu\sub{{S^*_yM}}}\Big(B\sub{{S^*_yM}}\!\big({\smalleq{ \mathcal{L}_{y,x}^{{R}}(t_0, \Ti(R))}}, R\big)\Big) \Ti(R)^2 \bigg)\leq \Cnl.
$$ 
We say $x$ is \emph{$(t_0,\Ti)$ non-looping with constant $\Cnl$} if $(x,x)$ is a $(t_0,\Ti)$ non-looping pair with constant $\Cnl$.
\end{definition}
 
Note that if $t_0<\inj(M)$, {where $\inj(M)$ is the injectivity radius of $M$}, then for $x$ to be $(t_0, \Ti)$ non-looping
 is the same as being $(\ep, \Ti)$ non-looping for any $0<\ep \leq t_0$. In this case, we write $x$ is $(0, \Ti)$ non-looping.

To state our estimates on the pointwise Weyl remainder, we let $\lambda>0$, {and, for points $x,y \in M$ with $d(x,y)<\inj M$,}
define
\begin{equation}\label{e:R^0}
 E_\lambda^0(x,y):=\Pi_\lambda(x,y)-\frac{1}{(2\pi)^n}\int_{|\xi|_{g_y}<\lambda}e^{i\langle \exp_{y}^{-1}(x),\xi\rangle }\frac{d\xi}{\sqrt{|g_y|}} .
\end{equation}
Here, the integral is over $T_y^*M$, $\exp_x:T_x^*M\to M$ is the the exponential map, and $|g_y|$ denotes the determinant of the metric $g$ at $y$, when $g$ is thought of as matrix in local coordinates. 
%Note, that the RHS in \eqref{e:R^0} is coordinate invariant.

%%%%%%%%%%%%%%%%%%%%%%%%%%%%%%%%%%%%%%%%%%%%%%%%%%%%%%%%%%%%%%%%%%%%%%%%%%%%%%%
\begin{theorem}
\label{t:laplaceDiag} 
Let $\alpha,\beta \in \mathbb{N}^n$, $0< \delta<\frac{1}{2}$, $\Cnl>0$, and ${\Omega_0}>0$. There exists $C\sub{0}>0$ such that {the following holds. If $\Ti$ is a sub-logarithmic \resfun\, with $\Omega(\Ti)<{\Omega_0}$, there is $\lambda_0>0$ such that if $x_0\in M$ is $(0,\Ti)$ non-looping with constant $\Cnl$, then for all $\lambda>\lambda_0$
$$
\sup_{x, y \in B(x_0, \lambda^{-\delta})}\big|\partial_x^\alpha \partial_y^\beta E_\lambda^0(x,y)\big|\leq C\sub{0}\,\lambda^{n-1+|\alpha|+|\beta|}\big/\,\Ti \big(\lambda^{-1}\big).
$$
}
\end{theorem}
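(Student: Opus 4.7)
The plan is to reduce the bound on $\partial_x^\alpha\partial_y^\beta E_\lambda^0(x,y)$ to the long-time behavior of the half-wave kernel $U(t):=e^{-it\sqrt{-\Delta_g}}$ via a Fourier--Tauberian argument, and then to control this long-time behavior using the geodesic beam decomposition of~\cite{Gdefect,CG17,CG19a}.

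First I would choose $\rho\in\mathcal{S}(\R)$ with $\hat\rho\in C_c^\infty$ supported in $(-t_0,t_0)$ for some $t_0<\inj M$ and $\hat\rho(0)=1$, and set $\rho_T(\sigma):=T\rho(T\sigma)$ with $T=\Ti(\lambda^{-1})$. A Safarov-type Tauberian argument reduces the bound on $\partial_x^\alpha\partial_y^\beta E_\lambda^0(x,y)$ to estimating $\partial_x^\alpha\partial_y^\beta(\rho_T\!*\!d\Pi_\lambda)(x,y)$ at an additional cost of $O(T^{-1}\lambda^{n-1+|\alpha|+|\beta|})$, which already matches the desired order. Splitting $\rho_T=\rho+(\rho_T-\rho)$ and Fourier transforming in $\lambda$, the short-time piece $\rho*d\Pi_\lambda$ is produced by H\"ormander's parametrix for $U(t)$ on $|t|<t_0$, and, after stationary phase, reproduces the local Weyl integrand in~\eqref{e:R^0} modulo a pointwise error of order $\lambda^{n-1+|\alpha|+|\beta|}$.

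The heart of the argument is to show that the long-time piece, corresponding via Fourier transform to $(\hat\rho(t/T)-\hat\rho(t))\,U(t)(x,y)$ for $|t|\in(t_0,T)$, is of order $O(T^{-1}\lambda^{n-1+|\alpha|+|\beta|})$ uniformly for $(x,y)\in B(x_0,\lambda^{-\delta})^2$. I would cover $S_{x_0}^*M$ by tubes $\{\mathcal{T}_j\}$ of phase-space radius $R\sim\lambda^{-1/2}$ and decompose the half-wave kernel via an associated partition of $h$-pseudodifferential cutoffs, with $h=\lambda^{-1}$, so that
$$
U(t)(x,y)=\sum_{j,k}K_{jk}(t,x,y),\qquad \WFh(K_{jk})\subset\mathcal{T}_j\times\mathcal{T}_k.
$$
A WKB parametrix on the Ehrenfest time scale gives the pointwise bound $|\partial_x^\alpha\partial_y^\beta K_{jk}(t,x,y)|\lesssim \lambda^{n-1+|\alpha|+|\beta|}R^{n-1}$ for $|t|\leq T$, while the non-looping hypothesis of Definition~\ref{d:non looping through} bounds the number of pairs $(j,k)$ with $\mathcal{T}_j\cap S_x^*M\neq\emptyset$, $\mathcal{T}_k\cap S_y^*M\neq\emptyset$, and $\varphi_t(\mathcal{T}_j)\cap B(\mathcal{T}_k,R)\neq\emptyset$ for some $t\in(t_0,T)$ by $\Cnl\,R^{-2(n-1)}T^{-2}$. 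Summing over these pairs, combined with the $T^{-1}$ gain from the normalization of $\hat\rho(t/T)$, yields the long-time bound.

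The main obstacle is propagating the semiclassical parametrix up to the Ehrenfest time $T\sim\log\lambda$: the WKB construction can be iterated only for $|t|\lesssim c\log\lambda$, with $c$ inversely proportional to the maximal expansion rate of the flow on the tubes under consideration. This is precisely why $\Ti$ must be sub-logarithmic, and the constant $C_0$ acquires its dependence on $\Omega_0$ through the bound $\Omega(\Ti)<\Omega_0$. The uniformity in $(x,y)\in B(x_0,\lambda^{-\delta})$ with $\delta<1/2$ is compatible with the beam decomposition at scale $R\sim\lambda^{-1/2}$, since on this length scale all basepoints in $B(x_0,\lambda^{-\delta})$ are seen by essentially the same family of tubes as $x_0$, with only lower-order perturbations of the symbols and phases.
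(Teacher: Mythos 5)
Your outline captures the right broad shape — a Tauberian reduction to a long-time wave-kernel estimate, a decomposition into phase-space tubes at scale $R\sim\lambda^{-1/2}$, and a count of looping pairs — but the central technical mechanism you propose is precisely the one the paper is designed to \emph{avoid}, and on a general manifold it does not go through.

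Your argument rests on a pointwise WKB parametrix for $U(t)\,Op_h(\chi_j)$ valid for $|t|\leq T\sim\log\lambda$, giving $\big|\partial_x^\alpha\partial_y^\beta K_{jk}(t,x,y)\big|\lesssim\lambda^{n-1+|\alpha|+|\beta|}R^{n-1}$ uniformly on the Ehrenfest time scale. On a generic compact manifold, conjugate points and caustics prevent one from maintaining a single-phase WKB representation of the propagated beam for times of this order; the propagated Lagrangian ceases to be a graph, and the required amplitude bounds degrade with the Jacobian of $d\varphi_t$. This is exactly the obstruction that restricted prior quantitative Weyl results to manifolds without conjugate points or to special analytic geometries. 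The paper's innovation is to bypass long-time parametrices entirely: it treats $\1_{[t-s,t]}(P)w$ (for arbitrary $\|w\|_{L^2}=1$) as an $o(1)$-quasimode for $P$, applies the geodesic-beam averaging estimate (Lemma~\ref{l:mainEst}) to bound $\int_H A\,\1_{[t-s,t]}(P)w\,d\sigma_H$ by a sum of $L^2$ masses $\|Op_h(\tilde\chi_j)u\|_{L^2}$, and then controls that sum with the density lemma (Lemma~\ref{l:cheat}), which needs only Egorov's theorem for \emph{observables}, not a kernel parametrix. This $L^2$ route is both simpler and unconditionally valid. Your pointwise bound on $K_{jk}$ is exactly what is not available in the generality claimed by Theorem~\ref{t:laplaceDiag}.

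There is a secondary gap as well: you count the ``looping'' pairs $(j,k)$ and claim they dominate, but you never argue that the non-looping pairs contribute $O(\lambda^{-\infty})$. This is not automatic from the tube decomposition and requires a separate propagation argument. In the paper this is Lemma~\ref{l:prop} (and Lemma~\ref{l:weylLoop} in the Weyl-law section), which uses Egorov's theorem up to Ehrenfest time to show that $Op_h(\chi_{\T_j})\,f_{t_0,\tilde T,h}(P_E)\,Op_h(\chi_{\T_k})=O(h^\infty)$ when the propagated tube $\varphi_t(\T_j)$, $|t|\in[t_0,T]$, misses $\T_k$. Egorov at this time scale is available precisely because it concerns conjugation of operators, not the pointwise behavior of the kernel. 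Your proposal needs both a quantitative Egorov-type statement for the non-looping pairs and the pointwise $K_{jk}$ bound for the looping ones; the paper needs only the former plus an $L^2$ beam estimate, which is the whole point.

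Finally, two smaller issues worth flagging. First, your Tauberian step splits $\rho_T=\rho+(\rho_T-\rho)$ and Fourier-transforms to the wave group, but the paper instead uses a Lipschitz-scale estimate on $s\mapsto\HAs(s)$ (Lemma~\ref{l:tiramisu}) fed into a Tauberian Lemma~\ref{l:taub} directly on the spectral side; these are morally equivalent, but the paper's version is what makes the $L^2$ formulation seamless. Second, the uniformity in $(x,y)\in B(x_0,\lambda^{-\delta})$ is handled in the paper by the regular-family formalism (\eqref{e:conormalClose},~\eqref{e:curvature}) which tracks constants explicitly; your assertion that nearby basepoints ``are seen by essentially the same tubes'' is true in spirit but needs the uniformity bookkeeping that Theorem~\ref{t:MAIN} was written to provide.
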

%%%%%%%%%%%%%%%%%%%%%%%%%%%%%%%%%%%%%%%%%%%%%%%%%%%%%%%%%%%%%%%%%%%%%%%%%%%%%%%
\noindent See Table \ref{ta:ex} in \S\ref{s:concreteApps} for {some} examples to which Theorem \ref{t:laplaceDiag} {applies}.

Theorems~\ref{t:laplaceWeyl} and~\ref{t:laplaceDiag} fit in a long history of work on asymptotics of the kernel of the spectral projector and the eigenvalue counting function. Many authors considered pointwise Weyl sums~\cite{Pl:49,Ga:53,Ava,Lev,Se:67,Ho68}, {eventually proving the sharp remainder estimates}. The article~\cite{Ho68} provided a method which was used in many later works:~\cite{DuGu:75} showed $E(\lambda)=o(\lambda^{n-1})$ under the assumption that the {set of} periodic trajectories has measure 0,~\cite{Saf88,SoggeZelditch} improved estimates on $E_\lambda(x)$ to $o(\lambda^{n-1})$ under the assumption that the set of looping directions through $x$ has measure 0 {(see also the book of Safarov--Vassiliev~\cite{SaVa:97})}. See~\cite{CaHa:15,CaHa:18} for corresponding estimates that are uniform in a small neighborhood of the diagonal {and} Ivrii~\cite{Iv:80} for the case of manifolds with boundaries.

While $o(1)$ improvements were available under dynamical assumptions, until now, quantitative improvements in remainders were available in geometries where one has an effective parametrix to $\log \lambda$ times e.g. manifolds without conjugate points~\cite{Berard77,Bo:17, Ke:19} or non-Zoll convex analytic rotation surfaces~\cite{Vo:90,Vo:90b}. We point out that the closest results to ours are those of Volovoy~\cite{Vo:90}. There, quantitative estimates on $E(\lambda)$ are obtained under stronger assumptions than those of Theorem~\ref{t:laplaceWeyl}. In particular, $\subM$ is required to be equal to $M$ and the volume in~\eqref{e:volume} is required to be bounded by a positive power of $R$, rather than $\Ti(R)^{-1}$.

The estimates in this article are available \emph{without additional} geometric assumptions. This comes from our use of {the 'geodesic beam techniques' developed in the} authors' work~\cite{Gdefect, CG17, CG19a} and which in turn draw upon the semiclassical approach of Koch--Tataru--Zworski~\cite{KTZ}. Theorems~\ref{t:laplaceWeyl} and~\ref{t:laplaceDiag} can be thought of as the quantitative analogs of the {main results in~\cite{DuGu:75} and  of~\cite{Saf88}},~\cite{SoggeZelditch} respectively. In fact, these results can be recovered from Theorems~\ref{t:laplaceWeyl} and~\ref{t:laplaceDiag} by allowing $\Ti(R)$ to grow arbitrarily slowly as $R\to 0^+$ {(see~\cite[Appendix B]{CG19a})}. We also note that our estimates include both $C^\infty$ asymptotics for $\Pi_\lambda(x,y)$ and uniformity in certain shrinking neighborhoods of the diagonal without any additional effort and hence include the results from~\cite{CaHa:15,CaHa:18}.

\begin{remark}
To recover the results of~\cite{Saf88,SoggeZelditch, CaHa:15,CaHa:18} one needs uniformity in $o(1)$ neighborhoods of points of interest. As stated, Theorem~\ref{t:laplaceDiag} does not quite include this since it works in a $\lambda^{-\delta}$ neighborhood of $x$. However, the full version of our estimates, Theorem~\ref{t:MAIN2}, allows for the neighborhood of $x$ to shrink arbitrarily slowly and thus recovers these earlier results.
\end{remark}

%%%%%%%%%%%%%%%%%%%%%%%%%%%%%%%%%%%%%%%%%%%%%%%%%%%%%%%%%%%%%%%%%%%%%%%%%%%%%%%
%%%%%%%%%%%%%%%%%%%%%%%%%%%%%%%%%%%%%%%%%%%%%%%%%%%%%%%%%%%%%%%%%%%%%%%%%%%%%%%
\subsection{Off diagonal Weyl remainders}
%%%%%%%%%%%%%%%%%%%%%%%%%%%%%%%%%%%%%%%%%%%%%%%%%%%%%%%%%%%%%%%%%%%%%%%%%%%%%%%
%%%%%%%%%%%%%%%%%%%%%%%%%%%%%%%%%%%%%%%%%%%%%%%%%%%%%%%%%%%%%%%%%%%%%%%%%%%%%%%
The off diagonal behavior of $\Pi_\lambda(x,y)$ plays a crucial role in understanding monochromatic random waves (see e.g.~\cite{Ca20}) as well as in estimates for $L^p$ norms of Laplace eigenfunctions (see e.g.~\cite[Section 5.1]{SoggeBook}). This problem is more complicated than the on diagonal situation since understanding the far off diagonal (i.e., $d(x,y)>\inj (M)$) regime typically involves parametrices for $e^{it\sqrt{-\Delta_g}}$ for $t>\inj(M)$, which are difficult to control. Notably, our geodesic beam techniques allow us to overcome this difficulty when estimating errors.

To control $\Pi_\lambda(x,y)$ off-diagonal, we introduce a dynamical condition on the non-recurrence properties of the geodesics joining a point $x$ with itself. {To our knowledge, this is the first time non-recurrence is used in understanding off-diagonal Weyl remainders.} For {$x \in M$, ${U} \subset S_x^*M$, $t_0>0$, $T>0$, and} $R>0$, let
$$
\mc{R}^{R}\sub{{U},{\pm}}(t_0,T):= \bigcup_{{t_0\leq \pm t\leq T}}\varphi_t\big (B({U},R) \big) \cap B\sub{{S^*_xM}}\!({U},R).
$$

%%Also, let $B\sub{\blue{S^*_xM}}\!\big({\scriptstyle \mc{R}^{R}(t_0,T,A)}, S \big):=\{ \rho \in \blue{S^*_xM}:\, d({\scriptstyle \mc{R}^{R}(t_0,T,A)}, \rho )<S\}$ for $S>0$.
\begin{definition}
\label{d:non rec}
Let {$\ti$ and} $\Ti$ be \resfuns\, and $R_0>0$. We say \emph{{$x \in M$} is ${(\ti,}\Ti)$ non-recurrent {at scale $R_0$}} if for all $\rho\in S^*_xM$ there exists a choice of $\pm$ such that for all $A\subset B\sub{S^*_xM}(\rho,R_0)$, {$\e>0$}, $r>0$ with $\Ti(r)>{\ti(\e)}$, {and $0<R<R_0$,}
$$
\mu\sub{{S^*_xM}}\Big(B\sub{{S^*_xM}}\!\Big({\smalleq{ \mc{R}^{{r}R}\sub{A,\pm}({\ti(\e)},\Ti(r))}}\,,\, r R\Big)\Big)<\e \,\mu\sub{{S^*_xM}} \Big(B\sub{S^*_xM}\!(A,R)\Big).
$$
\end{definition}

%Again, we note that if $t_0=\inj(M)$, where $\inj(M)$ is the injectivity radius of $M$, then being $(t_0, \Ti)$ non-recurrent is the same as being $(\ep, \Ti)$ non-recurrent for any $0<\ep \leq t_0$. In this case we will simply say that the point is $(0, \Ti)$ non-recurrent.

%In Lemma \ref{l:non loop then non rec} we prove that
%$$
% \text{$x$ is $[t_0, T(\lambda)]$ non-looping through $x$}
%\quad \Longrightarrow \quad
%\text{$x$ is $[t_0, T(\lambda)]$ non-recurrent}.
%$$ 
%We are now ready to state our first off diagonal result.

%%%%%%%%%%%%%%%%%%%%%%%%%%%%%%%%%%%%%%%%%%%%%%%%%%%%%%%%%%%%%%%%%%%%%%%%%%%%%%%
%\begin{theorem}
%\label{t:laplaceoff always} 
%For any two multi-indices $\alpha,\beta \in \mathbb N^n$, $0<\delta<\frac{1}{2}$, $\Cnr,C_l>0$ there exists $C_{\alpha,\beta}>0$ such that the following holds.
%Let $x_0,y_0 \in M$ with $d(x_0,y_0)<\frac{\inj(M)}{2}$ and $\Ti$ a sub-logarithmic \resfun such that both $x_0$ and $y_0$ are $\Ti$ non-recurrent, and such that $(x_0,y_0)$ is a $(0, \Ti)$ non-looping pair. Then,
%$$
%\sup_{x \in B(x_0, \lambda^{-\delta})}\sup_{y \in B(y_0, \lambda^{-\delta})}\big|\partial_x^\alpha \partial_y^\beta R^0_\lambda(x,y)\big|\leq C_{\alpha,\beta}\frac{\lambda^{n-1+|\alpha|+|\beta|}}{\Ti \big(\lambda^{-1}\big)}.$$
%\end{theorem}
%%%%%%%%%%%%%%%%%%%%%%%%%%%%%%%%%%%%%%%%%%%%%%%%%%%%%%%%%%%%%%%%%%%%%%%%%%%%%%%
\medskip

If $(x,y)$ is a {$(t_0, \Ti)$} non looping pair for some $t_0>0$ we measure the difference between $\Pi_\lambda(x,y)$ and its smoothed version {which takes into account propagation} up to time $t_0$.
Let $\rho\in \mc{S}(\mathbb{R})$ with $\hat{\rho}(0)\equiv 1$ on $[-1,1]$ and $\supp \hat{\rho}\subset [-2,2]$. 
For $\sigma>0$ we define
\begin{equation}\label{e:rho def lambda}
 \rho_{\sigma}(s):= \sigma \, \rho \big(\sigma \, s \big). 
\end{equation}
 For $x,y \in M$, $t_0>0$, and $\lambda>0$, let
\begin{equation}
\label{e:smoothie}
 E_\lambda^{t_0}:=\Pi_\lambda-\rho\sub{t_0}*\Pi_{\lambda},
\end{equation}
where the convolution is taken in the $\lambda$ variable. 
%\begin{remark}
%Note that the smoothing in~\eqref{e:smoothie} results from the fact that we are fundamentally using the Schr\"odinger propagator in our smoothing rather than the wave propagator. In particular, observe that 
%$$
%\partial_s(\rho\sub{t_0 \lambda}*\Pi_{\lambda})(s)=t_0\lambda\, \rho\big(\lambda( s+\lambda^{-2}\Delta_g)\big)=\frac{\lambda}{2\pi}\int \hat{\rho}\big(\frac{\tau}{t_0}\big)e^{i\tau\lambda (\lambda^{-2}\Delta_g+s)}d\tau.
%$$
%so we consider only the singularities in the Schr\"odinger propagator to time $2t_0$. The fact that our estimates are stated in terms of this type of smoothing results from our use of semiclassical rather than homogeneous methods. Note for $\chi\equiv 1$ on $[-2,2]$, and $s\leq 1$, 
%$$
%(1-\chi(\lambda^{-2}\Delta_g))\partial_s(\rho\sub{t_0\lambda}\Pi_\lambda)=O(\lambda^{-\infty})_{H^{-N}\to H^N},
%$$
%therefore standard methods of semiclassical analysis e.g.~\cite[Chapter 10]{EZB} can be used to write an explicit oscillatory expression for $\rho_{t_0\lambda}*\Pi_\lambda(1).$ 
%\end{remark}
Below is our first off diagonal result.
%%%%%%%%%%%%%%%%%%%%%%%%%%%%%%%%%%%%%%%%%%%%%%%%%%%%%%%%%%%%%%%%%%%%%%%%%%%%%%%
\begin{theorem}
\label{t:laplaceoff} 
Let $\alpha,\beta \in \mathbb{N}^n$, $0<\delta<\frac{1}{2}$, $\Cnl>0$, ${R_0>0}$, ${\Omega_0}>0$, ${\e}>0$, {and $\ti$ be a \resfun,} {there is $C\sub{0}>0$ such that if $\Ti_j$ is a sub-logarithmic \resfun\, with $\Omega(\Ti_j)<{\Omega_0}$ for $j=1,2$ {and $\Ti_{\max}=\max(\Ti_1,\Ti_2)$}, then there is $\lambda_0>0$ such the following holds. If $x_0,y_0 \in M$ and $t_0>0$ are such that $x_0$ and $y_0$ {are respectively {$(\ti,\Ti_1)$ and $(\ti,\Ti_2)$} non-recurrent {at scale $R_0$}}, and $(x_0,y_0)$ is a $(t_0,{\Ti_{\max}})$ non-looping pair with constant $\Cnl$, then for $\lambda>\lambda_0$
$$
\sup_{x \in B(x_0, \lambda^{-\delta})}\sup_{y \in B(y_0, \lambda^{-\delta})}\big|\partial_x^\alpha \partial_y^\beta E^{t_0{+\e}}_\lambda(x,y)\big|\leq C\sub{0}\,\lambda^{n-1+|\alpha|+|\beta|}\bigg/\!\sqrt{\Ti_1 \big(\lambda^{-1}\big)\Ti_2(\lambda^{-1})}.
$$}
\end{theorem}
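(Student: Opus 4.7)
The plan is to extend the geodesic beam machinery of the proof of Theorem~\ref{t:laplaceDiag} to a bilocal setting in which two independent phase space decompositions are carried out at $x_0$ and at $y_0$. First, use Fourier inversion in the spectral parameter to rewrite $E_\lambda^{t_0+\e}(x,y)=(\Pi_\lambda-\rho\sub{t_0+\e}*\Pi_\lambda)(x,y)$. Since $\hat\rho\equiv 1$ on $[-1,1]$ and $\supp\hat\rho\subset[-2,2]$, the convolution $\rho\sub{t_0+\e}*\Pi_\lambda$ corresponds---up to a Schwartz-class transition---to restricting the half-wave kernel $e^{it\sqrt{-\Delta_g}}(x,y)$ to times $|t|\lesssim t_0+\e$ in its Fourier synthesis. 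Hence $E_\lambda^{t_0+\e}(x,y)$ records only the contribution of the wave kernel for $|t|\ge t_0+\e$, and, since the relevant microlocal region is $|\xi|_g\sim\lambda$, the derivatives $\partial_x^\alpha\partial_y^\beta$ cost at most $\lambda^{|\alpha|+|\beta|}$.

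Second, fix an admissible phase space scale $R=R(\lambda)$ tending to zero slowly with $\lambda^{-1}$, decompose $S^*_{x_0}M$ and $S^*_{y_0}M$ into balls $B\sub{S^*_{x_0}M}(\rho_j,R)$ and $B\sub{S^*_{y_0}M}(\rho'_k,R)$, and introduce the corresponding geodesic beam cutoffs $P_j, Q_k$---semiclassical pseudodifferential operators microsupported in small tubes over these balls, as in~\cite{CG19a}. This produces
\begin{equation*}
E_\lambda^{t_0+\e}(x,y)=\sum_{j,k}\big(Q_k\, E_\lambda^{t_0+\e}\, P_j^*\big)(x,y)+\mc{O}(\lambda^{-\infty}),
\end{equation*}
uniformly for $x\in B(x_0,\lambda^{-\delta})$ and $y\in B(y_0,\lambda^{-\delta})$, with errors arising from ellipticity off the characteristic variety. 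Pairs $(j,k)$ for which no geodesic through $B(\rho_j,R)$ reaches $B(\rho'_k,R)$ within the window $[t_0+\e,\Ti_{\max}(R)]$ contribute $\mc{O}(\lambda^{-\infty})$: propagating $P_j^*$ by Egorov on the time-localized wave parametrix---valid up to a fraction of the Ehrenfest time thanks to $\Omega(\Ti_{\max})<\Omega_0$---combined with a non-stationary phase argument kills these cross terms. The non-looping hypothesis on $(x_0,y_0)$ with constant $\Cnl$ controls the number of surviving pairs via the set $\mc{L}_{x_0,y_0}^R$ from~\eqref{e:L}.

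Third, the surviving terms are estimated by a pointwise bilinear bound of the form
\begin{equation*}
\big|\partial_x^\alpha\partial_y^\beta (Q_k E_\lambda^{t_0+\e} P_j^*)(x,y)\big|\lesssim \lambda^{n-1+|\alpha|+|\beta|}\,a_j(\lambda)\,b_k(\lambda),
\end{equation*}
in which $a_j(\lambda)^2$ and $b_k(\lambda)^2$ quantify the $L^2$-masses of the $P_j$- and $Q_k$-beams applied to frequency-localized quasimodes; such estimates are Koch--Tataru--Zworski type bounds pushed through the semiclassical calculus to absorb the derivatives and the shrinking $\lambda^{-\delta}$-neighborhoods. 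The non-recurrence of $x_0$ (resp.\ $y_0$) at scale $R_0$, in the sense of Definition~\ref{d:non rec}, is precisely the hypothesis yielding averaged beam mass control by $1/\Ti_1(\lambda^{-1})$ (resp.\ $1/\Ti_2(\lambda^{-1})$), in the spirit of~\cite{CG17,CG19a}. Summing over the looping pairs $(j,k)$ and splitting by Cauchy--Schwarz so that the $j$-sum is handled at $x_0$ and the $k$-sum at $y_0$ independently delivers the required factor $1/\sqrt{\Ti_1(\lambda^{-1})\,\Ti_2(\lambda^{-1})}$.

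The main obstacle is that $d(x_0,y_0)$ may well exceed $\inj(M)$, so there is no global parametrix for $e^{it\sqrt{-\Delta_g}}$ on the relevant time interval $[t_0+\e,\Ti_{\max}(\lambda^{-1})]$. Geodesic beams bypass this because the interaction between $P_j^*$ and $Q_k$ is governed entirely by the phase-space flowout relation $\varphi_t(B(\rho_j,R))\cap B(\rho'_k,R)\ne\emptyset$---exactly the condition defining $\mc{L}_{x_0,y_0}^R$---so one only tracks flowouts of phase-space supports rather than constructing an explicit kernel. A secondary subtlety is producing the symmetric factor $1/\sqrt{\Ti_1\Ti_2}$ in place of the weaker $1/\min(\Ti_1,\Ti_2)$: this requires that the non-recurrence at $x_0$ and at $y_0$ be exploited independently inside the Cauchy--Schwarz step, respecting the bilinear structure of the pointwise kernel bound.
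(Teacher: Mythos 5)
Your decomposition of the roles of the two hypotheses is on the right track: non-looping is indeed used to kill tube pairs that do not interact under the flow, and non-recurrence is the source of the quantitative gain through beam-mass control, with a Cauchy--Schwarz split handling the two points independently so that the geometric mean $1/\sqrt{\Ti_1\Ti_2}$ appears. However, there is a genuine gap in the way you dispose of the ``far'' time contribution.

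You assert that pairs $(j,k)$ with no geodesic interaction in the time window $[t_0+\e,\Ti_{\max}(R)]$ contribute $O(\lambda^{-\infty})$ by Egorov plus non-stationary phase. This is false for the full kernel $E^{t_0+\e}_\lambda=\Pi_\lambda-\rho\sub{t_0+\e}*\Pi_\lambda$: the sharp spectral cut-off $\Pi_\lambda$ sees all times, and after subtracting the short-time piece you are left with a slowly decaying tail that Egorov cannot touch, since Egorov only propagates to a fraction of the Ehrenfest time $\sim\log\lambda$. The Egorov argument works only after one has first replaced $\Pi_\lambda$ by its smoothing $\rho\sub{T(h)}*\Pi_\lambda$ at the intermediate scale $T(h)=\sqrt{\Ti_1(R(h))\Ti_2(R(h))}$. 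The paper's proof therefore inserts this scale and splits
\begin{equation*}
E^{t_0+\e}_\lambda \;=\; \bigl(\Pi_\lambda-\rho\sub{T(h)}*\Pi_\lambda\bigr) \;+\; \bigl(\rho\sub{T(h)}*\Pi_\lambda-\rho\sub{t_0+\e}*\Pi_\lambda\bigr),
\end{equation*}
and the two pieces are handled by genuinely different arguments. The first piece is controlled by a Lipschitz-scale/Tauberian argument (Lemma~\ref{l:taub} combined with Lemma~\ref{l:tiramisu}): one views $\1_{[t-s,t]}(P)w$ as an $O(|s|)$ quasimode and applies the geodesic-beam estimate Lemma~\ref{l:basicEst}, with the non-recurrence hypothesis (via coverings, through Lemma~\ref{l:cheat}) providing the density gain. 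It is \emph{this} step, not the bilinear kernel estimate you sketch, where your Cauchy--Schwarz and the $1/\sqrt{\Ti_1\Ti_2}$ factor actually originate. The second piece is where your Egorov argument lives (Lemma~\ref{l:prop}), and the non-looping hypothesis enters also through a volume count of the remaining ``bad'' tubes (Lemma~\ref{l:vol}, using condition $(2)$ of Definition~\ref{d:non loop cov}), which again produces a $1/T(h)$ factor. Without the intermediate smoothing scale, the Tauberian lemma, and the quasimode viewpoint on $\1_{[t-s,t]}(P)$, your outline cannot reach the stated bound. A secondary remark: the uniformity over $x\in B(x_0,\lambda^{-\delta})$, $y\in B(y_0,\lambda^{-\delta})$ is obtained in the paper by running the argument for moving families of point submanifolds $\tilde H_{j,h}$ (see Theorem~\ref{t:MAIN} and Section~\ref{s:itAllFollows}), with constants depending only on the regularity data of Section~\ref{s:uniform}; your outline does not address how the estimate survives as the basepoints move.
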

%%%%%%%%%%%%%%%%%%%%%%%%%%%%%%%%%%%%%%%%%%%%%%%%%%%%%%%%%%%%%%%%%%%%%%%%%%%%%%%
\noindent See Table \ref{ta:ex} in \S\ref{s:concreteApps} for {some} examples to which Theorem \ref{t:laplaceoff} applies.

To compare Theorems~\ref{t:laplaceDiag} and~\ref{t:laplaceoff}, note that for $x,y\in M$ with $d(x,y)<\e<\inj(M)$,
$$\bigg|\partial^\alpha_x\partial^\beta_y \bigg(\rho\sub{\ep \lambda}*{\Pi_\lambda}(x,y)-\frac{1}{(2\pi)^n}\int_{|\xi|_{g_y}<\lambda}e^{i\langle \exp_{y}^{-1}(x),\xi\rangle }q_{{\lambda}}(x,y,\xi)\frac{d\xi}{\sqrt{|g_y|}}\bigg)\bigg|\leq C\sub{0}\lambda^{n-2+|\alpha|+|\beta|}$$
where $q_{{\lambda}}(x,y, \xi){=} 1+\lambda^{-1}q_{-1}(x,y,\xi)$ and $q_{-1}(x,y,\xi)=O(d(x,y))$ (see e.g.~\cite[Proof of Proposition 10]{CaHa:15}).
Then, {for {points $x,y$ with} $d(x,y)<\lambda^{-\delta}$}, modulo terms smaller than our remainder, $E^0_\lambda(x,y)$ as defined in \eqref{e:R^0} is the same as $E^\ep_\lambda(x,y)$.

For any $t_0<\infty$, it is possible to write an oscillatory integral expression for {$\rho_{t_0}*\Pi_{\lambda}(x,y)$}. However, its precise behavior in $\lambda$ depends heavily on the geometry of $(M,g)$; in particular, on the structure of the set of geodesics from $x$ to $y$. This explains why we state our estimates in terms of $E_{\lambda}^{t_0}$. 
%{\begin{remark}
%Note that our non-periodic, non-looping, and non-recurrent conditions are all monotonic in $\Ti$ in the sense that if $\Ti_1(R)\leq \Ti_2(R)$, and one of these conditions hold with the {\resfun} $\Ti_2$, then it also holds with $\Ti_1$.
%\end{remark}}

More generally, our results apply to averages of $\Pi_\lambda(x,y)$ with $x \in H_1$ and $y \in H_2$, where $H_1,H_2$ are any two smooth submanifolds of $M$. {This type of integral is known as a Kuznecov sum~\cite{Zel} and appears in the analytic theory of automorphic forms~\cite{Br:81,Ku:80,Iw:84,Good,Hej}.} All our dynamical assumptions for points $x,y \in M$ above may be defined for the submanifolds $H_1,H_2 \subset M$ instead. In doing so, the only change needed is to use the sets of unit co-normal directions $S\!N^*\!H_1$ and $S\!N^*\!H_2$, instead of $S_x^*M$ and $S_y^*M$. See Definitions \ref{d:non loop gral} and \ref{d:non rec gral} for a detailed explanation. In what follows $\ds{H_1}$ and $\ds{H_2}$ denote the volume measures induced by the Riemannian metric on $H_1$ and $H_2$ respectively.

%%%%%%%%%%%%%%%%%%%%%%%%%%%%%%%%%%%%%%%%%%%%%%%%%%%%%%%%%%%%%%%%%%%%%%%%%%%%%%%
\begin{theorem}
\label{t:laplaceOff for H}
Let $\alpha,\beta \in \mathbb{N}^n$, ${1}\leq k_1\leq n$, ${1}\leq k_2\leq n$, $\Cnl>0$, ${\Omega_0}>0$, ${\e}>0$, ${R_0}>0$, {and $\ti$ be a \resfun}. There is $C\sub{0}>0$ such that
if $\Ti_j$ is a sub-logarithmic \resfun\, with $\Omega(\Ti_j)<{\Omega_0}$ for $j=1,2$ {and $\Ti_{\max}=\max(\Ti_1,\Ti_2)$} the following holds.
If $t_0>0$, and $H_j\subset M$ are submanifolds of codimension $k_j$ such that $(H_1,H_2)$ is a $(t_0,{\Ti_{\max}})$ non-looping pair with constant $\Cnl$, and $H_j$ is $(\ti,\Ti_j)$ non-recurrent {at scale $R_0$} {for $j=1,2$}, {then there is $\lambda_0>0$ such that} for $\lambda>\lambda_0$
$$
\left|\int_{H_1}\int_{H_2}\partial_x^\alpha \partial_y^\beta E^{t_0{+\e}}_\lambda(x,y)\;\ds{H_1}(x)\ds{H_2}(y)\right|\leq C\sub{0}\,\lambda^{\frac{k_1+k_2}{2}-1+|\alpha|+|\beta|}\bigg/\!\sqrt{\Ti_1 \big(\lambda^{-1}\big)\Ti_2(\lambda^{-1})}.
$$
\end{theorem}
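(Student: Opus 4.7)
The plan is to proceed by reducing the submanifold Kuznecov sum to a bilinear pairing of geodesic beams and estimating each beam's contribution in analogy with Theorem~\ref{t:laplaceoff}. First, I would pass from $\Pi_\lambda$ to the half-wave propagator via the Fourier-Tauberian framework: the smoothed projector $\rho_{t_0+\e}*\Pi_\lambda$ captures the contribution of $U(t)=e^{it\sqrt{-\Delta_g}}$ for $|t|\leq 2(t_0+\e)$, so $E_\lambda^{t_0+\e}(x,y)$ is built from $U(t)$ with $|t|\geq t_0+\e$, up to negligible Schwartz errors. Multiplying by $\partial_x^\alpha\partial_y^\beta$ simply produces the extra $\lambda^{|\alpha|+|\beta|}$ factor via symbolic calculus, so I may as well take $\alpha=\beta=0$.

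Next, I would reformulate the double integral as a quadratic form. For $f\in L^2(M)$ write $R_{H_j}f:=f|_{H_j}$ (the restriction trace) and observe
\[
\int_{H_1}\!\!\int_{H_2} E_\lambda^{t_0+\e}(x,y)\,\ds{H_1}(x)\ds{H_2}(y)=\bigl\langle E_\lambda^{t_0+\e}R_{H_2}^*\1,\,R_{H_1}^*\1\bigr\rangle_{L^2(M)}.
\]
Because $E_\lambda^{t_0+\e}$ is a spectral multiplier of $-\Delta_g$, I can insert a microlocal cutoff $\chi$ supported near the characteristic variety $\{|\xi|_g=1\}$ at scale $1$ and localize both $R_{H_j}^*\1$ to a collar of $S\!N^*\!H_j$, at the price of errors controlled by standard non-stationary phase in the normal directions to the $H_j$. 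This is precisely where the codimensional factors $\lambda^{k_j/2}$ enter: the restriction operators $R_{H_j}^*\1$ are quasimodes of $-\Delta_g$ of squared $L^2$ mass of order $\lambda^{k_j-1}$ once microlocalized near $\SigH$ (see the authors' earlier work and the Kuznecov bound of H\"ormander).

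Now I would invoke the geodesic beam machinery of~\cite{Gdefect,CG17,CG19a}: partition $B(\sn_j,R_\lambda)\subset T^*M$ into tubes $\mathcal{T}_{\rho_j}$ of radius $R_\lambda\asymp \lambda^{-1/2}$ transverse to the flow, and write
\[
E_\lambda^{t_0+\e}R_{H_2}^*\1\;\approx\;\sum_{\rho_2\in \sn_2}\Oph(\chi_{\rho_2})\,E_\lambda^{t_0+\e}R_{H_2}^*\1,
\]
with an analogous decomposition for $R_{H_1}^*\1$. By Cauchy--Schwarz the pairing is bounded by the product of an $H_1$-side and an $H_2$-side norm. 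On each side, one uses Egorov's theorem to propagate the tubes for time $\Ti_j(\lambda^{-1})$; the $(\ti,\Ti_j)$ non-recurrence at scale $R_0$ guarantees that the propagated tubes enjoy a mass decrement of the type $\e$ after each chunk of time, which gives the $1/\sqrt{\Ti_j(\lambda^{-1})}$ saving in the beam $L^2$ norms exactly as in the proof of Theorem~\ref{t:laplaceoff}. Finally, the cross term in the bilinear pairing vanishes except when a beam over $\sn_1$ lands near $\sn_2$ between time $t_0$ and $\Ti_{\max}(\lambda^{-1})$, and the total mass of such tubes is controlled by $\Cnl/\Ti_{\max}(\lambda^{-1})$ through the $(t_0,\Ti_{\max})$ non-looping pair hypothesis.

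The main obstacle I expect is bookkeeping: one must ensure that the two non-recurrence gains (one for each $H_j$) combine multiplicatively to give the $1/\sqrt{\Ti_1\Ti_2}$ factor, rather than just $1/\sqrt{\Ti_{\max}}$. The key point is that the Cauchy--Schwarz split is done \emph{before} propagation, so non-recurrence of $H_1$ controls the $L^2$ beam mass near $\sn_1$ and non-recurrence of $H_2$ controls the mass near $\sn_2$ independently, while non-looping of the pair only enters to discard the contribution supported in tubes that genuinely connect $\sn_1$ to $\sn_2$ at times in $[t_0,\Ti_{\max}(\lambda^{-1})]$. Once these three inputs are cleanly separated, the remainder of the argument reduces to routine symbol calculus, giving the stated bound of order $\lambda^{(k_1+k_2)/2-1+|\alpha|+|\beta|}/\sqrt{\Ti_1(\lambda^{-1})\Ti_2(\lambda^{-1})}$ for $\lambda\geq\lambda_0$.
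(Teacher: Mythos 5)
Your high-level intuitions — Cauchy--Schwarz splitting, non-recurrence controlling each factor, non-looping controlling the cross term — point in roughly the right direction, but the proposal as written has a genuine gap, and it also deviates in a structural way from what the paper does.

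First, the paper does not prove Theorem~\ref{t:laplaceOff for H} directly in the Laplacian setting at all: it is derived in \S\ref{s:itAllFollows} as a corollary of Theorem~\ref{t:MAIN2} applied to the operator $Q$ built from $-h^2\Delta_g$ in~\eqref{e:goodApprox}, with $P=Q$ and $a=b=1$. The actual work happens in Theorems~\ref{t:MAIN} and~\ref{t:MAIN2}. Your plan to argue directly about the half-wave propagator $e^{it\sqrt{-\Delta_g}}$ is already a structural divergence, and it runs into the difficulty you do not address: $E_\lambda^{t_0+\e}=\Pi_\lambda-\rho_{t_0+\e}*\Pi_\lambda$ involves $U(t)$ for $|t|$ arbitrarily large, whereas Egorov's theorem for $h^\delta$-scale beams only reaches logarithmic (Ehrenfest) times. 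You cannot simply ``decompose $E_\lambda^{t_0+\e}R_{H_2}^*\1$ into propagated beams'' and invoke Egorov — that decomposition is not available at the relevant times.

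The paper closes this gap with a two-stage reduction that your proposal is missing. Setting $T(h)=\sqrt{\Ti_1(R(h))\Ti_2(R(h))}$, one writes
\[
E_\lambda^{t_0+\e}=\bigl(\Pi_h-\rho_{h,T(h)}*\Pi_h\bigr)+\bigl(\rho_{h,T(h)}-\rho_{h,t_0+\e}\bigr)*\Pi_h.
\]
For the first piece one proves a \emph{Lipschitz estimate} on $s\mapsto\HAs(s)$ (Lemma~\ref{l:tiramisu}), which is then fed into the Tauberian Lemma~\ref{l:taub} (Proposition~\ref{p:error-smooth}). The Cauchy--Schwarz you have in mind is applied here, but to the \emph{narrow-band projector} $\1_{[t-s,t]}(P)$ with $|s|\lesssim h$, not to $E_\lambda^{t_0+\e}$; this is essential, because $\1_{[t-s,t]}(P)$ is an orthogonal projection (so $\|\1_{[t-s,t]}(P)A_i^*\delta_{H_i}\|_{L^2}^2$ is exactly the quantity that shows up), while $E_\lambda^{t_0+\e}$ is neither positive nor a projection and a naive Cauchy--Schwarz against $R_{H_1}^*\1$ loses a catastrophic factor $\lambda^{(k_1-1)/2}$. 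The central insight you are not using is precisely to view $\1_{[t-s,t]}(P)w$ as an $O(h)$-quasimode and apply the geodesic beam estimates (Lemmas~\ref{l:basicEst}, \ref{l:cheat}) to \emph{that} object; the non-recurrence hypotheses enter only here, and they contribute $\sqrt{\Ti_i(R(h))}^{-1}$ on each side. Only in the second piece — the bounded-time part $(\rho_{h,T(h)}-\rho_{h,t_0+\e})*\Pi_h$, equal to $f_{t_0,\Tt,h}(P_E)$ up to $O(h^\infty)$ — does Egorov appear (Lemma~\ref{l:prop}), and there the non-looping pair condition is used to split the covering into ``good'' tubes killed by Egorov and ``bad'' tubes controlled by a volume/Hilbert--Schmidt bound (Lemma~\ref{l:vol}), with $R(h)^{n-1}|\BE^1|^{1/2}|\BE^2|^{1/2}\leq \Cnl/T(h)$.

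Two smaller but real issues: the tube radius $R_\lambda\asymp\lambda^{-1/2}$ you propose is the critical scale at which the $S_\delta$ calculus breaks down; the paper requires $h^\delta\leq R(h)$ with $0<\delta<\tfrac12$ strictly (and in fact $\Lambda\Omega(\Ti)<1-2\delta$ is needed to keep the propagation time below Ehrenfest). And the $\lambda^{|\alpha|+|\beta|}$ factor is not a pure symbolic-calculus gain; one needs that $[P,A]\in h\Psi_\delta$ uniformly along the $\delta$-partition, which is part of the hypotheses in Lemma~\ref{l:mainEst} via the operator $B_E$ and the set $\mc{W}$.
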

%%%%%%%%%%%%%%%%%%%%%%%%%%%%%%%%%%%%%%%%%%%%%%%%%%%%%%%%%%%%%%%%%%%%%%%%%%%%%%%
\noindent See Table \ref{ta:ex} in \S\ref{s:concreteApps} for {some} examples to which Theorem \ref{t:laplaceOff for H} {applies}.

 To our knowledge, Theorem~\ref{t:laplaceOff for H} is the first theorem {to give} improved remainders for Kuznecov sum remainders under dynamical assumptions.
%For examples where the non-recurrent and non-looping assumptions hold, we refer the reader to~\cite{CG19dyn}. \blue{\cs need to discuss how to actually say this\cs}
Theorems~\ref{t:laplaceDiag}, \ref{t:laplaceoff}, and \ref{t:laplaceOff for H} are consequences of our results for general semiclassical pseudodifferential operators {(see Theorems~\ref{t:MAIN} and~\ref{t:MAIN2})}.

\begin{remark}[Little oh improvements]\label{r:little-oh}
When the expansion rate $\Lambda_{\max}=0$ (see~\eqref{e:Lmax}) and our {dynamical} assumptions hold for $\Ti(R)\gg \log R^{-1}$, our theorems can be used to obtain $o(1/\log \lambda)$ improvements over standard remainders. In special situations where the geodesic flow has sub-exponential expansion, we expect similar results with improvements beyond $o(1/\log \lambda)$.
\end{remark}

%%%%%%%%%%%%%%%%%%%%%%%%%%%%%%%%%%%%%%%%%%%%%%%%%%%%%%%%%%%%%%%%%%%%%%%%%%%%%%%
%%%%%%%%%%%%%%%%%%%%%%%%%%%%%%%%%%%%%%%%%%%%%%%%%%%%%%%%%%%%%%%%%%%%%%%%%%%%%%%
 \subsection{Applications} \label{s:concreteApps}
 %%%%%%%%%%%%%%%%%%%%%%%%%%%%%%%%%%%%%%%%%%%%%%%%%%%%%%%%%%%%%%%%%%%%%%%%%%%%%%%
 %%%%%%%%%%%%%%%%%%%%%%%%%%%%%%%%%%%%%%%%%%%%%%%%%%%%%%%%%%%%%%%%%%%%%%%%%%%%%%%
 In this section we present {some} examples to which our theorems {apply}. For each of them we give a reference for the detailed proofs that the relevant assumptions are satisfied. Note that Appendix~\ref{a:concrete} contains many examples not listed in Tables~\ref{ta:exWeyl} and~\ref{ta:ex}, and that the results from~\cite{CG19dyn} can be used to find additional examples. {With the exception of the final {three} rows of Table~\ref{ta:exWeyl} {with $W=M$}, all the estimates in Tables~\ref{ta:exWeyl} and~\ref{ta:ex} are new.}

In Table~\ref{ta:exWeyl}, we list examples where the assumptions of Theorem~\ref{t:laplaceWeyl} hold. The final two examples are due to Volovoy~\cite{Vo:90b}.

\smallskip
 \begin{table}[!h]
 \setlength{\extrarowheight}{4pt}%
 \newcommand{\nl}{\\[4pt]\hline}
 \noindent\begin{tabular}{|c|c|c|c|}
 \hline
$M$&$\subM$&$|E_\lambda|\lesssim$&\S\nl
 product manifolds& any&$\tfrac{\lambda^{n-1}}{\log \lambda}$&\ref{s:prod}\nl
perturbed spheres& in the non-periodic set&$\tfrac{\lambda^{n-1}}{\log \lambda}$& \ref{s:perturbEx}\nl
%triaxial ellipsoid& ??? &\nl
manifolds without conjugate points& any&$\tfrac{\lambda^{n-1}}{\log \lambda}$&\ref{s:noConj}\nl
non-Zoll convex analytic surfaces of revolution&any& $\tfrac{\lambda^{n-1}}{\log \lambda}$&\cite{Vo:90b}\nl
compact Lie group rank $>1$ with bi-invariant metric&any&$\tfrac{\lambda^{n-1}}{\log \lambda}$& \cite{Vo:90b}\nl
\multicolumn{3}{c}{}\\
\end{tabular}
\caption{This table lists examples with $\Ti$ non-periodic subsets { with $\Ti (R)= c \log R^{-1}$}. Theorem~\ref{t:laplaceWeyl} holds for all these examples. Here, $E_\lambda=\int_WE_\lambda(x)\dv_g$ with $E_\lambda(x)$ as in~\eqref{e:pwWeyl}.}
\label{ta:exWeyl}
\end{table}

 In Table~\ref{ta:ex} we list {some} examples for which Theorems \ref{t:laplaceoff} and \ref{t:laplaceOff for H} hold. 
 In each case there exists $t_0>0$ such that $(H_1, H_2)$ is a $(t_0, {\max(\Ti_1,\Ti_2)})$ non-looping pair. 
Note that we omit labeling points for which $\Ti_2=\inj(M)$ since being $\inj(M)$ non-recurrent is an empty statement. In these cases the gain in the pointwise Weyl law is $\sqrt{\log \lambda}$ instead of $\log \lambda$.\medskip

 %\FloatBarrier
 \begin{table}[!h]
 \setlength{\extrarowheight}{9pt}%
 \newcommand{\nl}{\\[9pt]\hline}
 \noindent\begin{tabular}{|c|c|c|c|c|c|c|}
 \hline
&$H_1$&$H_2$&$\Ti_1$&$\Ti_2$&$|E_\lambda|\lesssim$&\S\nl
\multicolumn{7}{|l|}{\bf{Manifolds with conjugate points}}\nl
 product manifolds&$x$ any point \tiny{(nL)} &$y$ any point \tiny{(nL)}&${\smalleq{\log R}}$&${\smalleq{\log R}}$&$\tfrac{\lambda^{n-1}}{{\log\lambda}}$&\ref{s:prod}\nl
spherical pendulum&$x$ not a pole \tiny{(nL)}&$x$ \tiny{(nL)}&${\smalleq{\log R}}$&${\smalleq{\log R}}$&$\tfrac{\lambda^{n-1}}{{\log\lambda}}$& \ref{s:spherePend}\nl
spherical pendulum&$x$ not a pole \tiny{(nL)}&$y$ a pole&${\smalleq{\log R}}$&$\smalleq{\inj M}$&$\tfrac{\lambda^{n-1}}{\sqrt{\log\lambda}}$&\ref{s:spherePend}\nl
 \setlength{\extrarowheight}{2pt}
\multirow{2}{*}{perturbed spheres}& \multirow{2}{*}{$\substack{\text{\normalsize{$x$ non-periodic,}}\\\text{\normalsize{ not a pole,}}}$ {\tiny(nL)}}&\multirow{2}{*}{$y$ a pole}&\multirow{2}{*}{${\smalleq{\log R}}$}& \multirow{2}{*}{$\smalleq{\inj M}$} &\multirow{2}{*}{$\tfrac{\lambda^{n-1}}{\sqrt{\log\lambda}}$}&\multirow{2}{*}{\ref{s:perturbEx}}
\\
&&&&&&
\\
\hline
 \setlength{\extrarowheight}{3pt}%
 \setlength{\extrarowheight}{2pt}
\multirow{2}{*}{perturbed spheres}& \multirow{2}{*}{$\substack{\text{\normalsize{$x$ non-periodic,}}\\\text{\normalsize{ not a pole,}}}$ {\tiny(nL)}}&\multirow{2}{*} {$x$ \tiny(nL)}&\multirow{2}{*}{${\smalleq{\log R}}$}& \multirow{2}{*}{${\smalleq{\log R}}$} &\multirow{2}{*}{$\tfrac{\lambda^{n-1}}{{\log\lambda}}$}&\multirow{2}{*}{\ref{s:perturbEx}}
\\
&&&&&&
\\
\hline
%
%triaxial ellipsoid& $x\notin $ middle ellipse \tiny{(nL)}&$y$ any point \tiny{(nR)}&${\smalleq{\log R}}$&${\smalleq{\log R}}$&\nl
%any &$x$ any point& $d(y,\mc{C}_{x}^{n-1,R,{\smalleq{\log R}}})>R$&${\smalleq{\log R}}$&${\smalleq{\log R}}$&\nl
\multicolumn{7}{|l|}{ \setlength{\extrarowheight}{3pt}{\bf{Manifolds without conjugate points}}}\nl
 \setlength{\extrarowheight}{2pt}%
\multirow{2}{*}{any}&\multirow{2}{*}{$\smalleq{k_1>1}\,$\tiny{(nL)}}&\multirow{2}{*}{$\substack{\smalleq{k_2>1}\\\smalleq{k_1+k_2>n+1}}\,$\tiny{(nL)}}&\multirow{2}{*}{${\smalleq{\log R}}$}&\multirow{2}{*}{${\smalleq{\log R}}$}
&\multirow{2}{*}{$\tfrac{\lambda^{\frac{k_1+k_2}{2}-1}}{\log\lambda}$}&\multirow{2}{*}{\ref{s:noConj}}\\
&&&&&&
\\
\hline
 \setlength{\extrarowheight}{3pt}%
any&geodesic sphere \tiny{(nL)}&geodesic sphere \tiny{(nL)}&${\smalleq{\log R}}$&${\smalleq{\log R}}$&$\tfrac{1}{{\log\lambda}}$&\ref{s:flowInv}\nl
%\multicolumn{6}{|l|}{{\bf{Manifolds with Anosov geodesic flow}}}\nl
%Anosov surface&$x$ any point& any curve&${\smalleq{\log R}}$&${\smalleq{\log R}}$&\nl
Anosov&horosphere$^+$ \tiny{(nRvc)}& horosphere$^-$ \tiny{(nRvc)}&${\smalleq{\log R}}$&${\smalleq{\log R}}$&$\tfrac{1}{{\log\lambda}}$&\ref{s:anosov}\nl
Anosov, $K_g\leq0$&totally geodesic \tiny{(nL)}& totally geodesic \tiny{(nL)}&${\smalleq{\log R}}$&${\smalleq{\log R}}$&$\tfrac{\lambda^{\frac{k_1+k_2}{2}-1}}{{\log\lambda}}$&\ref{s:anosov}\nl
Anosov, $K_g\leq 0$&totally geodesic \tiny{(nL)}& horosphere \tiny{(nRvc)}&${\smalleq{\log R}}$&$\log R$&$\tfrac{\lambda^{\frac{k_1-1}{2}}}{{\log\lambda}}$&\ref{s:anosov}\nl
\multicolumn{7}{c}{}\\
\end{tabular}
\caption{The table lists examples where Theorems~\ref{t:laplaceoff} and~\ref{t:laplaceOff for H} hold. We write {\tiny(nL)} when $H_i$ is $\Ti_i$ non-looping and {\tiny(nRvc)} when $H_i$ is $\Ti_i$ non-recurrent via coverings. Horosphere$^\pm$ denotes stable/unstable horospheres, and $K_g$ the sectional curvature. A manifold is called Anosov if it has Anosov geodesic flow (see \S\ref{s:anosov} for a definition). The label $E_\lambda$ represents the integrated error term $\int_{H_1}\int_{H_2} E_\lambda(x,y) \ds{H_1}\ds{H_2}$.}
\label{ta:ex}
\end{table}

\subsection{Further improvements}
Many experts believe that, for a Baire generic Riemannian metric on a smooth compact manifold, there is $\delta>0$ such that  $E(\lambda)=O(\lambda^{n-1-\delta})$. Presently, this type of improved remainder is only available when the geodesic flow has special structure e.g. the flat torus, non-Zoll convex analytic surfaces of revolution, or compact Lie groups of rank $>1$ with bi-invariant metric~\cite{Vo:90b}. Specifically, the geodesic flow must expand only polynomially in time, {$\|d\varphi_t\|_{L^\infty(T\SM)}\leq C\langle t\rangle^N$ for some $N>0$.} Typically, geodesics will instead expand exponentially in some places and, because of this, Egorov's theorem generally only holds to logarithmic times. In fact, the only ingredient in our proof which restricts us to logarithmic improvements is Egorov's theorem. Under {the assumption of polynomial expansion} one can prove an Egorov theorem to polynomial times and hence obtain polynomially improved remainders using our methods. We do not pursue this here since the present article is intended to apply on a general manifold and the polynomial times involved in such an Egorov theorem are not explicit. We instead plan to address the integrable case specifically in a future article.

%%%%%%%%%%%%%%%%%%%%%%%%%%%%%%%%%%%%%%%%%%%%%%%%%%%%%%%%%%%%%%%%%%%%%%%%%%%%%%%
%%%%%%%%%%%%%%%%%%%%%%%%%%%%%%%%%%%%%%%%%%%%%%%%%%%%%%%%%%%%%%%%%%%%%%%%%%%%%%%
\subsection{Weyl laws for general operators}
%%%%%%%%%%%%%%%%%%%%%%%%%%%%%%%%%%%%%%%%%%%%%%%%%%%%%%%%%%%%%%%%%%%%%%%%%%%%%%%
%%%%%%%%%%%%%%%%%%%%%%%%%%%%%%%%%%%%%%%%%%%%%%%%%%%%%%%%%%%%%%%%%%%%%%%%%%%%%%%
Let $P(h) \in \Psi^m(M)$ be a self-adjoint, semiclassical pseudodifferential operator with principal symbol $p$, that is positive and classically elliptic in the sense that there is $C>0$ such that 
\begin{equation}
p(x,\xi)\geq \tfrac{1}{C}|\xi|^m,\qquad |\xi|\geq C.\label{e:positivity}
\end{equation}
%Without loss of generality, we will assume that $p\geq 0$ on $|\xi|\geq C$ (otherwise taking instead $-P$).
Let $\{E_j(h)\}_j$ be the eigenvalues of $P$ repeated with multiplicity.
{For $s\in \re$ we work with $\Pi_h(s):=\1_{(-\infty,s]}(P(h)),$ which is the orthogonal projection operator}
$$
\Pi_h(s): L^2(M) \to \bigoplus_{ E_j(h)\leq s} \ker(P(h)-E_j(h)).
$$
For $x,y \in M$ we write $\Pi_h(s; x,y)$ for its kernel 
\begin{equation}\label{e:efxn decomp}
\Pi_h(s;x,y):=\sum_{ E_j(h)\leq s}\phi\sub{E_j(h)}(x)\overline{\phi\sub{E_j(h)}}(y),
\end{equation}
where $\{\phi\sub{E_j(h)}\}_j$ is an orthonormal basis for $L^2(M)$ with
$
P(h)\phi\sub{E_j(h)}=E_j(h)\phi\sub{E_j(h)}.
$

Let
$
\varphi_t:T^*M \to T^*M
$
denote the Hamiltonian flow for $p$ at time $t$.
We recall the \emph{maximal expansion rate} for the flow and the \emph{Ehrenfest time} at frequency $h^{-1}$ respectively:
\begin{equation}\label{e:Lmax}
\Lambda_{\max}:=\limsup_{|t|\to \infty}\frac{1}{|t|}{\log} \sup_{{\{p\in[a-\e,b+\e]\}}}\|d\varphi_t(x,\xi)\|, \qquad 
T_e(h):=\frac{\log h^{-1}}{2\Lambda_{\max}}.
\end{equation}
Note that $\Lambda_{\max}\in[0,\infty)$ and if $\Lambda_{\max}=0$, we may replace it by an arbitrarily small constant. 

\begin{definition}\label{d:non period gral}
Let $a, b \in \re$ with $a\leq b$. Let $t_0>0$ and $\Ti$ be a {\resfun}. 
{A set} $U\subset T^*M$ is said to be \emph{$ \Ti$ 
non-periodic for $p$ in the window $[a,b]$} provided that for all $E\in [a,b]$ Definition \ref{d:non periodic} {holds with $\varphi_t$ being the Hamiltonian flow for $p$, and with $S^*M$ replaced by $p^{-1}(E)$}.
\end{definition}
The following is our most general version of the Weyl Law. {We write $\pi\sub{M}:T^*M \to M$ for the natural projection {and $H_p$ for the Hamiltonian vector field for $p$}.} %, and we denote the subprincipal symbol of $P$ by $\sigma\sub{\!-1}(P)$.}

\begin{theorem}
\label{t:genWeyl}
Let $0< \delta<\frac{1}{2}$, $\ell\in \mathbb{R}$, and $\mc{V}\subset \Psi^\ell(M)$ a bounded subset, $U\subset T^*M$ open, $t_0>0$, $C\sub{U}>0$, and $a,b\in\mathbb{R}$ with $a\leq b$.
Suppose $d\pi\sub{M}H_p\neq 0$ on $p^{-1}([a,b])\cap \overline{U}$. Then, there is $C\sub{0}>0$ such that the following holds. Let {$K>0$}, $A\in \mc{V}$ with $\WFh(A)\subset U$, $\Lambda>\Lambda_{\max}$, $\Ti$ be a sub-logarithmic {\resfun} with $\Lambda \Omega(\Ti)<1-2\delta$, and suppose $U$ is $\Ti$ non-periodic in the window $[a,b]$ with
\begin{equation}\label{e:thePoachedChicken}
\limsup_{R\to 0}\sup_{t\in[a,b]} \Ti(R)\mu\sub{p^{-1}(t)}(B(\partial U, R))\leq C\sub{U}.
\end{equation}
 Then, there is $h_0>0$ such that for all $0<h<h_0$, and $E\in[a,b+Kh]$
\begin{equation}
\label{e:genWeyl}
\begin{aligned}
&\bigg|\sum_{-\infty< E_j(h)\leq E}\langle A\phi\sub{E_j(h)},\phi\sub{E_j(h)}\rangle-\tr \big(A\,\rho\sub{t_0/h}*\Pi_h(E) \big)\bigg| %A\phi\sub{E_j(h)},\phi\sub{E_j(h)}\rangle-\int_{a\leq p\leq E}\sigma(A)dxd\xi\\
%&\qquad-h\Big(\int_{a\leq p\leq E}\sigma\sub{-1}(A)dxd\xi + \int \sigma(A)\sigma\sub{-1}(P)d\mu_{p^{-1}(a)}-\int \sigma(A)\sigma\sub{-1}(P)d\mu_{p^{-1}(E)}\Big)
%\Bigg |\\
\leq C\sub{0}\,h^{1-n}\big/\,\Ti(h).
\end{aligned}
\end{equation}
\end{theorem}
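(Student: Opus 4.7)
The plan is to combine a long-time Tauberian argument with a geodesic-beam analysis of the semiclassical wave group $e^{-i\tau P(h)/h}$. Write $N_A(E):=\sum_{E_j(h)\le E}\langle A\phi\sub{E_j(h)},\phi\sub{E_j(h)}\rangle$ for the sum on the left-hand side of \eqref{e:genWeyl}, so that the target is $|N_A(E)-(\rho_{t_0/h}*N_A)(E)|\le C\sub{0}\,h^{1-n}/\Ti(h)$. My first step is a two-scale Tauberian reduction (Safarov--Vassiliev style, \cite{SaVa:97}): decomposing $A=A_+-A_-$ with $\sigma(A_\pm)\ge 0$ by subtracting a large multiple of a microlocal cutoff near $p^{-1}([a,b])$, each monotone piece $N_{A_\pm}$ can be controlled once we establish the uniform density bound
\[
\sup_{E\in[a-\epsilon,b+\epsilon]}\bigl|(\rho_{\Ti(h)/h}*dN_{A_\pm})(E)\bigr|\le C\,h^{-n}.
\]
The mismatch between the smoothing at scale $h/t_0$ prescribed in the theorem and the sharper smoothing at scale $h/\Ti(h)$ that we are free to insert is exactly what produces the $1/\Ti(h)$ gain over the standard $h^{1-n}$ remainder.

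Second, I would convert this density bound into wave-trace language via Fourier inversion,
\[
(\rho_{\Ti(h)/h}*dN_A)(E)=\frac{1}{2\pi h}\int\hat\rho\bigl(\tau/\Ti(h)\bigr)\,\tr\!\bigl(Ae^{-i\tau P(h)/h}\bigr)\,e^{i\tau E/h}\,d\tau,
\]
whose $\tau$-support is $|\tau|\le 2\Ti(h)$, and split at $|\tau|=t_0$. The short-time regime $|\tau|\le t_0$ is treated by a standard H\"ormander-type semiclassical parametrix for $e^{-i\tau P/h}$ together with stationary phase, enabled by the non-degeneracy hypothesis $d\pi\sub{M}H_p\ne 0$ on $p^{-1}([a,b])\cap\overline{U}$; this yields the Weyl main term together with an acceptable $O(h^{-n})$ remainder for the density. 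The long-time regime $t_0<|\tau|\le 2\Ti(h)$ is where the dynamical hypothesis must be exploited: following \cite{Gdefect,CG17,CG19a}, I would decompose $A$ microlocally as $\sum_\rho B_\rho$ with each $B_\rho$ essentially supported in a tube of radius $h^{1/2-\delta}$ about the orbit through $\rho\in U$, and propagate each $B_\rho$ by long-time Egorov. The hypothesis $\Lambda\Omega(\Ti)<1-2\delta$ is precisely what keeps Egorov valid on $[-2\Ti(h),2\Ti(h)]$ with negligible remainder. The contribution $\tr(AB_\rho e^{-i\tau P/h})$ then measures the overlap of $B_\rho$ with its $\tau$-propagate, which vanishes to top order unless $\rho$ lies in the near-periodic set $\mc{P}^{h^{1/2-\delta}}\sub{U}(t_0,\Ti(h))$. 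The $\Ti$ non-periodicity assumption together with the boundary estimate \eqref{e:thePoachedChicken} bounds the measure of this set by $C/\Ti(h)$, and this combined with the $h^{-n}$ density of tubes and the $1/h$ prefactor delivers the target bound.

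The main obstacle I anticipate is the passage from quasimode-averaging estimates to a trace estimate. The geodesic-beam machinery of \cite{CG17,CG19a} was set up for $L^2$ norms of quasimode averages over submanifolds, so adapting it to $\tr(AB_\rho e^{-i\tau P/h})$ requires showing that cross-terms $B_\rho^*AB_{\rho'}$ with $\rho\ne\rho'$ contribute only at lower order, and that the long-time Egorov errors remain summable uniformly in $|\tau|\le 2\Ti(h)$. A secondary technical point will be to arrange the Tauberian enlargement of $[a,b]$ so that the estimate holds on $[a,b+Kh]$, and to control the boundary layer of $\partial U$ with the correct $1/\Ti(R)$ weight using \eqref{e:thePoachedChicken}.
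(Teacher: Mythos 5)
Your proposal takes a genuinely different route from the paper. You propose the classical wave-trace plus Tauberian strategy in the spirit of Volovoy~\cite{Vo:90} and Safarov--Vassiliev~\cite{SaVa:97}: convert the density to a $\tau$-integral of $\tr(Ae^{-i\tau P/h})$, split at $|\tau|=t_0$, use a H\"ormander parametrix for short time and a tube decomposition in $T^*M$ with long-time Egorov for $t_0\le|\tau|\lesssim\Ti(h)$. The paper does something structurally different: it treats the kernel $\1_{[t-s,t]}(P)(x,y)$ as a \emph{quasimode} for $\mathbf P=P\otimes\Id$ on $M\times M$, covers the conormal bundle of the diagonal $\Delta\subset M\times M$ by bicharacteristic tubes for $\mathbf p$, and then applies the geodesic-beam average estimates (Lemmas~\ref{l:mainEst}, \ref{l:basicEst}, \ref{l:HS}) to the diagonal average. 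The non-periodicity hypothesis on $U$ is converted into a non-\emph{looping} condition for $\Delta_U\subset N^*\Delta$, and then \eqref{e:ribs} (a Lipschitz-scale estimate for $E\mapsto\tr(A\Pi_h(E))$ via Lemma~\ref{l:weylDyn}) and \eqref{e:sauce} (comparison of the $T(h)$- and $t_0$-scale smoothings via Lemmas~\ref{l:weylLoop} and~\ref{l:weylVol}) do the remaining work. This sidesteps both a long-time wave-group parametrix and any positivity requirement for a Tauberian theorem; it also is what makes the argument go through for general open $U$ with rough boundary (via~\eqref{e:thePoachedChicken}) and for $\Ti$ up to the Ehrenfest constant, which is exactly where the Volovoy-style route is known to lose generality.

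Two points where your plan as written would run into trouble. First, your reduction to monotone pieces by writing $A=A_+-A_-$ with $\sigma(A_\pm)\ge 0$ does not make the operators $A_\pm$ positive, only their symbols; G\aa rding gives positivity only up to $O(h)$, so $N_{A_\pm}$ is not monotone and the H\"ormander Tauberian theorem does not apply directly. The paper avoids monotonicity entirely: Lemma~\ref{l:taub} needs only a two-sided Lipschitz estimate $|w_h(t-s)-w_h(t)|\le L_h\langle\sigma_h s\rangle$, and this Lipschitz bound for $w_h(E)=\tr(A\Pi_h(E))$ is established by Cauchy--Schwarz and Hilbert--Schmidt bounds on $\1_{[t-s,t]}(P)$ (Lemma~\ref{l:HS}, Lemma~\ref{l:weylDyn}), not by positivity. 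Second, the obstacle you flag — cross-terms $B_\rho^*AB_{\rho'}$ — is not actually the hard part: cyclic invariance of the trace lets you work with $\tr(B_\rho e^{-i\tau P/h})$ directly, and propagation reduces the question to whether $\varphi_\tau(\supp b_\rho)$ meets $\supp b_\rho$. The genuine difficulty is the quantitative accounting in the long-time regime. A crude bound of $h^{-n}\cdot\text{vol(tube)}$ per near-periodic tube, summed over $\lesssim\mu(\mc P_U^R)/\text{vol(tube)}\sim(1/\Ti)R^{1-2n}$ bad tubes and integrated over $|\tau|\lesssim\Ti(h)$ with the $1/h$ prefactor, overshoots the target by a power of $h$. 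Getting the sharp bound requires understanding the geometry of the intersection of the flowed tube with the diagonal (the stationary-phase gain in the $x$-integral on $\Delta$), together with a volume bound of exactly the type the paper proves in Lemma~\ref{l:weylVol}. In other words, pushing your approach through would essentially require rediscovering the quantitative geodesic-beam trace estimates, and the paper's $M\times M$ formulation is what makes those estimates drop out cleanly from the machinery already developed for averages over submanifolds.
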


Since the second term in~\eqref{e:genWeyl} involves only short time propagation for the Schr\"odinger group $e^{itP/h}$, its asymptotic expansion in powers of $h$ can in principle be obtained. This calculation is routine, but long, so we do not include it here. For the details when $P=-h^2\Delta_g$, we refer the reader to~\cite[Proposition 2.1]{DuGu:75}. In addition, if $U\subset T^*M$ has smooth boundary which intersects $p^{-1}(E)$ transversally for $E\in[a,b]$, then~\eqref{e:thePoachedChicken} holds. Although the statement of Theorem~\ref{t:genWeyl} is cumbersome when $U$ with rough boundary is allowed, it is natural to consider dynamical assumptions on this type of set. Indeed, many dynamical systems exhibit the so-called `chaotic sea' with `integrable islands' behavior where the dynamics are aperiodic in the sea; a set which typically has very rough boundary.
%\blue{Note that when $P(h)=-h^2\Delta_g$, then $\sigma(P)(x,\xi)=|\xi|^2_{g(x)}$ and $\sigma\sub{-1}(P)=0$. Thus, this result improves on....\cs}

Next, we consider generalized Kuznecov{~\cite{Ku:80}} type sums of the form 
\begin{equation*}
\begin{aligned}
\HAs(s)
&:=\int_{H_1}\int_{H_2} A_1\Pi_h(s) A_2^*\, (x,y)\,\ds{H_1}(x)\ds{H_2}(y),
%&=\int_{H_1}\int_{H_2} A_1(x,hD_x)A_2(y,hD_y) \Pi_h(s;x,y)\ds{H_1}(x)\ds{H_2}(y),
\end{aligned}
\end{equation*}
where $A_1,A_2\in \Psi^\infty(M)$ {and $H_1, H_2 \subset M$ are two submanifolds of $M$}.
%In the special case where $H=H_1=H_2$ and $A=A_1=A_2$, we will write 
%$$
%\HA(s):=\Pi_{_{H_1,H_1}}^{^{A_1,A_1}}(s).
%$$
%We note that both $\HAs(s)$ and $\HA(s)$ depend on $h$, but we omit this dependence to ease notation.

Let $H\subset M$ be a smooth submanifold. For $a,b \in \re$, $a \leq b$, define
 \begin{equation}\label{e:SigmaAB}
\Sab^H:=p^{-1}([a,b])\cap N^*\!H.
 \end{equation}

\begin{definition}
We say a submanifold $H\subset M$ of codimension $k$ is \emph{conormally transverse for $p$ in the window $[a,b]$} if given $f_1,\dots, f_{k}\in C_c^\infty(M;\R)$ locally defining $H$, i.e. with
$H= \bigcap_{i=1}^k\{f_i=0\}$ and $\{df_i\}\text{ linearly independent on }H,$ 
we have
\begin{equation}
\label{e:transverse} \Sab^H \subset \bigcup_{i=1}^k\{ H_pf_i \neq 0\},
\end{equation}
{Here, we interpret $f_i$ as a function on the cotangent bundle by pulling it back through the canonical projection map.}
\end{definition}

\begin{remark}
If $P(h)=-h^2\Delta_g$, then $p(x,\xi)=|\xi|^2_{g(x)}$. Working with $a=b=1$, we have $\Sab^H= S\!N^*H$. In this setup every submanifold $H \subset M$ is conormally transverse for $p$.
\end{remark}

\begin{definition}\label{d:non loop gral}
Let $H_1, H_2 \subset M$ be two smooth submanifolds. Let $a, b \in \re$ with $a\leq b$. Let $t_0>0$, $\Ti$ a {\resfun}, {and $\Cnl>0$}. We say
$(H_1,H_2)$ is a \emph{$(t_0, \Ti)$ 
non-looping pair in the window $[a,b]$ {with constant $\Cnl$}} provided that Definition \ref{d:non looping through} holds for all $E\in [a,b]$ 
 with $\varphi_t$ being the Hamiltonian flow for $p$ and with $\mathcal{L}_{x,y}^{{R}}$ changed to
 $$\mathcal{L}\sub{H_1,H_2}^{{R},{E}}(t_0, T):=\bigg\{ \rho \in \SE^{H_1}:\; \bigcup_{t_0\leq |t|\leq T}\varphi_t({B(\rho,R)}) \,\cap {B\big(\SE^{H_2},R\big)}\neq \emptyset\bigg\},$$
 {and with ${S_x^*M}$ and ${S_y^*M}$ replaced with $\SE^{H_1} $ and $\SE^{H_2}$ respectively.}
 We say \emph{ $H$ is $(t_0,\Ti)$ non-looping} if $(H,H)$ is a $(t_0,\Ti)$ non-looping pair.
\end{definition}

\begin{definition}\label{d:non rec gral}
Let $H \subset M$ be a smooth submanifold. Let $a, b \in \re$ with $a\leq b$. Let $t_0>0$, {$R_0>0$}, {$0<\Cnr<1$}, and let $\Ti$ be a {\resfun}. 
 $H$ is said to be \emph{$\Ti$ non-recurrent in the window $[a,b]$ {with constants ${(R_0,\Cnr)}$}} provided Definition \ref{d:non rec} holds for any $E\in[a,b]$ with {$ S^*_xM$ replaced by $\SE^H$ and where $\varphi_t$ is the Hamiltonian flow for $p$}.
\end{definition}

To state our main estimate for Kuznecov sums, let $\rho\in \mc{S}(\mathbb{R})$ with $\hat{\rho}(0)\equiv 1$ on $[-1,1]$ and $\supp \hat{\rho}\subset [-2,2]$. 
For $T>0$ we define
\begin{equation}\label{e:rho def}
 \rho\sub{h,T}(t):= \tfrac{T}{h} \, \rho \Big(\tfrac{T}{h} \, t \Big). 
\end{equation}
We then introduce the remainder 
\begin{equation}\label{e:gralremainder}
\RAs(T,h; {\energy})=\HAs({\energy})-\rho\sub{h,T}*\HAs({\energy}).
\end{equation}
%%%%%%%%%%%%%%%%%%%%%%%%%%%%%%%%%%%%%%%%%%%%%%%%%%%%%%%%%%%%%%%%%%%%%%%%%%%%%%%
\begin{theorem}
\label{t:off}
Let $P(h) \in \Psi^m(M)$ be a self-adjoint semiclassical pseudodifferential operator with classically elliptic symbol $p$. {Let $\ti$ be a {\resfun} and ${\e>0}$.}
{For $j=1,2,$ let $H_j\subset M$ be submanifolds with co-dimension $k_j$. Let $a, b\in \re$ such that $H_j$ is conormally transverse for $p$ in the window $[a,b]$ for $j=1,2$. Let ${R_0}>0,$ $t_0>0$, and for $j=1,2$, let $\Ti_j$ be sub-logarithmic \resfuns \,{and $\Ti_{\max}=\max(\Ti_1,\Ti_2)$}. } Suppose  $H_j$ is ${(\ti,\Ti_j)}$ non-recurrent in the window $[a,b]$ with {constant} ${R_0}$ {for each $j=1,2$}, and $(H_1,H_2)$ is a $(t_0, {\Ti_{\max}})$ non-looping pair in the window $[a,b]$ {with constant $\Cnl$}.
Then, for all $A_1, A_2\in \Psi^\infty(M)$, there exist $h_0>0$ and $C\sub{0}>0$ such that for all $0<h\leq h_0$, $K>0$, and ${\energy}\in [a-Kh,b+Kh]$
$$
\Big|\RAs(t_0{+\e},h; {\energy})\Big|\leq C\sub{0}\, h^{1-\frac{k_1+k_2}{2}}\Big/\!\sqrt{\Ti_
1(h)\Ti_2(h)}.
$$
\end{theorem}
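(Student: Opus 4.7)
The plan is to reduce the remainder $\RAs(t_0+\e,h;\energy)$ to a time integral of the propagator $e^{itP/h}$ and then exploit the geodesic beam machinery of the authors' earlier work \cite{CG17,CG19a} together with the two dynamical hypotheses. Fourier duality transforms the convolution by $\rho\sub{h,t_0+\e}$ into the insertion of $\hat\rho(ht/(t_0+\e))$ on the propagator side, which equals $1$ for $|t|\leq t_0+\e$. Consequently, modulo $O(h^\infty)$ errors produced by a smooth spectral cutoff $\psi$ supported near $[a,b]$, $\RAs(t_0+\e,h;\energy)$ can be written as
$$
\frac{1}{2\pi h}\int \chi(t)\,e^{-it\energy/h}\operatorname{tr}\!\big(J_1 A_1\, e^{itP/h}A_2^* J_2^* \psi(P)\big)\,dt,
$$
where $J_j$ denotes integration against $H_j$ and $\chi$ vanishes for $|t|<t_0+\e$. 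Since the bilinear form $\HAs$ is not monotone in $\energy$, one upgrades the Tauberian-style estimate by Cauchy--Schwarz against the two monotone diagonal sums $\Pi^{A_j,A_j}_{H_j,H_j}$, reducing the problem to bounding a geometric mean of diagonal Kuznecov remainders.

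Next, introduce a geodesic beam decomposition of $A_1$ and $A_2$. Following \cite{CG19a}, choose a scale $R$ with $h^{1/2}\ll R\ll 1$, a maximal $R$-separated set $\{\rho_\alpha\}\subset \Sab^{H_j}$, and write $A_j=\sum_\alpha A_j^\alpha + O(h^\infty)$ with $\WFh(A_j^\alpha)$ contained in an $R$-tube around the geodesic arc $\{\varphi_t(\rho_\alpha)\}_{|t|\leq \Ti_j(R)}$. The submanifold averaging inequality of \cite{CG19a} bounds the $L^2(H_j)$ norm of $J_j A_j^\alpha u$ by a constant times $\mu\sub{\Sab^{H_j}}(\textup{tube})^{1/2}\,\|u\|_{L^2(M)}$ for admissible quasimodes $u$. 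Propagating one factor by $e^{itP/h}$ via Egorov's theorem, which remains valid up to the Ehrenfest-type time (the reason for imposing sub-logarithmic $\Ti_j$), the trace collapses to a weighted sum over pairs of tubes $(\alpha_1,\alpha_2)$ whose geodesic arcs are connected by $\varphi_t$ at some $t\in[t_0+\e,\Ti_{\max}(R)]$, where $\Ti_{\max}=\max(\Ti_1,\Ti_2)$.

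At this stage the dynamical assumptions furnish the key gains. The non-looping hypothesis for $(H_1,H_2)$ in Definition~\ref{d:non loop gral} bounds the total measure of such contributing pairs by $\Cnl/\Ti_{\max}(R)^2$. The $(\ti,\Ti_j)$ non-recurrence of each $H_j$ enters via the iteration scheme of \cite{CG19a}: it upgrades the single-tube averaging bound to one that saves an additional $\Ti_j(R)^{-1/2}$ when squared tube contributions are summed along $H_j$ at scale $R$. Combining these three gains and choosing $R=h^{1/2}$ so that Egorov's theorem holds up to time $\Ti_{\max}(h)$ yields
$$
|\RAs(t_0+\e,h;\energy)|\leq C\sub{0}\,h^{1-(k_1+k_2)/2}\big/\!\sqrt{\Ti_1(h)\Ti_2(h)},
$$
in which the prefactor $h^{-(k_1+k_2)/2+1}$ absorbs the generic growth of submanifold averages (one factor of $h^{-k_j/2}$ per submanifold) together with the extra $h$ coming from the propagator cutoff $\hat\rho(ht/(t_0+\e))$.

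The principal obstacle will be arranging a single tube partition that simultaneously delivers (i) the non-looping pair-measure estimate from Definition~\ref{d:non loop gral}, (ii) the non-recurrence amplification from Definition~\ref{d:non rec gral} on each $H_j$, and (iii) an Egorov theorem valid up to time $\Ti_{\max}(h)$. This is the reason for the resolution-function formalism and the auxiliary function $\ti$: the non-recurrence iteration must terminate before the scale drops below $h^{1/2}$, while $\Ti_{\max}(R)$ remains inside the Ehrenfest window. Once these scale constraints are reconciled, the rest of the argument is a careful bookkeeping of tube volumes and reduces to applying the already-established general Theorems~\ref{t:MAIN} and~\ref{t:MAIN2}.
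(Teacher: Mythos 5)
Your proposal correctly identifies that Theorem~\ref{t:off} is obtained by invoking the general averages result Theorem~\ref{t:MAIN2} (the paper itself takes this route, via Propositions~\ref{p:non loop implies cov} and~\ref{p:non rec implies cov}, which translate Definitions~\ref{d:non loop gral} and~\ref{d:non rec gral} into the covering-based Definitions~\ref{d:non loop cov} and~\ref{d:non rec cov}, and then Theorem~\ref{t:MAIN}). However, your sketch of what lies inside that reduction does not match the paper's argument, and, as written, the single time-integral scheme you describe would not produce the estimate.

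The paper's key structural move is a \emph{two-stage split} of the remainder at an intermediate smoothing scale $T_{\max}(h)=\max(\Ti_1(R(h)),\Ti_2(R(h)))$:
$$
\RAs(t_0,h;\energy)=\big[\HAs(\energy)-\rho\sub{h,T_{\max}(h)}*\HAs(\energy)\big]+\big[(\rho\sub{h,T_{\max}(h)}-\rho\sub{h,t_0})*\HAs(\energy)\big],
$$
and the two dynamical hypotheses enter \emph{separately}. The first bracket is handled by Proposition~\ref{p:error-smooth} using \emph{only} non-recurrence: the crucial ingredient is the Lipschitz/Tauberian argument of Lemmas~\ref{l:taub} and~\ref{l:tiramisu}, which treats the small-window projector kernel $\1_{[t-s,t]}(P)$ itself as a quasimode for $P$, applies the tube averaging estimates of Lemma~\ref{l:basicEst}, and uses the non-recurrence covering decomposition to show that $\HAs(t)$ has Lipschitz oscillation of size $C h^{(2-k_1-k_2)/2}\big(\Ti_1\Ti_2\big)^{-1/2}\langle \Ti_1 s/h\rangle^{1/2}\langle \Ti_2 s/h\rangle^{1/2}$. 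The second bracket is handled by Proposition~\ref{P:toShortTime} using \emph{only} non-looping: it is rewritten via Lemma~\ref{l:toShortTime2} as $f_{t_0,\Tt,h}(P_E)$ and decomposed into tube pairs, with Egorov's theorem (Lemma~\ref{l:prop}) killing the non-looping pairs and a volume estimate (Lemma~\ref{l:vol}) bounding the looping ones by $\Cnl/\Ti$. Your proposal collapses this into a single time-integral of the propagator with Cauchy--Schwarz and tube bookkeeping, which conflates the roles of the two hypotheses; in particular, it is not clear how a single Egorov-plus-tube-measure argument would yield the product $\Ti_1^{-1/2}\Ti_2^{-1/2}$ rather than $\Ti_{\max}^{-1}$ or $\Ti_{\max}^{-1/2}$, since the non-looping hypothesis is stated only for the pair at the rate $\Ti_{\max}$ while the non-recurrence hypothesis is what separately injects $\Ti_1$ and $\Ti_2$.

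A smaller point: you propose $R=h^{1/2}$, but this is exactly on the boundary of the admissible range; the paper works with $R(h)\geq h^{\delta}$ for some fixed $\delta<\tfrac{1}{2}$ (with the constraint $\Lambda\Omega(\Ti_j)<1-2\delta$), precisely so that Egorov's theorem and the $\delta$-partition calculus remain valid.
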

%%%%%%%%%%%%%%%%%%%%%%%%%%%%%%%%%%%%%%%%%%%%%%%%%%%%%%%%%%%%%%%%%%%%%%%%%%%%%%%

{\begin{remark}
We omit the precise dependence of the constant $C\sub{0}$ on various parameters in Theorem~\ref{t:off}. Instead, we refer the reader to our main theorem on averages, Theorem~\ref{t:MAIN}, where we have introduced notation to handle uniformity in families of submanifolds $H_1$ and $H_2$. \end{remark}}

\subsection{Outline of the paper and ideas from the proof}\label{s:outline}
In \S\ref{s:mainDyn} we introduce the notion of good coverings by tubes and various assumptions on these coverings which allow us to {adapt} the results of~\cite{CG19a} {to our setup}. We also state our main averages theorem in its full generality (Theorem~\ref{t:MAIN}). \S\ref{s:meas-then-dyn} studies how the dynamical assumptions in the introduction relate to the assumptions on coverings by tubes from~\S\ref{s:mainDyn}. In \S\ref{s:basic} we {adapt the crucial estimates coming from {the} geodesic beam techniques~\cite{CG19a} so that they can be applied to the study of Weyl remainders.} Next, in \S\ref{s:lip}, we estimate the scale (in the energy) at which averages of the spectral projector behave like Lipschitz functions {in the spectral parameter}. With this in hand, we are able to approximate $\Pi_h$ using $\rho\sub{h,T(h)}*\Pi_h$ {with $T(h)=\sqrt{\Ti_1(h)\Ti_2(h)}$}.
%\marginpar{\ya{note that thm 6 is stated with $\Ti_i(R(h))$ while thm 5 with $\Ti_i(h)$.}}
Finally, \S\ref{s:smoothed} shows that the $\rho\sub{h,T(h)}*\Pi_h$ approximation is close to $\rho_{h,t_0}*\Pi_h$, finishing the proof of our main theorem on averages. \S\ref{s:weyl} contains the proof of our theorems on the Weyl remainder. This section follows the same strategy as that for averages: an estimate for the Lipschitz scale of the trace of the spectral projector, followed by relating $\rho\sub{h,T(h)}*\Pi_h$ to $\rho\sub{h,t_0}*\Pi_h$. {In Appendix~\ref{s:index} we present an index of notation and in} Appendix~\ref{a:concrete} we give examples including those from Table~\ref{ta:ex} to which our theorems can be applied. 

The main idea of this article is to view the kernel of the spectral projector $\1_{[t-s,t]}(P)$ as a quasimode for $P$. This allows us to use {the} geodesic beam techniques from~\cite{CG19a} to control {the} energy scale at which the projector behaves like a Lipschitz function and hence to {estimate} the error when the projector is smoothed at very small scales. This idea is used a second time when controlling $(\rho_{h,T(h)}-\rho_{h,t_0}) *\Pi_h$ to {estimate} the contribution from small volumes of {the possibly looping} tubes. A simple argument using Egorov's theorem controls the remaining non-looping tubes. The crucial insight used to handle the Weyl law is to view the kernel of the spectral projector as a distribution on $M\times M$, where it is a quasimode for ${\bf{P}}:=P\otimes 1$, {and to study the Weyl Law} via integration of the kernel over the diagonal. {By doing this, we are able to reduce the problem to bounding an average of a quasimode over a submanifold, a setting in which geodesic beam techniques} apply.

Note that Theorems~\ref{t:laplaceWeyl} and~\ref{t:genWeyl} are proved in \S\ref{s:laplaceWeyl} and \S\ref{s:laplaceGen} respectively. {Theorem~\ref{t:product} is a corollary of Theorem~\ref{t:laplaceWeyl}; the necessary dynamical properties are proved in Appendix~\ref{s:prod}.}
Theorems~\ref{t:laplaceDiag},~\ref{t:laplaceoff},~\ref{t:laplaceOff for H}, {and~\ref{t:off}} follow from an application of Theorem \ref{t:MAIN2} (See \S\ref{s:itAllFollows} {for Theorems~\ref{t:laplaceDiag},~\ref{t:laplaceoff}, and~\ref{t:laplaceOff for H}. Theorem~\ref{t:off} is a direct corollary of Theorem~\ref{t:MAIN2}.}). {The fact that Theorem~\ref{t:MAIN2} follows from Theorem~\ref{t:MAIN} is proved in \S~\ref{t:MAIN2} and Theorem~\ref{t:MAIN} is proved in \S\ref{s:imTheMAIN}.}

\begin{remark}[Resolution functions]\label{r:rate function}
There are several reasons why we state our theorems in terms of a general resolution function. First, it is necessary to allow $\Ti(R)$ to grow arbitrarily slowly as $R\to 0$ to recover the $o(1)$ results of~\cite{Saf88,SoggeZelditch,DuGu:75} {(see Remark \ref{r:little-oh}).} Second, while it may appear from Tables~\ref{ta:exWeyl} and~\ref{ta:ex}, that $\Ti(R)$ is always either $c\log R^{-1}$ or the trivial case of $\inj(M)$, this is not always true. In fact, one can check that many integrable examples are non-looping or non-periodic for $\Ti(R)\gg \log R^{-1}$.
At the moment, the authors are not aware of concrete examples with $\Ti(R)\ll \log R$. However, it is likely that for any sub-logarithmic {\resfun} $\Ti$, with $\Ti(R)\to \infty$ as $R\to 0^+$, a modification of the construction from~\cite{BuPa:95} yields a metric on the sphere for which there is a point $x$ such that $x$ is not $(t_0,\Ti)$ non-looping for any $t_0>0$, but there is  a {\resfun}  $\Ti_1$ with $\Ti_1(R){\longrightarrow} \infty$ as $R\to 0^+$ and $t_0>0$ such that $x$ is $(t_0,\Ti_1)$ non-looping. Also, note that our non-periodic, non-looping, and non-recurrent conditions are all monotonic in $\Ti$ in the sense that if $\Ti_1(R)\leq \Ti_2(R)$, and one of these conditions hold with the {\resfun} $\Ti_2$, then it also holds with $\Ti_1$.
\end{remark}

\noindent {\sc Acknowledgements.}
The authors would like to thank {Dmitry} Jakobson, Iosif Polterovich, John Toth, Dmitri Vassiliev and Steve Zelditch for helpful comments {on the existing literature} and Maciej Zworski for {suggestions on how to improve} the exposition and presentation, and Leonid Parnovski for comments on a previous draft. The authors are grateful to the National Science Foundation for partial support under grants DMS-1900434 and DMS-1502661 (JG) and DMS-1900519 (YC). {Y.C. is grateful to the Alfred P. Sloan Foundation. } 
%%%%%%%%%%%%%%%%%%%%%%%%%%%%%%%%%%%%%%%%%%%%%%%%%%%%%%%%%%%%%%%%%%%%%%%%%%%%%%%
%%%%%%%%%%%%%%%%%%%%%%%%%%%%%%%%%%%%%%%%%%%%%%%%%%%%%%%%%%%%%%%%%%%%%%%%%%%%%%%
%%%%%%%%%%%%%%%%%%%%%%%%%%%%%%%%%%%%%%%%%%%%%%%%%%%%%%%%%%%%%%%%%%%%%%%%%%%%%%%
%%%%%%%%%%%%%%%%%%%%%%%%%%%%%%%%%%%%%%%%%%%%%%%%%%%%%%%%%%%%%%%%%%%%%%%%%%%%%%%
\section{Results with dynamical assumptions via coverings by tubes}
\label{s:mainDyn}
%%%%%%%%%%%%%%%%%%%%%%%%%%%%%%%%%%%%%%%%%%%%%%%%%%%%%%%%%%%%%%%%%%%%%%%%%%%%%%%
%%%%%%%%%%%%%%%%%%%%%%%%%%%%%%%%%%%%%%%%%%%%%%%%%%%%%%%%%%%%%%%%%%%%%%%%%%%%%%%
%%%%%%%%%%%%%%%%%%%%%%%%%%%%%%%%%%%%%%%%%%%%%%%%%%%%%%%%%%%%%%%%%%%%%%%%%%%%%%%
%%%%%%%%%%%%%%%%%%%%%%%%%%%%%%%%%%%%%%%%%%%%%%%%%%%%%%%%%%%%%%%%%%%%%%%%%%%%%%%
We divide this section in four parts.
In Section \ref{s:dyn} we introduce the analogues of Definitions \ref{d:non loop gral} and \ref{d:non rec gral} via the use of coverings by bicharacteristic tubes. {Microlocalization to these tubes will eventually be used to generate bicharacteristic beams.} In Section~\ref{s:uniform} we introduce the uniformity assumptions that allow us to obtain uniform control of the constants in all our results. In Section \ref{s:gral} we state the most general version of our results, using the definitions via coverings by tubes, and the uniformity assumptions. 

%%%%%%%%%%%%%%%%%%%%%%%%%%%%%%%%%%%%%%%%%%%%%%%%%%%%%%%%%%%%%%%%%%%%%%%%%%%%%%%
%%%%%%%%%%%%%%%%%%%%%%%%%%%%%%%%%%%%%%%%%%%%%%%%%%%%%%%%%%%%%%%%%%%%%%%%%%%%%%%
\subsection{Dynamical assumptions via coverings by tubes}\label{s:dyn}
%%%%%%%%%%%%%%%%%%%%%%%%%%%%%%%%%%%%%%%%%%%%%%%%%%%%%%%%%%%%%%%%%%%%%%%%%%%%%%%
%%%%%%%%%%%%%%%%%%%%%%%%%%%%%%%%%%%%%%%%%%%%%%%%%%%%%%%%%%%%%%%%%%%%%%%%%%%%%%%
Let $H\subset M$ be a smooth submanifold that is conormally transverse for $p$ in the window $[a,b]$.
Let $\hyp\subset T^*M$ with 
\begin{equation}\label{e:mc H}
\Sab^H \subset \hyp
\end{equation}
be a hypersurface that is transverse to the flow, and
$
\varphi_t
$
continue to denote the Hamiltonian flow for $p$ at time $t$. 
Given $A \subset \Sab^{H}$, $\tau>0$, and $r>0$, we define
\begin{equation}\label{e:tube}
\Lambda_{_{\!A}}^\tau(r):=\bigcup_{|t|\leq \tau+r}\varphi_t\big({B\sub{\hyp}}(A,r)\big).%,\qquad A_{_r}:=\{\rho\in \hyp :d(\rho,A)<r\},}
\end{equation}

Let {$\tau\sub{\inj_H}>0$ be small enough so that the map 
\begin{equation}\label{e:tau_inj}
(-\tau\sub{\inj_H},\tau\sub{\inj_H})\times \hyp \to T^*M, \qquad (t,q) \mapsto \varphi_t(q),
\end{equation} 
is injective.} Given $r>0$, $0<\tau<\tau\sub{\inj_H}$, and a collection of points $\{\rho_j\}_{j\in \J(r)}$, we 
will work with the tubes $$\mc{T}_j=\mc{T}_j(r):=\Lambda_{\rho_j}^\tau(r).$$
A \emph{$(\tau, r)$-cover for $A\subset T^*M$} is a collection of tubes 
$\{\mc{T}_j(r)\}_{j\in \J(r)}$ for which
$$\Lambda_A^\tau(\tfrac{1}{2}r) \subset \bigcup_{j\in \J(r)}\mc{T}_j(r),\qquad \text{and}\qquad\, \T_j(r)\cap \Lambda^\tau_A(\tfrac{1}{2}r)\neq \emptyset,\quad \text{for all }{j\in \J(r)}.$$
{Let $\mathfrak{D}>0$.} We say a $(\tau,r)$-cover is a $(\mathfrak{D},\tau,r)$-\emph{good cover}, if there is a splitting $\J(r)=\sqcup_{i=1}^{\mathfrak D}\J_i(r)$ such that for all $1\leq i\leq \mathfrak{D}$ and $k{\neq}\ell\in \J_i(r)$, 
\begin{equation}\label{e:good cover}
\mc{T}_k(3r)\cap \mc{T}_\ell(3r)=\emptyset.
\end{equation}
For $E\in \re$ and $r>0$, we adopt the notation 
\begin{equation}\label{e:J_E}
 \J\sub{E}(r):=\Big\{ j\in \J(r):\; \mc{T}_j(r)\, \cap \hyp\,\cap B(\SE^{H},r)\neq \emptyset \Big\}. %\bigcap\, p^{-1}\big(E-r, E+r\big)\neq \emptyset\Big\}.
\end{equation}

We are now ready to introduce the definitions via coverings of our dynamical assumptions. First, for $0<t_0<T_0$, we say $A\subset T^*\!M$ is \emph{$[t_0,T_0]$ non-self looping} if 
\begin{equation}\label{e:nonsl}
\bigcup_{t=t_0}^{T_0}\varphi_t(A)\cap A=\emptyset\qquad \text{ or }\qquad \bigcup_{t=-T_0}^{-t_0}\varphi_t(A)\cap A=\emptyset.
\end{equation}

 \begin{definition}[non looping pairs via coverings]\label{d:non loop cov}
Let $t_0>0$, ${\tau_0}>0$, ${\mathfrak{D}}>0$, and $\Ti$ be a {\resfun}.
 Let $H_1, H_2$ be two submanifolds and $U_1\subset {N^*H_1}$, $U_2\subset N^*H_2$. We say \emph{$(U_1,U_2)$ is a $(t_0, \Ti)$ non-looping pair in the window $[a,b]$ via {${\tau_0}$-coverings} {with constant ${\Cnl}$}} provided {for all $0<\tau<\tau_0$ there exists $r_0>0$ such that }for $0<r<{r_0}$, any two $({\mathfrak{D}},\tau, r)$-{good} covers of $U_1\cap \Sab^{H_1}$ and $U_2\cap \Sab^{H_2}$,
 $\{\mc{T}_j^1(r)\}_{j\in \J^1(r)}$ and $\{\mc{T}_j^2(r)\}_{j\in \J^2(r)}$ respectively, and every $E\in [a,b]$, there is splittings of indices 
 $$\begin{gathered}\J^1\sub{E}(r)={\BE^1(r) \cup \GE^1(r)},\qquad \J\sub{E}^2(r)=\BE^2(r) \cup \GE^2(r),
 \end{gathered}$$
satisfying
\begin{enumerate}
 \item for each ${i,k\in\{1,2\}}$, {$i\neq k$} every $\ell \in \GE^i(r)$, 
 $$\Bigg(\bigcup_{{t_0+\tau\leq|t|\leq \Ti(r)-\tau}} \varphi_t \Big(\mc{T}_\ell^i(r)\Big) \Bigg) \bigcap \Bigg( \bigcup_{j \in \J^k\sub{E}(r)} \mc{T}_j^k(r) \Bigg)=\emptyset,$$
 \item 
 $r^{2({n-1})}|\BE^1(r)||\BE^2(r)|\Ti(r)^2\leq {\mathfrak{D}^2}\Cnl.$
\end{enumerate}\smallskip
We will say \emph{$(H_1,H_2)$ is a $(t_0,\Ti)$ non-looping pair in the window $[a,b]$ via {$\tau$-coverings}} 
if $(N^*H_1,N^*H_2)$ is. We will also say \emph{$H$ is $(t_0, \Ti)$ non-looping in the window $[a,b]$ via $\tau$ coverings} whenever $(H,H)$ is a non-looping pair.
\end{definition}

In Section~\ref{s:meas-then-dyn}, we prove that
 non looping in the sense of Definition \ref{d:non loop gral} is equivalent to non looping by coverings in the sense of Definition \ref{d:non loop cov}.

\begin{definition}[non-recurrence {via} coverings]\label{d:non rec cov}
Let ${\tau_0}>0$, ${\mathfrak{D}}>0$, and $\Ti$ be a {\resfun}.
We say $H$ is \emph{$\Ti$ non-recurrent in the window $[a,b]$ via {${\tau_0}$-coverings} {with constant $\Cnr$}} provided {for all $0<\tau<\tau_0$} {there exists $r_0>0$ such that } for $0<r<{r_0}$, every $({\mathfrak{D}},\tau, r)$-{good} cover of $\Sab^{H}$, $\{\mc{T}_j(r)\}_{j\in \J(r)}$, and $E\in [a,b]$, there exists a finite collection of sets of indices $\{\mc{G}\sub{E,\ell}(r)\}_{\ell \in {\mc{L}\sub{E}(r)}}$ with 
$
\J\sub{E}(r)=\bigcup_{\ell \in {\mc{L}\sub{E}(r)}}\mc{G}\sub{E,\ell}(r),
$
 and so that for every $\ell \in {\mc{L}\sub{E}(r)}$ there exist functions $t_\ell(r)>0$ and ${T_\ell(r)}>0$, {with $0\leq t_\ell(r)\leq T_\ell(r)\leq \Ti(r)$}, so that 
\begin{enumerate}
\item
$\bigcup_{j\in \mc{G}\sub{E,\ell}(r)}\mc{T}_j(r)\;\;\text{ is }\;\;[t_\ell(r),T_{\ell}(r)]\text{ non-self looping},$
\smallskip
\item
$r^{\frac{n-1}{2}}\sum_{\ell\in {\L\sub{E}(r)}} {\big(|\mc{G}\sub{E,\ell} (r)|t_\ell(r){T_\ell(r)}^{-1}\big)^{\frac{1}{2}}}
\leq {\mathfrak{D}^{\frac{1}{2}}}\Cnr\,\Ti(r)^{-\frac{1}{2}}.$ \smallskip
%\item
%$r^{\frac{n-1}{2}}\sum_{\ell\in\ya{\L\sub{E}(r)}} {\big(|\G_\ell (r)|t_\ell(r){T_\ell(r)}\big)^{\frac{1}{2}}}
%\leq C\,T^{\frac{1}{2}}.$
\end{enumerate}
\end{definition}
%\blue{\cs decide if we keep the third one, and if $t_\ell \geq t_0$ is all the dependence we will ask on $t_0$\cs}

In Lemma \ref{p:non rec implies cov} below we prove that non recurrence in the sense of Definition \ref{d:non rec gral} implies non recurrence by coverings in the sense of Definition \ref{d:non rec cov}. At the moment, we are unable to determine whether these two definitions are equivalent.

%%%%%%%%%%%%%%%%%%%%%%%%%%%%%%%%%%%%%%%%%%%%%%%%%%%%%%%%%%%%%%%%%%%%%%%%%%%%%%%
%%%%%%%%%%%%%%%%%%%%%%%%%%%%%%%%%%%%%%%%%%%%%%%%%%%%%%%%%%%%%%%%%%%%%%%%%%%%%%%
\subsection{Uniformity assumptions}\label{s:uniform}
%%%%%%%%%%%%%%%%%%%%%%%%%%%%%%%%%%%%%%%%%%%%%%%%%%%%%%%%%%%%%%%%%%%%%%%%%%%%%%%
%%%%%%%%%%%%%%%%%%%%%%%%%%%%%%%%%%%%%%%%%%%%%%%%%%%%%%%%%%%%%%%%%%%%%%%%%%%%%%%
Let $H\subset M$ be a smooth submanifold. In practice, we prove estimates on $\{\tilde{H}_h\}_h$, where $\{\tilde{H}_h\}_h$ is a family of submanifolds such that
\begin{equation}
\label{e:conormalClose}
{\sup\Big\{ d\big(\rho ,\Sab^{\tilde H_h}}\big)\,\,\big|\,\,{\rho \in \Sab^H}\Big\}\leq {R(h)} \qquad\qquad h>0,
\end{equation}
 {where $R(h)>0$}
and for every multi-index $\alpha$ there is 
{$\KR_{_{\alpha}}>0$} such that for all $h>0$
\begin{equation}
\label{e:curvature}
|\partial_x^\alpha {\bf R}_{_{\tilde H_h}}|+|\partial_x^\alpha {\bf \Pi}_{_{\tilde H_h}}|\leq \KR_{_{\alpha}}.
\end{equation}
Here ${\bf R}_{_{\tilde H_h}}$ and ${\bf \Pi}_{_{\tilde H_h}}$ denote the sectional curvature and the second fundamental form of $\tilde H_h$. 
Without loss of generality, we will assume $\hyp$ is chosen so that there exist {$N>0$,} $C=C(p,a,b,\{\KR_\alpha\}_{|\alpha|\leq N})>0,$ and $r_0>0$ such that
for all {$E\in [a,b]$,} $A\subset \SE^H$ {and $0<r<r_0$}, 
$$\vol\Big(B_{_{\hyp}}(A,r)\Big)\leq Cr^{n}\mu_{\SE^{H}}\Big(B_{_{\SE^{H}}}\big(A,r\big)\Big).$$
 We may do this since $\dim \hyp=2n-1$, $\dim \SE^{H}=n-1$, and $\SE^{H} \subset \hyp$.

 Note that when $H=\{x_0\}$ is a point, the curvature bounds become trivial, {and so in place of ~\eqref{e:conormalClose} we work with} $d(x_0,\tilde{x}_h)<{R(h)}$ 
and may take ${\KR_{_{\alpha}}}$ to be arbitrarily close to $0$. In what follows, let $r\sub{H}:T^*M\to \re$ be the geodesic distance to $H$, i.e., $r\sub{H}(x,\xi)=d(x,H)$ {for $(x,\xi)\in T^*M$}, {and write $\pi\sub{M}:T^*M \to M$ for the natural projection.}

 \begin{definition}[regular families]
 We will say a family of submanifolds $\{H_h\}_{h}$ is \emph{regular in the window $[a,b]$} if it satisfies ~\eqref{e:curvature} and
 there is $\e>0$ so that for all $h>0$, the map $(-\e,\e)\times {\Sab^H}\to M$,
\begin{equation}
 \label{e:nonSelf}
 (t,\rho)\mapsto\pi\sub{M}({\varphi_t(\rho)})\;\;\;\text{ is a diffeomorphism}.
\end{equation} 

 \end{definition}
Then, define $|H_pr_{_{\!H}}|:\Sab^H\to \re$ by 
\begin{equation}\label{e:HprH}
|H_pr_{_{\!H}}|(\rho):=\lim_{t\to 0}|H_pr_{_{\!H}}(\varphi_t(\rho))|.
\end{equation}

\begin{definition}[uniformly conormally transverse submanifolds]
A family of submanifolds $\{\tilde H_h\}_{h}$ is said to be \emph{uniformly conormally transverse for $p$ in the window $[a,b]$} provided 
\begin{enumerate}
 \item $\tilde H_h$ is conormally transverse {for $p$} in the window $[a,b]$ for all $h>0$,
 \item there exists {$\FR>0$} so that for all $h>0$
\begin{equation}\label{e:FR}
\inf\Big\{|H_pr_{_{\!\tilde H_{\!h}}}|(\rho)\,\,\big|\,\,{\rho \in \Sab^H}\Big\} \geq \FR.
\end{equation}
\end{enumerate}
\end{definition}
\noindent When the constants involved in our estimates depend on $\{\tilde H_h\}_h$, they will do so \emph{only} through finitely many of the $\KR_{_{\alpha}}$ constants and the constant $\FR$.
\begin{remark}
We note that for $p(x, \xi)=|\xi|^2_{g(x)}$, $ {a=b=1,}$ and $\Sab^H=\SNH$, we have $|H_pr_{_{\!H}}|(\rho)=2$ for all $\rho \in \SNH$. It follows that every family of submanifolds is uniformly conormally transverse {and we may take $\FR=2$}.
\end{remark}

%%%%%%%%%%%%%%%%%%%%%%%%%%%%%%%%%%%%%%%%%%%%%%%%%%%%%%%%%%%%%%%%%%%%%%%%%%%%%%%
%%%%%%%%%%%%%%%%%%%%%%%%%%%%%%%%%%%%%%%%%%%%%%%%%%%%%%%%%%%%%%%%%%%%%%%%%%%%%%%
\subsection{Main results}\label{s:gral}
%%%%%%%%%%%%%%%%%%%%%%%%%%%%%%%%%%%%%%%%%%%%%%%%%%%%%%%%%%%%%%%%%%%%%%%%%%%%%%%
%%%%%%%%%%%%%%%%%%%%%%%%%%%%%%%%%%%%%%%%%%%%%%%%%%%%%%%%%%%%%%%%%%%%%%%%%%%%%%%
We now state the main results from which all of our Kuznecov type asymptotics follow. {Throughout the text, the notation $C=C(a_1,\dots, a_k)$ means that the constant $C$ depends \emph{only} on $a_1, \dots, a_k$.}

\begin{theorem}
\label{t:MAIN}
For $j=1,2$, let $k_j\in\{1,\dots ,n\}$, $\FR^j>0$, $A_j\in \Psi^\infty(M)$. Let {$\Cnr^1>0$, $\Cnr^2>0$} and $\Cnl>0$. There is $$C\sub{0}={C\sub{0}(n, k_1, k_2, A_1, A_2, \FR^1, \FR^2,{\Cnr^1, \Cnr^2}, \Cnl)}>0$$
such that the following holds.

Let $P(h) \in \Psi^m(M)$ be a self-adjoint semiclassical pseudodifferential operator, with classically elliptic symbol $p$. {Let {$0<\delta<\frac{1}{2}$, $K>0$,} $a,b \in \re$ with $a\leq b$, and for $j=1,2$ let} $H_j{\subset M}$ be a submanifold with co-dimension $k_j$ that is regular and uniformly conormally transverse for $p$ in the window $[a,b]$ (with constant $\FR^j$ as in~\eqref{e:FR}). {Then, there exists $\tau_0>0$ with the following property. Let $\Lambda>\Lambda_{\max}$, and $t_0>0$. For $j=1,2$ let $\Ti_j$ be a sub-logarithmic {\resfun}\, with $\Lambda\Omega(\Ti_j)<1-2\delta$ and such that the submanifold $H_j$ is $\Ti_j$ non-recurrent in the window $[a,b]$ via ${\tau_0}$-coverings with constant {$\Cnr^j$}. Suppose $(H_1,H_2)$ is a $(t_0,\Ti_{\max})$ non-looping pair in the window $[a,b]$ via ${\tau_0}$-coverings with constant $\Cnl$ where $\Ti_{\max}=\max(\Ti_1,\Ti_2)$. Let $h^\delta\leq R(h)=o(1)$ and for $j=1,2$ let $\{\tilde{H}_{j,h}\}_h$ be a family of submanifolds of codimension $k_j$ that is regular, uniformly conormally transverse for $p$ in the window $[a,b]$, and satisfies
$$
\sup\Big\{d\big(\rho,\Sab^{\tilde{H}_{j,h}}\big)\,\big|\,\rho\in \Sab^{H_j}\Big\}\leq R(h).
$$

Then, there is $h_0>0$ such that for all $0<h\leq h_0$ and ${s}\in[a-Kh,b+Kh]$,
$$
\Big|\RAst(t_0,h; {s})\Big|\leq C\sub{0}\, h^{1-\frac{k_1+k_2}{2}}\Big/\!\sqrt{ \Ti_1(R(h))\Ti_2(R(h))}.
$$}
\end{theorem}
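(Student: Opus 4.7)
The plan is to decompose $\RAst(t_0,h;s)$ via an intermediate smoothing time. I would set $T(h):=\sqrt{\Ti_1(R(h))\Ti_2(R(h))}$ and write
$$
\RAst(t_0,h;s) = \RAst(T(h),h;s) + (\rho\sub{h,T(h)} - \rho\sub{h,t_0})*\Pi^{A_1,A_2}_{\tilde H_{1,h},\tilde H_{2,h}}(s),
$$
so that it suffices to bound each of the two terms by a constant times $h^{1-(k_1+k_2)/2}/T(h)$. This is the same splitting sketched in \S\ref{s:outline}.

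For the first term I would develop in \S\ref{s:lip} a Lipschitz-scale estimate for $s\mapsto \Pi^{A_1,A_2}_{\tilde H_{1,h},\tilde H_{2,h}}(s)$. The observation is that the difference $\Pi^{A_1,A_2}_{\tilde H_{1,h},\tilde H_{2,h}}(s+\sigma) - \Pi^{A_1,A_2}_{\tilde H_{1,h},\tilde H_{2,h}}(s)$ is an average over $\tilde H_{1,h}\times \tilde H_{2,h}$ of the kernel of $A_1\,\1\sub{(s,s+\sigma]}(P)A_2^*$, i.e.\ a finite sum of rank-one products $\phi\sub{E_j}\otimes \overline{\phi\sub{E_j}}$ with $E_j\in(s,s+\sigma]$. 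Each $\phi\sub{E_j}$ is a quasimode for $P$, so applying the geodesic-beam averaging estimate for quasimodes over submanifolds (adapted from~\cite{CG19a} in \S\ref{s:basic}) separately in each variable should yield an inequality of the form $|\Pi^{A_1,A_2}_{\tilde H_{1,h},\tilde H_{2,h}}(s+\sigma) - \Pi^{A_1,A_2}_{\tilde H_{1,h},\tilde H_{2,h}}(s)|\leq C h^{-(k_1+k_2)/2}\sigma$ for $\sigma\lesssim h/T(h)$. Convolving against $\rho\sub{h,T(h)}$, whose mass is essentially concentrated on an interval of length $h/T(h)$, would then give $|\RAst(T(h),h;s)|\leq Ch^{1-(k_1+k_2)/2}/T(h)$.

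For the second term I would use Fourier inversion to rewrite $(\rho\sub{h,T(h)}-\rho\sub{h,t_0})*\Pi^{A_1,A_2}_{\tilde H_{1,h},\tilde H_{2,h}}(s)$ as an integral in $\sigma$ of short-time evolutions $e^{-i\sigma P/h}$ for times $\sigma$ between $t_0$ and $2T(h)$, cut off by a spectral window near $[a,b]$. Choosing $r=r(h)$ appropriately, I would cover $\Sab^{\tilde H_{j,h}}$ by a $(\mathfrak{D},\tau,r)$-good bicharacteristic tube cover $\{\mc{T}^j_i(r)\}$ and microlocalize $A_j$ through a subordinate partition. For each energy $E\in[a,b]$, Definition~\ref{d:non loop cov} supplies a splitting $\J^j\sub{E}(r)=\BE^j(r)\cup\GE^j(r)$. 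On good pairs $(j_1,j_2)\in\GE^1\times\GE^2$, the non-looping condition forces the tubes to be disjoint from the opposite cover throughout $[t_0+\tau,\Ti_{\max}(r)-\tau]$, and Egorov's theorem up to time $(1-2\delta)\Lambda^{-1}\log h^{-1}$ (available thanks to $\Lambda\Omega(\Ti_j)<1-2\delta$) kills these contributions modulo acceptable errors. On pairs involving bad indices, Cauchy--Schwarz combined with $L^2$-orthogonality of beams reduces matters to the counting bound $r^{2(n-1)}|\BE^1||\BE^2|\Ti_{\max}(r)^2\leq \mathfrak{D}^2\Cnl$ from Definition~\ref{d:non loop cov}(2); the remaining $\Ti$ factors would then be extracted one side at a time via the non-recurrence decomposition from Definition~\ref{d:non rec cov}, whose non-self-looping intervals $[t_\ell,T_\ell]$ and weight bound $\sum_\ell(|\mc{G}\sub{E,\ell}|t_\ell/T_\ell)^{1/2}\leq \mathfrak{D}^{1/2}\Cnr^j/\sqrt{\Ti_j(r)}$ supply one factor of $\sqrt{\Ti_j(r)}$ per side.

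The hardest step will be this bad-tube analysis: one must interleave the two per-side non-recurrence decompositions with the non-looping pair counting and arrange Cauchy--Schwarz so that each submanifold contributes exactly one factor of $\sqrt{\Ti_j(R(h))}$ to the denominator. A secondary but nontrivial technical point will be uniformity in the moving family $\{\tilde H_{j,h}\}_h$: the curvature bounds~\eqref{e:curvature}, the non-degeneracy~\eqref{e:FR}, and the closeness condition on $\Sab^{\tilde H_{j,h}}$ will allow tube covers of $H_j$ to be transferred to $\tilde H_{j,h}$ while perturbing $\Cnl$ and $\Cnr^j$ only by constants depending on $\FR^j$ and finitely many $\KR_\alpha$, but this transfer must be tracked consistently through both steps so that the final constant $C\sub{0}$ indeed depends only on the parameters listed in the theorem.
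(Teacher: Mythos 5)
Your high-level decomposition is the one the paper uses: split $\RAst(t_0,h;s)$ into a part controlled by a Lipschitz estimate at an intermediate smoothing time, plus a part controlled by non-looping tube counting and Egorov's theorem. However, the choice of intermediate smoothing time is off, and the error propagates through the first step. The paper smooths at scale $T_{\max}(h)=\max(\Ti_1,\Ti_2)(R(h))$, not at $T(h)=\sqrt{\Ti_1\Ti_2}(R(h))$. The reason is that the Lipschitz estimate coming from the per-side non-recurrence decompositions (Lemma~\ref{l:tiramisu}) has the shape
\[
\big|\HAs(t)-\HAs(t-\sigma)\big|\;\leq\; C\,\frac{h^{\frac{2-k_1-k_2}{2}}}{\sqrt{T_1T_2}}\,\Big\langle\tfrac{T_1\sigma}{h}\Big\rangle^{\frac12}\Big\langle\tfrac{T_2\sigma}{h}\Big\rangle^{\frac12},
\]
and when you feed this into Lemma~\ref{l:taub} you must choose $\sigma_h$ so that the right-hand side is dominated by $L_h\langle\sigma_h\sigma\rangle$. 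This forces $\sigma_h\geq T_{\max}/h$: the pointwise inequality $\langle a\rangle^{1/2}\langle b\rangle^{1/2}\leq\langle\sqrt{ab}\rangle$ fails (take $T_1\sigma/h\ll 1\ll T_2\sigma/h$ with $T_1T_2\sigma^2/h^2\sim 1$), and if you try to push $\sigma_h=T/h$ through anyway, the integral against $\rho\sub{h,T}$ picks up an uncontrolled factor $(T_{\max}/T_{\min})^{1/4}$, which is unbounded when $\Ti_1$ and $\Ti_2$ grow at different rates (e.g. $\Ti_2$ bounded, $\Ti_1\sim\log R^{-1}$). Relatedly, your claim that the difference is $\lesssim h^{-(k_1+k_2)/2}\sigma$ for $\sigma\lesssim h/T(h)$ is too strong for small $\sigma$: a single eigenvalue in the window contributes a jump of size $\sim h^{1-(k_1+k_2)/2}$, so the estimate cannot be linear in $\sigma$; the correct form is the plateau $\sim h^{(2-k_1-k_2)/2}/T$ valid for $|\sigma|\lesssim h/T_{\max}$. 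Once you take $T_{\max}$ as the smoothing scale, the second (non-looping) piece only produces $O(h^{(2-k_1-k_2)/2}/T_{\max})\leq O(h^{(2-k_1-k_2)/2}/T)$, so the final bound is still governed by the Lipschitz piece, as desired.

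A second, smaller structural misreading: in your description of the ``hardest step'' you propose to extract the $\sqrt{\Ti_j}$ factors from the non-recurrence decompositions inside the bad-tube analysis for the $(\rho\sub{h,T_{\max}}-\rho\sub{h,t_0})$ piece. In the paper's proof (Proposition~\ref{P:toShortTime}), the bad-tube contribution is handled purely by Lemma~\ref{l:vol} together with Definition~\ref{d:non loop cov}(2), i.e.\ the count $r^{2(n-1)}|\BE^1||\BE^2|\Ti_{\max}^2\leq\mathfrak{D}^2\Cnl$; no non-recurrence is used there. The non-recurrence decompositions of Definition~\ref{d:non rec cov} enter only in the Lipschitz estimate for the first piece, one per submanifold, via Lemma~\ref{l:cheat}/Lemma~\ref{l:basicEst} applied to $\1\sub{[t-\sigma,t]}(P)w$. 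Your ``hardest step'' is therefore located in the wrong half of the argument. The remaining points — tube covers and $\delta$-partitions, microlocalization of $A_j$, Egorov up to the Ehrenfest time using $\Lambda\Omega(\Ti_j)<1-2\delta$, and transferring the covers from $H_j$ to the nearby family $\tilde H_{j,h}$ with constants controlled by $\FR^j$ and finitely many $\KR_\alpha$ — are correctly identified and do match the paper.
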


We also have the following corollary involving the definitions of non-looping (Definition ~\ref{d:non loop gral}) and non-recurrence (Definition~\ref{d:non rec gral}).
\begin{theorem}
\label{t:MAIN2}
%Let $P(h) \in \Psi^m(M)$, $A_1,A_2\in \Psi^\infty(M)$, and $H_1,H_2$ as in Theorem~\ref{t:MAIN}. Let $\Cnl,\Cnr, K>0$, $\Lambda>\Lambda_{\max}$, $0\leq \delta<\frac{1}{2}$, $R(h)\geq \ya{h^\delta}$, and $\tilde{H}_{i,h}$ be as in Theorem~\ref{t:MAIN}. Then there is $C\sub{0}>0$ such that the following holds. 
Let {$\ti$ be a {\resfun},} $\Lambda>\Lambda_{\max}$, $K>0$, ${\e}>0$, $R_0>0$, $0< \delta<\frac{1}{2}$, {and for $j=1,2$} let $\Ti_j$ be a sub-logarithmic {\resfun} {with} $\Lambda \Omega(\Ti_j)<1-2\delta$ {and let $\Ti_{\max}=\max(\Ti_1,\Ti_2)$}. {Suppose the same assumptions as Theorem~\ref{t:MAIN}, but assume instead that {for $j=1,2$ the submanifold $H_j$ is ${(\ti,\Ti_j)}$ non-recurrent in the window $[a,b]$ {at scale $R_0$}, and $(H_1,H_2)$ is a $(t_0, {\Ti_{\max}})$} non-looping pair in the window $[a,b]$ with constant $\Cnl$.} 
{Then, there exist $C\sub{0}={C\sub{0}(n, k_1, k_2, A_1, A_2, \FR^1, \FR^2, {\ti}, \Cnl)}$ and $h_0>0$ such that for all $0<h\leq h_0$ and ${\energy}\in [a-Kh,b+Kh]$}
$$
\Big|\RAst(t_0+{\e},h; {\energy})\Big|\leq C\sub{0}\, h^{1-\frac{k_1+k_2}{2}}\Big/\!\sqrt{\Ti_1(R(h))\Ti_2(R(h))}.
$$
\end{theorem}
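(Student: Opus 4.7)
The plan is to deduce Theorem~\ref{t:MAIN2} from Theorem~\ref{t:MAIN} by upgrading the measure-theoretic dynamical hypotheses of Definitions~\ref{d:non loop gral} and~\ref{d:non rec gral} to the covering-based hypotheses of Definitions~\ref{d:non loop cov} and~\ref{d:non rec cov}. This upgrade is essentially lossless for non-recurrence, but for the non-looping pair it consumes a small buffer of length $\e$ in the time window, which is precisely the source of the $t_0\mapsto t_0+\e$ inflation in the conclusion.

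\textbf{Non-recurrence via coverings.} Given a $(\mathfrak{D},\tau,r)$-good cover $\{\mc{T}_j(r)\}_{j\in \J(r)}$ of $\Sab^{H}$, I would construct the partition $\{\mc{G}\sub{E,\ell}(r)\}_\ell$ of $\J\sub{E}(r)$ by a dyadic peeling procedure. At scale $\ell$, put $t_\ell(r)=\ti(\e_\ell)$ and $T_\ell(r)\leq \Ti(r)$ for a geometrically decreasing sequence $\e_\ell\to 0$, and collect in $\mc{G}\sub{E,\ell}(r)$ the indices whose tubes are $[t_\ell,T_\ell]$ non-self-looping. The good-cover property implies $r^{n-1}|\mc{G}\sub{E,\ell}(r)|$ is comparable to $\mu\sub{\SE^{H}}$ of the relevant region on $\SE^H$, and the non-recurrence hypothesis of Definition~\ref{d:non rec gral} bounds the measure of the unselected indices, namely those whose tubes return within the window, by $\e_\ell$ times the previous measure. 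Summing the resulting geometric series yields the weighted-count inequality~(2) of Definition~\ref{d:non rec cov}. This is the content of the forthcoming Lemma~\ref{p:non rec implies cov}.

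\textbf{Non-looping pair via coverings.} Given good covers $\{\mc{T}^i_j(r)\}_{j\in \J^i(r)}$ of each $\Sab^{H_i}$, declare $k\in \BE^i(r)$ iff the propagated tube $\bigcup_{t_0+\tau\leq |t|\leq \Ti_{\max}(r)-\tau}\varphi_t(\mc{T}^i_k(r))$ meets some $\mc{T}^{i'}_\ell(r)$ with $i'\neq i$, and let $\GE^i(r)$ be the complement. Property~(1) of Definition~\ref{d:non loop cov} holds by construction. For~(2), each center of a tube in $\BE^i(r)$ lies within $O(r)$ of the looping set $\mc{L}^{O(r)}\sub{H_i,H_{i'}}(t_0+\tau,\Ti_{\max}(r))$, so that $r^{n-1}|\BE^i(r)|\leq C\mu\sub{\SE^{H_i}}(B(\mc{L}^{O(r)}\sub{H_i,H_{i'}},O(r)))$. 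Multiplying the bounds for $i=1,2$ and invoking the $(t_0,\Ti_{\max})$ non-looping hypothesis on the pair $(H_1,H_2)$ yields the bound in~(2) of Definition~\ref{d:non loop cov} with constant $C\Cnl$. The shift from $t_0$ to $t_0+\tau$ is the source of the $\e$ loss: choose $\tau_0<\e$, so that the covering definition is verified at time $t_0+\tau<t_0+\e$, which is exactly what Theorem~\ref{t:MAIN} requires to control the smoothed remainder $\RAst(t_0+\e,h;s)$.

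\textbf{Transfer to the perturbed family and conclusion.} The bound $\sup\{d(\rho,\Sab^{\tilde H_{j,h}})\mid \rho\in \Sab^{H_j}\}\leq R(h)=o(1)$, together with~\eqref{e:curvature} and~\eqref{e:FR}, implies that any $(\mathfrak{D},\tau,r)$-good cover of $\Sab^{H_j}$ can be translated into a $(\mathfrak{D},\tau,(1+o(1))r)$-good cover of $\Sab^{\tilde H_{j,h}}$, and the splittings from the previous two steps transfer with constants depending only on finitely many $\KR_\alpha$ and on $\FR^j$. For $h$ small enough that $R(h)\leq r_0$, all the covering hypotheses of Theorem~\ref{t:MAIN} hold for $\{\tilde H_{j,h}\}_h$ with uniform constants, and invoking Theorem~\ref{t:MAIN} with time $t_0+\e$ and the given rate functions yields the claimed estimate. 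The main obstacle I anticipate is the quantitative construction of the peeling partition in the non-recurrence step: the scales $(t_\ell(r),T_\ell(r))$ must be chosen so that $r^{(n-1)/2}\sum_\ell \bigl(|\mc{G}\sub{E,\ell}(r)|\,t_\ell(r)/T_\ell(r)\bigr)^{1/2}$ is controlled by the geometric-mean quantity in Definition~\ref{d:non rec cov}(2), which is strictly stronger than what a naive $\ell^1$ decomposition gives and forces a careful dyadic choice compatible with the sub-logarithmic rate $\Ti_j$ and the non-recurrence bound of Definition~\ref{d:non rec gral}.
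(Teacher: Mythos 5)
Your proposal is correct and follows the paper's own route: translate the measure-theoretic dynamical hypotheses into the covering-based ones required by Theorem~\ref{t:MAIN} via Propositions~\ref{p:non rec implies cov} and~\ref{p:non loop implies cov} (whose proofs you sketch), absorb the resulting $3\tau_0$ time shift into the $\e$-enlargement of $t_0$, and absorb the spatial dilation of the resolution function into a multiplicative constant via sub-logarithmicity (Lemma~\ref{l:sublog}). Two small caveats: your budget should be $\tau_0\leq\e/3$ rather than $\tau_0<\e$ since Proposition~\ref{p:non loop implies cov} loses $3\tau_0$; and your ``transfer to the perturbed family'' step is redundant, as Theorem~\ref{t:MAIN} already places the dynamical hypotheses on $H_j$ and concludes for $\{\tilde H_{j,h}\}_h$.
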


For the proof of Theorem~\ref{t:MAIN}, see \S\ref{s:imTheMAIN} {and for the proof of Theorem~\ref{t:MAIN2} see \S~\ref{t:MAIN2}.}

\subsection{Application to the Laplacian}
\label{s:itAllFollows}
{In this section we} show how to obtain Theorems~\ref{t:laplaceDiag},~\ref{t:laplaceoff}, and~\ref{t:laplaceOff for H} from Theorem~\ref{t:MAIN2}. To do this, we {work with} an operator $Q$ such that { $\sigma(Q)(x,\xi)=|\xi|_{g(x)}$ near $\{(x,\xi):\,|\xi|_{g(x)}=1\}$, Theorem~\ref{t:MAIN2} applies with $P=Q$, and for $\lambda=h^{-1}$ and all $N>0$}
\begin{equation}
\label{e:approxSquareRoot}
\begin{gathered}
\1_{(-\infty,1]}(Q)=\Pi_{\lambda},\qquad \qquad
(\rho\sub{h,t_0}*\1_{(-\infty,s]}(Q))(1)=\rho\sub{t_0}*\Pi_{\lambda}+O(h^\infty)_{\smooth}.
 \end{gathered}
\end{equation}
{To build $Q$,} let $\psi_1,\psi_2\in C_c^\infty(\mathbb{R};[0,1])$ with $\supp \psi_1\subset (-1/4,1/4)$, $\supp \psi_2\subset [-16,16]$, $\psi_1 \equiv 1$ on $[-1/16,1/16]$ and $\psi_2 \equiv 1$ on $[-4,4]$. We claim
\begin{equation}
\label{e:goodApprox}
Q=(1-\psi_1(-h^2\Delta_g))\psi_2(-h^2\Delta_g)\sqrt{-h^2\Delta_g}-h^2\Delta_g(1-\psi_2(-h^2\Delta_g))
\end{equation}
satisfies the desired properties. 
 {Observe that the second term in \eqref{e:equivalentConvolve} is added to make $Q$ classically elliptic, and that we use $-h^2\Delta_g$ rather than $\sqrt{-h^2\Delta_g}$ in order to apply \cite[Theorem 14.9]{EZB} to obtain $Q\in \Psi^2(M)$. Note also that $Q$ is self-adjoint and $\sigma(Q)=|\xi|_g$ on $\{\frac{1}{2}\leq |\xi|_g\leq 2\}$,}
\begin{gather}
\label{e:equivalentConvolve}
\rho\sub{t_0}*\Pi_{\lambda}=\Big(\rho\sub{t_0,h}*\1_{(-\infty,s]}\Big(\sqrt{-h^2\Delta_g}\Big)\Big)(1),\qquad \Pi_\lambda=\1_{(-\infty,1]}\Big(\sqrt{-h^2\Delta_g}\Big)\\
\label{e:exact}
\1_{(-\infty,s]}(Q)=\1_{(-\infty,s]}\Big(\sqrt{-h^2\Delta_g}\Big),\qquad s\in[\tfrac{1}{2},2]
\end{gather}
and $\1_{(-\infty,s]}(Q)=\1_{(-\infty,s]}(\sqrt{-h^2\Delta_g})=0$ for $s<0$.
Finally, we use the ellipticity of both $Q$ and $-h^2\Delta_g$ to obtain that for $N\geq 0$
$$
\1_{(-\infty,s]}(Q)=O\sub{N}(\langle s\rangle ^{N})_{\smooth},\qquad \1_{(-\infty,s]}\Big(\sqrt{-h^2\Delta_g}\Big)=O\sub{N}(\langle s\rangle ^{2N})_{\smooth}.
$$
Now, for all $N>0$ and $L>1$ there is $C\sub{N,L}>0$ so that $|\rho\Big(\frac{t_0}{h}(1-s)\Big)|\leq C\sub{N,L} h^{2N+L}\langle s\rangle^{-2N-L}$ { on $|s-1|>\frac{1}{2}$}. Therefore
\begin{equation}
\label{e:wellApproximated}
\begin{aligned}
&\Big[\rho_{t_0,h}*\Big(\1_{(-\infty,s]}(Q)-\1_{(-\infty,s]}(\sqrt{-h^2\Delta_g})\Big)\Big](1)\\
&=\int_{\substack{s\notin [1/2,2]\\s\geq 0}}\frac{t_0}{h}\rho\Big(\frac{t_0}{h}(1-s)\Big)\Big(\1_{(-\infty,s]}(Q)-\1_{(-\infty,s]}\Big(\sqrt{-h^2\Delta_g}\Big)\Big)ds
%&=\int_{\substack{s\notin [1/2,2]\\s\geq 0}}C\sub{N,L} h^{2N+L}\langle s\rangle^{-2N-L}O_N(\langle s\rangle^{2N})_{\smooth}ds\\
=O\sub{N}(h^{2N+L-1})_{\smooth}.
\end{aligned}
\end{equation}
Combining~\eqref{e:equivalentConvolve} with~\eqref{e:exact} and~\eqref{e:wellApproximated}, we obtain~\eqref{e:approxSquareRoot}.

Now, every submanifold is conormally transverse for $p(x,\xi)=|\xi|_{g(x)}$ at {$p^{-1}(1)$ with constant} $\FR=1$. Therefore, Theorems~\ref{t:laplaceDiag},~\ref{t:laplaceoff}, and~\ref{t:laplaceOff for H} follow from Theorem~\ref{t:MAIN2}. {To see this, we set} $P=Q$, $a=b=1$, and observe that the Hamiltonian flow for $\sigma(Q)$ near $S^*_xM$ is equal to the geodesic flow. {In particular, the dynamical definitions~\ref{d:non loop gral} and~\ref{d:non rec gral} applied to $Q$ at $E=1$ are exactly Definitions~\ref{d:non looping through} and~\ref{d:non rec} with $S^*_xM$ replaced by $SN^*H$.} {This is true because Definitions~\ref{d:non looping through} and~\ref{d:non rec} are stated with $\varphi_t$ being the \emph{homogeneous} geodesic flow, i.e., the flow generated by $|\xi|_{g(x)}$.} {Next, we apply Theorem~\ref{t:laplaceOff for H} with} $\Lambda=2\Lambda_{\max}{+1}$, $h=\lambda^{-1}$, {and work with the \resfuns\, $\widetilde{\Ti}_j=(\Lambda{\Omega_0})^{-1}(1-2\delta)\Ti_j$ for $j=1,2$}.

%%%%%%%%%%%%%%%%%%%%%%%%%%%%%%%%%%%%%%%%%%%%%%%%%%%%%%%%%%%%%%%%%%%%%%%%%%%%%%%
%%%%%%%%%%%%%%%%%%%%%%%%%%%%%%%%%%%%%%%%%%%%%%%%%%%%%%%%%%%%%%%%%%%%%%%%%%%%%%%
\section{Dynamical assumptions and coverings}\label{s:meas-then-dyn}
%%%%%%%%%%%%%%%%%%%%%%%%%%%%%%%%%%%%%%%%%%%%%%%%%%%%%%%%%%%%%%%%%%%%%%%%%%%%%%%
%%%%%%%%%%%%%%%%%%%%%%%%%%%%%%%%%%%%%%%%%%%%%%%%%%%%%%%%%%%%%%%%%%%%%%%%%%%%%%%
In this section we relate the non-looping and non-recurrence concepts introduced in Definitions \ref{d:non loop gral} , \ref{d:non rec gral}, to their analogues via coverings given in Definitions \ref{d:non loop cov}, \ref{d:non rec cov}.

%%%%%%%%%%%%%%%%%%%%%%%%%%%%%%%%%%%%%%%%%%%%%%%%%%%%%%%%%%%%%%%%%%%%%%%%%%%%%%%
%%%%%%%%%%%%%%%%%%%%%%%%%%%%%%%%%%%%%%%%%%%%%%%%%%%%%%%%%%%%%%%%%%%%%%%%%%%%%%%
\begin{proposition}
\label{p:non loop implies cov}
Let $H_1, H_2 \subset M$ be smooth submanifolds. Let $a,b \in \re$ be such that $H_1, H_2$ are conormally transverse for $p$ in the window $[a,b]$, and $\tau_0>0$. Let $t_0>0$, {$\Ti$ a {\resfun}}, and suppose $(H_1,H_2)$ is a $(t_0,\Ti)$
non-looping pair in the window $[a,b]$ {with constant $\Cnl$}. Then, {there is $\widetilde{\Cnl}=\widetilde{\Cnl}(p,a,b,n, \Cnl,{H_1,
H_2})>0$} such that $(H_1,H_2)$ is a $(t_0{+{3}\tau_0}, {\widetilde{\Ti}})$ non-looping pair in the window $[a,b]$ via {$\tau_0$-coverings} with {constant $\widetilde{\Cnl}$} {and with $\widetilde{\Ti}(R)=\Ti(4R){-{3}\tau_0}$}.

%\ya{\ \\ \cs actually, I think we get $\widetilde{\Ti}(R)=\Ti(4R){-\tau_0}-2R$ and $\tilde t_0=t_0+\tau_0+2R$\cs }

%\blue{In addition, if} $\Ti$ is sub-logarithmic, then the following hold.
%\begin{enumerate}
%\item There is $h_0=h_0(\{R(h)\}_{h>0})$ such that $\Ti(4R(h))\geq \tfrac{1}{2} \Ti(R(h))$ for $0<h<h_0$, and if $R(h)\leq h^\e$ for some $\ep>0$, then
%$
%\Ti(4R(h)) \geq \ep \tfrac{1}{2} \Ti(h).
%$
%\item For all $\Lambda>\Lambda_{\max}$, $\e>0$, there is $h_0=h_0(\{R(h)\}_{h>0},\Lambda,\e)$ such that for $0<h<h_0$,
%$$
%\Ti(R(h))\leq (\Omega(\Ti)\Lambda +\e)T_e(h).
%$$
%\end{enumerate}

\end{proposition}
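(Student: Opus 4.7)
\textbf{Proof plan for Proposition~\ref{p:non loop implies cov}.} The strategy is to translate between the measure bound on the looping set (Definition~\ref{d:non loop gral}) and a cardinality bound on the ``bad'' indices in an arbitrary good cover (Definition~\ref{d:non loop cov}). The bridge is that each tube occupies a definite amount of volume on the hypersurface $\hyp$, with overlap controlled by $\mathfrak D$ from the good cover property \eqref{e:good cover}.

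First, I would fix any two $(\mathfrak D,\tau,r)$-good covers $\{\mc T_j^1(r)\}_{j\in\J^1(r)}$ and $\{\mc T_j^2(r)\}_{j\in\J^2(r)}$ of $U_1\cap\Sab^{H_1}$ and $U_2\cap\Sab^{H_2}$ with $0<\tau<\tau_0$ and $r<r_0$ small. Writing $\mc T_j^i(r)=\Lambda^\tau_{\rho_j^i}(r)$, for each $E\in[a,b]$ I split
\[
\BE^i(r) := \bigl\{\, j\in\J_E^i(r) \,:\, \rho_j^i \in B\bigl(\mc L^{4r}_{H_i,H_k}(t_0,\Ti(4r)),\,4r\bigr)\,\bigr\}, \qquad \GE^i(r) := \J_E^i(r)\setminus \BE^i(r),
\]
where $k\neq i$. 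Set $\widetilde{\Ti}(r):=\Ti(4r)-3\tau_0$; for $r$ small enough this is a positive, decreasing, continuous function of $r$, hence a \resfun.

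Next, I would verify condition~(1) of Definition~\ref{d:non loop cov} for $\GE^i(r)$. Suppose, toward a contradiction, that for some $\ell\in\GE^i(r)$ and some $j\in\J_E^k(r)$ there is $t$ with $(t_0+3\tau_0)+\tau\leq |t|\leq\widetilde{\Ti}(r)-\tau$ and $\varphi_t(\mc T_\ell^i(r))\cap\mc T_j^k(r)\neq\emptyset$. Unfolding the tube definition~\eqref{e:tube} one finds $\sigma_1\in B_\hyp(\rho_\ell^i,r)$, $\sigma_2\in B_\hyp(\rho_j^k,r)$ and $|s_1|,|s_2|\leq\tau+r$ with $\varphi_{t+s_1-s_2}(\sigma_1)=\sigma_2$. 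The shift $3\tau_0$ was introduced precisely so that $t':=t+s_1-s_2$ satisfies $t_0\leq|t'|\leq\Ti(4r)$. Since $j\in\J_E^k(r)$, the tube $\mc T_j^k(r)$ meets $\hyp\cap B(\SE^{H_k},r)$ and, by uniform transversality of the flow to $\hyp$, $\rho_j^k$ itself lies within distance $\lesssim r$ of $\SE^{H_k}$, so $\sigma_2\in B(\SE^{H_k},4r)$. This places $\rho_\ell^i$ in $B(\mc L^{4r}_{H_i,H_k}(t_0,\Ti(4r)),4r)$, contradicting $\ell\in\GE^i(r)$.

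Third, I would estimate $|\BE^i(r)|$ by measure counting. Each $j\in\BE^i(r)$ contributes a set $B_\hyp(\rho_j^i,r)$ that meets $B_\hyp(\mc L^{4r}_{H_i,H_k}(t_0,\Ti(4r)),5r)\cap\Sab^{H_i}$. Using the uniformity assumptions on $H_i$ from Section~\ref{s:uniform} (in particular \eqref{e:conormalClose}, \eqref{e:curvature}, \eqref{e:FR} and the volume comparison between $\hyp$ and $\SE^{H_i}$ displayed there), the base of each tube contributes volume $\gtrsim r^{n-1}$ times $\mu_{\SE^{H_i}}(B_{\SE^{H_i}}(\rho_j^i,cr))$. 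Splitting $\J(r)=\sqcup_{m=1}^{\mathfrak D}\J_m(r)$ as in \eqref{e:good cover} so that the tubes in each piece are pairwise disjoint at scale $3r$, one obtains
\[
r^{n-1}\,|\BE^i(r)| \;\leq\; C\,\mathfrak D\,\mu_{\SE^{H_i}}\!\Bigl(B\bigl(\mc L^{4r}_{H_i,H_k}(t_0,\Ti(4r)),\,Cr\bigr)\Bigr),
\]
with $C$ depending on $n$, $p$, $a,b$ and finitely many of the constants $\KR_\alpha,\FR$ attached to $H_1,H_2$. Multiplying the estimates for $i=1,2$, using $\widetilde{\Ti}(r)\leq\Ti(4r)$, and invoking the measure hypothesis \ref{d:non loop gral} at the scale $R=4r$ produces
\[
r^{2(n-1)}\,|\BE^1(r)|\,|\BE^2(r)|\,\widetilde{\Ti}(r)^2 \;\leq\; C^2\mathfrak D^2\,\Ti(4r)^2\,\prod_{i=1}^2\mu_{\SE^{H_i}}\!\bigl(B(\mc L^{4r}_{H_i,H_k}(t_0,\Ti(4r)),Cr)\bigr) \;\leq\; \mathfrak D^2\,\widetilde{\Cnl},
\]
for all $r$ smaller than some $r_0$ (depending on $E$, but uniformly in $E\in[a,b]$ using compactness, upper semi-continuity of $E\mapsto \mc L^{R,E}_{H_i,H_k}$, and continuity of the smooth measures), with $\widetilde{\Cnl}:=2C^2\Cnl$. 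This is exactly condition~(2) of Definition~\ref{d:non loop cov}.

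\textbf{Main obstacle.} The delicate step is Step~2: ensuring that a time shift of $3\tau_0$ and a spatial rescaling from $r$ to $4r$ are enough to absorb, simultaneously, the time thickness $\tau+r$ and the spatial thickness $r$ of tubes on \emph{both} ends of the bicharacteristic (so $t'$ still falls in $[t_0,\Ti(4r)]$), and that a base point of a tube $\mc T_j^k(r)$ with $j\in\J_E^k(r)$ lies $\lesssim r$ from $\SE^{H_k}$. The latter requires the uniform conormal transversality \eqref{e:FR} so that flowing transversally across $\hyp$ does not distort distances, and the former motivates the precise constants in the statement. Once these geometric comparisons are pinned down, Step~3 is a purely measure-theoretic packing argument and Step~4 is just multiplication.
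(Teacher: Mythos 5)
Your plan is essentially the paper's argument: the paper first isolates a lemma (Lemma 3.3) that, given any two good covers, splits $\J_E^1(r)$ into a ``good'' part $\GE^1(r)$ (tubes whose centers stay a fixed multiple of $r$ away from the looping set) and a ``bad'' part $\BE^1(r)$, verifies the non-intersection property for the good tubes by exactly the absorption argument you describe (a time shift of $2(\tau+r)$, which is $\leq 3\tau_0$ once $\tau<\tau_0$ and $r<\tfrac12\tau_0$, plus a spatial enlargement from $r$ to $S=4r$), and bounds $|\BE^1|$ by packing $r$-balls into $B_{\hyp}(\mc L^{S,E}_{H_1,H_2},4r)$ and then transferring to $\mu_{\SE^{H_1}}$ using the $\vol(B_\hyp(A,r))\leq Cr^n\mu_{\SE^{H}}(B_{\SE^{H}}(A,r))$ comparison. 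Proposition 3.1 is then just Lemma 3.3 applied with $r=R$, $S=4R$, $T=\Ti(4R)$.

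The one place your write-up needs tightening is the definition of $\BE^i(r)$. You declare $j\in\BE^i(r)$ when $\rho_j^i\in B(\mc L^{4r}_{H_i,H_k}(t_0,\Ti(4r)),4r)$. The tube bases then lie in $B_\hyp(\mc L^{4r},5r)$, so the packing estimate yields a bound involving $\mu_{\SE^{H_i}}\bigl(B_{\SE^{H_i}}(\mc L^{4r}(t_0,\Ti(4r)),5r)\bigr)$. The non-looping hypothesis (Definition~\ref{d:non loop gral}) controls $\mu_{\SE^{H_i}}\bigl(B(\mc L^{R}(t_0,\Ti(R)),R)\bigr)$ with the three occurrences of $R$ matched, and because $\Ti$ is decreasing while $\mc L^R(t_0,T)$ is increasing in both $R$ and $T$, the set $\mc L^{4r}(t_0,\Ti(4r))$ is \emph{not} contained in $\mc L^{5r}(t_0,\Ti(5r))$, so you cannot invoke the hypothesis at scale $5r$ to absorb the mismatch. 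The paper sidesteps this by declaring $j$ bad when $\T_j^1(r)\cap B_{\hyp_1}(\mc L^{S,E},2r)\neq\emptyset$, i.e.\ $d(\rho_j,\mc L^{S,E})<3r$, so the bases land in $B_\hyp(\mc L^{4r},4r)$ and the hypothesis applies exactly at $R=S=4r$, producing the claimed $\widetilde\Ti(R)=\Ti(4R)-3\tau_0$. Shrinking your proximity radius from $4r$ to $3r$ (or, equivalently, using the tube-intersection criterion) fixes this; as written your argument would instead produce $\widetilde{\Ti}(R)=\Ti(cR)-3\tau_0$ with $c>4$, which is weaker than the stated proposition. The remaining slips (``$\cap\Sab^{H_i}$'' in Step~3, and $r^{n-1}$ where the volume comparison gives $r^{n}$) are typos and do not affect the argument.
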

%%%%%%%%%%%%%%%%%%%%%%%%%%%%%%%%%%%%%%%%%%%%%%%%%%%%%%%%%%%%%%%%%%%%%%%%%%%%%%%

{Before proving the proposition, we record some facts about sub-logarithmic \resfuns.}
\begin{lemma} 
\label{l:sublog}
Suppose $\Ti$ is a sub-logarithmic {\resfun}. 
\begin{enumerate} \item For $0<a<b<1$, 
$$
{\Ti(b)\leq \Ti(a)}\leq \frac{\log a}{\log b}\,{\Ti(b)}.
$$
In particular, $\Ti(R)\leq \frac{\log R}{\log \mu+\log R} \Ti(\mu R)$ for {$0<\mu<R^{-1}$}.
\item Let $f(s):=-\log (\Ti^{-1}(s))$. Then, $f$ extends to a differentiable function on $[0,\infty)$, $f(0)=0$, and $f(a)\leq \frac{a}{b}f(b)$ for $0<a<b$.
\item Let $0<\delta<\frac{1}{2}$, and $R(h)\geq h^\delta$ with $R(h)=o(1)$. Then for all $\Lambda>\Lambda_{\max}$, $\e>0$, there is $h_0>0$ such that for $0<h<h_0$
$$
\Ti(R(h))\leq (\Omega(\Ti)\Lambda+\e)T_e(h).
$$
\end{enumerate}
\end{lemma}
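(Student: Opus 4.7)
All three parts reduce to direct computations from the defining sub-logarithmic derivative inequality $(\log\log R^{-1})' \leq [\log \Ti(R)]' \leq 0$, which I would attack in turn.

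For part (1), the bound $\Ti(b) \leq \Ti(a)$ is immediate from $[\log\Ti]' \leq 0$. To get the other inequality, I would integrate $(\log\log R^{-1})' \leq [\log \Ti(R)]'$ over $[a,b]$, obtaining $\log\log b^{-1} - \log\log a^{-1} \leq \log\Ti(b) - \log\Ti(a)$. Exponentiating both sides and rearranging yields $\Ti(a) \leq (\log a/\log b)\Ti(b)$ as claimed (note $\log a/\log b > 1$ since $0<a<b<1$). The stated consequence is then the substitution $a = R$, $b = \mu R$, where $\mu < R^{-1}$ ensures $\mu R < 1$ so that the derivative hypothesis applies throughout.

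For part (2), I would set $R := \Ti^{-1}(s)$ and $f(s) = -\log R$; differentiability of $\Ti$ with $\Ti' < 0$ makes $\Ti^{-1}$ differentiable, and implicit differentiation gives $f'(s) = 1/(R\,|\Ti'(R)|)$. The sub-logarithmic bound rearranges to $|\Ti'(R)| \leq \Ti(R)/(R \log R^{-1})$, which plugged into the expression for $f'$ yields $f'(s) \geq (\log R^{-1})/\Ti(R) = f(s)/s$, hence $(f(s)/s)' \geq 0$. So $f(s)/s$ is non-decreasing, which immediately gives the claimed $f(a)/a \leq f(b)/b$. For the extension to $s = 0$: monotonicity together with positivity of $f(s)/s$ force $L := \lim_{s \to 0^+} f(s)/s$ to exist, and I would then define $f(0) := 0$ and $f'(0) := L$; continuity at $0$ is immediate, while differentiability at $0$ follows after checking $\lim_{s \to 0^+} f'(s) = L$, which I would extract from the sub-logarithmic bound on $\Ti'$.

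For part (3), I would invoke the $\limsup$ definition of $\Omega(\Ti)$: for every $\e_1 > 0$ there is $R_1 > 0$ such that $\Ti(R) \leq (\Omega(\Ti) + \e_1) \log R^{-1}$ for all $R < R_1$. Since $R(h) = o(1)$, for small $h$ we have $R(h) < R_1$; combining with $R(h) \geq h^\delta$ (which gives $\log R(h)^{-1} \leq \delta \log h^{-1}$) yields $\Ti(R(h)) \leq \delta(\Omega(\Ti) + \e_1)\log h^{-1} = 2\delta \Lambda_{\max}(\Omega(\Ti) + \e_1) T_e(h)$. Since $2\delta < 1 < \Lambda/\Lambda_{\max}$, we have $2\delta\Lambda_{\max} < \Lambda$, so $2\delta\Lambda_{\max}\Omega(\Ti) \leq \Lambda\Omega(\Ti)$, and choosing $\e_1$ small enough (in terms of $\e$, $\Lambda$, $\Lambda_{\max}$) renders $2\delta\Lambda_{\max}\e_1 < \e$, yielding the stated bound. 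If $\Lambda_{\max} = 0$ I would replace it by an arbitrarily small positive constant per the convention following \eqref{e:Lmax}. The only delicate point in the whole argument is the extension of $f$ in part (2) to a genuinely \emph{differentiable} function at $s = 0$; this requires carefully combining the monotonicity of $f(s)/s$ with the sub-logarithmic bound on $\Ti'$, while the rest of the proof is routine.
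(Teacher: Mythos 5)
Your proof takes essentially the same route as the paper. For part (1) you integrate $(\log\log R^{-1})' \leq (\log\Ti)'$ over $[a,b]$ and exponentiate, matching the paper's computation $\log\big(\Ti(a)/\Ti(b)\big) = -\int_a^b \Ti'/\Ti\,ds \leq \log\big(\log a^{-1}/\log b^{-1}\big)$; for part (3) you carry a separate $\e_1$ while the paper absorbs the slack directly into the constant $\Omega(\Ti)+\e\Lambda^{-1}$, but the arithmetic using $R(h)\geq h^\delta$, $2\delta<1<\Lambda/\Lambda_{\max}$, and $T_e(h)=\log h^{-1}/(2\Lambda_{\max})$ is the same. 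For part (2), both you and the paper derive $f'(s) = 1/(R\,|\Ti'(R)|) \geq f(s)/s$ with $R = \Ti^{-1}(s)$ from the sub-logarithmic bound; the paper stops there, while you go on to attempt the differentiable extension at $s=0$. That extra step has a small gap as written: the monotone-limit argument for $L := \lim_{s\to 0^+} f(s)/s$ needs $f\geq 0$ near the left end of its domain, which under the literal definitions ($\Ti:(0,\infty)\to(0,\infty)$ decreasing) fails, since $f(s)=-\log\Ti^{-1}(s)<0$ for $s<\Ti(1)$ and $f(s)\to-\infty$ as $s$ descends to the infimum of $\Ti$'s range. The paper's own proof silently skips the extension claim entirely, so this is as much a looseness in the lemma's statement as a defect in your argument; what is actually used downstream (in the proof of Proposition~\ref{p:non rec implies cov}) is only $f(a)\leq (a/b)f(b)$ for $a,b$ in the range where $\Ti^{-1}\leq 1$, which your proof, like the paper's, correctly establishes.
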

\begin{proof}
Note that 
$$
0\leq \log {\frac{\Ti(a)}{\Ti(b)}}={-}\int^{b}_{a}\frac{\Ti'(s)}{\Ti(s)}ds\leq \int_{a}^{b}\frac{1}{s\log s^{-1}}ds= \log \Big(\frac{\log a^{-1}}{\log b^{-1}}\Big),
$$
and hence the first claim holds.
For the second claim, observe that since $\Ti$ is sub-logarithmic,
$
f'(s)\geq -\frac{\log (\Ti^{-1}(s))}{s}=\frac{f(s)}{s}.
$

To prove the last claim, observe that since $R(h)=o(1)$, for all $\Lambda>\Lambda_{\max}$ and $\e>0$, there is $h_0>0$ such that for $0<h<h_0$, 
$$
\Ti(R(h))\leq (\Omega(\Ti)+\e{\Lambda^{-1}})\log R(h)^{-1}\leq (\Omega(\Ti)\Lambda+\e) T_e(h).
$$
{The second inequality follows from definitions {\eqref{e:Omega}, \eqref{e:Lmax}, and} $R(h)\geq h^\delta$ with $0<\delta<\frac{1}{2}$.}
\end{proof}

{In the following lemma we explain how to partition a $(\mathfrak{D},\tau,r)$-good cover for $\SE^{H_1}$ into tubes that do not loop through $\SE^{H_2}$ for times in $(t_0, T)$, and tubes that are `bad' in the sense that they do loop through $\SE^{H_2}$. We do this while controlling the number of `bad` tubes in terms of the size of the set $ {\mathcal{L}\sub{{H_1,H_2}}^{{S},E}(t_0, T)}$ for $S>4r$.}
%%%%%%%%%%%%%%%%%%%%%%%%%%%%%%%%%%%%%%%%%%%%%%%%%%%%%%%%%%%%%%%%%%%%%%%%%%%%%%%
\begin{lemma}\label{l:non loop implies cov}
Let $a,b\in \re$, $H_1, H_2 \subset M$ be smooth submanifolds such that $H_1, H_2$ are conormally transverse for $p$ in the window $[a,b]$. Then there is $C\sub{0}=C\sub{0}(p,a,b,n,{H_1,H_2})$ such that the following holds.
{Let $\tau_0>0$, $r>0$, and $0<\tau<\tau_0$. 
For $i=1,2$ let $\{\mc{T}_{j}^i(r)\}_{j\in \mathcal{J}^i(r)}$ be a $(\mathfrak{D},\tau,r)$-good cover of $\Sigma^{H_i}_{[a,b]}$. Let $t_0>0$, $T>0$. Then, for all $E\in[a,b]$ and
$
S\geq 4r
$}
there is a splitting $\mc{J}\sub{E}^1(r)=\BE^1(r)\cup\GE^1(r)$ such that
\begin{enumerate}
\item for $j\in \GE^1(r)$ and $k\in \mc{J}\sub{E}^2(r)$ 
 $$
 \bigcup_{t_0+{{2}(\tau+r)}\leq |t|\leq T-{{2}(\tau+r)}}\varphi_t(\mc{T}_{j}^1(r))\cap \mc{T}_{k}^2(r)=\emptyset,
 $$
 \item $|\BE^1(r)|\leq {\mathfrak{D}}C\sub{0}r^{1-n}\,\mu\!\sub{\SE^{H_1}}\!\!\Big( B\!\!\sub{\SE^{H_1}}\!\Big(\smalleq{ \mathcal{L}\sub{{H_1,H_2}}^{{S,E}}(t_0, T)}, S\Big) \Big)$. 
\end{enumerate}
\end{lemma}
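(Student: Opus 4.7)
The natural splitting is to set $\BE^1(r)$ to consist of those $j\in \J^1\sub{E}(r)$ for which there exist $k\in \J^2\sub{E}(r)$ and $t$ with $t_0+2(\tau+r)\leq |t|\leq T-2(\tau+r)$ satisfying $\varphi_t(\mc{T}_j^1(r))\cap \mc{T}_k^2(r)\neq \emptyset$, and to let $\GE^1(r):=\J^1\sub{E}(r)\setminus \BE^1(r)$. Property (1) is then tautological, so all the work goes into the volume bound (2).

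To establish (2), I would first translate membership in $\BE^1(r)$ into a geometric statement about $\rho_j^1$. Unfolding the tube definition, $j\in \BE^1(r)$ produces $w'\in B\sub{\hyp}(\rho_j^1,r)$, $z'\in B\sub{\hyp}(\rho_k^2,r)$, and a time $t''=t+s_1-s_2$ with $|s_i|\leq \tau+r$, satisfying $t_0\leq |t''|\leq T$ and $\varphi_{t''}(w')=z'$. Using $j\in \J^1\sub{E}(r)$, I would pick $q_j\in \mc{T}_j^1(r)\cap \hyp\cap B(\SE^{H_1},r)$, writing $q_j=\varphi_{\tilde t}(\tilde q_j)$ with $\tilde q_j\in B\sub{\hyp}(\rho_j^1,r)$ and $|\tilde t|\leq \tau+r$; since both $q_j$ and $\tilde q_j$ lie on $\hyp$, the injectivity in~\eqref{e:tau_inj} (valid once $\tau+r<\tau\sub{\inj_{H_1}}$) forces $\tilde t=0$, so $q_j\in B\sub{\hyp}(\rho_j^1,r)$. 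The analogous argument at $k$ gives $q_k\in B\sub{\hyp}(\rho_k^2,r)\cap B(\SE^{H_2},r)$. Picking $\tilde\rho_j\in \SE^{H_1}$ with $d(q_j,\tilde\rho_j)<r$, three applications of the triangle inequality give $d(w',\tilde\rho_j)<3r$ and $d(z',\SE^{H_2})<3r$; hence for every $S\geq 4r$ the pair $(w',z')$ witnesses that $\tilde\rho_j\in \mathcal{L}\sub{H_1,H_2}^{S,E}(t_0,T)$. Combined with $d(\rho_j^1,\tilde\rho_j)<2r$, this proves the containment
$$B\sub{\hyp}(\rho_j^1,r)\subset B\sub{\hyp}\bigl(\mathcal{L}\sub{H_1,H_2}^{S,E}(t_0,T),\,3r\bigr)\qquad \text{for every } j\in \BE^1(r).$$

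With this containment in hand, the remainder is a volume-packing argument. Splitting $\BE^1(r)=\bigsqcup_{i=1}^{\mathfrak{D}}\BE^{1,i}(r)$ via the good-cover decomposition, the tubes $\mc{T}_j^1(3r)$ in each $\BE^{1,i}(r)$ are pairwise disjoint, hence so are the balls $B\sub{\hyp}(\rho_j^1,r)$. Since $\dim\hyp=2n-1$, each of these balls has volume at least $c r^{2n-1}$, with $c=c(p,a,b,n,H_1)$, and their disjoint union lies in $B\sub{\hyp}(\mathcal{L}\sub{H_1,H_2}^{S,E}(t_0,T),3r)$. Applying the standing comparison $\vol(B\sub{\hyp}(A,r))\leq C r^n\mu\sub{\SE^{H_1}}(B\sub{\SE^{H_1}}(A,r))$ at radius $3r$, together with the monotonicity $B\sub{\SE^{H_1}}(A,3r)\subset B\sub{\SE^{H_1}}(A,S)$ (which uses $S\geq 4r$), yields $|\BE^{1,i}(r)|\leq C\sub{0}\, r^{1-n}\mu\sub{\SE^{H_1}}(B\sub{\SE^{H_1}}(\mathcal{L}\sub{H_1,H_2}^{S,E}(t_0,T),S))$; summing over $1\leq i\leq \mathfrak{D}$ gives (2). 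The delicate step is the middle paragraph's bookkeeping: one must absorb the time loss $2(\tau+r)$ so that $t''$ still lies in $[t_0,T]$, and exploit the transversality of $\hyp$ to the flow to replace the \emph{a priori} flowed-out tube by the hypersurface ball $B\sub{\hyp}(\rho_j^1,r)$ when extracting $q_j$, so that the final containment lives in a single slice of width $O(r)$ around $\mathcal{L}\sub{H_1,H_2}^{S,E}(t_0,T)$.
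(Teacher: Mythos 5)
Your proof is correct, and it is essentially the same volume-packing argument as the paper's, but with the logical direction reversed. The paper defines $\BE^1(r)$ as the set of tubes whose trace on $\hyp_1$ meets $B\sub{\hyp_1}(\mc{L}\sub{H_1,H_2}^{S,E}(t_0,T),2r)$; it then proves the non-looping property (1) for the complement (showing that a tube centered far from $\mc{L}$ must be flowed from a non-looping point of $\SE^{H_1}$), while the volume bound (2) is immediate from the definition of $\BE^1$. You instead make (1) tautological by defining $\BE^1(r)$ to consist of the tubes that actually loop, and then prove the reverse implication: that any looping tube must have its hypersurface slice contained in a ball of radius $O(r)$ around $\mc{L}\sub{H_1,H_2}^{S,E}(t_0,T)$. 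These are contrapositive formulations of the same geometric fact, and the packing step (disjointness from the $(\mathfrak{D},\tau,r)$-good cover, $\vol\geq c r^{2n-1}$ per ball in $\hyp_1$, and the standing comparison between $\vol\sub{\hyp_1}$ and $\mu\sub{\SE^{H_1}}$) is identical. The only point where you do slightly more bookkeeping than the paper is the careful extraction of $q_j\in B\sub{\hyp_1}(\rho_j^1,r)$ via injectivity of the flow parametrization; the paper uses the same fact implicitly when identifying $\T_j^1(r)\cap\hyp_1$ with $B\sub{\hyp_1}(\rho_j^1,r)$. Your $\BE^1(r)$ is in fact a subset of the paper's $\BE^1(r)$, so the estimate (2) you obtain is, if anything, tighter, but the end result is the same up to the constant $C\sub{0}$.
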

%%%%%%%%%%%%%%%%%%%%%%%%%%%%%%%%%%%%%%%%%%%%%%%%%%%%%%%%%%%%%%%%%%%%%%%%%%%%%%%
\begin{proof} {For $j=1,2$ let} $\hyp_j\subset T^*M$ be the hypersurface transverse to the flow, with $\Sab^{H_j} \subset \hyp_j$, used to build the tubes of the cover, {as explained in \eqref{e:mc H}}. Let $E\in [a,b]$ and for $S>0$ set
$$
\BE^1(r):=\big\{ j\in \J\sub{E}^1(r)\,:\, \T_j^1(r)\cap B_{_{\hyp_{1}}}(\mc{L}\sub{H_1,H_2}^{{S,E}}(t_0,T),2r)\neq \emptyset\big\}.
$$
Then, for $j\in \BE^{1}(r)$,
$$
{\T_j^1(r)}\cap \hyp_{1}\;\subset B_{_{\hyp_{1}}}(\mc{L}\sub{H_1,H_2}^{{S,E}}(t_0,T),4r).
$$
In particular, there exists $C\sub{0}={C\sub{0}(p,a,b,n,{H_1,H_2})}>0$ such that for all $S\geq 4r$ 
$$
|\BE^1(r)|\leq {\mathfrak{D}}r^{1-2n}\vol\Big(B_{_{\hyp_{1}}}(\smalleq{\mc{L}\sub{H_1,H_2}^{{S,E}}(t_0,T)},4r)\Big)\leq C\sub{0}{\mathfrak{D}}r^{1-n}\mu_{\SE^{H_1}}\Big(B_{_{\SE^{H_1}}}\big(\smalleq{\mc{L}^{{S,E}}\sub{H_1,H_2}(t_0,T)},S\big)\Big).
$$
This proves the claim in (2).

To see the claim in (1), let $j\in \GE^1(r):=\J\sub{E}^1(r)\setminus \BE^1(r)$. Then, ${\T_j^{{1}}}(r)={\Lambda_{\rho_j}^\tau}(r)$ for some $\rho_j\in \hyp_{1}$ with $d(\rho_j,\SE^{H_1})<2r$ and $d(\rho_j, \smalleq{\mc{L}\sub{H_1,H_2}^{{S,E}}(t_0,T)})>3r$. This yields that there is $\rho_0\in \SE^{H_1}\setminus \mc{L}\sub{H_1,H_2}^{{S,E}}(t_0,T)$ such that $d(\rho_0,\rho_j)<2r$. In particular, since
$
\underset{t_0\leq |t|\leq T}{\bigcup}\varphi_t(B(\rho_0,S))\cap B(\SE^{H_2},S)=\emptyset
$ 
and 
$
\T_j^1(r)\subset \underset{|t|\leq {\tau+r}}{\bigcup}\varphi_t(B(\rho_0,3r)),
$
this yields 
\begin{equation}\label{e:salami}
\bigcup_{t_0+{\tau+r}\leq |t|\leq T-{(\tau+r)}}\varphi_t(\T_j^1(r))\cap B(\SE^{H_2},S)=\emptyset
\end{equation}
for $S\geq 4r$. 
On the other hand,
since for all {$k \in \mc{J}\sub{E}^2(r)$}, we have 
$
\T_k^2(r)\cap \hyp_{2}\subset B(\SE^{H_2},3r),
$
\begin{equation}\label{e:bread}
\T_k^2(r)\subset \bigcup_{|t|\leq {\tau+r}}\varphi_t(B(\SE^{H_2},3r))
\end{equation}
In particular, {combining \eqref{e:salami} and \eqref{e:bread} we have }
$$
\bigcup_{t_0+{2}{(\tau+r)}\leq |t|\leq T-{{2}(\tau+r)}}\varphi_t(\T_j^1(r))\cap B(\SE^{H_2},S)=\emptyset.
$$
{Thus, the} claim (1) holds, provided $S\geq 4r$.
\end{proof}

{With Lemmas \ref{l:sublog} and \ref{l:non loop implies cov} in place, we are now ready to prove Proposition \ref{p:non loop implies cov}.}
%%%%%%%%%%%%%%%%%%%%%%%%%%%%%%%%%%%%%%%%%%%%%%%%%%%%%%%%%%%%%%%%%%%%%%%%%%%%%%%%%

\begin{proof}[Proof of Proposition \ref{p:non loop implies cov}]

Let $C\sub{0}=C\sub{0}(p, a, b,n,{H_1,H_2})$ be as in Lemma \ref{l:non loop implies cov}.
We apply Lemma~\ref{l:non loop implies cov} with $r=R$, {$T=\Ti(S)$}, $S=4R$, {$0<R<\frac{1}{2}\tau_0$}. {This shows that $(H_1,H_2)$ is a $[t_0{+{3}\tau_0}, \widetilde{\Ti}]$ non-looping pair in the window $[a,b]$ via {$\tau$-coverings} with constant $\widetilde{\Cnl}=C_0^2 \Cnl$}.
%
%
% Using Lemma~\ref{l:sublog}, we have
%$$
%\Ti(R(h))\leq \frac{\log R(h)}{\log S(h)} \Ti( S(h))\blue{=} \frac{1}{1+\frac{\log 4}{\log R(h)}}\Ti(S(h)),
%$$
%\blue{proving the first part of the claim in $(1)$.}
%\blue{In addition,} if $R(h)\leq h^\e$, then,
%$$
%\Ti(h)\leq \frac{\log h}{\log R(h)}\Ti(R(h))\leq \e^{-1}\frac{1}{1+\frac{\log 4}{\log R(h)}}\Ti(\blue{S(h)}),
%$$
%\blue{and we finished proving the claim in $(1)$. The claim in $(3)$ follows from $(3)$ in Lemma~\ref{l:sublog}.}
\end{proof}

\begin{lemma}
\label{l:4minutes}
{There is a constant {$C_n>0$}, {depending only on $n$,} such that the following holds.} Let $\tau_0>0$, $t_0>0$, $H_1,H_2\subset M$ be smooth submanifolds such that $H_1$ and $H_2$ are conormally transverse for $p$ in the window $[a,b]$. Let $\Ti$ be a {\resfun}. If $(H_1,H_2)$ is a {$(t_0,\Ti)$} non-looping pair in the window $[a,b]$ via {$\tau_0$-coverings} with constant $\Cnl$, then $(H_1,H_2)$ is a $(t_0,{\widetilde{\Ti}})$ non-looping pair in the window $[a,b]$ with constant $\Cnl {C_n}$ {and $\widetilde{\Ti}(R)=\Ti(2R)$}.
\end{lemma}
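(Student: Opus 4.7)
The plan is to extract the measure non-looping of Definition~\ref{d:non loop gral} from the covering version of Definition~\ref{d:non loop cov}. The main observation is that near-looping points lie essentially in the union of the bad tubes of a good cover, so that property~(2) of Definition~\ref{d:non loop cov} delivers the product-measure bound of Definition~\ref{d:non loop gral}, modulo a constant depending only on $n$.

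Fix $E\in[a,b]$. Given small $R>0$, set $r:=2R$ so that $\widetilde{\Ti}(R)=\Ti(r)$, choose $\tau\in(0,\tau_0)$, and take $R$ small enough that $r$ is below the threshold $r_0(\tau)$ supplied by Definition~\ref{d:non loop cov}. A standard Vitali-type argument produces $(\mathfrak{D}_n,\tau,r)$-good covers $\{\mc{T}^i_j(r)\}_{j\in\J^i(r)}$ of $\Sab^{H_i}$ for $i=1,2$, with multiplicity $\mathfrak{D}_n$ depending only on $n$; the covering hypothesis then supplies splittings $\J\sub{E}^i(r)=\BE^i(r)\cup\GE^i(r)$ satisfying (1),(2) of Definition~\ref{d:non loop cov}. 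The key inclusion is
\begin{equation*}
B\sub{\SE^{H_i}}\!\bigl(\mathcal{L}\sub{H_i,H_{3-i}}^{R,E}(t_0,\widetilde{\Ti}(R)),R\bigr)\;\subset\;\bigcup_{\ell\in\BE^i(r)}\bigl(\mc{T}^i_\ell(r)\cap\SE^{H_i}\bigr),\qquad i=1,2,
\end{equation*}
up to a harmless enlargement of the tube scale by a constant depending on $p$, $H_1$, $H_2$, and the choice of $\hyp$ in~\eqref{e:mc H}. To establish it, a point $\rho$ on the left comes with $\rho'\in\SE^{H_i}$, $\sigma\in T^*M$, and $|t|\in[t_0,\widetilde{\Ti}(R)]$ satisfying $d(\rho,\rho')\leq R$, $d(\rho',\sigma)<R$, and $d(\varphi_t(\sigma),\SE^{H_{3-i}})<R$. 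Pick $\ell$ with $\rho\in\mc{T}^i_\ell(r)$ and assume toward contradiction that $\ell\in\GE^i(r)$: transversality of $\hyp$ to the flow combined with $d(\sigma,\rho)<2R=r$ gives $\sigma\in\mc{T}^i_\ell(O(r))$, and similarly $\varphi_t(\sigma)$ lies in some $\mc{T}^{3-i}_j(O(r))$ with $j\in\J\sub{E}^{3-i}(r)$, contradicting condition~(1) whenever $|t|\in[t_0+\tau,\Ti(r)-\tau]$.

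With the inclusion in hand, the tube-measure estimate $\mu\sub{\SE^{H_i}}(\mc{T}^i_\ell(r)\cap\SE^{H_i})\leq C_n\, r^{n-1}$ gives $\mu\sub{\SE^{H_i}}\bigl(B(\mathcal{L}\sub{H_i,H_{3-i}}^{R,E}(t_0,\widetilde{\Ti}(R)),R)\bigr)\leq C_n\, r^{n-1}|\BE^i(r)|$ for $i=1,2$. Multiplying the two estimates and applying condition~(2) with $\mathfrak{D}=\mathfrak{D}_n$ yields
\begin{equation*}
\mu_1\cdot\mu_2\cdot\widetilde{\Ti}(R)^2\;\leq\;C_n^2\,r^{2(n-1)}|\BE^1(r)||\BE^2(r)|\Ti(r)^2\;\leq\;C_n^2\mathfrak{D}_n^2\,\Cnl,
\end{equation*}
and taking $\limsup_{R\to 0^+}$ delivers the claim with a new dimensional constant $C_n:=C_n^2\mathfrak{D}_n^2$ in place of the $C_n$ of the lemma. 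The main obstacle I foresee is the endpoint correction in the key inclusion: raw condition~(1) only controls times in $[t_0+\tau,\Ti(r)-\tau]$, so the short gaps $[t_0,t_0+\tau]$ and $[\Ti(r)-\tau,\Ti(r)]$ must be absorbed, either by choosing $\tau=\tau(R)\to 0^+$ (as permitted by the ``for all $\tau\in(0,\tau_0)$'' clause of Definition~\ref{d:non loop cov}) or by a mild enlargement of the tube scale, with any loss folded into the final dimensional constant.
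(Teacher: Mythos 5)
Your proposal follows the same route as the paper's proof: fix $E$, use a good cover at scale $r\asymp R$, show that the $R$-neighborhood of $\mathcal{L}\sub{H_i,H_{3-i}}^{R,E}(t_0,\widetilde{\Ti}(R))$ is swept by the bad tubes $\{\mc{T}^i_\ell\}_{\ell\in\BE^i}$, bound each tube's intersection with $\SE^{H_i}$ by $C_n r^{n-1}$, multiply, and invoke condition (2) of Definition~\ref{d:non loop cov}. The paper works at cover scale $r=R$ and looping scale $R/2$; you take $r=2R$; these differ only by relabeling, and both deliver $\widetilde{\Ti}(R)=\Ti(2R)$.

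On your ``main obstacle'': neither $\tau=\tau(R)\to 0^+$ nor an enlargement constant depending on $p,H_1,H_2,\hyp$ is the right remedy (the former risks losing the threshold $r_0(\tau)$, the latter would contaminate the purely dimensional constant $C_n$). What actually closes the gap is the tube's flow thickness. A $(\tau,r)$-tube $\T^i_\ell(r)=\Lambda^\tau_{\rho_\ell}(r)$ extends $\pm(\tau+r)$ in flow time from its base $B_{\hyp_1}(\rho_\ell,r)$, so
$\bigcup_{t_0+\tau\le|t|\le\Ti(r)-\tau}\varphi_t(\T^i_\ell(r))\supset\bigcup_{t_0-r\le|v|\le\Ti(r)+r}\varphi_v\bigl(B_{\hyp_1}(\rho_\ell,r)\bigr)$,
i.e.\ condition (1) already forbids the base ball from meeting the $\T^{3-i}$-tubes over the \emph{enlarged} window $[t_0-r,\Ti(r)+r]$. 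Meanwhile, the offsets locating $\rho_1$ in $\T^i_\ell(r)$ and $\varphi_t(\rho_1)$ in some $\T^{3-i}_k(r)$ are $O(r)$ (not $O(\tau+r)$) because both points lie within $O(R)$ of the transversal hypersurfaces $\SE^{H_1}\subset\hyp_1$ and $\SE^{H_2}\subset\hyp_2$. So the effective loop time $v$ lands in $[t_0-O(r),\Ti(r)+O(r)]$, which is absorbed by the enlarged window, giving the contradiction that forces $\ell\in\BE^i(r)$. The uniform control on these $O(r)$ constants is exactly what the regular/uniformly-conormally-transverse hypotheses supply, and is the justification for writing $C_n$.
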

%\begin{remark}
%\label{r:4minutes}
%{\sout{Note that if $R(h)=h^\delta$ and $T(h)=a\log h^{-1}$, then $\Ti(R)= \frac{a}{\delta} \log (2R)^{-1}.$}}
%\end{remark}

\begin{proof}
Let $E\in[a,b]$ {and fix $i,j \in \{1,2\}$, $i\neq j$}. {For each $R>0$} consider the non-looping partition $\J\sub{E}^i({R})=\GE^i(R)\sqcup\BE^i({R})$ given by Definition \eqref{d:non loop cov}.
Let $\rho\in \mc{L}\sub{H_i,H_j}^{{{R}/2,E}}(t_0,{\Ti(R)})$. Then, there are $\rho_1\in B(\rho,{R}/2)$ and $t_0\leq |t|\leq {\Ti(R)}$ such that $\varphi_t(\rho_1)\in B(\SE^{H_j},{R}/2)$.
Hence, there is $\ell\in {\BE^i}({R})$ such that $\rho_1\in \T^i_\ell({R})$ and hence $\rho\in \T^i_\ell(2{R})$. This implies
$
B\sub{{\SE^{H_i}}}(\rho,{R}/2)\subset {\T^i_\ell(3{R})}. 
$
Thus, 
$$
B\sub{{\SE^{H_i}}}\big(\smalleq{\mc{L}_{H_i,H_j}^{{{R}/2,E}}(t_0,{\Ti(R)})}, {R}/2\big){\subset}
\bigcup_{\ell\in {\BE^i}({R})}{\T^i_\ell(3{R})}.
$$
In particular, {there exists $C_{{n}}>0$ such that}
$$
{\mu\sub{{\SE^{H_i}}}}\Big(B\sub{{\SE^{H_i}}}\!\big({\smalleq{ \mathcal{L}_{H_i,H_j}^{{{R}/2,E}}(t_0, {\Ti(R)})}}, {R}/2\big)\Big)\leq C_{{n}} {R}^{n-1}|{\BE^i}({R})|.
$$

Therefore,
\begin{align*}
&{\mu\sub{{\SE^{H_1}}}}\Big(B\sub{{\SE^{H_1}}}\!\big({\smalleq{ \mathcal{L}_{H_1,H_2}^{{{R}/2,E}}(t_0, {\Ti(R)})}}, {R}/2\big)\Big){\mu\sub{{\SE^{H_2}}}}\Big(B\sub{{\SE^{H_2}}}\!\big({\smalleq{ \mathcal{L}_{H_2,H_1}^{{{R}/2,E}}(t_0, {\Ti(R)})}}, {R}/2\big)\Big){\Ti(
R)}^2\\&\qquad \qquad\leq {C_n^2} {R}^{2n-2}|{\BE^1}({R})||\BE^2({R})|{\Ti(R)}^2\leq {C_n^2}\mathfrak{D}^2\Cnl .
\end{align*}
{The lemma follows from Definition \ref{d:non loop gral} after taking the limit $R \to 0^+$} {and redefining $C_n$}.
%Then,
%$$
% \limsup_{R\to 0^+} \Bigg({\mu\sub{{\SE^{H_1}}}}\Big(B\sub{{\SE^{H_1}}}\!\big({\smalleq{ \mathcal{L}_{H_1,H_2}^{{R}}(t_0, \Ti(\red{2}R))}}, R\big)\Big){\mu\sub{{\SE^{H_2}}}}\Big(B\sub{{\SE^{H_2}}}\!\big({\smalleq{ \mathcal{L}_{H_2,H_1}^{{R}}(t_0, \Ti(\red{2}R))}}, R\big)\Big) \Ti(\red{2}R)^2 \Bigg)
%$$
%is bounded by $ C\mathfrak{D}^2\Cnl$ as claimed.
\end{proof}

%%%%%%%%%%%%%%%%%%%%%%%%%%%%%%%%%%%%%%%%%%%%%%%%%%%%%%%%%%%%%%%%%%%%%%%%%%%%%%%%
\begin{proposition}\label{p:non rec implies cov}
 Let {$\ti$, $\Ti$ be \resfuns} and $H \subset M$ be a smooth submanifold. Let $a,b \in \re$ be such that $H$ is conormally transverse for $p$ in the window $[a,b]$. Suppose $H$ is ${(\ti,\Ti)}$ non-recurrent in the window $[a,b]$ {{at scale $R_0$}}.

Then, {there exists ${\Cnr=\Cnr}(M,p,{\ti},{R_0})>0$ such that} for all {$\tau_0>0$}, there {is a {\resfun} $\widetilde{\Ti}$} such that the submanifold $H$ is {$\widetilde{\Ti}$} non-recurrent in the window $[a,b]$ via {$\tau_0$-coverings} {with constant {${\Cnr}$}}. Moreover, there is $c>0$ such that if $\Ti$ is sub-logarithmic, then {$\widetilde{\Ti}(R)\geq c\Ti(R)$} {for all $R$}.
\end{proposition}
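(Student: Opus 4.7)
The plan is to convert the measure-theoretic non-recurrence assumption into a tube-partition statement by a dyadic peeling argument in the parameter $\e$. First I would fix $\tau\in(0,\tau_0)$ and cover the compact set $\Sab^H$ by finitely many balls $\{B(\rho_i,R_0/2)\}_{i=1}^N$, with $N$ depending only on $M$, $p$, $a$, $b$, and $R_0$. For each centre $\rho_i$, let $\pm_i\in\{+,-\}$ be the sign provided by Definition~\ref{d:non rec}. The target resolution function will be $\widetilde{\Ti}(R):=c_0\Ti(R)-C_\tau$ for a small constant $c_0>0$ and a constant $C_\tau=C_\tau(\tau_0)$ absorbing the $O(\tau+R)$ shifts that arise when passing from orbit non-recurrence at the scale of points to non-self looping at the scale of tubes (extended so as to remain a positive decreasing function on $(0,\infty)$). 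Part (1) of Lemma~\ref{l:sublog} then supplies the claim $\widetilde{\Ti}(R)\geq c\,\Ti(R)$ in the sub-logarithmic case.

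Next, fix a $(\mathfrak{D},\tau,r)$-good cover $\{\mc{T}_j(r)\}_{j\in\J(r)}$ of $\Sab^H$, an energy $E\in[a,b]$, and carry out the peeling on each ball separately. Set $\e_k:=2^{-k}$, $A_i:=B\sub{\SE^H}(\rho_i,R_0/2)$, and consider the recurrence set
\[
U_{i,k} \;:=\; \mc{R}^{\,rR_0/2}_{A_i,\,\pm_i}\!\bigl(\ti(\e_k),\,\widetilde{\Ti}(r)\bigr),
\]
taking $K=K(r)$ to be the largest integer with $\ti(\e_K)\leq\widetilde{\Ti}(r)$. Applying the non-recurrence hypothesis with this $A$ and $R=R_0/2$ yields $\mu\sub{\SE^H}(B(U_{i,k},rR_0/2))\leq \e_k\,\mu\sub{\SE^H}(A_i)$. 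For $k\in\{0,\dots,K\}$ define the group $\mc{G}_{E,(i,k)}\subset\J\sub{E}(r)$ to be the tube indices whose centres lie in the annulus $B(U_{i,k-1},rR_0/2)\setminus B(U_{i,k},rR_0/2)$, adopting the convention $U_{i,-1}:=B(A_i,rR_0/2)$ so that $\mc{G}_{E,(i,0)}$ collects tubes whose centres never enter the recurrent set. A terminal group $\mc{G}_{E,(i,K+1)}$ absorbs the centres still lying in $B(U_{i,K},rR_0/2)$; it has cardinality $\lesssim r^{1-n}\,\ti^{-1}(\widetilde{\Ti}(r))$, which will turn out to be negligible.

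For each group I assign $t_{(i,k)}:=\ti(\e_k)+C(\tau+r)$ and $T_{(i,k)}:=\widetilde{\Ti}(r)$. Verifying that $\bigcup_{j\in\mc{G}_{E,(i,k)}}\mc{T}_j(r)$ is $[t_{(i,k)},T_{(i,k)}]$ non-self looping in the sign $\pm_i$ is direct: if two centres in the group were linked by a flow image of a ball of radius $O(r)$ at some time $t$ in this window, the target centre would have to lie in $\mc{R}^{\,rR_0/2+O(r)}_{A_i,\pm_i}(\ti(\e_k),\widetilde{\Ti}(r))\subset B(U_{i,k},rR_0/2)$ (once the $O(r)$ shift is absorbed into $c_0$ and $R_0$), contradicting its membership in the group. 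The weighted sum in Definition~\ref{d:non rec cov}(2) then reads
\[
r^{\frac{n-1}{2}}\sum_{i,k}\bigl(|\mc{G}_{E,(i,k)}|\,t_{(i,k)}\,T_{(i,k)}^{-1}\bigr)^{1/2}
\;\lesssim\; N\,\widetilde{\Ti}(r)^{-1/2}\sum_{k\geq 0}2^{-k/2}\bigl(\ti(\e_k)+C(\tau+r)\bigr)^{1/2},
\]
plus the negligible contribution of $\mc{G}_{E,(i,K+1)}$. The series converges to a constant $\Cnr=\Cnr(M,p,\ti,R_0)$ under natural growth of $\ti$; for rapidly growing $\ti$ I would instead choose $\e_k:=\ti^{-1}(2^k)$, making the analogous series telescope geometrically.

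The main obstacle I expect is the careful bookkeeping in the non-self looping verification: one must track precisely how the tube radius $r$ and the recurrence bloating $rR_0/2$ interact, and confirm that the $O(\tau+r)$ time shifts are uniformly absorbable into the replacement $\Ti\mapsto\widetilde{\Ti}$ without forcing $c_0\to 0$ as $r\to 0$. Once this is handled, the sub-logarithmic conclusion follows from Lemma~\ref{l:sublog}(1), since $\Ti(R)\mapsto c_0\Ti(R)-C_\tau$ preserves sub-logarithmic decay up to a multiplicative constant.
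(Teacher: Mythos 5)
Your approach diverges from the paper's and contains a gap. The paper's proof (via Lemmas~\ref{l:control} and~\ref{e:control2}, which invoke the ``$(\e_0,\ti_0,\digamma,f)$-controlled'' machinery of \cite{CG19dyn}) \emph{fixes} a single $\e_0\in(0,\tfrac12)$ once and for all so that $t_\ell=\ti_0(\e_0)$ is a fixed finite constant, and then iterates a control step whose output is groups $\{\mc G_\ell\}_{\ell=0}^{m}$ with $|\mc G_\ell|\lesssim \e_0^\ell$ and geometrically shrinking upper time thresholds $T_\ell=\beta_0^{-\ell}T_0$, where $1<\beta_0<\e_0^{-1}$. The quantity $|\mc G_\ell|\,t_\ell/T_\ell\sim(\beta_0\e_0)^\ell$ then decays geometrically, and since $m\sim\log_{\beta_0}T_0$ the terminal set $\BE$ has $|\BE|\lesssim\e_0^{m+1}\lesssim T_0^{-1}$, so both contributions are $\lesssim T_0^{-1/2}$. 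Your proposal keeps $T_{(i,k)}=\widetilde{\Ti}(r)$ constant and instead varies the depth $\e_k$, which forces $t_{(i,k)}=\ti(\e_k)$ to grow with $k$; this trades the controlled geometric decay for a dependence on the growth of $\ti$ that cannot in general be controlled.

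Concretely, you have misidentified the obstacle: it is not the non-self-looping bookkeeping but the tension between your series and your terminal group. You need both $\sum_{k\leq K(r)}(\e_{k-1}\,\ti(\e_k))^{1/2}\lesssim 1$ and $\e_{K(r)}\lesssim\widetilde{\Ti}(r)^{-1}$, uniformly in $r$, where $K(r)$ is the last index with $\ti(\e_{K(r)})\leq\widetilde{\Ti}(r)$. With $\e_k=2^{-k}$ (or your fallback $\e_k=\ti^{-1}(2^k)$), the terminal requirement is equivalent to $\ti(\e)\gtrsim 1/\e$, but then each summand $\e_{k-1}\ti(\e_k)\gtrsim\e_{k-1}/\e_k\geq 1$ and the main sum grows like $K(r)\to\infty$; conversely, if $\ti(\e)=o(1/\e)$ the summands decay, but then $\e_{K(r)}\sim\ti^{-1}(\widetilde{\Ti}(r))\gg\widetilde{\Ti}(r)^{-1}$ and the terminal group is too large. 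Even in the borderline case $\ti(\e)\asymp 1/\e$ you only get a bound of order $\log\widetilde{\Ti}(r)\big/\sqrt{\widetilde{\Ti}(r)}$, which forces $\widetilde{\Ti}(R)\approx\Ti(R)/(\log\Ti(R))^2$ and defeats the final claim $\widetilde{\Ti}(R)\geq c\,\Ti(R)$. The missing idea is precisely what Lemma~\ref{e:control2} provides: for a fixed $\e_0$, iterate the peel so that the \emph{time windows} $T_\ell$ decay geometrically, making $|\mc G_\ell|t_\ell T_\ell^{-1}$ decay geometrically regardless of how fast $\ti$ grows.
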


The proof of this result hinges on two lemmas. To state the first one, we introduce a slight adaptation of ~\cite[Definition 3]{CG19dyn}. Let $\e_0>0$, {$\digamma>0$,} $\ti_0:[\e_0, +\infty) \to [1, +\infty)$, and $f:[0,\infty)\to [0,\infty)$.
We say a set $A_0$ is \emph{$(\e_0,\ti_0,\digamma, f)$ controlled up to time $T$}
provided it is $(\e_0, \ti_0, \digamma)$ controlled up to time $T$ in the sense of ~\cite[Definition 3]{CG19dyn} except that we replace {the condition on $r$ by
\begin{equation}
\label{e:point 1}
0<r<\tfrac{1}{\digamma}e^{-\digamma \Lambda T-f(T)}r_0
\end{equation}
and replace} point (3) by 
\begin{equation}\label{e:point 3}
\inf_{k}R_{1,k} \geq {\tfrac{1}{4}e^{-f(T)}}\inf_i R_{0,i}.
\end{equation}

Next, fix $E \in [a,b]$. 
Since $H$ is $ {(\ti,}\Ti)$ non-recurrent in the window $[a,b]$ {at scale ${R_0}$,} for all $\rho\in \SE^H$ there exists {a choice of $\pm$} such that for all $A \subset {B\sub{\SE^H}(\rho,R_0)}$, {$0<R<R_0$}, {$\e>0$}, and $T>{\ti(\e)}$
\begin{equation}\label{e: epsilon}
\mu\sub{\SE^H} \Big( B\sub{\SE^H}\!\Big({\smalleq{ \mc{R}^{{e^{-f(T)}}R}\sub{A,{\pm}}({\ti(\e)},T)}}\,,\, e^{-f(T)}R\Big) \leq {\e} \,\mu\sub{\SE^H}(B\sub{\SE^H}(A,R) \Big),
\end{equation}
{with $f$ as in Lemma~\ref{l:sublog}.}
 Then, extract a finite cover of $\SE^H$ {by balls $\tilde{B}_\rho=B(\rho,R_{{0}}/2)$} and set
\begin{equation}\label{e:collection A_e2}
{\tilde{\mc{A}}}\sub{E}:=\{{\tilde{B}}_{\rho_i}\}_{i=1}^K, 
\qquad \text{and}\qquad
\mc{A}\sub{E}:=\{{B}_{\rho_i}\}_{i=1}^K, 
\end{equation} 
{where ${B}_\rho=B(\rho,R_{{0}})$.}
{Note that, again using that $H$ is non-recurrent with at scale $R_0$}, we may assume $K\leq C_n R_0^{1-n}$ where $C_n$ is a constant depending only on $n$.

%%%%%%%%%%%%%%%%%%%%%%%%%%%%%%%%%%%%%%%%%%%%%%%%%%%%%%%%%%%%%%%%%%%%%%%%%%%%%%%
\begin{lemma}\label{l:control}
 Let $H$, {$\ti$ and} $\Ti$ be as in Proposition \ref{p:non rec implies cov} and $f(T):= -\log (\Ti^{-1}(T))$. 
 Then, there exist {$c_n>0$ depending only on $n$} and $\digamma>0$ such that for all {$E\in[a,b]$ and} ${T>1}$ every ball in $\mc{A}\sub{E}$ is $(0,{\ti_0},\digamma, f)$ controlled up to time $T$ {with $\ti_0(\e)=\ti(c_n\e)$}.
\end{lemma}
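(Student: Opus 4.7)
The plan is to unpack the modified controlled property from \cite[Definition 3]{CG19dyn} (together with the replacements \eqref{e:point 1} and \eqref{e:point 3}) and verify each of its requirements directly from the non-recurrence hypothesis \eqref{e: epsilon}. Fix $E\in[a,b]$ and a ball $B_\rho=B(\rho,R_0)\in\mc{A}\sub{E}$. By the choice of $\mc{A}\sub{E}$ and the assumption that $H$ is $(\ti,\Ti)$ non-recurrent in the window $[a,b]$ at scale $R_0$, there is a choice of sign $\pm$ at $\rho$ such that \eqref{e: epsilon} holds for every $A\subset B\sub{\SE^H}(\rho,R_0)$, every $0<R<R_0$, every $\e>0$, and every time $T>\ti(\e)$, with the refinement scale $e^{-f(T)}R$, where $f(T)=-\log\Ti^{-1}(T)$ is precisely the function appearing in \eqref{e:point 3}. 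This is the exact matching that makes the proof feasible: the refinement scale in \eqref{e: epsilon} has been calibrated to the $e^{-f(T)}$ factor in the modified definition.

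Next, I would run the iterative covering construction required by the controlled property. Starting from the trivial cover $\{B_\rho\}$ at scale $R_{0,0}=R_0$, I set up inductively, at each stage, a cover of the set of points in $B_\rho$ that have not yet been seen to propagate away up to time $T$; at stage $i+1$ I refine this cover at scale $e^{-f(T)}R_{i,k}$ and apply \eqref{e: epsilon} with parameter $c_n\e$ to the union of the stage-$i$ balls. The recurrent piece given by $\mc{R}\sub{A,\pm}^{e^{-f(T)}R}(\ti(c_n\e),T)$ becomes the `surviving' part carried to the next stage, while the complementary piece is discarded as non-recurrent, yielding \eqref{e:point 3} directly since each surviving radius is at least $\tfrac{1}{4}e^{-f(T)}$ times the predecessors (the factor $1/4$ absorbs the standard bookkeeping losses when replacing the union of balls by a Vitali-type subcover). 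The constant $c_n$ arises when passing between the measure of a union of balls of radius $e^{-f(T)}R$ and the sum of the individual measures on the $(n-1)$-dimensional manifold $\SE^H$: the bounded overlap multiplicity in a Vitali-type refinement introduces a factor depending only on $n$, which we absorb by replacing $\e$ by $c_n\e$ throughout, leading to $\ti_0(\e)=\ti(c_n\e)$.

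To pin down the constant $\digamma$, I would use the maximal expansion rate of the Hamiltonian flow. The scale condition \eqref{e:point 1}, namely $r<\tfrac{1}{\digamma}e^{-\digamma\Lambda T-f(T)}r_0$, dictates the tightest admissible radius. Choosing $\digamma$ larger than the expansion factors obtained by differentiating $\varphi_t$ on the energy shell $p^{-1}([a,b])$ (using that the flow is fixed and depends only on $p$ and the window $[a,b]$), one guarantees that each ball in the iterative cover, after flowing up to time $T$, still has diameter bounded by a constant times $e^{\digamma\Lambda T}r$, so that the geometry of the cover remains consistent with the refinement scale $e^{-f(T)}R$. The monotonicity part of the statement, that $\widetilde{\Ti}(R)\geq c\Ti(R)$ when $\Ti$ is sub-logarithmic, would follow from part (1) of Lemma~\ref{l:sublog}: the price paid in rescaling $R\mapsto e^{-f(T)}R$ at each iteration only changes the effective timescale by a multiplicative constant when $\Ti$ grows at most logarithmically.

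The main obstacle is bookkeeping: one must keep track simultaneously of (i) the radii $R_{i,k}$ across iteration levels to verify \eqref{e:point 3}, (ii) the accumulated measure in the recurrent piece to confirm the decay inequality of the controlled definition, and (iii) the compatibility with the modified scale bound \eqref{e:point 1}, all while applying \eqref{e: epsilon} at the single matched scale $e^{-f(T)}R$. The key insight that unlocks this is that $f$ was chosen precisely so that \eqref{e: epsilon} sees exactly the scale appearing in the modified definition, so the iteration is consistent at each stage; the rest is geometric overlap control on $\SE^H$, which costs only the $n$-dependent constant $c_n$.
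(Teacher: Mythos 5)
There is a genuine gap: you have misidentified what the ``controlled'' property requires. In \cite[Definition~3]{CG19dyn} (as modified here by~\eqref{e:point 1} and~\eqref{e:point 3}), being $(\e_0,\ti_0,\digamma,f)$ controlled is a \emph{single-step} universal covering statement: for \emph{every} subset $A_1\subset A_0$ and \emph{every} admissible cover $\{B_{0,i}\}$ of $A_1$, one must exhibit a refined cover $\{B_{1,k}\}$ satisfying three conditions --- a measure decay estimate (point~(2)), a radius lower bound (point~(3), here~\eqref{e:point 3}), and, crucially, the \emph{non-self-looping} property of the tube $\Lambda^\tau_{A_1\setminus\cup_kB_{1,k}}(r)$ on the interval $[\ti_0(\e),T]$ (point~(1)). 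Your proposal instead runs an iterative refinement ``at each stage,'' which is the content of the \emph{next} lemma (Lemma~\ref{e:control2}), not of Lemma~\ref{l:control}. In particular you never verify the universal quantifier over $A_1$ and over covers, nor do you establish point~(1): you describe the complementary piece as ``discarded as non-recurrent,'' but the lemma requires showing that the tube over $A_1\setminus\cup_kB_{1,k}$ is $[\ti_0(\e),T]$ non-self looping, and this is where $\digamma$ earns its keep. The actual argument is a contradiction: if two points of the tube were connected by the flow in time $t\in[\ti_0(\e),T]$, then (tracking the expansion by $e^{\Lambda(|t|+2\tau+2r)}$ and using the smallness $r<\tfrac{1}{\digamma}e^{-\digamma\Lambda T-f(T)}R$) the base point would land inside $\mc{R}^{e^{-f(T)}R}_{A_1,+}(\ti_0(\e),T)\subset\cup_iB_{1,i}$, contradicting its membership in $A_1\setminus\cup_kB_{1,k}$. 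Your description of $\digamma$ as ensuring ``the geometry of the cover remains consistent with the refinement scale'' does not capture this.

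Two further issues: (i) to verify point~(2) you need the Vitali-type comparison $N_0R^{n-1}\leq C_n\sum_iR_{0,i}^{n-1}$ \emph{and} the bounded overlap $\mathfrak D_n$ of the cover of $\SE^H$, which together produce $\ti_0(\e)=\ti(\e/(C_n\mathfrak D_n))$; your proposal gestures at a single overlap constant $c_n$ without separating these two sources, making the bookkeeping unverifiable as written. (ii) The final sentence of your proposal, about the monotonicity $\widetilde{\Ti}(R)\geq c\Ti(R)$, belongs to the conclusion of Proposition~\ref{p:non rec implies cov}, not to Lemma~\ref{l:control}; including it here indicates that you are proving a different statement than the one asked.
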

%%%%%%%%%%%%%%%%%%%%%%%%%%%%%%%%%%%%%%%%%%%%%%%%%%%%%%%%%%%%%%%%%%%%%%%%%%%%%%%
\begin{proof}

{Let $E\in[a,b]$.} 
Let $A_0:=B_{\rho_0}$ for some $B_{\rho_0}\in \mc{A}\sub{E}$, {$\e_1>0$}, $\Lambda {>} \Lambda_{\max}$, and $0<\tau<\tfrac{1}{2}\tau\sub{\inj_H}$. 
Let $T> {1}$ and $0\leq {\tilde{R}}_0 \leq \frac{1}{\digamma}e^{-\digamma \Lambda T }$ for $\digamma>{2R_0^{-1}}$ to be determined later. Let $0<r_0<{\tilde{R}}_0$. Suppose $A_1\subset A_0$ and $\{B_{0,i}\}_{i=1}^N$ are balls centered in $A_0$ with radii $R_{0,i}\in [r_0,{\tilde{R}}_0]$ such that 
$
A_1\subset \cup_{i=1}^N B_{0,i}\subset A_0.
$

Let $R:=\tfrac{1}{2}\inf_i R_{0,i}$. There exist $C_n>0$, depending only on $n$, and a collection of balls $\{\tilde{B}_{0,i}\}_{i=1}^{N_0}$ of radius $R$, such that 
\begin{equation}\label{e:hotdog}
A_1\subset \bigcup_{i=1}^{N_0}\tilde{B}_{0,i},\qquad N_0R^{n-1}\leq C_n\sum_{i=1}^NR_{0,i}^{n-1}.
\end{equation}

Fix $0\leq r \leq \frac{1}{\digamma}e^{-\digamma \Lambda T{-f(T)} }r_0$.
Next, let $\{B(q_j, r)\}_{j\in \mc{J}} \subset \SE^H$ be a cover of $\SE^H$ by balls of radius $r$ such that there are at most $\mathfrak{D}_n$ balls over each point in $\SE^H$, {where $\mathfrak{D}_n>0$ depends only on $n$}.
{Assume, without loss of generality, that \eqref{e: epsilon} holds for $\rho_0$ with the choice $\pm=+$.}
Next, set $\mc{J}\sub{\!A_1}:=\{j \in \mc{J}:\; B(q_j, {\frac{1}{2}}e^{-f(T)}R)\cap \mc{R}^{{e^{-f(T)}}R}\sub{{A_1, +}}(\ti{(\e_1)},T)\neq \emptyset\}$.
Defining the collection
$$
\{B_{1,i}\}_{i=1}^{N_1}:=\Big\{ B\sub{{\SE^H}}\big (q_j, \tfrac{1}{2}e^{-f(T)}R\big):\; j \in \mc{J}\sub{\!A_1} \Big\},
$$
we have
$
\bigcup_{i=1}^{N_1}B_{1,i}\subset B\sub{\SE^H}\!\Big({\smalleq{ \mc{R}^{{e^{-f(T)}}R}\sub{{A_1, +}}({\ti(\e_1)},T)}}\,,\, e^{-f(T)}R\Big). 
$
Then, letting $R_{1,i}:=\tfrac{1}{2}e^{-f(T)}R$, we have $R_{1,i}\in [0,\tfrac{1}{4}{\tilde{R}}_0]$, and using that $R<R_0/2$ the bound in \eqref{e: epsilon} {applied to $A_1$} yields
\begin{equation}\label{e:apples}
\sum_{i=1}^{N_1}R_{1,i}^{n-1} \leq {\e_1}\,\mathfrak{D}_n\, \mu\sub{\SE^H}(B\sub{\SE^H}(A_1,R) \Big).
\end{equation}
Next, {by \eqref{e:hotdog}} note that 
$
B\sub{\SE^H}(A_1,R)\subset \bigcup_{i=1}^{N_0}2\tilde{B}_{0,i},
$
where $2\tilde{B}_{0,i}$ denotes the ball with the same center as $\tilde{B}_{0,i}$ but {with radius $2R$}. {Using \eqref{e:hotdog} again} there is $C_n>0$ such that 
\begin{equation}\label{e:oranges}
\mu\sub{\SE^H}(B\sub{\SE^H}(A_1,R))\leq {\mu\sub{\SE^H} \Big(\bigcup_{i=1}^{N_0}2\tilde{B}_{0,i}\Big)} \leq C_n\sum_{i=1}^N R_{0,i}^{n-1}.
\end{equation}

{Let ${\e:=\e_1} C_n \mathfrak{D}_n$.} Combining \eqref{e:apples} and \eqref{e:oranges} yields point $(2)$ of~\cite[Definition 3]{CG19dyn} {with $\ti_0(\e)=\ti(\e/(C_n\mathfrak{D}_n))$}. By the definition of $R$, we also note that point $(3)$, which was replaced by \eqref{e:point 3}, also holds. 

It remains to check point (1) i.e. there is $\digamma>0$ such that $\Lambda^\tau_{A_1\setminus \cup_{k}B_{1,k}}(r)$ is $[\ti_0{(\e)},T]$ non-self looping for
$
0<r<\frac{1}{\digamma}e^{-\digamma \Lambda T{-f(T)}}R.
$
For this, suppose $\rho_1,\rho_2\in \Lambda^{\tau}_{A_1\setminus \cup_{k}B_{1,k}}(r)$ and $t\in[{\ti_0(\e)},T]$ such that 
$
\varphi_t(\rho_1)=\rho_2.
$
Then, there are $s_1,s_2\in{[-\tau-r,\tau+r]}$, $q_1,q_2\in A_1\setminus \cup_k B_{1,k}$ such that $d(\rho_i,\varphi_{s_i}(q_i))<{r}$.
In particular, {there is $C\sub{0}>0$ depending only on $(M,p,a,b,\Lambda)$ such that} 
%\marginpar{\je{seems there is a small error in our other paper here. Note that $C\sub{0}$ should depend on how close $\Lambda$ is to $\Lambda_{\max}$}}
$$
d(\varphi_{s_2-t-s_1}(q_2), A_1)<(1+C\sub{0}e^{\Lambda(|t|+{2\tau+2r})})r.
$$
Finally, let $\digamma>0$ be large enough so that 
$\frac{1}{\digamma}e^{-\digamma \Lambda T}<{\min((1+C\sub{0}e^{\Lambda(|T|+{2\tau+2r})})^{-1},R_0/2)}$. Note that the choice of $\digamma$ does not need to depend on $T$.
Then, since
$
r<(1+C\sub{0}e^{\Lambda(|{T}|+{2\tau+2r})})^{-1}{e^{-f(T)}}R,
$
we have 
$q_2\in \mc{R}^{{e^{-f(T)}}R}\sub{A_1{,+}}({\ti_0(\e)},T)$, which is a contradiction since {$ \mc{R}^{{e^{-f(T)}}R}\sub{A_1{,+}}({\ti_0(\e)},T)\subset \cup_i B_{1,i}$.}
\end{proof}

%%%%%%%%%%%%%%%%%%%%%%%%%%%%%%%%%%%%%%%%%%%%%%%%%%%%%%%%%%%%%%%%%%%%%%%%%%%%%%%
In what follows we fix $1<\beta_0 < \ep_0^{-1}$ and define
$$
{\bf{F}}(T):=\sum_{k=0}^{\log_{\beta_0}T}f\big (\beta_0^{-k}\,T\big ). 
$$

%%%%%%%%%%%%%%%%%%%%%%%%%%%%%%%%%%%%%%%%%%%%%%%%%%%%%%%%%%%%%%%%%%%%%%%%%%%%%%%

\begin{lemma}\label{e:control2}
Let $B\subset \SE^H$ be a ball of radius $\delta>0$. Let ${0<\e_0<1}$, $\ti_0:[\e_0, +\infty) \to [1, +\infty)$, $f:[0,\infty)\to [0,\infty)$ increasing with $f(e^{-x})\in L^1([0,\infty))$, $T_0>0$, and $\digamma>0$, such {that $B$ can be $(\e_0, {\ti_0}, \digamma,f)$-controlled up to time $T_0$.}
Let $0<m<\frac{\log T_0-\log {\ti_0({\e_0})}}{\log \beta_0}$ be a positive integer, {$\Lambda> \Lambda_{\max}$}, 
\[
0< {\tilde{R}}_0\leq {\min}\Big\{{\tfrac{1}{\digamma}}e^{-{\digamma}\Lambda T_0}, {\tfrac{\delta}{10}}\Big\},
\qquad 
0<r_1<{\tfrac{1}{5\digamma}}e^{-({\digamma}\Lambda T_0+\Decay+{f(T_0)})}{\tilde{R}}_0,
\]
and $B_0\subset B$ with $d(B_0, B^c)>{\tilde{R}}_0$. Let $0<\tau<\tau_0$ and suppose $\{\Lambda_{_{\rho_j}}^\tau(r_1)\}_{j=1}^{N_{r_1}}$ is a $({\mathfrak{D}},\tau, r_1)$ good cover of $\SigH$ and set
$
\mc{E}:=\{j \in \{1, \dots, {N_{r_1}}\}: \Lambda_{\rho_j}^\tau(r_1)\cap \Lambda^\tau_{B_0}(\tfrac{r_1}{5})\neq \emptyset\}.
$

 Then, there exist $C_{_{\!M,p}}>0$ depending only on $(M,p)$ and sets $\{\mc{G}\sub{E,\ell}\}_{\ell =0}^m\subset \{1,\dots N_{r_1}\}$, $\BE\subset \{1,\dots N_{r_1}\}$ so that 
 $\mc{E}\;\subset\; \BE\cup \displaystyle\cup_{\ell=0}^m \mc{G}\sub{E,\ell}$ and
\begin{align}
%&\bullet\; \mc{E}\;\subset\; \BE\cup \displaystyle\bigcup_{\ell=0}^m\ya{\mc{G}\sub{E,\ell}},\label{e:union}\\
&\bullet \;\bigcup_{i\in \mc{G}\sub{E,\ell}}\Lambda_{\rho_i}^\tau(r_1)\text{\; is \;} \big[\ti_0(\e_0),\beta_0^{-\ell}T_0\big]\text{\; non-self looping {for $\ell \in\{0, \dots, m\}$},} \label{e:nsl} \\
&\bullet\; |\mc{G}\sub{E,\ell}|\leq C_{_{\!M,p}}{\mathfrak{D}}\e_0^\ell {\delta^{n-1}} r_1^{1-n} \;\;\; {\text{for every}\;\; \ell \in\{0, \dots, m\}}, \label{e:count good}\\ \ \medskip
&\bullet\; |\BE|\leq C_{_{\!M,p}}{\mathfrak{D}} \e_0^{m+1}{\delta^{n-1}} r_1^{1-n}\Big.. \label{e:count bad}
 \end{align}
 \end{lemma}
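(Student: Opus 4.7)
\emph{Proof proposal for Lemma \ref{e:control2}.}
The plan is to iterate the $(\e_0,\ti_0,\digamma,f)$-controlled hypothesis along the sequence of decreasing times $T_\ell:=\beta_0^{-\ell}T_0$, for $\ell=0,\dots,m$. By the bound on $m$ we have $T_m>\ti_0(\e_0)$, so the controlled definition is applicable at every time $T_\ell$.

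I would set $A_0^{(0)}:=B_0$, covered by the single ball $\tilde B_\rho$ of radius $\tilde R_0$. At step $\ell$, given that $A_0^{(\ell)}\subset B$ is covered by balls $\{B_{0,i}^{(\ell)}\}$ of radii in $[R_\ell,\tilde R_\ell]$, I would apply the $(\e_0,\ti_0,\digamma,f)$-controlled property with $A_1=A_0^{(\ell)}$ and time $T_\ell$. This yields a new collection of balls $\{B_{1,k}^{(\ell)}\}$ whose union I call $A_0^{(\ell+1)}$, with the properties that
(i) the tubes $\Lambda^\tau_q(r)$ over $A_0^{(\ell)}\setminus A_0^{(\ell+1)}$ are $[\ti_0(\e_0),T_\ell]$ non-self looping for all $r<\tfrac{1}{\digamma}e^{-\digamma\Lambda T_\ell-f(T_\ell)}R_\ell$,
(ii) the radii at level $\ell+1$ satisfy $R_{\ell+1}\geq \tfrac14 e^{-f(T_\ell)}R_\ell$, and
(iii) the covering sum of $(n-1)$-powers of radii shrinks by a factor of $\e_0$. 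The choice of $r_1$ in the hypothesis, together with the total shrinking $\prod_\ell e^{-f(T_\ell)}\geq e^{-\Decay}$ and the factor $e^{-\digamma\Lambda T_0}$, guarantees that $r_1$ lies below the threshold $\tfrac{1}{\digamma}e^{-\digamma\Lambda T_\ell-f(T_\ell)}R_\ell$ at every level, so point (i) is available throughout.

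Next, I would define the index sets. Let
\[
\mc{G}\sub{E,\ell}:=\bigl\{j\in\mc{E}\,:\,\Lambda^\tau_{\rho_j}(r_1)\cap\Lambda^\tau_{A_0^{(\ell)}\setminus A_0^{(\ell+1)}}(\tfrac{r_1}{5})\neq\emptyset\bigr\},\qquad
\BE:=\bigl\{j\in\mc{E}\,:\,\Lambda^\tau_{\rho_j}(r_1)\cap\Lambda^\tau_{A_0^{(m+1)}}(\tfrac{r_1}{5})\neq\emptyset\bigr\}.
\]
Since $B_0=\bigl(\bigsqcup_{\ell=0}^m(A_0^{(\ell)}\setminus A_0^{(\ell+1)})\bigr)\cup A_0^{(m+1)}$, these sets cover $\mc{E}$. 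The non-self looping property \eqref{e:nsl} follows from point (i) together with the fact that if $\Lambda^\tau_{\rho_j}(r_1)$ meets $\Lambda^\tau_{A_0^{(\ell)}\setminus A_0^{(\ell+1)}}(r_1/5)$, then $\rho_j$ sits within distance $O(r_1)$ of $A_0^{(\ell)}\setminus A_0^{(\ell+1)}$ on $\hyp$, so the tube is contained in $\Lambda^\tau_{A_0^{(\ell)}\setminus A_0^{(\ell+1)}}(cr_1)$ with $cr_1$ still below the threshold in (i).

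For the counting bounds \eqref{e:count good} and \eqref{e:count bad}, I would use that a $(\mathfrak{D},\tau,r_1)$-good cover has at most $C_{M,p}\mathfrak{D}\,\mu\sub{\SE^H}(B\sub{\SE^H}(S,2r_1))\,r_1^{1-n}$ tubes intersecting any set $S\cap\hyp$. Iterating (iii) gives that at level $\ell$ the sum of $(n-1)$-powers of radii of the balls covering $A_0^{(\ell)}$ is at most $\e_0^\ell$ times the analogous sum at level $0$, which is $\lesssim \delta^{n-1}$ (since $\tilde R_0\leq\delta/10$). This yields $\mu\sub{\SE^H}(A_0^{(\ell)})\lesssim \e_0^\ell \delta^{n-1}$, and hence the stated bounds on $|\mc{G}\sub{E,\ell}|$ and $|\BE|\lesssim \mathfrak{D}\e_0^{m+1}\delta^{n-1}r_1^{1-n}$.

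The main obstacle will be the careful bookkeeping of scales: verifying that the threshold condition in point (i) of controlled holds uniformly at every level $\ell\in\{0,\dots,m\}$ requires combining the shrinking factor $e^{-\sum_\ell f(T_\ell)}\geq e^{-\Decay}$ with the exponential $e^{-\digamma\Lambda T_0}$ (which dominates $e^{-\digamma\Lambda T_\ell}$ since $T_\ell\leq T_0$), and this is precisely what the constraint on $r_1$ in the statement encodes. A secondary subtlety is that the radii in (ii) might a priori shrink by $4^{-m}$, but this factor is absorbed in $\digamma$ by enlarging $\digamma$ if necessary, since $m\leq \log_{\beta_0}(T_0/\ti_0(\e_0))$ is controlled by $T_0$.
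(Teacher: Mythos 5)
Your proposal reconstructs exactly the iterative scheme underlying \cite[Lemma 3.2]{CG19dyn}, which is what the paper's proof points to (the paper simply cites that lemma and records the one modification: track the factor $e^{-f(T_\ell)}$ through the radius-lowerbound recursion, replacing $r_0=e^{2\digamma\Lambda T_0}\tilde R_0$ by $r_0=e^{-\Decay}\tilde R_0$). So the approach is the same: iterate the controlled property along $T_\ell=\beta_0^{-\ell}T_0$, use the bound on $m$ to keep $T_\ell>\ti_0(\e_0)$, harvest non-self-looping over $A_0^{(\ell)}\setminus A_0^{(\ell+1)}$ at each level, propagate $R_{\ell+1}\geq\tfrac14 e^{-f(T_\ell)}R_\ell$, and count tubes via the good-cover property applied to the level-$\ell$ measure bound.

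Two slips worth tightening. First, $B_0$ cannot be covered by \emph{one} ball $\tilde B_\rho$ of radius $\tilde R_0$: since $\tilde R_0\leq \delta/10$ while $B_0$ may have radius nearly $\delta$, the recursion must be initialized with a covering of $B_0$ by balls of radii in $[r_0,\tilde R_0]$; this is in fact what you implicitly use later when invoking $\sum_i R_{0,i}^{n-1}\lesssim\delta^{n-1}$ (a single ball would give $\tilde R_0^{n-1}$, which would contradict your stated level-$0$ sum). Second, the absorption of the $4^{-m}$ loss: you cannot enlarge $\digamma$ inside this lemma, as $\digamma$ is fixed in the hypotheses; the correct statement is that the $(\e_0,\ti_0,\digamma,f)$-controlled property is monotone in $\digamma$ (the constraint on $r$ only tightens as $\digamma$ grows), so the caller — here Proposition~\ref{p:non rec implies cov} — may WLOG take $\digamma$ large enough that $4^m\leq e^{\digamma\Lambda T_0}/5$, using that $m$ is bounded by $\log_{\beta_0}(T_0/\ti_0(\e_0))$, before invoking this lemma. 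You correctly identified this as the one genuine scale-bookkeeping subtlety; just be explicit that the enlargement happens upstream of the lemma, not within it.
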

 
 %%%%%%%%%%%%%%%%%%%%%%%%%%%%%%%%%%%%%%%%%%%%%%%%%%%%%%%%%%%%%%%%%%%%%%%%%%%%%%%
\begin{proof}
The proof is the same as that of~\cite[Lemma 3.2]{CG19dyn}, with a very minor modification. Namely, we {replace $R_0$ by $\tilde{R}_0$ and} put $r_0=e^{-\Decay}{\tilde{R}}_0$ instead of $r_0=e^{2{\bf D}\Lambda T_0}{\tilde{R}}_0$.
We then obtain the following instead of the leftmost equation in~\cite[(3.21)]{CG19dyn}
$$
\inf_{k}R_{2,k}\geq \tfrac{1}{4}e^{-f(T_0)}\inf_iR_{1,i}.
$$
Which in turn changes the leftmost equation in~\cite[(3.22)]{CG19dyn} to 
$$
\inf_{k}R_{\ell,k}\geq e^{-\Decay}{\tilde{R}}_0=r_0.
$$
This follows from the argument below~\cite[Remark 8]{CG19dyn}, 
that yields, since $\ell \leq m$,
$$
\inf_{k}R_{\ell,k}\geq \frac{1}{4^\ell}\prod_{j=0}^\ell e^{-f(\beta_0^{-j}T_0)}R_0=\frac{1}{4^\ell} e^{-\sum_{j=0}^\ell f(\beta_0^{-j}T_0)}{\tilde{R}}_0\geq e^{-\Decay}{\tilde{R}}_0.
$$
\vspace{-1cm}

\end{proof}
%%%%%%%%%%%%%%%%%%%%%%%%%%%%%%%%%%%%%%%%%%%%%%%%%%%%%%%%%%%%%%%%%%%%%%%%%%%%%
{With Lemmas \ref{l:control} and \ref{e:control2} in place, we are now ready to prove Proposition \ref{p:non rec implies cov}.}
\begin{proof}[Proof of Proposition \ref{p:non rec implies cov}]
Let $\{\T_j(R)\}_{j \in \mc{J}(h)}=\{\Lambda_{_{\rho_j}}^\tau(R)\}_{j\in \mc{J}(h)}$ be a $(\mathfrak D, \tau, R)$ good covering of $\Sab^H$.
Let $E \in [a,b]$ and $\mc{A}\sub{E}:=\{B_{\rho_i}\}_{i=1}^K$ be the covering of $\SE^H$ as described in \eqref{e:collection A_e2}. 
{Let $\ti_0$ be as in Lemma~\ref{l:control} and fix $0<\e_0<\frac{1}{2}$}. There exists $\digamma>0$ such that each ball in $\mc{A}\sub{E}$ can be $(\e_0,\ti_0, \digamma, f)$ controlled for time $T>1$. 

We then apply Lemma \ref{e:control2} to each ball in $\mc{A}\sub{E}$. {Let $\delta_0:={R_0/2}$ be the radius of the balls in $\mc{A}\sub{E}$, and
 ${\Ti_0=\Ti_0(R)}$ such that $\Ti_0>\ti_0(\e_0)$ and}
\begin{equation}
\label{e:condR}
{ R\leq \frac{1}{10\digamma^2}e^{-\big(2\digamma \Lambda \Ti_0(R)+{\bf{F}}(\Ti_0(R))+{f(\Ti_0(R))}\big)}.}
\end{equation}
Without loss of generality, we may assume $\digamma$ is large enough so that ${\tfrac{1}{\digamma}}e^{-{\digamma}\Lambda {\ti_0(\e_0)}}\leq {\tfrac{\delta_0}{10}}$.
Then, putting $ {\tilde{R}}_0= \tfrac{1}{\digamma}e^{-{\digamma}\Lambda T_0}$ in Lemma~\ref{e:control2}, and using condition~\eqref{e:condR} allows us to set $r_1={R}$ in Lemma \ref{e:control2} and apply it to each ball $B_{\rho_0}$ in $\mc{A}\sub{E}$. Let $\tilde B_{\rho_0}$ be the ball with the same center as $B_{\rho_0}$ but with a radius ${R_{{0}}/2}$ {so} that $d(\tilde B_{\rho_0}, B_{\rho_0}^c){=}R_0/2>{\tilde{R}_0}$. {Let $\tau_0>0$, $0<\tau<\tau_0$, and set} 
$\J\sub{E}^{\rho_0}({R})=\{j \in \J\sub{E}({R}):\; \Lambda_{_{\rho_j}}^\tau(R) \cap \Lambda_{_{\tilde B_{\rho_0}}}^\tau(\tfrac{1}{5}R)\neq \emptyset\}$, there is $C_{_{\!M,p}}>0$ and sets $\{{\mc{G}\sub{E,\ell}}\}_{\ell =0}^m\subset \J\sub{E}({R})$, $\BE\subset \J\sub{E}({R})$ so that 
 $\J\sub{E}^{\rho_0}({R})\;\subset\; \BE\cup \displaystyle\cup_{\ell=0}^m{\mc{G}\sub{E,\ell}}$, and \eqref{e:nsl}, \eqref{e:count good}, \eqref{e:count bad} hold. 
 
 Therefore, letting $T_\ell= \beta_0^{-\ell} {\Ti_0}$ and $t_\ell=\ti_0(\e_0)$ for $1 \leq \ell \leq m$, and setting $\mc{G}_{m+1}:=\BE$, $T_{m+1}=t_{m+1}=1$, yields that there exists ${\Cnr=\Cnr(M,p{,\ti})>0}$ such that 
$$
R^{\frac{n-1}{2}}\sum_{\ell=0}^{m+1}\bigg(\frac{|\G_\ell| t_\ell}{T_\ell}\bigg)^{\!\!1/2}
\leq \bigg(\frac{C\sub{M,p}\mathfrak{D} \delta_0^{{n-1}}}{{\Ti_0(R)}} \sum_{\ell=0}^{m+1}{(\beta_0\e_0)^\ell}\bigg)^{\tfrac{1}{2}}\leq \frac{\Cnr \mathfrak{D}^{\frac{1}{2}}}{\sqrt{\Ti_0(R)}}.
$$
The existence of $\Cnr>0$ is justified since $\beta_0\e_0<1$.
Repeating for each ball $B_{\rho_i} \in \mc{A}\sub{E}$ {and using $K\leq C_nR_0^{1-n}$,} proves that $H$ is $\Ti_0$ non-recurrent in the window $[a,b]$ via {$\tau_0$-coverings} {with constant $\Cnr C_nR_0^{1-n}$}.

By Lemma~\ref{l:sublog}, when $\Ti$ is sub-logarithmic and $0<a<b$ we have
$
f(b)\geq \frac{b}{a}f(a).
$
In particular,
$$
\Decay=\sum_j f(2^{-j}T_0)\leq \sum_j 2^{-j}f(T_0)\leq 2f(T_0).
$$
Therefore, using $f(T)=-\log (\Ti^{-1}(T))$, there exists $c>0$ such that we may define
$$
\Ti_0(R)=c f^{-1}(\log R)\geq c \Ti(R).
$$
\end{proof}

\begin{remark}
We note that our definition of recurrence (Definition~\ref{d:non rec gral}) is equivalent to the following. There is $\digamma>0$ such that for all $\rho\in \SE^{H}$ there is $R_0>0$ such that {$B(\rho,R_0)$} is $(\e_0,\ti_0,\digamma,f)$ controlled with an additional small modification of the definition of $(\e_0,\ti_0,\digamma,f)$ controlled (see~\eqref{e:point 1} and~\eqref{e:point 3}): One needs to replace (1) by 
$$
\bigcup_{t_0\leq \pm t\leq T}\Lambda_{A_1\setminus \cup \tilde{B}_{1,k}}^\tau(r)\cap \Lambda_{A_1}^\tau(r)=\emptyset.
$$
To see these are equivalent, we identify ${B(\rho,R_0)}$ with $A_0$ and $A$ with $A_1$.

One can check that all of the proofs of being $(\e_0,\ti_0,\digamma, f)$ controlled in~\cite{CG19dyn} actually prove this slightly stronger condition with $f(T)=CT$ for some $C>0$.
\end{remark}

%%%%%%%%%%%%%%%%%%%%%%%%%%%%%%%%%%%%%%%%%%%%%%%%%%%%%%%%%%%%%%%%%%%%%%%%%%%%%%%
%%%%%%%%%%%%%%%%%%%%%%%%%%%%%%%%%%%%%%%%%%%%%%%%%%%%%%%%%%%%%%%%%%%%%%%%%%%%%%%
%%%%%%%%%%%%%%%%%%%%%%%%%%%%%%%%%%%%%%%%%%%%%%%%%%%%%%%%%%%%%%%%%%%%%%%%%%%%%%%
%%%%%%%%%%%%%%%%%%%%%%%%%%%%%%%%%%%%%%%%%%%%%%%%%%%%%%%%%%%%%%%%%%%%%%%%%%%%%%%
\section{Basic Estimates for averages over submanifolds}
\label{s:basic}
%%%%%%%%%%%%%%%%%%%%%%%%%%%%%%%%%%%%%%%%%%%%%%%%%%%%%%%%%%%%%%%%%%%%%%%%%%%%%%%
%%%%%%%%%%%%%%%%%%%%%%%%%%%%%%%%%%%%%%%%%%%%%%%%%%%%%%%%%%%%%%%%%%%%%%%%%%%%%%%
%%%%%%%%%%%%%%%%%%%%%%%%%%%%%%%%%%%%%%%%%%%%%%%%%%%%%%%%%%%%%%%%%%%%%%%%%%%%%%%
%%%%%%%%%%%%%%%%%%%%%%%%%%%%%%%%%%%%%%%%%%%%%%%%%%%%%%%%%%%%%%%%%%%%%%%%%%%%%%%
Let $P(h) \in \Psi^m(M)$ be a self-adjoint semiclassical pseudodifferential operator, with classically elliptic symbol $p$.
Throughout this section we assume $H \subset M$ is a smooth submanifold of co-dimension $k$, and $a, b\in \re$ are such that $H$ is conormally transverse for $p$ in the window $[a,b]$. 

{As explained in \S\ref{s:outline}, we crucially view the kernel of the spectral projector $\1_{[t-s,t]}(P)$ as a quasimode for $P$. We are then able to use estimates from~\cite{CG19a} to {estimate} the error when the projector is smoothed at very small scales. This section is dedicated to adapting the estimates from~\cite{CG19a} to the current setup. }

All our estimates are made in terms of $(\mathfrak{D},\tau, R(h))$-good covers and $\delta$-partitions associated to them. {For the definition of a good cover see \eqref{e:good cover}.} {Note, in addition, that there is a constant $\mathfrak{D}_n$ depending only on $n$ such that we may work with a $(\mathfrak{D}_n,\tau, R(h))$ good cover~\cite[Lemma 2.2]{CG20Lp}~\cite[Proposition 3.3]{CG19a}.} 

We now define the concept of $\delta$-partitions.
{
Let $\tau>0$, $0< \delta <\tfrac{1}{2}$, and $R(h)\geq {h^\delta}$. Let $\{\T_j\}\sub{j\in\J(h)}$ be a $(\tau,R(h))$-cover for $\Sab^H$ {with $\T_j=\Lambda_{\rho_j}^\tau (R(h))$}, and for $E\in [a,b]$ 
let $\JE(h):=\J\sub{E}(R(h))$ as defined in \eqref{e:J_E}.}
We say 
\begin{equation}\label{e: chi H}
 \{\chi\sub{\T_j}\}_{j\in \JE(h)} \subset S_\delta(T^*M;[0,1])
\end{equation}
is a \emph{$\delta$-partition for $\SE^H$} associated to $\{\T_j\}_{j\in \J(h)}$ provided the families 
 $\{\chi_j\}_{j\in \JE(h)}$ and {$ \{h^{-1}[P,\chi_j]\}_{j\in \JE(h)}$ are} bounded in $S_\delta(T^*M;[0,1])$ and 
 \begin{gather*}
 \text{(1)} \supp \chi_j \subset \Lambda_{\rho_j}^\tau(R(h)),\text{ for all }j \in \JE(h),\qquad\text{(2)}
 \underset{j\in \JE(h)}{\sum} \chi_j \geq 1\text{ on }\Lambda_{\SE^H}^{\tau/2}(\tfrac{1}{2}R(h)).
\end{gather*}
{For the construction of such a partition we refer the reader to~\cite[Proposition 3.4]{CG19a}.}

%%%%%%%%%%%%%%%%%%%%%%%%%%%%%%%%%%%%%%%%%%%%%%%%%%%%%%%%%%%%%%%%%%%%%%%%%%%%%%%

{The next lemma controls the average of $Au$ over a submanifold $H$ {in terms of the $L^2$ masses of the bicharacteristic beams intersecting the microsupport of $A$.} Here, $u$ is a quasimode for $P$ and $A$ is a pseudodifferential operator. {When we apply this lemma, $u$ will be the kernel of the spectral projector onto a small window, and $A$ will either represent a localizer to a family of tubes or differentiation in one of the coordinates}. } 

To ease notation, for ${E} \in \re$ we write $P\sub{E}=P\sub{E}(h)$
\begin{equation}\label{e:pE}P\sub{E}:=P-E.\end{equation}
In addition, given $A \in \Psi_\delta^\infty(M)$, $\psi \in C^\infty_0(\re;[0,1])$, $E\in \re$, $h>0$, $C>0$, $C\sub{N}>0$, and $u \in \mc{D}'(M)$ we set $\alpha:=\frac{k-2m+1}{2}$ and
\begin{align}\label{e:Q remainder}
 Q^{A,\psi}_{E,h}(C, C\sub{N}, u)
 := Ch^{-\frac{1}{2}-\delta}\big\|\big(1-\psi \big(\tfrac{P\sub{E}}{h^\delta}\big)\big)P\sub{E}Au\big\|\sub{H_{\scl}^\alpha} \!\!\!+C\sub{N}h^N\Big(\|u\|_\LM+ \|P\sub{E}u\|\sub{H_{\scl}^\alpha}\Big).
\end{align}

We fix $\e_0>0$ and a continuous family $[a-\e_0,b+\e_0]\ni E\mapsto B\sub{E}\in \Psi_\delta^0(M)$ such that 
\begin{equation}\label{e:B_E}
\MSh(B\sub{E})\subset \Lambda^{\tau_0+{\e_0}}_{\SE^H}(3R(h)) \qquad \text{and}\qquad \MSh(I-B\sub{E})\cap \Lambda^{\tau_0+{\e_0}}_{\SE^H}(2R(h)))=\emptyset.
\end{equation}
This will serve as a microlocalizer to the region of interest. {We {recall the} constants $\mc{K}_0$, $\tau_{\inj}$, $\FR$ defined in \eqref{e:curvature}, \eqref{e:tau_inj}, and \eqref{e:FR} respectively.}

%%%%%%%%%%%%%%%%%%%%%%%%%%%%%%%%%%%%%%%%%%%%%%%%%%%%%%%%%%%%%%%%%%%%%%%%%%%%%%%
%%%%%%%%%%%%%%%%%%%%%%%%%%%%%%%%%%%%%%%%%%%%%%%%%%%%%%%%%%%%%%%%%%%%%%%%%%%%%%%
%%%%%%%%%%%%%%%%%%%%%%%%%%%%%%%%%%%%%%%%%%%%%%%%%%%%%%%%%%%%%%%%%%%%%%%%%%%%%%%

\begin{lemma} \label{l:mainEst} 
{There exist 
$
\tau_0=\tau_0(M,p,\tau_{\inj},\FR)>0)$ and$ R_0=R_0(M,p,k, \mc{K}_0, \tau_{\inj}, \FR)>0,
$
such that the following holds. 

Let $0<\tau<\tau_0$, $0<\delta<\tfrac{1}{2}$ and ${h^\delta} \leq R(h) \leq R_0$. For $h>0$ let $\{\T_j\}_{j\in \J(h)}$ be a $(\mathfrak{D}_n, \tau, R(h))$ good cover of $\Sab^H$. Let $\mc{V}\subset S_\delta(T^*M;[0,1])$ be bounded. Let $\psi \in C^\infty_0(\re;[0,1])$ with $\psi(t)=1$ for $|t| \leq \tfrac{1}{4}$ and $\psi(t)=0$ for $|t| \geq 1$.}
 Let $\ell\in \mathbb{R}$, $\mc{W}$ and $\widetilde{\mc{W}}$ be bounded subsets of $\Psi_\delta(M)$ and $\Psi^{\ell}_\delta(M)$ respectively, and $B\sub{E}$ {be as in \eqref{e:B_E}.} 
 
 Then, there exist $C\sub{0}=C\sub{0}(n,k,\FR,\mc{V},\mc{W},\widetilde{\mc{W}})$, $C>0$, and {for all $K>0$ there is} $h_0>0$, such that for all $N>0$ there exists $C\sub{N}>0$, with the following properties. For all $u\in \mc{D}'(M)$, $0<h<h_0$, $E\in[a-Kh,b+Kh]$, every $\delta$-partition $\{\chi\sub{\T_j}\}_{j\in \JE(h)}\subset \mathcal{V}$ associated to $\{\T_j\}_{j\in \JE(h)}$, and every $A\in \widetilde{\mc{W}}$ such that $B\sub{E}\frac{1}{h}[P,A]\in \mc{W}$,
\begin{align}
\label{e:mainEst}
h^{\frac{k-1}{2}}\Big|\int_{H}Au\,\ds{H}\Big|
&\leq C\sub{0}{R(h)^{\frac{n-1}{2}}}
\sum_{j \in \IE(h)} \bigg(\frac{\|Op_h(\tilde\chi\sub{\T_j})u\|_\LM}{\tau^{\frac{1}{2}}}+\frac{C}{h}\|Op_h(\tilde \chi\sub{\T_j})P\sub{E}u\|_\LM\bigg) \notag\\
&\hspace{1cm}+ Q^{A,\psi}_{E,h}(C, C\sub{N}, u).
%&\qquad +Ch^{-\frac{1}{2}-\delta}\big\|\big(1-\psi \big(\tfrac{P\sub{E}}{h^\delta}\big)\big)P\sub{E}Au\big\|_\Hm\\
%&\qquad+C\sub{N}h^N\Big(\|u\|_\LM+ \|P\sub{E}u\|_\Hm\Big).
\end{align}
Here, $\IE(h):=\{j \in \JE(h): \T_j \cap \MSh(A) \cap \Lambda_{\SE^H}^\tau(R(h)/2) \neq \emptyset\}$, $\psi \in S_\delta \cap C^\infty_c(T^*M;[0,1])$ is any symbol with $\supp \psi \subset \big(\Lambda^\tau_{\Sigma^H\sub{E}}(2h^\delta)\big)^c$, 
and for each $j \in \JE(h)$ we let $\tilde{\chi}\sub{\T_j}$ be any symbol in $S_\delta(T^*M;[0,1])\cap C^\infty_c(T^*M;[0,1])$ such that
$\tilde{\chi}\sub{\T_j}\equiv 1 $ on $\supp\chi\sub{\T_j}$ and $\supp \tilde{\chi}\sub{\T_j}\subset \T_j.$ In addition, if ${\widetilde{\mc{W}}}\subset \Psi_0^{\ell}(M)$, then $C\sub{0}=C\sub{0}(n,k,\FR,\mc{V},\widetilde{\mc{W}})$.
\end{lemma}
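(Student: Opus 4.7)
The plan is to adapt the geodesic beam averaging machinery of \cite{CG19a}, originally designed for exact quasimodes, to the present setting in which one bounds $\int_H A u\,\ds{H}$ for a general $u \in \mc{D}'(M)$ with only $\|u\|_\LM$ and $\|P\sub{E}u\|$ as inputs. First, I would split $u = \psi(P\sub{E}/h^\delta)u + (1-\psi(P\sub{E}/h^\delta))u$. The tail lives where $|P-E|\gtrsim h^\delta$ and $P\sub{E}$ is elliptic in the functional-calculus sense, so combined with the standard submanifold restriction bound $\|v|_H\|_{L^1(H)} \lesssim h^{-(k-1)/2}\|v\|_{H^\alpha_\scl}$ with $\alpha = \tfrac{k-2m+1}{2}$, it contributes the first piece of $Q^{A,\psi}_{E,h}$, while $O(h^\infty)$ functional-calculus tails furnish the $C\sub{N}h^N$ piece. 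Next, using the microlocalizer $B\sub{E}$ from \eqref{e:B_E}, I would write
$$\int_H A u\,\ds{H} = \int_H B\sub{E} A u\,\ds{H} + \int_H (I-B\sub{E}) A u\,\ds{H},$$
and absorb the second integral into $Q^{A,\psi}_{E,h}$: $\MSh(I-B\sub{E})$ is disjoint from the thick tube $\Lambda^{\tau_0+\e_0}_{\SE^H}(2R(h))$ containing $\Sab^H$ near energy $E$, so a parametrix for $P\sub{E}$ together with the restriction estimate closes this off.

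For the main term, I would invoke the $\delta$-partition $\{\chi\sub{\T_j}\}_{j\in\JE(h)}$ and decompose
$$B\sub{E} A u = \sum_{j\in\JE(h)} \Oph(\chi\sub{\T_j})\, B\sub{E} A u + O_N(h^N).$$
Only tubes meeting $\MSh(A)$ matter, reducing the sum to $\IE(h)$. The heart of the argument is the single-beam averaging bound, i.e.\ the analogue in the current language of the main beam estimates of \cite[\S 4--5]{CG19a}: for each $j\in\IE(h)$ and any test distribution $v$,
$$h^{\frac{k-1}{2}}\Bigl|\int_H \Oph(\chi\sub{\T_j})\,v\,\ds{H}\Bigr| \leq C\,R(h)^{\frac{n-1}{2}}\Bigl(\tau^{-1/2}\|\Oph(\tilde\chi\sub{\T_j})v\|_\LM + \tfrac{C}{h}\|\Oph(\tilde\chi\sub{\T_j})P\sub{E}v\|_\LM\Bigr).$$
Here $R(h)^{(n-1)/2}$ encodes the transverse cross-section of a tube, $\tau^{-1/2}$ is the gain from averaging a coherent state along a time-$\tau$ bicharacteristic arc, and the $P\sub{E}v$ term arises from propagating $v$ as a quasi-state. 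The hypotheses $\tau < \tau_0 \leq \tau\sub{\inj_H}$, the uniform second-fundamental-form bound $\mc{K}_0$, and the uniform conormal transversality constant $\FR$ are exactly what make this estimate hold with constants uniform in $E \in [a-Kh, b+Kh]$.

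Finally, I would apply the single-beam estimate with $v = u$ after commuting $A$ through $\Oph(\tilde\chi\sub{\T_j})$ on the right. The commutator $[\Oph(\tilde\chi\sub{\T_j}), A]$ is controlled microlocally via the bicharacteristic invariance of the symbol $\tilde\chi\sub{\T_j}$ together with the assumption $B\sub{E}\tfrac{1}{h}[P,A] \in \mc{W}$, so the output constants depend on $\mc{V}, \mc{W}, \widetilde{\mc{W}}$ only through a finite number of Calder\'on--Vaillancourt seminorms. Summing over $j \in \IE(h)$ and sweeping the remaining errors into $Q^{A,\psi}_{E,h}$ yields \eqref{e:mainEst}; the refinement when $\widetilde{\mc{W}} \subset \Psi^\ell_0(M)$ comes from commutators with $S_0$ symbols gaining a full factor of $h$, which removes the $h^{-\delta}$ losses native to the $S_\delta$ calculus. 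The main obstacle is precisely the bookkeeping in this last step: one must cleanly decompose $[\Oph(\tilde\chi\sub{\T_j}), A]$ into a ``bicharacteristic'' part captured by $B\sub{E}\tfrac{1}{h}[P,A]$ and a genuinely off-tube remainder, and check that the latter is both small enough and uniform enough -- across $E \in [a-Kh,b+Kh]$, across all $\delta$-partitions drawn from the bounded family $\mc{V}$, and across operators in $\widetilde{\mc{W}}$ -- to be reabsorbed into $Q^{A,\psi}_{E,h}$.
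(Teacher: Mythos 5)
Your overall strategy---invoke the geodesic-beam estimates from \cite{CG19a}, decompose into tubes via a $\delta$-partition, sweep off-support and off-energy-shell contributions into $Q^{A,\psi}_{E,h}$, and then convert $Au$ to $u$---is the same one the paper follows (the paper first establishes the $A=I$ case by directly combining the cited estimates, then feeds $Au$ into it). However, there is a genuine gap in your description of the final reduction step. You say that the commutator $[\Oph(\tilde\chi\sub{\T_j}),A]$ is what is controlled by $B\sub{E}\tfrac{1}{h}[P,A]\in\mc{W}$, ``via the bicharacteristic invariance of the symbol $\tilde\chi\sub{\T_j}$.'' That is not the commutator that arises, and that mechanism would not give the required bound. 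After applying the $A=I$ estimate to $v=Au$, the right-hand side carries $\|Op_h(\tilde\chi\sub{\T_j})Au\|_\LM$ and $\|Op_h(\tilde\chi\sub{\T_j})P\sub{E}Au\|_\LM$. The first is handled by a pure elliptic parametrix, $Op_h(\chi\sub{\T_j})A = F_1\,Op_h(\tilde\chi\sub{\T_j})$, using only the microsupport inclusion $\MSh(Op_h(\chi\sub{\T_j})A)\subset \Ell(Op_h(\tilde\chi\sub{\T_j}))$---no commutator, no invariance of the tube symbol, and no use of the hypothesis on $B\sub{E}\frac{1}{h}[P,A]$. The second is where $[P\sub{E},A]$ (not $[\Oph(\tilde\chi\sub{\T_j}),A]$) enters, through the split $Op_h(\chi\sub{\T_j})P\sub{E}A = Op_h(\chi\sub{\T_j})[P\sub{E},A] + Op_h(\chi\sub{\T_j})AP\sub{E}$. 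For $A\in\Psi^\ell_\delta$ the commutator $\frac{1}{h}[P,A]$ a priori only lives in $h^{-\delta}\Psi^{\ell+m-1}_\delta$, so the hypothesis $B\sub{E}\tfrac{1}{h}[P,A]\in\mc{W}\subset\Psi_\delta(M)$ is exactly what is needed to run a parametrix argument producing $F_2\in\Psi_\delta$ with $h^{-1}Op_h(\chi\sub{\T_j})[P\sub{E},A]=F_2\,Op_h(\tilde\chi\sub{\T_j})$. In short, you attach the right hypothesis to the wrong object, and the proposed mechanism for it does not produce a uniform bound.

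A secondary discrepancy: you propose to use $B\sub{E}$ itself to cut $Au$ to a neighbourhood of the flow tube. The paper instead introduces a separate cut-off $X$ with $\MSh(X)\subset\Lambda^{\tau_0+\e_0}_{\SE^H}(R(h)/2)$ and $\MSh(I-X)\cap\Lambda^\tau_{\SE^H}(R(h)/3)=\emptyset$, reserving $B\sub{E}$ (whose microsupport reaches out to $\Lambda^{\tau_0+\e_0}_{\SE^H}(3R(h))$) exclusively for the commutator parametrix. If you use $B\sub{E}$ directly, $\MSh(B\sub{E}A)$ is too large to be covered by the tubes $\{\T_j\}_{j\in\JE(h)}$, which are built over $\Lambda^\tau_{\Sab^H}(R(h)/2)$ with $\tau<\tau_0$, and the reduction to $\IE(h)$ breaks down. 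You also conflate two different cut-offs in the first step: the paper's $Q^{A,\psi}_{E,h}$ collects errors both from the near-conormal localizer $Op_h(\beta_\delta)$ on $T^*H$ (from \cite[Prop.~3.2]{CG19a}) and from the energy cut-off $(1-\psi(P\sub{E}/h^\delta))$ on $T^*M$; your account treats only the latter.
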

\begin{proof}
{First, we prove the statement for the case $A=I$.} {Note that in this case the sets $\mc{W}$ and $\widetilde{\mc{W}}$ play no role.}
{The result for $A=I$} is a direct combination of the estimate in \cite[(3.16)]{CG19a} and \cite[Proposition 3.2]{CG19a}. 
Indeed, \cite[Proposition 3.2]{CG19a} yields the existence of $\tau_0, R_0, h_0>0$ as claimed, and the estimate \cite[(3.16)]{CG19a} yields the same bound as above, but with three modifications. 

First, the constant { $C\sub{0}=C\sub{0}(n,k, \FR)>0$} is the constant $C_{n,k}$ divided by $\FR$, because we absorb the $|H_pr_H(\rho_j)|$ factors in \cite[(3.16)]{CG19a}. 
Second, the estimate in \cite[(3.16)]{CG19a} is given for
for $\Big|\int_{H} Op_h(\beta_{\delta}) u\,\ds{H}\Big|$, where $\beta_\delta:T^*H \to \R$ is a localizer to near conormal directions defined by $\beta_\delta(x', \xi')=\chi \big(h^{-\delta}|\xi'|\sub{H} \big)$ where $\chi \in C^\infty_0(\re;[0,1])$ is a smooth cut-off with $\chi(t)=1$ for $t \leq \tfrac{1}{2}$ and $\chi(t)=0$ for $t \geq 1$. It turns out that this estimate is all we need since \cite[Proposition 3.2]{CG19a} yields that for every $N>0$ there exists $c\sub{N}>0$ such that for all $u \in\mc{D}'(M)$
\begin{equation}\label{e:lemma3.1}
\Big|\int_{H} (1-Op_h(\beta_{\delta})) u\,\ds{H}\Big| \leq c\sub{N}h^N \Big(\|u\|_{L^2(H)}+ \|P\sub{E}u\|_\Hm\Big).
\end{equation}

The third modification is that in \cite[(3.16)]{CG19a} the first error term is $Ch^{-\frac{1}{2}-\delta}\big\|P\sub{E}u\big\|\Hm$ instead of $Ch^{-\frac{1}{2}-\delta}\big\|\big(1-\psi \big(\tfrac{P\sub{E}}{h^\delta}\big)\big)P\sub{E}u\big\|\Hm$. The operator $\big(1-\psi \big(\tfrac{P\sub{E}}{h^\delta}\big)\big)$ can be added since the error term is a consequence of the bound in \cite[(3.10)]{CG19a}, and that bound is for $Op_h(\chi) u$ where $\chi$ is supported in $\{(x, \xi):\; |p\sub{E}(x, \xi)|\geq \tfrac{1}{3}h^\delta\}$. One then uses $\supp \chi \subset \supp \big(1-\psi \big(\tfrac{p\sub{E}}{h^\delta}\big) \big)$.

We note that the desired bound holds for every $\delta$-partition $\{\chi\sub{\T_j}\}_{j\in \JE(h)}\subset \mathcal{V}$ associated to $\{\T_j\}_{j\in \JE(h)}$, since the constants $C, C\sub{N}, h_0$ provided by \cite[Proposition 3.5]{CG19a} are uniform for $\chi\sub{\T_j}$ in bounded subsets of $S_\delta$.

{Given $\ep_0>0$ we note that the statement holds for every $E \in [a-\ep_0, b+\ep_0]$ since the constants $C, C\sub{N}, h_0$ provided by \cite[Proposition 3.5]{CG19a} depend on $P\sub{E}$ only through $P$. Therefore, given $K>0$, the statement for $A=I$ holds for $E \in [a-Kh, b+Kh]$ provided $h_0$ depends on $K$.}

{We now treat the case $A \neq I$.}
 Let $\mc{V},\mc{W}, \widetilde{\mc{W}}$, {and $\{B\sub{E}\}_{E\in [a-\e_0, b+\e_0]}$} be as in the assumptions. {Let $E\in [a-\e_0, b+\e_0]$.} Let $X\in \Psi_\delta(M)$ with $\MSh(I-X)\cap \Lambda_{\SE^H}^\tau(\tfrac{1}{3}R(h))=\emptyset$, {$\MSh(X)\subset \Lambda_{\SE^H}^{\tau_0+{\e_0}}(\tfrac{1}{2}R(h))$} and $B\sub{E}[P,X]\in \Psi_\delta(M)$. Then, for all $N>0$ there is $C\sub{N}>0$ depending on $\mc{V}$ 
 $$
 \Big|\int_H (I-X)Au\ds{H}\Big|\leq C\sub{N}h^N,
 $$
 so we may replace $A$ by $XA$ and assume $\MSh(A)\subset \Lambda_{\SE}^{{\tau_0+{\e_0}}}(R(h)/2)$ from now on.
{Since the estimate holds when $A=I$}, there exist $C\sub{0}=C\sub{0}(n,k,\FR)$, $C>0$, and {for all $K>0$ there is $h_0>0$} such that for all $N>0$ there exists $C\sub{N}>0$ with the following properties. For all $u\in \mc{D}'(M)$, $0<h<h_0$, $E\in[a-Kh,b+Kh]$, 
and every $\delta$-partition $\{\chi\sub{\T_j}\}_{j\in \JE(h)}\subset \mathcal{V}$ associated to $\{\T_j\}_{j\in \JE(h)}$,
the bound in \eqref{e:mainEst} holds with {$I$ in place of $A$, and with $Au$ in place of $u$:}
{\begin{align*}
h^{\frac{k-1}{2}}\Big|\int_{H}Au\,\ds{H}\Big|
&\leq C\sub{0}{R(h)^{\frac{n-1}{2}}}
\sum_{j \in \IE(h)} \bigg(\frac{\|Op_h(\tilde\chi\sub{\T_j})Au\|}{\tau^{\frac{1}{2}}}+Ch^{-1}\|Op_h(\tilde \chi\sub{\T_j})P\sub{E}Au\|\bigg) \notag\\
&\hspace{1cm}+ Q^{I,\psi}_{E,h}(C, C\sub{N}, Au).
\end{align*}}
We may sum over $j \in \IE(h)$ instead of $j \in \JE(h)$ since $\MSh(A){\cap \Lambda_{\SE^H}^\tau(\tfrac{1}{2}R(h))}\subset \cup_{j \in \IE(h)} \T_j$. 

Next, we explain how to write $u$ in place of $Au$ in each of the terms of the sum over $j \in \IE(h)$ in \eqref{e:mainEst}. To replace the term $\|Op_h({ \chi}\sub{\T_j})Au\|_\LM$ with $\|Op_h(\tilde \chi\sub{\T_j})u\|_\LM$, we use $\MSh(Op_h(\chi\sub{\T_j})A) \subset \Ell(Op_h(\tilde \chi\sub{\T_j}))$
and apply the elliptic parametrix construction to find $F_1\in \Psi_\delta(M)$ with
\begin{equation}\label{e:F_1}
 Op_h(\chi\sub{\T_j})A=
F_1Op_h(\tilde{\chi}\sub{\T_j}). 
\end{equation}
Next, to replace the term $\|Op_h({ \chi}\sub{\T_j})P\sub{E}Au\|_\LM$ with $\|Op_h(\tilde \chi\sub{\T_j})P\sub{E}u\|_\LM$,
 we decompose $$Op_h(\chi\sub{\T_j})P\sub{E}A=Op_h(\chi\sub{\T_j})[P\sub{E},A]+Op_h(\chi\sub{\T_j})AP\sub{E}$$ for each $j \in \IE(h)$, and apply the elliptic parametrix construction and find $ F_2\in \Psi_\delta(M)$ with
\begin{equation}\label{e:F_2}
h^{-1}Op_h(\chi\sub{\T_j})[P\sub{E},A]=F_2Op_h(\tilde{\chi}\sub{\T_j}).
\end{equation} 
To do this we used the assumptions: $B\sub{E}$ is microlocally the identity on $\Lambda_{{\SE^H}}^{\tau_0+{\e_0}}({2R(h)})$, $\MSh ({A}) \subset \Lambda_{{\SE^H}}^{\tau_0+{\e_0}}(\tfrac{1}{2}R(h))$,
and $A$ is such that $B\sub{{E}}\tfrac{1}{h}[P,A]\in \mc{W}\subset \Psi_\delta(M)$. This allows us to apply the parametrix construction to $Op_h(\chi\sub{\T_j})B\sub{E}\tfrac{1}{h}[P\sub{E},A]$.

Using \eqref{e:F_1} and \eqref{e:F_2}, we may 
 modify $C\sub{0}$, and having it now also depend on $A$, $\mc{V}$ and $\mc{W}$, to obtain the claim. Note that if $A \in \Psi_0^\infty(M)$, then $\tfrac{1}{h}[P\sub{E},A] \in \Psi_\delta^\infty(M)$ and so we may apply the elliptic parametrix construction to obtain \eqref{e:F_2} without the need of introducing the operator $B\sub{{E}}$ or the set $\mc{W}$. In this case, we have $C\sub{0}=C\sub{0}(n,k,\FR,\mc{V}, {\widetilde{\mc{W}}})$ as claimed.
\end{proof}
%%%%%%%%%%%%%%%%%%%%%%%%%%%%%%%%%%%%%%%%%%%%%%%%%%%%%%%%%%%%%%%%%%%%%%%%%%%%%%%
{
\begin{definition}[low density tubes]
\label{d:cheat}
Let $\{\T_j\}_{j\in \J(h)}$ be a cover by tubes of $\Sab^H$ and $0<\delta<\tfrac{1}{2}$. Let $\G(h)\subset \J(h)$ and for each $j \in {\G(h)}$ let $1<t_j(E,h)\leq T_j(E,h)$, where $h>0$ and $E\in\mathbb{R}$.

 We say $\{\T_j\}_{j \in \G(h)}$ \emph{has $\{(t_j, T_j)\}_{j\in \G(h)}$ density on $[a,b]$} if the following holds. For all $\mc{V}\subset S_\delta$ bounded, $K>0$ there is $h_0>0$ such that for all $0<h<h_0$, $E\in[a-Kh,b+Kh]$, every $\delta$-partition $\{\chi_j\}_{j\in \mc{G}\sub{E}(h)}\subset \mc{V}$ associated to $\{\T_j\}_{j \in \GE(h)}$, and all $u \in \mc{D}'(M)$, 
$$
\sum_{j\in {\GE(h)}}\|Op_h(\chi_j)u\|_\LM^2\frac{T_{j}(E,h)}{t_j(E,h)}\leq 4\|u\|_\LM^2+ 4\max_{j \in {\GE(h)}}\frac{T_{j}(E,h)^2}{h^2} \|P\sub{E}u\|^2_\LM,
$$
where $\GE(h)=\G(h)\cap \J\sub{E}(h)$.
%In addition, we ask that $h_0$ be uniform for $\chi_j$ in bounded subsets of $S_\delta(T^*M,[0,1])$.
\end{definition}

{As a} consequence of~\cite[Lemma 4.1]{CG19a} one has: if a collection of families of tubes is non self-looping for different times, then the tubes have a low density dictated by those times. 
\begin{lemma} \label{l:cheat}
Let $R_0,$ $\tau_0$, $\delta$, $R(h)$, $\tau$, and $\{\T_j\}_{j\in \J(h)}$ be as in Lemma \ref{l:mainEst}. Let $0< \alpha< 1-\limsup_{h\to 0^+} 2\tfrac{\log R(h)}{\log h}$ {and $K>0$}. There exists $h_0>0$ such that the following holds.
Let $0<h<h_0$, $E\in [a-Kh, b+Kh]$, and $\GE(h)\subset \J\sub{E}(h)$ with ${\GE(h)=\sqcup_{\ell \in \LE(h)} \mc{G}\sub{E,\ell}}(h)$. For every $\ell \in \mathcal L\sub{E}(h)$ suppose 
$t_\ell(E,h)>0$, $0<{T_\ell(E,h)}\leq 2\alpha\, T_e(h),$
\vspace{-0.1cm}
and 
$$\bigcup_{j\in {\G\sub{E,\ell}(h)}}\T_j \qquad \text{is} \;\; [t_\ell,T_\ell]\;\;\text{ non-self looping for every}\;\; \ell \in \LE(h).$$ 
Then, $\{\T_j\}_{j \in \G(h)}$ has $\{(t_j, T_j)\}_{j\in \G(h)}$ density on $[a,b]$, where 
for $0<h<h_0$, $j \in \J(h)$, and $E\in [a-Kh, b+Kh]$, we set $(t_j(E,h), T_j(E,h)):=(t_\ell(E,h), T_\ell(E,h))$ whenever $j \in \mc{G}\sub{E,\ell}(h)$.
\end{lemma}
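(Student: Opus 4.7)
The plan is to reduce the claim to a direct application of \cite[Lemma 4.1]{CG19a}, which is the semiclassical mass-propagation estimate underlying the geodesic beam technique. That prior result asserts, roughly, that if a collection of tubes decomposes into subcollections, each admitting a non-self-looping interval $[t_\ell, T_\ell]$ satisfying an Ehrenfest-type bound, then the total microlocal $L^2$ mass admits a density gain of $T_\ell/t_\ell$ on each piece.

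The first step is to verify the Ehrenfest-type hypothesis. The bound $T_\ell(E,h) \leq 2\alpha T_e(h) = \alpha \log h^{-1}/\Lambda_{\max}$, combined with $\alpha < 1 - \limsup_{h\to 0^+} 2\log R(h)/\log h$, is exactly what is needed so that after propagation by time $T_\ell$ the tubes of radius $R(h)$ expand only to scale $h^{\delta'}$ for some $\delta' < 1/2$; this keeps all relevant calculus inside the symbol class $S_\delta$ and makes Egorov's theorem applicable uniformly for $E \in [a-Kh, b+Kh]$. The remaining hypotheses on $\tau$, $R(h)$, and the $(\mathfrak{D}_n, \tau, R(h))$-good cover match those of \cite{CG19a} verbatim. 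Given this, for each $\ell \in \mc{L}\sub{E}(h)$ the union $\bigcup_{j \in \mc{G}\sub{E,\ell}(h)} \T_j$ is $[t_\ell, T_\ell]$ non-self-looping by hypothesis; applying \cite[Lemma 4.1]{CG19a} with the time-weights $(t_\ell, T_\ell)$ and summing the resulting density estimates over $\ell$ — using that the $\mc{G}\sub{E,\ell}(h)$ partition $\GE(h)$ — then yields
$$\sum_{j \in \GE(h)} \|\Oph(\chi_j) u\|_\LM^2 \frac{T_j(E,h)}{t_j(E,h)} \leq 4\|u\|_\LM^2 + 4\max_{j \in \GE(h)} \frac{T_j(E,h)^2}{h^2} \|P\sub{E} u\|_\LM^2,$$
which is exactly the density bound in Definition \ref{d:cheat}.

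The main obstacle is the bookkeeping needed to ensure that the universal constant $4$ in Definition \ref{d:cheat} is maintained under the summation over $\ell$; this requires invoking the precise form of \cite[Lemma 4.1]{CG19a} in which the partition-independent constant is explicit, rather than a naive per-$\ell$ application that would introduce a factor of $|\mc{L}\sub{E}(h)|$. Uniformity of $h_0$ across $E \in [a-Kh, b+Kh]$ and across $\delta$-partitions in a bounded $\mc{V} \subset S_\delta$ follows from the observation that the parametrix constructions in \cite{CG19a} depend only continuously on the principal symbol of $P$ and on symbol-class bounds of the cutoffs, exactly as in the proof of Lemma \ref{l:mainEst}.
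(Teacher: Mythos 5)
Your proposal is correct and follows essentially the same route as the paper: both reduce the lemma to a direct application of \cite[Lemma 4.1]{CG19a} and record that, although the stated version there does not give uniformity for $E\in[a-Kh,b+Kh]$, the same argument does. You spell out in more detail the checks (the Ehrenfest-time condition on $T_\ell$, the matching of cover hypotheses, and the fact that the density constant is partition-independent rather than accrued per-$\ell$) that the paper leaves implicit in its one-sentence proof.

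One small caution on your heuristic for the Ehrenfest constraint $\alpha<1-\limsup 2\log R(h)/\log h$: the issue after propagation is not that the tubes themselves ``expand to scale $h^{\delta'}$'' but that the contracting direction of the flow can shrink the supports (and hence blow up derivatives of the associated cutoffs) and, more to the point, that the particular products of propagated cutoffs appearing in the averaging argument of \cite[Lemma 4.1]{CG19a} must remain in a class $S_{\delta''}$ with $\delta''<1/2$. The stated inequality is exactly what that lemma requires; your phrasing conveys the right idea but shouldn't be read as the literal mechanism.
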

{ We note that the statement of \cite[Lemma 4.1]{CG19a} does not provide the requisite uniformity for $E\in[a-Kh,b+Kh]$}; however, this follows from the same argument.}

{Our next estimate shows that if a family of tubes has low density, then averages of a quasimode over $H$ can be controlled in terms of the density times.}
%%%%%%%%%%%%%%%%%%%%%%%%%%%%%%%%%%%%%%%%%%%%%%%%%%%%%%%%%%%%%%%%%%%%%%%%%%%%%%%

\begin{lemma}
\label{l:basicEst}
{Let $R_0,$ $\tau_0$, $\delta$, $R(h)$, $\tau$, $\{\T_j\}_{j\in \J(h)}$, $\mc{W}$, $\widetilde{\mc{W}}$, and $\psi$ be as in Lemma \ref{l:mainEst}.} Then, there exist $C\sub{0}=C\sub{0}(n,k,p,\FR,\mc{W})$ and $C>0$, and for all $N>0$, {$K>0$ there are $h_0>0$ and} $C\sub{N}>0$, such that the following holds.

 Suppose that for all $0<h<h_0$ and $E\in [a-Kh, b+Kh]$ there exists $\GE(h)\subset \J\sub{E}(h)$ with ${\GE(h)=\sqcup_{\ell \in \mathcal L_{E}(h)} \mc{G}\sub{E,\ell}}(h)$, such that for every $\ell \in \mathcal L\sub{E}(h)$ there exist $t_\ell=t_\ell(E,h)>0$ and ${T_\ell=T_\ell(E,h)}>0$ so that, with $(t_j, T_j):=(t_\ell, T_\ell)$ for every $j \in \mc{G}\sub{E,\ell}(h)$, then
 % with $t_\ell(h)\leq T_\ell \leq {2} \alpha T_e(h)$ for $0< \alpha< 1-\limsup_{h\to 0^+} 2\tfrac{\log R(h)}{\log h}$, so that
\begin{gather*}
 %\item $\bigcup_{j\in \ya{\mc{G}\sub{E,\ell}}}\T_j$ is $[t_\ell,T_{\ell}]$ non-self looping for every $\ell \in \mathcal L_{h,E}$, \medskip
 (1)\{\T_j\}_{j \in \G(h)}\text{ has }\{(t_j, T_j)\}_{j\in \G(h)}\text{ density on $[a,b]$},\qquad
 (2) \MSh(A)\cap \Lambda_{\SE^H}^\tau(\tfrac{1}{2}R(h))\subset \, {\bigcup_{j \in \GE(h)}\T_j}.
\end{gather*}
Then, for all $u\in \mc{D}'(M)$, $0<h<h_0$, $E\in[a-Kh,b+Kh]$, and every $A\in {\widetilde{\mc{W}}}$ with $B\sub{E}\frac{1}{h}[P,A]\in \mc{W}$,
\begin{align*}
h^{\frac{k-1}{2}}\Big|\int_{H}Au\,\ds{H}\Big|
&\leq C\sub{0}R(h)^{\frac{n-1}{2}}
\sum_{\ell\in \LE(h)}\!\!\bigg(\frac{(|\G\sub{E,\ell}|t_\ell)^{\frac{1}{2}}}{\tau^{\frac{1}{2}} T_\ell^{\frac{1}{2}}}\|u\|_\LM +\frac{(|\G\sub{E,\ell}| t_\ell T_\ell)^{\frac{1}{2}}}{h}\|P\sub{E}u\|_\LM\!\!\bigg)\\
&\hspace{1cm}+ Q^{A,\psi}_{E,h}(C, C\sub{N}, u).
%&\qquad\qquad\qquad \qquad +Ch^{-\frac{1}{2}-\delta}\big\|\big(1-\psi \big(\tfrac{P\sub{E}}{h^\delta}\big)\big)P\sub{E}Au\big\|_\Hm\\
%&\qquad\qquad\qquad \qquad+C\sub{N}h^N\Big(\|u\|_\LM+ \|P\sub{E}u\|_\Hm\Big).
\end{align*}
In addition, if ${\widetilde{\mc{W}} \subset} \Psi_0^\infty(M)$, the estimate holds with $C\sub{0}=C\sub{0}(n,k,p,\FR,{\widetilde{\mc{W}}})$.
\end{lemma}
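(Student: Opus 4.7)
The plan is to feed the average $\int_{H}Au\,d\sigma_H$ into Lemma~\ref{l:mainEst} and then collapse the resulting tube sum using the low density hypothesis together with Cauchy--Schwarz inside each group $\mathcal{G}_{E,\ell}(h)$.

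First, I would invoke Lemma~\ref{l:mainEst} with the given $A$, $\{\chi\sub{\T_j}\}$, and $\psi$ to get
\[
h^{\frac{k-1}{2}}\Big|\int_H Au\,d\sigma_H\Big| \le C_0R(h)^{\frac{n-1}{2}}\!\!\sum_{j\in\mathcal{I}_E(h)}\!\!\Big(\tau^{-\frac12}\|Op_h(\tilde\chi\sub{\T_j})u\|\sub{L^2(M)}+Ch^{-1}\|Op_h(\tilde\chi\sub{\T_j})P\sub{E}u\|\sub{L^2(M)}\Big)+Q^{A,\psi}_{E,h},
\]
where the $\tilde\chi\sub{\T_j}$ are mildly enlarged symbols. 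Assumption~(2) together with the definition of $\mathcal{I}_E(h)$ shows that all terms with $j\notin \mathcal{G}_E(h)$ contribute at most $O(h^\infty)$, which can be absorbed into $Q^{A,\psi}_{E,h}$. Thus I may restrict the sum to $j\in\mathcal{G}_E(h)$ and regroup it as $\sum_{\ell\in\mathcal{L}_E(h)}\sum_{j\in\mathcal{G}\sub{E,\ell}(h)}$.

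For each fixed $\ell$ I would apply Cauchy--Schwarz in $j$:
\[
\sum_{j\in\mathcal{G}\sub{E,\ell}(h)}\|Op_h(\tilde\chi\sub{\T_j})u\|\sub{L^2(M)}\le |\mathcal{G}\sub{E,\ell}(h)|^{\frac12}\Big(\sum_{j\in\mathcal{G}\sub{E,\ell}(h)}\|Op_h(\tilde\chi\sub{\T_j})u\|^2\sub{L^2(M)}\Big)^{\frac12}.
\]
Since $T_j/t_j=T_\ell/t_\ell$ is constant on $\mathcal{G}\sub{E,\ell}(h)$ and $\max_{j\in\mathcal{G}\sub{E,\ell}(h)}T_j=T_\ell$, the low density hypothesis~(1) specialized to this subfamily gives
\[
\tfrac{T_\ell}{t_\ell}\!\!\sum_{j\in\mathcal{G}\sub{E,\ell}(h)}\!\!\|Op_h(\tilde\chi\sub{\T_j})u\|^2\sub{L^2(M)}\le 4\|u\|^2\sub{L^2(M)}+4\tfrac{T_\ell^2}{h^2}\|P\sub{E}u\|^2\sub{L^2(M)}.
\]
Taking square roots and using $\sqrt{a+b}\le\sqrt a+\sqrt b$ produces precisely the two coefficients $(|\mathcal{G}\sub{E,\ell}|t_\ell)^{1/2}/T_\ell^{1/2}$ in front of $\|u\|$ and $(|\mathcal{G}\sub{E,\ell}|t_\ell T_\ell)^{1/2}/h$ in front of $\|P\sub{E}u\|$ that appear in the target estimate. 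The companion sum $\sum_{j\in\mathcal{G}\sub{E,\ell}}\|Op_h(\tilde\chi\sub{\T_j})P\sub{E}u\|$ can be treated either by the same density-plus-Cauchy--Schwarz argument or, more crudely, by the finite multiplicity of the good cover (which yields $\sum_j\|Op_h(\tilde\chi\sub{\T_j})v\|^2\lesssim\|v\|^2$), and its contribution is dominated by the $h^{-1}(|\mathcal{G}\sub{E,\ell}|t_\ell T_\ell)^{1/2}\|P\sub{E}u\|$ term already present. Summing over $\ell$ and collecting constants into $C_0$ yields the claimed inequality; the $\widetilde{\mathcal{W}}\subset\Psi_0^\infty$ case is identical except that the parametrix step in Lemma~\ref{l:mainEst} does not require the auxiliary operator $B\sub{E}$, so $C_0$ no longer depends on $\mathcal{W}$.

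The main obstacle I anticipate is the specialization of Definition~\ref{d:cheat} from the whole family $\mathcal{G}_E(h)$ to the subfamily $\mathcal{G}\sub{E,\ell}(h)$: as stated, the density bound has a single $\max_j T_j^2/h^2$ on the right-hand side, whereas I need the $\ell$-dependent value $T_\ell$. This should be immediate from how density is produced in Lemma~\ref{l:cheat}, where each $\mathcal{G}\sub{E,\ell}(h)$ is already $[t_\ell,T_\ell]$ non-self-looping on its own and so inherits a density bound with a single time $T_\ell$; once this point is recorded the remainder of the proof is a routine bookkeeping exercise.
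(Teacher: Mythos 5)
Your proposal follows the paper's proof almost step for step: apply Lemma~\ref{l:mainEst}, regroup the tube sum over the blocks $\mc{G}_{E,\ell}(h)$, and then use the low-density hypothesis together with Cauchy--Schwarz inside each block to extract the $t_\ell/T_\ell$ gain. The coefficients $(|\mc{G}_{E,\ell}|t_\ell)^{1/2}/T_\ell^{1/2}$ and $(|\mc{G}_{E,\ell}|t_\ell T_\ell)^{1/2}/h$ come out exactly as you predict.

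The concern you raise at the end is a genuine one, and you are right to flag it. Definition~\ref{d:cheat} applied to the full collection $\mc{G}_E(h)$ puts $\max_{j\in\mc{G}_E(h)}T_j^2/h^2$ on the right-hand side, and restricting the left-hand sum to a block $\mc{G}_{E,\ell}(h)$ does not by itself replace that maximum by $T_\ell^2$: the density inequality does not automatically pass to subfamilies with their own, smaller time. The paper's display~\eqref{e:ant} implicitly assumes the per-block version. Your proposed resolution is the right one: Lemma~\ref{l:cheat} applied to the single block $\mc{G}_{E,\ell}(h)$ (which is $[t_\ell,T_\ell]$ non-self-looping in its own right) shows that this subfamily has $\{(t_\ell,T_\ell)\}$ density, and that is what is actually invoked in the proof. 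So hypothesis~(1) should really be read as ``each $\mc{G}_{E,\ell}(h)$ has $(t_\ell,T_\ell)$ density,'' which is what the sources producing the hypothesis (always via Lemma~\ref{l:cheat}) supply.

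One minor inaccuracy: you say that terms with $j\notin\mc{G}_E(h)$ contribute $O(h^\infty)$, but assumption~(2) only guarantees that $\MSh(A)\cap\Lambda^\tau_{\SE^H}(R(h)/2)$ is covered by $\cup_{j\in\GE(h)}\T_j$; a tube $\T_j$ with $j\notin\GE(h)$ can still intersect this set without giving a negligible term. The paper instead asserts that $\IE(h)=\bigcup_\ell\mc{G}_{E,\ell}(h)$, effectively choosing the cover so that $\IE(h)\subset\GE(h)$; in the applications this holds because $\GE(h)$ exhausts $\JE(h)$, but it is worth recording rather than appealing to an $O(h^\infty)$ loss. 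Finally, for the $\|P\sub{E}u\|$ sum the paper uses the crude route you mention (almost-orthogonality of the $\tilde\chi\sub{\T_j}$ together with $|\JE(h)|\lesssim R(h)^{1-n}$), yielding an additional $\frac{C}{h}\|P\sub{E}u\|$ term in the intermediate bound~\eqref{e:aadvark}; both that and your alternative lead to the stated conclusion.
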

%%%%%%%%%%%%%%%%%%%%%%%%%%%%%%%%%%%%%%%%%%%%%%%%%%%%%%%%%%%%%%%%%%%%%%%%%%%%%%%
\begin{proof}
Let $\mc{V}$ a bounded subset of $S_\delta(T^*M;[0,1])$.
By Lemma~\ref{l:mainEst} 
there exist $C\sub{0}=C\sub{0}(n,k,\FR,\mc{V},\mc{W})$, $C>0$, and $h_0>0$, such that for all $N>0$ there exist $C\sub{N}>0$, with the following properties. For all $u\in \mc{D}'(M)$, $K>0$, $0<h<h_0$, $E\in[a-Kh,b+Kh]$, 
and every $\delta$-partition $\{\chi\sub{\T_j}\}_{j\in \JE(h)}\subset \mathcal{V}$ associated to $\{\T_j\}_{j\in \JE(h)}$,
\begin{align*}
h^{\frac{k-1}{2}}\Big|\int_{H}Auds{H}\Big|
&\leq C\sub{0}{R(h)^{\frac{n-1}{2}}}
\!\!\!\sum_{j \in \IE(h)} \!\!\!\bigg(\frac{\|Op_h(\tilde\chi\sub{\T_j})u\|_{L^2}}{\tau^{\frac{1}{2}}}+\frac{C}{h}\|Op_h(\tilde \chi\sub{\T_j})P\sub{E}u\|_{L^2}\!\!\bigg)%\\
%&\hspace{1cm}+
\!\!+Q^{A,\psi}_{E,h}(C,\! C\sub{N}, \!u),
%&\qquad\qquad +Ch^{-\frac{1}{2}-\delta}\big\|\big(1-\psi \big(\tfrac{P\sub{E}}{h^\delta}\big)\big)P\sub{E}Au\big\|_\Hm\\
%&\qquad\qquad +C\sub{N}h^N\Big(\|u\|_\LM+ \|P\sub{E}u\|_\Hm\Big),
\end{align*}
where $\IE(h):=\bigcup_{\ell \in \mathcal L_{h,E}}{\mc{G}\sub{E,\ell}}$.
Note that if $A\in \Psi_0^\infty(M)$, then the estimate holds with $C\sub{0}=C\sub{0}(n,k,p, \FR, \mc{V}, {\widetilde{\mc{W}}})$.
Next, note that 
$$
\sum_{j\in \IE(h)}\|Op_h(\tilde{\chi}\sub{\T_j})P\sub{E}u\|\leq |\JE(h)|^{\frac{1}{2}} \Big(\sum_{j\in \JE(h)}\|Op_h(\tilde{\chi}\sub{\T_j})P\sub{E}u\|^2\Big)^{\frac{1}{2}},
$$
and so, since {$|\JE(h)| \leq C_n\vol({\SE^H})R(h)^{{1-n}}$} for some $C_n>0$,
we have, after adjusting $C>0$, that for all $0<h<h_0$
\begin{align}\label{e:aadvark}
h^{\frac{k-1}{2}}\Big|\int_{H}Au\,\ds{H}\Big|
&\leq C\sub{0}\frac{R(h)^{\frac{n-1}{2}}}{\tau^{\frac{1}{2}}}\!\!\sum_{j\in \IE(h)} \!\|Op_h(\tilde{\chi}\sub{\T_j}) u\|_\LM \!\!+ \frac{C}{h}\|P\sub{E}u\|_\LM 
%&\hspace{1cm}
\!+ Q^{A,\psi}_{E,h}(C,\! C\sub{N},\! u).
%&\qquad\qquad +Ch^{-\frac{1}{2}-\delta}\big\|\big(1-\psi \big(\tfrac{P\sub{E}}{h^\delta}\big)\big)P\sub{E}Au\big\|_\Hm\notag\\
%&\qquad\qquad +C\sub{N}h^N\Big(\|u\|_\LM+ \|P\sub{E}u\|_\Hm\Big),
\end{align}

Since we are working with a $({\mathfrak D_n},\tau,R(h))$-good cover, we split each ${\mc{G}\sub{E,\ell}}$ into $\mathfrak D_n$ families $\{\mc{G}\sub{E,\ell, i}\}_{i=1}^{\mathfrak{D}_n}$ of disjoint tubes.
Note that 
\begin{equation}\label{e:ground}
\sum_{j\in \IE(h)}\|Op_h(\tilde\chi_j)u\|_\LM
\leq \sum_{\ell\in \mathcal L}{\sum_{i=1}^{\mathfrak{D}_n} \sum_{j\in \mc{G}\sub{E,\ell,i}}}\|Op_h(\tilde\chi_j)u\|_\LM.
\end{equation}

Next, since $\{\T_j\}_{j \in \G(h)}$ has $\{(t_j, T_j)\}_{j\in \G(h)}$ density on $[a,b]$, after possibly shrinking $h_0$ {(depending on the $S_\delta$ bounds for $\tilde{\chi}_j$ and $K>0$),} Cauchy-Schwarz yields that for all $0<h<h_0$
\begin{equation}\label{e:ant}
{\sum_{j \in \mc{G}\sub{E,\ell,i}}}\|Op_h(\tilde \chi_j)u\|_\LM
\leq 2\Big(\frac{t_\ell |{\mc{G}\sub{E,\ell}}|}{T_\ell}\Big)^{\frac{1}{2}}\Big(\|u\|_\LM^2+ \frac{T_{\ell}^2}{h^2}\, \|P\sub{E}u\|^2_\LM\Big)^{\frac{1}{2}}.
\end{equation}
\indent The result follows from combining \eqref{e:ant} and \eqref{e:ground}, and feeding this to \eqref{e:aadvark}. Note that $C\sub{0}$ needs to be modified, but only in a way that depends on $n$ via $\mathfrak D_n$.
\end{proof}

%%%%%%%%%%%%%%%%%%%%%%%%%%%%%%%%%%%%%%%%%%%%%%%%%%%%%%%%%%%%%%%%%%%%%%%%%%%%%%%
We also need the following basic estimate for averages over submanifolds to control averages of $u=\mathbf 1_{(-\infty, s]}(P)$ {when $s$ is large}.

\begin{lemma}
\label{l:SobEst}
Suppose $H\subset M$ is a submanifold of codimension $k$ and $P\in \Psi^m(M)$, with $m>0$, is such that there exists $C>0$ for which
$$
|\sigma(P)(x,\xi)|\geq |\xi|^m/C,\qquad (x,\xi)\in N^*H,\qquad |\xi|\geq C.
$$ 
Let $\psi \in S^0(T^*M; [0,1])$ with $\psi\equiv 1$ on $N^*H$, and let $\ell\in \mathbb{R}$. Let $A\in \Psi_\delta^{\ell}(M)$ and $r>\frac{k+2\ell}{2m}$.
Then, there are $C\sub{0}>0$ and $h_{0}>0$ such that for all $N>0$ there is $C\sub{N}>0$ satisfying 
$$
h^{\frac{k}{2}}\Big|\int_H \!\!\!Aud\sigma\sub{H}\Big|\leq C\sub{0}\Big( \|Op_h(\psi)u\|_{\LM}+\|Op_h(\psi)P\sub{E}^ru\|_{\LM}\Big)+C\sub{N}h^N\|u\|_{H_{\textup{scl}}^{-N}(M)},\qquad 0<h<h_0.
$$
\end{lemma}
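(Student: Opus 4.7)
The strategy combines the semiclassical Sobolev restriction theorem on $H$ with a microlocal elliptic parametrix for $P_E^r$ near $N^*H$.

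First, I invoke the semiclassical restriction theorem: for every $s>k/2$ there is $C_s>0$ such that $h^{k/2}\|v\|_{L^2(H)}\leq C_s\|v\|_{H_{\scl}^s(M)}$, and Cauchy--Schwarz on the (compact) submanifold $H$ then yields
\[
h^{k/2}\Big|\int_H v\,d\sigma_H\Big|\leq C_s\|v\|_{H_{\scl}^s(M)}.
\]
Because $r>(k+2\ell)/(2m)$, I fix once and for all an $s$ with $k/2<s<mr-\ell$; in particular $s+\ell-mr<0$, which is what will allow me to drop down to $L^2$ when applying the elliptic estimate below.

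Next, I choose $\chi\in S^0(T^*M;[0,1])$ with $\chi\equiv 1$ on a conic neighborhood of $N^*H$ and $\supp\chi\subset\{\psi\equiv1\}$, and I decompose $Au=Op_h(\chi)Au+(I-Op_h(\chi))Au$. The pairing $v\mapsto\int_H v\,d\sigma_H$ is integration against $\delta_H$, whose semiclassical wavefront set lies in $N^*H$; since $\WF_h((I-Op_h(\chi))A)\cap N^*H=\emptyset$, the second summand contributes $C_Nh^N\|u\|_{H_{\scl}^{-N}}$. Applying the restriction estimate to the remaining piece, the problem reduces to bounding $\|Op_h(\chi)Au\|_{H_{\scl}^s}$.

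For this bound, the hypothesis $|\sigma(P)(x,\xi)|\geq |\xi|^m/C$ on $N^*H$ for $|\xi|\geq C$ together with continuity ensures that, after possibly shrinking $\supp\chi$, one has $|\sigma(P_E)|\geq c|\xi|^m$ on $\supp\chi\cap\{|\xi|\geq C'\}$. I split $\chi=\chi_0+\chi_\infty$ with $\chi_0$ of compact $\xi$-support and $\chi_\infty$ supported in $\{|\xi|\geq C'\}$. The operator $Op_h(\chi_0)A\in\Psi^{-\infty}_\delta$ is microsupported in $\{\psi\equiv 1\}$, so
\[
\|Op_h(\chi_0)Au\|_{H_{\scl}^s}\leq C\|Op_h(\psi)u\|_{L^2(M)}+C_Nh^N\|u\|_{H_{\scl}^{-N}(M)}.
\]
For $\chi_\infty$, the microlocal elliptic estimate applied to $P_E^r$ (elliptic of order $mr$ on $\supp\chi_\infty$) gives
\[
\|Op_h(\chi_\infty)Au\|_{H_{\scl}^s}\leq C\|Op_h(\tilde\chi)P_E^r u\|_{H_{\scl}^{s+\ell-mr}}+C_Nh^N\|u\|_{H_{\scl}^{-N}(M)},
\]
for a slightly fatter cutoff $\tilde\chi$ with $\tilde\chi\equiv 1$ on $\supp\chi_\infty$ and $\supp\tilde\chi\subset\{\psi\equiv1\}$. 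Since $s+\ell-mr<0$, the $H_{\scl}^{s+\ell-mr}$ norm is dominated by the $L^2$ norm, and $\supp\tilde\chi\subset\{\psi\equiv1\}$ permits replacing $Op_h(\tilde\chi)$ by $Op_h(\psi)$ modulo an $O(h^\infty)$ remainder on $H_{\scl}^{-N}(M)$. Summing the three contributions yields the claimed estimate.

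The main obstacle is making precise sense of $P_E^r$ for non-integer $r$ so that the microlocal elliptic estimate can be applied: since $P$ is assumed self-adjoint throughout this section, $P_E^r$ is defined by the functional calculus, and standard Helffer--Sj\"ostrand-type arguments realize it as a semiclassical pseudodifferential operator of order $mr$ on any conic region where $P_E$ is elliptic. This justifies the parametrix construction used above and completes the proof.
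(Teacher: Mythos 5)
Your proof is correct and follows essentially the same route as the paper's: bound the $H$-average by a semiclassical Sobolev norm via the trace embedding, then split microlocally into a smoothing piece at bounded frequencies near $N^*H$ and a piece at large $|\xi|$ where $P_E^r$ is elliptic, inverting the latter by a parametrix and using $s+\ell-mr<0$ to land in $L^2$. One small wrinkle in your closing paragraph: since $P_E$ is merely self-adjoint and not positive, the functional calculus does not define $P_E^r$ for non-integer $r$ without a branch choice; the simpler (and intended) reading is to take $r$ a positive integer, which the hypothesis $r>\tfrac{k+2\ell}{2m}$ permits and which makes $P_E^r\in\Psi^{mr}(M)$ an ordinary pseudodifferential operator.
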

%%%%%%%%%%%%%%%%%%%%%%%%%%%%%%%%%%%%%%%%%%%%%%%%%%%%%%%%%%%%%%%%%%%%%%%%%%%%%%%
\begin{proof} 
 Let $\tilde \psi \in S^0(T^*M; [0,1])$ with $\tilde{\psi}\equiv 1$ on $N^*H$, $\supp \tilde{\psi}\subset \{\psi\equiv 1\}$, and such that 
 $$
 |\sigma(P\sub{E})(x,\xi)|\geq \tfrac{1}{C}|\xi|^m,\qquad (x,\xi)\in \supp \tilde{\psi},\qquad |\xi|\geq C.
 $$
 Then, since $\WFh(\delta_{H})=N^*\!H$, for any $N>0$ there is $C\sub{N}>0$ such that 
\begin{equation} 
\label{e:blue}
\Big|\int_HAOp_h(1-\tilde{\psi})ud\sigma\sub{H}\Big|\leq C\sub{N}h^N\|u\|_{H_{\textup{scl}}^{-N}(M)}.
\end{equation}
Next, by the Sobolev embedding theorem, for any $\ep>0$ there exists $C\sub{0}>0$ such that 
\begin{align*}
\Big| \int_{H}AOp_h(\tilde{\psi})u d\sigma\sub{H}\Big|&\leq C\sub{0}h^{-\frac{k}{2}}\|Op_h(\tilde{\psi})u\|_{H_{\textup{scl}}^{\frac{k}{2}+\ep+\ell}(M)}.
\end{align*}
Taking $r$ with $rm>\frac{k}{2}+\ell$ and using an elliptic parametrix, for any $N>0$ there is $C\sub{N}>0$ with 
\begin{align}
h^{\frac{k}{2}}\Big| \int_{H}AOp_h(\psi)u d\sigma\sub{H}\Big|\leq C\sub{0}&\|Op_h(\tilde{\psi})u\|_{H_{\textup{scl}}^{rm}(M)}\leq C\sub{0} \big(\|Op_h(\psi)u\|_{\LM}+\|Op_h(\psi) P\sub{E}^ru\|_{\LM}\big)\notag
 \\&+C\sub{N}h^N\|u\|_{H_{\textup{scl}}^{-N}(M)}.\label{e:brown}
\end{align}
Indeed, this follows from letting $\chi \in S^0(T^*M;[0,1])$ so that $ |\sigma(P\sub{E})(x,\xi)|\geq \tfrac{1}{C}|\xi|^m$ in the support of {$\tilde \psi (1-\chi)$}, and then using the elliptic parametrix construction to find $F_1, F_2 \in \Psi^0(M)$ such that
\begin{gather*}
\langle hD\rangle^{rm} Op_h(\tilde \psi)(1-Op_h(\chi))=F_1 Op_h(\psi)P\sub{E}^r + O(h^\infty)_{\Psi^{-\infty}},\\
\langle hD\rangle^{rm} Op_h(\tilde \psi)Op_h(\chi)=F_2 Op_h(\psi) + O(h^\infty)_{\Psi^{-\infty}}.
\end{gather*}
Combining with~\eqref{e:blue} and~\eqref{e:brown} completes the proof.
\end{proof}

%%%%%%%%%%%%%%%%%%%%%%%%%%%%%%%%%%%%%%%%%%%%%%%%%%%%%%%%%%%%%%%%%%%%%%%%%%%%%%%
%%%%%%%%%%%%%%%%%%%%%%%%%%%%%%%%%%%%%%%%%%%%%%%%%%%%%%%%%%%%%%%%%%%%%%%%%%%%%%%
%%%%%%%%%%%%%%%%%%%%%%%%%%%%%%%%%%%%%%%%%%%%%%%%%%%%%%%%%%%%%%%%%%%%%%%%%%%%%%%
\section{Lipschitz Scale for Spectral Projectors} 
%%%%%%%%%%%%%%%%%%%%%%%%%%%%%%%%%%%%%%%%%%%%%%%%%%%%%%%%%%%%%%%%%%%%%%%%%%%%%%%
%%%%%%%%%%%%%%%%%%%%%%%%%%%%%%%%%%%%%%%%%%%%%%%%%%%%%%%%%%%%%%%%%%%%%%%%%%%%%%%
%%%%%%%%%%%%%%%%%%%%%%%%%%%%%%%%%%%%%%%%%%%%%%%%%%%%%%%%%%%%%%%%%%%%%%%%%%%%%%%
\label{s:lip}
{In this section we estimate the scale at which averages of the spectral projector behave like Lipschitz functions {of the spectral parameter}, and use this to approximate $\Pi_h$ using $\rho_{h,T(h)}*\Pi_h$.}

Throughout this section we assume $H_1,H_2 \subset M$ are two smooth submanifolds of co-dimension $k_1$ and $k_2$ respectively. The goal for this section is to prove the following proposition.
\begin{proposition}
\label{p:error-smooth}
Suppose $a, b\in \re$ such that $H_1, H_2$ are uniformly conormally transverse for $p$ in the window $[a,b]$. Let $\tau_0, R_0$ be as in Lemma \ref{l:mainEst}.
Let $0<\tau<\tau_0$ and $0<\delta<\tfrac{1}{2}$.
For $i=1,2$, let $\Ti_i$ be sub-logarithmic \resfuns\, with $\Omega(\Ti_i)\Lambda<1-2\delta$ {and suppose} $H_i$ is ${\Ti_i}$ non-recurrent in the window $[a,b]$ via {$\tau$-coverings} {with constant $\Cnr^i$}.

Let $A_1,A_2\in \Psi^\infty(M)$, ${K>0}$, $ R(h)\geq {h^\delta}$, and $\Ti:=\sqrt{\Ti_1\Ti_2}$. Then, there exist $h_0>0$ and $$C\sub{0}=C\sub{0}(n, k_1, k_2, \FR^1,\FR^2, A_1, A_2, \Cnr^1, \Cnr^2)>0,$$
such that for all $0<h\leq h_0$ and $E\in[a-Kh,b+Kh]$,
$$
\Big|\HAs(E)-\rho\sub{h,T_{\max}(h)}*\HAs(E)\Big|\leq C\sub{0}h^{\frac{2-k_1-k_2}{2}}\Big/\Ti(R(h)).
$$
\end{proposition}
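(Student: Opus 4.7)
The plan is to expand the difference $\HAs(E)-\rho\sub{h,T_{\max}(h)}*\HAs(E)$ spectrally, split via Cauchy--Schwarz into two factors (one per submanifold), and apply the averaging estimate of Lemma~\ref{l:basicEst} to a smoothed quasimode obtained from the functional calculus. Setting $T := T_{\max}(h)$, $I_j^i := \int_{H_i} A_i\phi_j\,d\sigma_{H_i}$, and $\Psi(t) := \int_{-\infty}^t \rho(s)\,ds$, a direct computation using $\rho\sub{h,T}(t)=(T/h)\rho((T/h)t)$ gives $\rho\sub{h,T}*\HAs(E) = \sum_j \Psi(T(E-E_j)/h)\,I_j^1\overline{I_j^2}$, so that with $f(s):=\mathbf{1}_{s\leq E}-\Psi(T(E-s)/h)$,
$$\HAs(E)-\rho\sub{h,T}*\HAs(E)=\sum_j f(E_j)\,I_j^1\overline{I_j^2}.$$
Because $\hat\rho(0)=1$ and $\rho\in\mc{S}(\mathbb{R})$, the function $f$ satisfies $|f(s)|\leq C_N\langle T(E-s)/h\rangle^{-N}$ for every $N\geq 0$. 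Cauchy--Schwarz therefore reduces the proposition to showing, for $i=1,2$,
\begin{equation}\label{e:plan-key}
\sum_j |f(E_j)|\,|I_j^i|^2 \leq C\, h^{1-k_i}/\Ti_i(R(h)),
\end{equation}
whose product yields the claimed bound $Ch^{(2-k_1-k_2)/2}/\Ti(R(h))$.

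Fix $i\in\{1,2\}$. Define the linear functional $S_i:L^2(M)\to\mathbb{C}$, $S_iv:=\int_{H_i}A_iv\,d\sigma_{H_i}$, so $I_j^i = S_i\phi_j$. The standard $TT^*$ identity gives
$$\sum_j |f(E_j)|\,|S_i\phi_j|^2 = \sup_{\|v\|_{L^2}=1}\big|S_i\,|f|^{1/2}(P)v\big|^2.$$
I then set $u:=|f|^{1/2}(P)v$ for $\|v\|=1$. Clearly $\|u\|\leq 1$. Moreover, with $x:=T(E_j-E)/h$, one has $(E_j-E)^2|f(E_j)|\leq C_N(h/T)^2 x^2\langle x\rangle^{-N}\leq C(h/T)^2$ for $N\geq 4$, so $\|P\sub{E}u\|^2 = \sum_j(E_j-E)^2|f(E_j)||v_j|^2\leq C(h/T)^2$. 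Thus $u$ is a quasimode for $P\sub{E}$ of spectral width $\lesssim h/T_{\max}(h)$, and crucially $T_{\max}(h)\geq \Ti_i(R(h))$.

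To conclude, I fix a $(\mathfrak{D}_n,\tau,R(h))$-good cover $\{\T_j\}_{j\in\J(h)}$ of $\Sigma^{H_i}_{[a,b]}$. The $\Ti_i$ non-recurrence of $H_i$ via $\tau$-coverings (Definition~\ref{d:non rec cov}) provides, for each $E\in[a-Kh,b+Kh]$, a decomposition $\J\sub{E}^i(h) = \bigsqcup_{\ell}\mc{G}\sub{E,\ell}(h)$ with non-self-looping times $[t_\ell,T_\ell]$ satisfying the density estimate $R(h)^{(n-1)/2}\sum_\ell(|\mc{G}\sub{E,\ell}|t_\ell/T_\ell)^{1/2}\leq \mathfrak{D}_n^{1/2}\Cnr^i/\sqrt{\Ti_i(R(h))}$. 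Lemmas~\ref{l:cheat} (translating the non-self-looping information into the density hypothesis of Definition~\ref{d:cheat}) and~\ref{l:basicEst} then bound $h^{(k_i-1)/2}|S_iu|$ by
$$C\,R(h)^{\frac{n-1}{2}}\sum_\ell\bigg[\frac{(|\mc{G}\sub{E,\ell}|t_\ell/T_\ell)^{1/2}}{\tau^{1/2}}\|u\|+\frac{(|\mc{G}\sub{E,\ell}|t_\ell T_\ell)^{1/2}}{h}\|P\sub{E}u\|\bigg]+Q^{A_i,\psi}_{E,h}(C,C_N,u).$$
The first summand is $\leq C/\sqrt{\Ti_i(R(h))}$ by the density estimate. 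For the second, $T_\ell\leq \Ti_i(R(h))\leq T_{\max}(h)$ yields $(|\mc{G}\sub{E,\ell}|t_\ell T_\ell)^{1/2}/h \leq (|\mc{G}\sub{E,\ell}|t_\ell/T_\ell)^{1/2}\Ti_i(R(h))/h$, which combined with $\|P\sub{E}u\|\leq Ch/T_{\max}(h)$ again delivers a bound of $C/\sqrt{\Ti_i(R(h))}$. Taking the supremum over $v$ produces~\eqref{e:plan-key}.

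The principal technical obstacle is to show that the $Q$-remainder $Q^{A_i,\psi}_{E,h}(C,C_N,u)$, which contains $Ch^{-1/2-\delta}\|(1-\psi(P\sub{E}/h^\delta))P\sub{E}A_iu\|$, is negligible; unlike a sharp quasimode, the smoothed object $u=|f|^{1/2}(P)v$ satisfies $(1-\psi(P\sub{E}/h^\delta))u\not\equiv 0$. However, on $\supp(1-\psi(P\sub{E}/h^\delta))$ one has $|E_j-E|\geq h^\delta/4$, so the Schwartz decay $|f(s)|\leq C_N\langle T(E-s)/h\rangle^{-N}$ combined with $Th^{\delta-1}\to\infty$ (which uses $\Ti_i$ sub-logarithmic and $\delta<\tfrac{1}{2}$) forces both $\|(1-\psi(P\sub{E}/h^\delta))u\|$ and $\|(1-\psi(P\sub{E}/h^\delta))P\sub{E}u\|$ to be $O(h^\infty)\|v\|$. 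A standard calculus argument for the commutator $[\psi(P\sub{E}/h^\delta),A_i]$, controlled via $[P,A_i]=hB$ for some $B\in\Psi^\infty(M)$, then dispatches the full $Q$-remainder as $O(h^N)$ for every $N$, completing the proof.
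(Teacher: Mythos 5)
Your argument is correct, but it takes a genuinely different and somewhat more direct route than the paper's proof, which is worth comparing.

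The paper proceeds modularly: it first shows (Lemma~\ref{l:tiramisu}) that $t\mapsto\HAs(t)$ is Lipschitz at scale $h/T_{\max}(h)$, by telescoping down to windows $\1_{[t-s,t]}(P)$ with $|s|\leq 2h$, applying Cauchy--Schwarz and the duality identity \eqref{e:pink}, and feeding the \emph{sharp} quasimode $u=\1_{[t-s,t]}(P)w$ into Lemma~\ref{l:basicEst}; it separately controls polynomial growth (Lemma~\ref{l:bigS}); it then invokes the Tauberian Lemma~\ref{l:taub} to convert the Lipschitz and growth bounds into the smoothing estimate. You instead expand the error $\HAs(E)-\rho_{h,T_{\max}}*\HAs(E)=\sum_jf(E_j)I_j^1\overline{I_j^2}$ with $f(s)=\mathbf{1}_{s\leq E}-\Psi(T_{\max}(E-s)/h)$, apply Cauchy--Schwarz and a $TT^*$ dualization, and then run the geodesic-beam estimate once on the single \emph{smoothed} quasimode $u=|f|^{1/2}(P)v$, whose spectral width $\|P\sub{E}u\|\lesssim h/T_{\max}(h)$ is built in by the $\langle T_{\max}(E-s)/h\rangle^{-N}$ decay of $f$. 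This is valid: the $TT^*$ identity $\sum_j|f(E_j)||S_i\phi_j|^2=\sup_{\|v\|=1}|S_i|f|^{1/2}(P)v|^2$ holds and the sum converges by the rapid decay of $f$ against the polynomial growth of $|I_j^i|$; the non-recurrence density estimate and $T_\ell\leq\Ti_i(R(h))\leq T_{\max}(h)$ give $C/\sqrt{\Ti_i(R(h))}$ from both terms of Lemma~\ref{l:basicEst}; and the hypothesis $T_\ell\leq 2\alpha T_e(h)$ of Lemma~\ref{l:cheat} is satisfied because $\Omega(\Ti_i)\Lambda<1-2\delta$ (Lemma~\ref{l:sublog}(3)). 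Your handling of the $Q$-remainder is also sound: on $\supp(1-\psi(P\sub{E}/h^\delta))$ one has $|E_j-E|\gtrsim h^\delta$, while $T_{\max}(h)h^{\delta-1}\to\infty$, so writing $u=\psi_0(P\sub{E}/h^\delta)u+(1-\psi_0(P\sub{E}/h^\delta))u$ and using rapid decay for the second piece and $S_\delta$-calculus disjoint-support estimates (valid since $\delta<\tfrac12$) for the first makes the whole $Q$-term $O(h^\infty)$---this is the same microlocal disjointness that the paper uses in \eqref{e:coffin}, just applied to a smoothed rather than sharp spectral window. The trade-off: your approach is shorter, bypassing the telescoping and the separate Tauberian step; the paper's approach is more modular, and the Lipschitz/Tauberian machinery (Lemmas~\ref{l:taub}, \ref{l:tiramisu}, \ref{l:bigS}) is reused in \S\ref{s:weyl} for the trace estimates, which is why the authors factor it out.
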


\begin{remark}
{To ease notation, throughout this section we write $T_i(h):=\Ti_i(R(h))$, $T(h):=\Ti(R(h))$, and
${T_{\max}(h):=\max(\Ti_1(R(h),\Ti_2(R(h))))}.$
}
\end{remark}

%%%%%%%%%%%%%%%%%%%%%%%%%%%%%%%%%%%%%%%%%%%%%%%%%%%%%%%%%%%%%%%%%%%%%%%%%%%%%%%
\begin{proof}
We split the proof into Lemmas \ref{l:taub},~\ref{l:tiramisu}, and~\ref{l:bigS} below. 
 Lemmas \ref{l:tiramisu} and~\ref{l:bigS} show that {there exist $C\sub{0}=C\sub{0}(n, k_1, k_2, \FR^1,\FR^2, A_1, A_2, \Cnr^1, \Cnr^2)>0$, $C_1>0$, and $h_0>0$ such that}
{$w_h(E):=\HAs(E)$ satisfies the hypotheses of Lemma~\ref{l:taub} with $I_h:= [a-Kh,b+Kh]$, $\rho_h:=\rho\sub{h,T_{\max}(h)}$, {$\sigma_h:= T_{\max}(h)/h$},
$$L_h:=C\sub{0}h^{\frac{2-k_1-k_2}{2}}\Big/T(h)\qquad \text{and}\qquad B_h:= C\sub{1}{h^{-\frac{k_1+k_2}{2}}},$$
{and $0<h<h_0$.}
{Next,} let $\{K_j\}_{j=1}^\infty\subset \re_+$ be given by the choice of $\rho$ in \eqref{e:rho def}.
Since
$\Big\langle \frac{T_1(h)s}{h}\Big\rangle^{\frac{1}{2}}\Big\langle \frac{T_2(h) s}{h}\Big\rangle^{\frac{1}{2}}\leq \langle \sigma\sub{h} s\rangle$ 
for all $s\in \re$, Lemma \ref{l:taub} yields that there exists $C\sub{\rho}>0$ and for all $N>0$ there exists $C\sub{N}>0$ such that 
$$
\Big|\HAs(E)-\rho\sub{h,T(h)}*\HAs(E)\Big|\leq C\sub{\rho} C\sub{0} \frac{h^{\frac{2-k_1-k_2}{2}}}{T(h)}+ C\sub{N} C\sub{1}{h^{-\frac{k_1+k_2}{2}}} \Big(\frac{h}{T_{\max}(h)}\Big)^N,
$$
{for all $0<h<h_0$.}
This completes the proof after choosing $h_0$ small enough.}
\end{proof}

{We now present the lemmas used in the proof of Proposition \ref{p:error-smooth}. The first shows that if a family of functions $\{w_h\}_h$ is Lipstchitz {at scale $\sigma_h^{-1}$} with (at most) polynomial growth at infinity, then the family can be well approximated by its convolution $\rho_{h} *w_h$ where $\{\rho_h\}_h$ is a family of Schwartz functions}

%%%%%%%%%%%%%%%%%%%%%%%%%%%%%%%%%%%%%%%%%%%%%%%%%%%%%%%%%%%%%%%%%%%%%%%%%%%%%%%
\begin{lemma}
\label{l:taub}
Let $\{K_j\}_{j={0}}^\infty\subset \re_+$. Then, there exists $C>0$ and for all $N_0\in \mathbb{R}$, $N>0$ there exists $C\sub{N}>0$, such that the following holds. Let $\{\rho_{h}\}_{h>0}\subset \mathcal{S}(\mathbb{R})$ be a family of functions and $\{\sigma_h\}_{h>0}\subset \re_+$ such that for all $j\geq 1$ {and $h>0$}, 
$$
|\rho_h(s)|\leq {\sigma_h K_j}\,{\langle \sigma_h s\rangle^{-j}}\qquad \text{for all}\; s \in \re.
$$
Let $\{L_h\}_{h>0}\subset \re_{+}$, $\{B_h\}_{h>0}\subset \re_{+}$, 
 $\{w_h:\re \to \re\}_{h>0}$, $I_h\subset [-K_0,K_0]$, {$h_0>0$} and $\e_0>0$, be so that for all {$0<h<h_0$}
\begin{itemize}
 \item $|w_h(t-s)-w_h(t)|\leq L_h\langle \sigma_h \,s\rangle$ for all $t\in I_h$ and ${|s|\leq \e_0}$, \smallskip
 \item $|w_h(s)|\leq B_h\langle s\rangle^{N_0}$ for all $s\in \mathbb{R}$.
\end{itemize}
Then, for all {$0<h<h_0$} and $t\in I_h$
$$
\Big|(\rho_{h} *w_h)(t)-w_h(t)\int_{\re} \rho_{h}(s) ds\Big|\leq C L_h+ C\sub{N} B_h \sigma_h^{-N}\e_0^{-N}.
$$
\end{lemma}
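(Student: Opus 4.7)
The starting point is the identity
$$
(\rho_h * w_h)(t) - w_h(t)\int_{\re} \rho_h(s)\,ds = \int_{\re} \rho_h(s)\bigl[w_h(t-s) - w_h(t)\bigr]\,ds,
$$
obtained by pulling $w_h(t)$ inside the integral and using a change of variables. The plan is to split this integral over the two regions $|s| \leq \e_0$ and $|s| > \e_0$ and to control each piece using one of the two hypotheses on $w_h$: the Lipschitz bound on the inner region produces the $C L_h$ term, while the polynomial growth bound combined with the rapid decay of $\rho_h$ on the outer region produces the $C\sub{N} B_h \sigma_h^{-N}\e_0^{-N}$ term.

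On $|s| \leq \e_0$ the Lipschitz hypothesis gives $|w_h(t-s) - w_h(t)| \leq L_h\langle \sigma_h s\rangle$, while the $j=3$ case of the decay hypothesis reads $|\rho_h(s)| \leq \sigma_h K_3\langle \sigma_h s\rangle^{-3}$. Multiplying these and substituting $u = \sigma_h s$ yields
$$
\left|\int_{|s|\leq \e_0}\rho_h(s)\bigl[w_h(t-s) - w_h(t)\bigr]\,ds\right| \leq L_h K_3\int_{\re} \langle u\rangle^{-2}\,du \leq C L_h,
$$
with $C$ depending only on $K_3$.

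On $|s| > \e_0$, since $|t|\leq K_0$ one has $\langle t-s\rangle \leq \langle K_0\rangle + \langle s\rangle$, so the polynomial growth hypothesis gives $|w_h(t-s) - w_h(t)| \leq C(N_0, K_0)\,B_h\langle s\rangle^{|N_0|}$. The key feature of this region is that $\langle \sigma_h s\rangle \geq \sigma_h|s| > \sigma_h\e_0$, so that for any $M > 0$,
$$
\langle \sigma_h s\rangle^{-M} \leq (\sigma_h\e_0)^{-M}\quad\text{whenever}\quad \sigma_h\e_0\geq 1
$$
(the alternative case $\sigma_h\e_0 < 1$ renders the claimed bound trivial). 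Factoring $\langle \sigma_h s\rangle^{-j} = \langle \sigma_h s\rangle^{-N}\langle\sigma_h s\rangle^{-j+N}$, extracting this pointwise bound, and substituting $u = \sigma_h s$ in the remaining integral leads, for $j$ large enough depending on $N$ and $N_0$, to the estimate
$$
\left|\int_{|s|>\e_0}\rho_h(s)\bigl[w_h(t-s) - w_h(t)\bigr]\,ds\right| \leq C(N, N_0, K_0)\,B_h K_j\,(\sigma_h\e_0)^{-N},
$$
which furnishes the required $C\sub{N} B_h\sigma_h^{-N}\e_0^{-N}$ contribution. The argument is a standard Tauberian-type split; there is no real analytic difficulty and the main task is the bookkeeping for how large $j$ must be so that the integral remaining after the change of variables converges, together with a short case check to absorb possible factors of $\sigma_h^{-|N_0|}$ arising when $\sigma_h < 1$ into the constant $C\sub{N}$.
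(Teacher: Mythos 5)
Your proof is correct and follows essentially the same route as the paper's: after the identity $(\rho_h*w_h)(t) - w_h(t)\int\rho_h = \int\rho_h(s)\,[w_h(t-s)-w_h(t)]\,ds$, both arguments split at $|s|=\e_0$, use the Lipschitz hypothesis with the $j=3$ decay on the inner region, and use the polynomial-growth hypothesis with a high-order decay $j \geq N_0+2+N$ on the outer region; the paper's proof is simply terser, leaving the extraction of the $(\sigma_h\e_0)^{-N}$ factor implicit under ``integrability of each term,'' whereas you make it explicit via $\langle\sigma_h s\rangle^{-N}\le(\sigma_h\e_0)^{-N}$ on $|s|>\e_0$ (which, incidentally, holds with no case check since $\langle\cdot\rangle\ge 1$, so your aside about $\sigma_h\e_0<1$ is unnecessary rather than load-bearing).
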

%%%%%%%%%%%%%%%%%%%%%%%%%%%%%%%%%%%%%%%%%%%%%%%%%%%%%%%%%%%%%%%%%%%%%%%%%%%%%%%

\begin{proof}
For all {$0<h<h_0$} and $t\in I_h$
\begin{align*}
&\Big|(\rho_h *w_h)(t)-w_h(t)\int_{\re} \rho_h(s)ds\Big|
=\Big|\int_{\re} \rho_h(s)\big(w_h(t-s)-w_h(t)\big)ds\Big|\\
& \leq L_h\int_{|s|\leq \e_0} |\rho_h(s)|\langle \sigma_h s\rangle ds
+B_h\int_{|s|\geq \e_0}|\rho_h(s)|\Big( \langle t-s\rangle^{N_0}+\langle t\rangle^{N_0}\Big)ds\\
&\leq L_h\int_{|s|\leq \e_0}\!\!\! \sigma_h K_{3} \langle \sigma_h s\rangle^{-2} ds
+B_h\!\!\int_{|s|\geq \e_0} \!\!\!K\sub{N_0+2+N}\sigma_h \langle \sigma_h s\rangle^{-(N_0+2+N)}\!\Big( \langle t-s\rangle^{N_0}+\!\langle t\rangle^{N_0}\Big)ds.
\end{align*}
The existence of $C$ and $C\sub{N}$ follows from integrability of each term {and the boundedness of $I_h$}.
\end{proof}
%%%%%%%%%%%%%%%%%%%%%%%%%%%%%%%%%%%%%%%%%%%%%%%%%%%%%%%%%%%%%%%%%%%%%%%%%%%%%%%

{The next lemma shows that the family of functions $w_h(t)=\HAs(t)$ is Lipschitz {at scales dictated by the non-recurrence times for $H_1$ and $H_2$.}}

%%%%%%%%%%%%%%%%%%%%%%%%%%%%%%%%%%%%%%%%%%%%%%%%%%%%%%%%%%%%%%%%%%%%%%%%%%%%%%%%

\begin{lemma}
\label{l:tiramisu}
Suppose $a,b\in \re$, $\e_0>0$ are such that $H_1$, $H_2$ are conormally transverse for $p$ in the window $[a-\e_0,b+\e_0]$. {Let $A_1$, $A_2$, $\tau_0$, $R_0$, $\tau$, $\delta$, $R(h)$, and $\alpha$ be as in Proposition \ref{p:error-smooth}.} Let $\Cnr>$ and $K>0$. Then, there exist $h_0>0$ and 
$$
C\sub{0}=C\sub{0}(n, k_1, k_2, \FR^1,\FR^2, A_1, A_2, \Cnr)>0
$$
such that the following holds.

For $i=1,2$, let {$\Ti_i$ be a sub-logarithmic {\resfun} with $\Omega(\Ti_i)\Lambda<1-2\delta$. } Suppose $H_i$ is $\Ti_i$ non-recurrent in the window $[a,b]$ via {$\tau$-coverings} {with constant $\Cnr^i{\leq \Cnr}$}. Then for all $0<h\leq h_0$, $|s|\leq \e_0$, and $t\in [a-Kh,b+Kh]$, 
$$
\Big|\HAs(t)-\HAs(t-s)\Big|\leq C\sub{0}\frac{h^{\frac{2-k_1-k_2}{2}}}{\sqrt{T_1(h)T_2(h)}}\Big\langle \frac{T_1(h)s}{h}\Big\rangle^{\frac{1}{2}}\Big\langle \frac{T_2(h) s}{h}\Big\rangle^{\frac{1}{2}}.
$$

\end{lemma}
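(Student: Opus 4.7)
The strategy is to peel off the product structure via Cauchy--Schwarz and then to recognize the spectral projector onto a short energy window as an $L^2$ quasimode. Expanding $\1_{(t-s,t]}(P)=\sum_{E_j\in(t-s,t]}\phi_{E_j}\otimes\overline{\phi_{E_j}}$ yields
\[
\HAs(t)-\HAs(t-s)=\sum_{E_j\in(t-s,t]}\bigg(\int_{H_1}\!\!A_1\phi_{E_j}\,\ds{H_1}\bigg)\overline{\bigg(\int_{H_2}\!\!A_2\phi_{E_j}\,\ds{H_2}\bigg)},
\]
so Cauchy--Schwarz reduces the claim to proving an estimate of the form $\mathcal{I}_i\lesssim h^{1-k_i}\langle sT_i(h)/h\rangle/T_i(h)$ for each of the diagonal sums $\mathcal{I}_i:=\sum_{E_j\in(t-s,t]}|\int_{H_i}A_i\phi_{E_j}\,\ds{H_i}|^2$. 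The key observation is that $\mathcal{I}_i=\int_{H_i}A_iv_i\,\ds{H_i}$, where $v_i:=\1_{(t-s,t]}(P)A_i^*\delta_{H_i}$ satisfies $\|v_i\|_{L^2}^2=\mathcal{I}_i$ and $\|(P-t)v_i\|_{L^2}\leq s\|v_i\|_{L^2}$ by the spectral theorem. Hence $v_i$ is a bona fide $L^2$ quasimode for $P$ at energy $t$ with quasimode error of size $s$---the concrete embodiment of the principle from \S\ref{s:outline}.

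I would then feed $u=v_i$, $A=A_i$, $H=H_i$, $E=t$ into Lemma \ref{l:basicEst} with the cover supplied by Proposition \ref{p:non rec implies cov}: non-recurrence of $H_i$ at scale $R(h)$ produces a $(\mathfrak{D}_n,\tau,R(h))$-good cover of $\Sab^{H_i}$ together with a splitting $\J\sub{E}(h)=\sqcup_\ell\mc{G}\sub{E,\ell}$ into $[t_\ell,T_\ell]$ non-self-looping families obeying the density bound $R(h)^{(n-1)/2}\sum_\ell(|\mc{G}\sub{E,\ell}|t_\ell/T_\ell)^{1/2}\lesssim T_i(h)^{-1/2}$. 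Using $T_\ell\leq T_i(h)$ in the quasimode error sum of Lemma \ref{l:basicEst} and inserting $\|(P-t)v_i\|\leq s\mathcal{I}_i^{1/2}$ produces the self-improving inequality
\[
h^{(k_i-1)/2}\mathcal{I}_i\;\lesssim\;\bigg(\frac{1}{\sqrt{T_i(h)}}+\frac{s\sqrt{T_i(h)}}{h}\bigg)\mathcal{I}_i^{1/2}+(\text{lower order}),
\]
which resolves to $\mathcal{I}_i\lesssim h^{1-k_i}(1+sT_i(h)/h)^2/T_i(h)$. This bound is sharp in the non-recurrent regime $sT_i(h)/h\lesssim 1$, matching the target $h^{1-k_i}/T_i(h)$, but it loses a factor of $sT_i(h)/h$ when $sT_i(h)/h\gg 1$.

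To recover the sharp $\langle\cdot\rangle^{1/2}$ profile, I would complement the non-recurrent estimate with the elementary bound $\mathcal{I}_i\lesssim sh^{-k_i}+h^{1-k_i}$ valid for large $s$, which follows from $\#\{E_j\in(t-s,t]\}\lesssim s/h+h^{1-n}$ combined with the standard Kuznecov bound $|\int_{H_i}A_i\phi_{E_j}\,\ds{H_i}|^2\lesssim h^{1-k_i}$ (equivalently obtained from Lemma \ref{l:SobEst}, or by running Lemma \ref{l:basicEst} with a trivial single-tube cover that does not invoke non-recurrence). Taking the minimum of the two regime-specific estimates unifies them to $\mathcal{I}_i\lesssim h^{1-k_i}\langle sT_i(h)/h\rangle/T_i(h)$, with the two regimes meeting at the critical scale $s=h/T_i(h)$, after which Cauchy--Schwarz yields exactly the claimed Lipschitz estimate. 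The lower-order $Q^{A_i,\psi}_{t,h}$ errors from Lemma \ref{l:basicEst} are controlled by $O_N(h^N)$ times polynomial powers of $\|v_i\|_{L^2}$ and are absorbed into the choice of $h_0$. The main obstacle is precisely this interpolation: because $\|(P-t)v_i\|\leq s\|v_i\|$ is sharp and cannot be refined, the $\langle\cdot\rangle^{1/2}$ profile must be assembled from non-recurrence for short spectral windows and trivial Weyl control for long ones rather than extracted from a single uniform argument.
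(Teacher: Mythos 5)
Your opening moves match the paper: factoring through Cauchy--Schwarz, identifying $v_i=\1_{(t-s,t]}(P)A_i^*\delta_{H_i}$ as an $L^2$ quasimode with $\|(P-t)v_i\|\leq s\|v_i\|$, and feeding this into Lemma~\ref{l:basicEst} via the non-recurrence cover is exactly the paper's strategy (the paper phrases the same thing through the duality $\sup_{\|w\|=1}|\int_{H_i}A_i\1_{[t-s,t]}(P)w\,\ds{H_i}|$, but these are equivalent). Your self-improving inequality
\[
\mathcal{I}_i\;\lesssim\; h^{1-k_i}\,\frac{1}{T_i}\Bigl(1+\frac{sT_i}{h}\Bigr)^2
\]
is correctly derived and correctly identified as insufficient.

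The gap is in the patching step. You propose to close the loss by taking a minimum against the trivial bound $\mathcal{I}_i\lesssim s h^{-k_i}+h^{1-k_i}$ and claim the two regimes meet at $s=h/T_i(h)$. They do not. In the intermediate range $h/T_i\lesssim s\lesssim h$ the target is $\mathcal{I}_i\lesssim s h^{-k_i}$, while your non-recurrence bound gives $T_is^2h^{-1-k_i}$ (off by a factor $T_is/h\geq 1$) and your trivial bound gives $h^{1-k_i}$ (off by a factor $h/s\geq 1$); at the crossover $s\sim h/\sqrt{T_i}$ both are larger than the target by a factor $\sqrt{T_i}$. No further refinement of the trivial bound is possible: a spectral window of width $<h$ can still carry $\sim h^{1-n}$ clustered eigenvalues, so without dynamics the per-window estimate $\lesssim h^{1-k_i}$ is sharp (sphere). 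Nor can the quasimode error $\|(P-t)v_i\|\leq s\|v_i\|$ be sharpened, as you correctly note. So the min of your two estimates loses $\sqrt{T_i}$ at worst, which would degrade the final theorem from a $1/T$ gain to a $1/\sqrt{T}$ gain.

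What the paper does instead is to \emph{modify the non-self-looping times themselves as a function of $s$}. In~\eqref{e:modifiedTs} it replaces $(t_\ell,T_\ell)$ by $(\tilde t_\ell,\tilde T_\ell)$ with $\tilde T_\ell:=T_\ell\langle T_is/h\rangle^{-1}$ when $t_\ell\leq T_\ell\langle T_is/h\rangle^{-1}$ and $(\tilde t_\ell,\tilde T_\ell):=(1,1)$ otherwise. Non-self-looping on $[t_\ell,T_\ell]$ implies it on $[\tilde t_\ell,\tilde T_\ell]$, and the modification trades the two density sums against each other: $\sum_\ell(|\mc{G}_\ell|\tilde t_\ell/\tilde T_\ell)^{1/2}$ grows by $\langle T_is/h\rangle^{1/2}$ while $\sum_\ell(|\mc{G}_\ell|\tilde t_\ell\tilde T_\ell)^{1/2}$ shrinks by the same factor, so that the two terms in Lemma~\ref{l:basicEst}, weighted respectively by $\|v_i\|$ and $(|s|/h)\|v_i\|$, are balanced \emph{for every} $s\in[0,2h]$ simultaneously. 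That is a continuous interpolation internal to the covering, not a two-endpoint patch. Finally, the general case $|s|\leq\e_0$ is reduced to $|s|\leq 2h$ by telescoping into $\sim|s|/h$ windows and applying the $|s|\leq 2h$ case with $T_1=T_2=1$; this telescoping only reproduces the target because the $|s|\leq 2h$ case already has the full $\langle\cdot\rangle^{1/2}$ profile, so it would not rescue your scheme either.
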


%%%%%%%%%%%%%%%%%%%%%%%%%%%%%%%%%%%%%%%%%%%%%%%%%%%%%%%%%%%%%%%%%%%%%%%%%%%%%%%%%%%%
\begin{proof}
{We first assume the statement for $|s| \leq 2h$.}
Suppose $s\geq 2h$. The case of $s\leq -2h$ being similar.
Define $k_0:=\lfloor\frac{s}{h}\rfloor$ and
{$t_k:=t-s+kh$ for $0\leq k\leq k_0-1,$ and $t_k:=t$ for $k=k_0$.} Then
$$
\HAs(t)-\HAs(t-s)=\sum_{k=0}^{k_0-1}\HAs(t_{k+1})-\HAs(t_{k}).
$$
{Using} $|t_{k+1}-t_k|\leq 2h$, {and putting $t=t_{k+1}$, $s=t_{k+1}-t_k$,} we apply the {case $|s|\leq 2h$} with $T_1=T_2=1$ for each term to obtain
\begin{align*}
\Big|\HAs(t)-\HAs(t-s)\Big|
%&\leq \sum_{k=0}^{\ya{k_0-1}}|\HAs(t_{k+1})-\HAs(t_{k})|\\
&\leq C\sub{0}k_0 h^{\frac{2-k_1-k_2}{2}}\leq C\sub{0}h^{\frac{2-k_1-k_2}{2}} |s/h|,
\end{align*}
{and this proves the claim provided the statement holds for $|s| \leq 2h$.}

{We proceed to prove the statement for $|s| \leq 2h$.}
First, note that by \eqref{e:efxn decomp} and Cauchy-Schwarz
\begin{align} \label{e: purple}
&\Big|\HAs(t)-\HAs(t-s)\Big|^2
%=\Bigg|\sum_{t-s\leq E_j\leq t} \int_{H_1}A_1 \phi\sub{E_j}\, \ds{H_1}\int_{H_2} A_2 \phi\sub{E_j} \ds{H_2}\Bigg|\notag\\
\leq \sum_{t-s\leq E_k\leq t} 
\Big|\int_{H_1}A_1\phi\sub{E_k}\ds{H_1}\Big|^2\,\,\cdot\,\,
\sum_{t-s\leq E_j\leq t}\Big|\int_{H_2} A_2\phi\sub{E_j} \ds{H_2}\Big|^2.
\end{align}
Now, for each $i=1,2,$
 \begin{equation}\label{e:pink}
 \sum_{t-s\leq E_j\leq t}\Big|\int_{H_i}\!\!A_i\phi\sub{E_j}\ds{H_i}\Big|^2
 %=\int_{H_i}\gamma\sub{H_i}A_i\1_{[t-s,t]}(P)\1_{[t-s,t]}(P)\, A_i^*\, \delta\sub{H_i}\ds{H_i}
 =\|\1_{[t-s,t]}(P)\, A_i^*\, \delta\sub{H_i}\|^2\sub{L^2(M)}=\!\!\!\!\!\sup_{\|w\|_\LM=1}\Big| \int_{H_i}\!\!A_i\1_{[t-s,t]}(P) w\,\ds{H_i}\Big|^2,
 \end{equation}
where $\delta\sub{H_i}$ is the delta distribution at $H_i$ and the last equality follows by duality.
%{Observe that {by duality}} 
%\begin{align}\label{e: norm of average} 
%\|\1_{[t-s,t]}(P)\, A_i^*\, \delta\sub{H_i}\|\sub{L^2}
%&=\sup_{\|w\|_\LM=1} |\langle \1_{[t-s,t]}(P)\, A_i^*\, \delta\sub{H_i}, w \rangle_\LM|\notag\\
%&=\sup_{\|w\|_\LM=1}\Big| \int_{H_i}A_i\1_{[t-s,t]}(P) w\,\ds{H_i}\Big|.
%\end{align}
%where $\gamma\sub{H_i}:M \to H_i$ is the restriction function to $H_i$.
 
{We now} use the non-recurrence assumption on $H_1$ and $H_2$.
Since for each $i=1,2$, the submanifold {$H_i$ is $\Ti_i$ non-recurrent in the window $[a,b]$ via {$\tau_0$-coverings}}, {there is $h_0>0$ small enough depending on $R(h),K$ so that} for all {$0<h<h_0$ and} $t \in {[E-Kh,E+Kh]}$ there is a partition of indices $\J\sub{t}^i(h)=\cup_{\ell \in \L\sub{t}^i(h)} \G\sub{t,{\ell}}^i(h)$, and times $\{T_\ell^i(h)\}_{\ell \in \L\sub{t}^i(h)}$, and $\{t_\ell^i(h)\}_{\ell \in \L\sub{t}^i(h)}$ as in Definition \ref{d:non rec cov}.

Note that {we have chosen $h_0$ small enough so that} $\JE^i(h)$ is a $(\tau,R(h))$ good covering of $\Sigma^{H_i}_t$ for $t \in [E-Kh,E+Kh]$. 
In particular, for $i=1, 2$ and $t\in [E-Kh,\,E+Kh]$
\begin{gather}\label{e: non rec conds}
R(h)^{\frac{n-1}{2}}\sum_{\ell\in\LE^i(h)} \frac{(|\G\sub{t,{\ell}}^i|t_\ell^i)^{\frac{1}{2}} }{{(T_\ell^i)}^{\frac{1}{2}}}
\leq \frac{\Cnr^i}{T_i^{\frac{1}{2}}},
\qquad 
R(h)^{\frac{n-1}{2}}\sum_{\ell\in\LE^i(h)} (|\G\sub{t,{\ell}}^i|t_\ell^i)^{\frac{1}{2}}(T_\ell^i)^{\frac{1}{2}}\leq \Cnr^i T_i^{\frac{1}{2}}.
\end{gather}
The first bound is condition $(2)$ in Definition \ref{d:non rec cov}, and the second bound follows from the first one together with the $T_\ell^i \leq T_i$ for all $\ell \in \mc{L}^i_{h,E}$.
Next, for $\ell \in \mc{L}^i_{E}$ let 
\begin{equation}
\label{e:modifiedTs}\tilde T_{\ell}^i(h):=\begin{cases}T_{\ell}^i(h) \big\langle \tfrac{T_i(h) s}{h}\big \rangle^{-1}&t_\ell^i\leq T_\ell^i\big\langle \tfrac{T_i(h)s}{h}\big\rangle^{-1}\\
1&\text{else}
\end{cases},\quad\tilde {t}_{\ell}^i(h):=\begin{cases}t_{\ell}^i(h) &t_\ell^i\leq T_\ell^i\big\langle \tfrac{T_i(h)s}{h}\big\rangle^{-1}\\
1&\text{else}
\end{cases}\end{equation}
and note that 
$
\sum_{\tilde{t}_\ell^i=\tilde{T}_\ell^i=1}|\G\sub{t,{\ell}}^i|^{\frac{1}{2}}\leq \Cnr^i \sqrt{\frac{1}{T_i}\Big\langle \frac{T_i s}{h}\Big\rangle}.
$
In particular, 
\begin{equation}
 \label{e:modifiedTEsts}
\begin{gathered}
\!\!\!\!\sum_{\ell \in \LE^i(h)}\!\!\frac{(|\G\sub{t,{\ell}}^i|\tilde{t}_\ell^i)^{\frac{1}{2}}}{(\tilde{T}_\ell^i)^{\frac{1}{2}}}\leq 2\Cnr^i \sqrt{\frac{1}{T_i}\Big\langle \frac{T_i s}{h}\Big\rangle},
\qquad
\sum_{\ell \in \LE^i(h)}\!\!\sqrt{|\G\sub{t,{\ell}}^i|\tilde{t}_\ell^i\tilde{T}_\ell^i}\leq 2\Cnr^i \Big(\frac{1}{T_i}\Big\langle \frac{T_i s}{h}\Big\rangle\Big)^{-\frac{1}{2}}.
\end{gathered}
\end{equation}
Then, since for each $\ell \in \LE^i(h)$ the union of tubes with indices in ${\mc{G}^i\sub{E,\ell}}$ is also $[\tilde{t}_\ell^i(h),\tilde T_{\ell}^i(h)]$ non-self looping, we may apply Lemma \ref{l:cheat} with the sets $\{\G\sub{t,{\ell}}^i(h)\}_{\ell \in \LE^i(h)}$, $\{\tilde T_\ell^i(h)\}_{\ell \in \LE^i(h)}$, $\{t_\ell^i(h)\}_{\ell \in \LE^i(h)}$ {to see that $\{\T_j\}_{j\in \G\sub{t,{\ell}}^i(h)}$ has $\{(t_j,T_j)\}$ density on $[a,b]$ where $t_j=\tilde{t}_j^i(h)$, $T_j=\tilde{T}_j^i(h)$}. Then, using Lemma~\ref{l:basicEst} with operators $A_i \in \Psi^\infty(M)$, $\psi \in C^\infty_0(\re;[0,1])$ with $\psi(t)=1$ for $|t| \leq \tfrac{1}{4}$ and $\psi(t)=0$ for $|t| \geq 1$, and for $s \in \re$ let $u=\1_{[t-s,t]}(P)w$, where $w$ is any function in $L^2(M)$ with $\|w\|_\LM=1$.
Next, by Lemma~\ref{l:basicEst}, for $i=1,2$, there exist $C\sub{0}^i=C\sub{0}(n,k_i,\FR^i, A_i)$, $C>0$, and for all $N$ there is $C\sub{N}>0$ such that for all $0<h<h_0$, $s\in \mathbb{R}$, and $t \in [E-Kh, E+Kh]$
\begin{align}\label{e:dracula}
&h^{\frac{k_i-1}{2}}\Big|\int_{H_i}A_i\1_{[t-s,t]}(P)w\,\ds{H_i}\Big|\notag
\leq C\sub{0}^iR(h)^{\frac{n-1}{2}}\!\!\!\!
\sum_{\ell\in \LE^i(h)}\!\!\frac{(|\G\sub{t,{\ell}}^i|{\tilde{t}}_\ell^i)^{\frac{1}{2}}}{(\tau \tilde T_\ell^i)^{\frac{1}{2}}}\|\1_{[t-s,t]}(P)w\|_\LM \\
&\,+C\sub{0}^iR(h)^{\frac{n-1}{2}}\!\!\!\!
\sum_{\ell\in \LE^i(h)}\frac{(|\G\sub{t,{\ell}}^i| \tilde{t}_\ell^i \tilde T_\ell^i)^{\frac{1}{2}}}{h}\|P_t \1_{[t-s,t]}(P)w\|_\LM+ Q^{A,\psi}_{t,h}(C, C\sub{N}, \1_{[t-s,t]}(P)w).
%&\qquad\qquad\qquad \qquad +Ch^{-\frac{1}{2}-\delta}\big\|\big(1-\psi \big(\tfrac{P_t}{h^\delta}\big)\big)P_tA_i\1_{[t-s,t]}(P)w\big\|_\Hm \notag\\
%&\qquad\qquad\qquad \qquad+C\sub{N}h^N\Big(\|\1_{[t-s,t]}(P)w\|_\LM+ \|P_t \1_{[t-s,t]}(P)w\|_\Hm\Big).
\end{align}

Note that for all $N$ there is $C\sub{N}>0$ such that for all $t\in[a{-Kh},b{+Kh}]$, $|s|\leq 10$ and $0<h<1$
\begin{equation}\label{e:bat}
\|P_t\1_{[t-s,t]}(P)\|_{L^2\to H_{\textup{scl}}^N}\leq C\sub{N}|s|,\qquad \|\1_{[t-s,t]}(P)\|_{L^2\to L^2}\leq 1.
\end{equation}
In addition, we use the elliptic parametrix construction, together with $|s|\leq 2h$ to obtain
\begin{equation}\label{e:coffin}
\|\big(1-\psi \big(\tfrac{P_t}{h^\delta}\big)\big)P_tA_i\1_{[t-s,t]}(P)\|_{L^2\to H_{\textup{scl}}^N}\leq C\sub{N} h^N.
\end{equation}
%find $F_i\in \Psi_\delta^\infty(M)$ such that 
%$$
%\big(1-\psi \big(\tfrac{P_t}{h^\delta}\big)\big)P_tA_i=F_iP_t,
%\qquad\qquad 
%\|F_iP_t\1_{[t-s,t]}(P)\|_{L^2\to H_{\textup{scl}}^N}\leq C\sub{N}h^\delta|s|.
%$$
We combine these estimates with \eqref{e: non rec conds} and the definition of $\tilde T_\ell^i$ into \eqref{e:pink} to obtain that for all $0<h<h_0$, $|s|\leq 2h$, $K>0$, and $t \in [E-Kh, E+Kh]$,
\begin{align*}
&h^{\frac{k_i-1}{2}}\|\1_{[t-s,t]}(P)\, A_i^*\, \delta\sub{H_i}\|\LM
\leq C\sub{0}^i\Cnr^i
\bigg( \frac{1}{\tau^{\frac{1}{2}}}\Big(\frac{1}{ T_i} \Big\langle \frac{T_i s}{h} \Big\rangle \Big)^\frac{1}{2} + \frac{|s|}{h} \Big(\frac{1}{ T_i} \Big\langle \frac{T_i s}{h} \Big\rangle \Big)^{\!-\frac{1}{2}}\bigg) +C\sub{N}h^N.
\end{align*}

In particular, since $\tau<1$, using this estimate in \eqref{e:pink} we conclude that for all $0<h<h_0$, $|s|\leq 2h$, $K>0$, and $t \in [E-Kh, E+Kh]$
$$h^{\frac{k_i-1}{2}}\Big(\sum_{t-s\leq E_j\leq t}\Big|\int_{H_i}A_i\phi\sub{E_j}\ds{H_i}\Big|^2\Big)^{\tfrac{1}{2}}\leq \frac{C\sub{0}^i\Cnr^i}{\sqrt{\tau T_i(h)}}\Big\langle \frac{T_i(h)s}{h}\Big\rangle^{\tfrac{1}{2}} +C\sub{N}h^N.$$
Combining estimates for $H_1$ and $H_2$ using~\eqref{e: purple}, and $\Cnr^i\leq \Cnr$ completes the proof.
\end{proof}

{The last lemma shows that $w_h(s)=\HAs(s)$ has at most polynomial growth at infinity.}
%%%%%%%%%%%%%%%%%%%%%%%%%%%%%%%%%%%%%%%%%%%%%%%%%%%%%%%%%%%%%%%%%%%%%%%%%%%%%%%
\begin{lemma}
\label{l:bigS}
Let $\ell_1, \ell_2\in \mathbb{R}$. Then, there is {$N_0>0$} such that for all $A_1\in \Psi^{\ell_1}_\delta(M)$, $A_2\in \Psi^{\ell_2}_\delta(M)$, there are $C\sub{1}>0$, {$h_0>0$}, such that for all $0<h<h_0$ and $s\in \mathbb{R}$,
$$
|\HAs(s)|\leq C\sub{1}h^{-\frac{k_1+k_2}{2}}\langle s\rangle^{N_0}.
$$
\end{lemma}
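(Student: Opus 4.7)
The plan is to reduce the bound to a pair of $L^2$ estimates, each of which can be handled by Lemma~\ref{l:SobEst}. First, using the spectral expansion of $\Pi_h(s)$ one writes
$$\HAs(s) \;=\; \sum_{E_j(h) \leq s} \Big(\int_{H_1} A_1\phi\sub{E_j(h)}\,\ds{H_1}\Big)\,\overline{\Big(\int_{H_2} A_2\phi\sub{E_j(h)}\,\ds{H_2}\Big)}.$$
Cauchy--Schwarz in the summation index followed by duality then yields
$$|\HAs(s)| \;\leq\; \prod_{i=1}^{2}\|\Pi_h(s) A_i^* \delta\sub{H_i}\|\LM \;=\; \prod_{i=1}^{2}\sup_{\|w\|\LM=1}\Big|\int_{H_i} A_i \Pi_h(s) w\,\ds{H_i}\Big|,$$
where the last equality uses self-adjointness of $\Pi_h(s)$. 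It therefore suffices to estimate each supremum separately.

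I will bound each factor via Lemma~\ref{l:SobEst} applied to $u = \Pi_h(s)w$, taking $E=0$, any $\psi \in S^0(T^*M;[0,1])$ equal to $1$ on $N^*H_i$, and a positive integer $r_i > (k_i + 2\ell_i)/(2m)$. The classical ellipticity assumption~\eqref{e:positivity} on $p$ supplies the required lower bound on $|\sigma(P)(x,\xi)|$ for $|\xi|$ large. Since $P$ is self-adjoint with classically elliptic, positive at infinity symbol, its spectrum is bounded below, so there is $C'$ with $|E_j(h)| \leq \max(|s|,C')$ whenever $E_j(h) \leq s$. From the spectral decomposition
$$P^{r_i}\Pi_h(s) \;=\; \sum_{E_j(h)\leq s} E_j(h)^{r_i}\,\langle\,\cdot\,,\phi\sub{E_j(h)}\rangle\phi\sub{E_j(h)}$$
one obtains
$$\|P^{r_i}\Pi_h(s)w\|\LM \;\leq\; C\,\langle s\rangle^{r_i}\|w\|\LM, \qquad \|\Pi_h(s)w\|_{H_{\textup{scl}}^{-N}(M)} \;\leq\; \|\Pi_h(s)w\|\LM \;\leq\; 1.$$
Feeding these bounds into Lemma~\ref{l:SobEst} (and using $L^2$-boundedness of $Op_h(\psi)$) produces $h_0>0$ and $C>0$ such that for all $0<h<h_0$ and $s\in\mathbb{R}$,
$$\|\Pi_h(s)A_i^*\delta\sub{H_i}\|\LM \;\leq\; C\,h^{-k_i/2}\langle s\rangle^{r_i}.$$

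Multiplying the estimates for $i=1,2$ then gives
$$|\HAs(s)| \;\leq\; C_1\,h^{-(k_1+k_2)/2}\langle s\rangle^{r_1+r_2},$$
which is the claimed bound with $N_0 := r_1 + r_2$. There is no serious obstacle: the only step requiring care is the polynomial-in-$s$ control on $P^{r_i}\Pi_h(s)w$, and this follows at once from the spectral decomposition together with lower boundedness of the eigenvalues of $P$.
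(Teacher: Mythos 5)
Your proof is correct and takes essentially the same route as the paper: reduce to the two $L^2$ norms via Cauchy--Schwarz and duality, then apply Lemma~\ref{l:SobEst} to $u=\Pi_h(s)w$ and bound the resulting terms by functional calculus. The only (cosmetic) difference is that you take $E=0$ in Lemma~\ref{l:SobEst}, bounding $\|P^{r_i}\Pi_h(s)w\|$ via the spectral theorem together with lower-boundedness of the spectrum, whereas the paper takes $E=s$ and bounds $\|P_s^{r_i}\1_{(-\infty,s]}(P)w\|$ via $\|f(P)\|_{L^2\to L^2}\leq\|f\|_{L^\infty}$; both yield the same $\langle s\rangle^{r_i}$ growth, and your choice of a fixed $E$ is arguably the cleaner way to read uniformity in $s$ out of Lemma~\ref{l:SobEst}.
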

\begin{proof} 
Arguing as in~\eqref{e: purple}, and~\eqref{e:pink}, it is enough to
prove that there is $C\sub{1}>0$ such that for each $i=1,2$ there is $N_i>0$ for which
$$
\sup_{\|w\|_\LM=1}\Big| \int_{H_i}A_i\1_{(-\infty,s]}(P) w\,\ds{H_i}\Big|\leq C\sub{1}h^{-\frac{k_i}{2}}\langle s\rangle^{N_i}.
$$ 
Applying Lemma~\ref{l:SobEst} with $u=\1_{(-\infty,s]}(P) w$ yields that for any $\psi \in S^0(T^*M;[0,1])$ with $\psi\equiv 1$ on $N^*\!H$ and $r_i> \tfrac{k_i+2\ell_i}{2m}$ there exist $C\sub{1}>0$ and $h_0>0$ such that for all $N>0$ there is $C\sub{N}>0$ satisfying for $0<h<h_1$ and $s\in\re$, 
\begin{equation}
\begin{aligned}
&h^{\frac{k_i}{2}}\Big| \int_{H_i}\!A_i\1_{(-\infty,s]}(P) w\,\ds{H_i}\Big| 
\leq C\sub{N}h^N\|\1_{(-\infty,s]}(P) w\|_{H^{-N}_{\scl}(M)}\\
&\qquad\qquad\qquad + C\sub{1} \big(\|{Op_h(\psi)}\1_{(-\infty,s]}(P) w\|_{\LM}\!\!+\|{Op_h(\psi)} P_s^{r_i}\1_{(-\infty,s]}(P) w\|_{\LM}\big).% \\
%&\qquad \qquad +{ C\sub{N}h^N\|\1_{(-\infty,s]}(P) w\|_{H^{-N}_{\scl}(M)}}.
\end{aligned} 
\end{equation}
{Finally, the last term is bounded by $C\sub{1} (1+|s|^{r_i})$ since $\|f(P)\|_{L^2\to L^2}\leq \|f\|_{L^\infty}.$}
\end{proof}

%%%%%%%%%%%%%%%%%%%%%%%%%%%%%%%%%%%%%%%%%%%%%%%%%%%%%%%%%%%%%%%%%%%%%%%%%%%%%%%
%%%%%%%%%%%%%%%%%%%%%%%%%%%%%%%%%%%%%%%%%%%%%%%%%%%%%%%%%%%%%%%%%%%%%%%%%%%%%%%
%%%%%%%%%%%%%%%%%%%%%%%%%%%%%%%%%%%%%%%%%%%%%%%%%%%%%%%%%%%%%%%%%%%%%%%%%%%%%%%
\section{Smoothed projector with non-looping condition}
%%%%%%%%%%%%%%%%%%%%%%%%%%%%%%%%%%%%%%%%%%%%%%%%%%%%%%%%%%%%%%%%%%%%%%%%%%%%%%%
%%%%%%%%%%%%%%%%%%%%%%%%%%%%%%%%%%%%%%%%%%%%%%%%%%%%%%%%%%%%%%%%%%%%%%%%%%%%%%%
%%%%%%%%%%%%%%%%%%%%%%%%%%%%%%%%%%%%%%%%%%%%%%%%%%%%%%%%%%%%%%%%%%%%%%%%%%%%%%%
\label{s:smoothed}
This section is dedicated to the proof of Theorems \ref{t:MAIN} and \ref{t:MAIN2}. {The crucial step, completed in \S\ref{s:fixed time}, is to bound $
(\rho\sub{h,\Tt(h)}-\rho\sub{h,t_0})*\HAs
$ 
when the pair $(H_1,H_2)$ is $(t_0,\Ti)$ non-looping and $\Tt(h)=\frac{1}{2}\Ti(R(h))$. In \S\ref{s:imTheMAIN} we prove Theorem \ref{t:MAIN} by combining the estimates from \S \ref{s:fixed time} with Proposition \ref{p:error-smooth}. In \S \ref{s:Main2} we derive Theorem \ref{t:MAIN2} from Theorem \ref{t:MAIN}.}
%%%%%%%%%%%%%%%%%%%%%%%%%%%%%%%%%%%%%%%%%%
%%%%%%%%%%%%%%%%%%%%%%%%%%%%%%%%%%%%%%%%%%
\subsection{Comparing against a short fixed time} \label{s:fixed time}
%%%%%%%%%%%%%%%%%%%%%%%%%%%%%%%%%%%%%%%%%%
%%%%%%%%%%%%%%%%%%%%%%%%%%%%%%%%%%%%%%%%%%

Throughout this section we continue to assume $H_1 \subset M$ and $H_2\subset M$ are two submanifolds of co-dimension $k_1$ and $k_2$ respectively. 
The goal is to show that, under the assumption {$(H_1,H_2)$ is a $(t_0,\Ti)$ non-looping pair in the window $[a,b]$}, we can control $\rho_{\sigma_{h,\Tt(h)}}*\Pi_h$ by comparing it to $\rho\sub{h,t_0}*\Pi_h$.
For the rest of the section we write 
$$\Tt(h):=\tfrac{1}{2}{\Ti(R(h))},\qquad T(h):=\Ti(R(h)).$$

\begin{proposition}
\label{P:toShortTime}
Suppose $a, b\in \re$ are such that $H_1, H_2$ are conormally transverse for $p$ in the window $[a,b]$. Let $\tau_0, R_0$ be as in Lemma \ref{l:mainEst}.
Let $0<\tau<\tau_0$, $0<\delta<\tfrac{1}{2}$, {and $\Ti$ a sub-logarithmic {resolution function} with $\Omega(\Ti)\Lambda<1-2\delta$.}

Suppose {$(H_1,H_2)$ is a $(t_0, {\Ti})$ non-looping pair} in the window $[a,b]$ via {$\tau$-coverings} {with constant $\Cnl$}.
 Let $A_1,A_2\in \Psi^\infty(M)$, {$ {h^\delta} \leq R(h) \leq R_0$,} {and $K>0$.}
 There exist $$C\sub{0}=C\sub{0} (n,k_1, k_2,\FR^1, \FR^2,A_1,A_2,\Cnl)>0$$ 
 and $h_0>0$ such that for all $0<h<h_0$ and all $E\in[a-Kh,b+Kh]$,
\begin{equation}\label{e:toShortTime 0}
\Big| (\rho\sub{h,\Tt(h)}-\rho\sub{h,t_0})*\HAs(E) \Big|\leq C\sub{0} h^{\frac{2-k_1-k_2}{2}}\Big/{\Ti(R(h))}.
\end{equation}
\end{proposition}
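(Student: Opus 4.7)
The plan is to express the smoothed projector as a wave group integral and then exploit the non-looping structure via the geodesic beam machinery of Section~\ref{s:basic}. Setting $G_T(s):=\int_{-\infty}^s\rho\sub{h,T}(w)\,dw$ and using functional calculus together with Fourier inversion,
\[
B_h:=(\rho\sub{h,\Tt(h)}-\rho\sub{h,t_0})*\Pi_h(E)=(G\sub{\Tt(h)}-G\sub{t_0})(E-P)=\frac{1}{2\pi}\int_{\R}\psi_h(t)\,e^{itE/h}e^{-itP/h}\,dt,
\]
where $\psi_h(t):=[\hat{\rho}(t/\Tt(h))-\hat{\rho}(t/t_0)]/(it)$ is smooth with support in $\{t_0\leq|t|\leq 2\Tt(h)=\Ti(R(h))\}$ and $|\psi_h(t)|\leq 2/|t|$. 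Crucially, this support fits inside the non-looping window up to a boundary of width $\tau$ at each endpoint. Functional calculus also gives $\|B_h\|_{L^2\to L^2}\leq 1$ and $\|P\sub{E}B_h\|_{L^2\to L^2}\leq C\,h/t_0$ (since $s\mapsto s(G\sub{\Tt}-G\sub{t_0})(s)$ has $L^\infty$ norm $\lesssim h/t_0$ by the Schwartz decay of $\rho$).

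Next, I will apply Lemma~\ref{l:mainEst} to $u_h:=B_hA_2^*\delta\sub{H_2}$ with $A=A_1$ on a $(\mathfrak{D}_n,\tau,R(h))$-good cover $\{\T^1_j\}_{j\in\J^1(h)}$ of $\Sab^{H_1}$, which reduces the task to bounding $\|Op_h(\tilde{\chi}^1_j)u_h\|_{L^2}$ summed over $j\in\J^1\sub{E}(h)$, weighted by $CR(h)^{(n-1)/2}$. By Definition~\ref{d:non loop cov} we split $\J^i\sub{E}(h)=\GE^i(h)\cup\BE^i(h)$ for $i=1,2$ so that $\varphi_t(\T^i_\ell)\cap\bigcup_{m\in\J^k\sub{E}(h)}\T^k_m=\emptyset$ for $\ell\in\GE^i$ and $t_0+\tau\leq|t|\leq\Ti(R(h))-\tau$, while $|\BE^1||\BE^2|\Ti(R(h))^2\leq\mathfrak{D}_n^2\Cnl R(h)^{2(1-n)}$. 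For $j\in\GE^1(h)$, I will prove $\|Op_h(\tilde{\chi}^1_j)u_h\|_{L^2}=O(h^\infty)$: plugging in the wave group representation and applying Egorov's theorem gives $Op_h(\tilde{\chi}^1_j)e^{-itP/h}=e^{-itP/h}Op_h(\tilde{\chi}^1_j\circ\varphi_t)+O(h^\infty)_{\Psi^{-\infty}}$, which is uniformly valid for $|t|\leq\Ti(R(h))\leq(1-2\delta)T_e(h)/\Lambda\sub{\max}$ by Lemma~\ref{l:sublog} and the hypothesis $\Omega(\Ti)\Lambda<1-2\delta$. Combining with a microlocal partition of $A_2^*\delta\sub{H_2}$ subordinate to $\{\T_k^2\}$, the good-tube disjointness gives pseudo-local vanishing on the non-looping window, and the two boundary $t$-intervals of total width $2\tau$ produce a correction controlled by $\|\psi_h\|_{L^1(\text{bdy})}\lesssim\tau/t_0$ combined with the semiclassical Sobolev norm of $A_2^*\delta\sub{H_2}$, which can be absorbed into a lower-order remainder by taking $\tau$ small.

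For $j\in\BE^1(h)$, I will iterate: by $L^2$-duality, $\|Op_h(\tilde{\chi}^1_j)u_h\|^2$ is a Kuznecov-type sum on $H_2\times H_2$, so a second application of Lemma~\ref{l:mainEst} with the decomposition $\J^2\sub{E}(h)=\GE^2(h)\cup\BE^2(h)$ reduces the problem to pairs $(j,k)\in\BE^1\times\BE^2$; pairs with $k\in\GE^2$ again vanish to $O(h^\infty)$ by the symmetric Egorov argument. For the surviving bad pairs, the trivial bound $\|B_h\|_{L^2\to L^2}\leq 1$ combined with Cauchy--Schwarz and the count $|\BE^1|^{1/2}|\BE^2|^{1/2}\leq\mathfrak{D}_n\Cnl^{1/2}R(h)^{1-n}/\Ti(R(h))$ yields the factor $1/\Ti(R(h))$. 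Collecting the two $h^{-(k_i-1)/2}$ factors from the two uses of Lemma~\ref{l:mainEst}, the $R(h)^{(n-1)/2}$ from each tube sum, and the above bad-pair count produces the claimed bound $C\sub{0}\,h^{(2-k_1-k_2)/2}/\Ti(R(h))$. The hardest step is the good-tube Egorov estimate: Egorov's theorem must be applied uniformly up to the Ehrenfest time and $\varphi_{-t}(\T^1_j)$ must be matched precisely against the $\T^2_k$ cover of $\Sab^{H_2}$, while the short boundary-$t$ corrections require a separate semiclassical Sobolev estimate so that they do not spoil the final rate.
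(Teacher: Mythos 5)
Your high-level strategy — write the smoothed projector as a wave-group integral supported in $\{t_0\le |t|\le \Ti(R(h))\}$, split both tube covers into good/bad sets via Definition~\ref{d:non loop cov}, kill the good pairs by Egorov to sub-Ehrenfest times, and count the bad pairs via property (2) — is indeed the architecture of the paper's proof (Lemma~\ref{l:toShortTime2} for the functional-calculus reduction, Lemma~\ref{l:prop} for Egorov, Lemma~\ref{l:Q(h)} to localize to tubes, and Lemma~\ref{l:vol} plus property (2) for the bad count). The good-tube half is essentially Lemma~\ref{l:prop} and is fine.

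The gap is in the bad-pair count. You need the bound $R(h)^{n-1}|\BE^1(h)|^{1/2}|\BE^2(h)|^{1/2}$, which after property (2) is $O(1/\Ti(R(h)))$. But the iterated scheme you sketch does not produce this. Carrying out your plan literally: the first application of Lemma~\ref{l:mainEst} to $u_h=B_h A_2^*\delta\sub{H_2}$ gives a factor $R(h)^{(n-1)/2}\sum_{j\in\BE^1}\|Op_h(\tilde\chi_j^1)u_h\|$; then, for each fixed $j$, dualizing and applying Lemma~\ref{l:mainEst} to $B_h^*Op_h(\tilde\chi_j^1)^*w$ on $H_2$ gives (after Egorov on good $k$ and $\|B_h\|\le 1$ on bad $k$) a bound $\le C_0 h^{(1-k_2)/2}R(h)^{(n-1)/2}|\BE^2|^{1/2}$ \emph{uniformly in $j$}. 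Summing the first factor over $j\in\BE^1$ then produces $|\BE^1|\cdot|\BE^2|^{1/2}$, one power of $|\BE^1|^{1/2}$ too many. Observing further that $|\BE^i|\le |\J^i\sub{E}|\lesssim R(h)^{1-n}$ does not repair this. Nor does $\|B_h\|\le 1$ alone: to recover $|\BE^1|^{1/2}$ you must first apply Cauchy--Schwarz to the $j$-sum, which forces you to estimate $\sum_j\|Op_h(\tilde\chi_j^1)u_h\|^2=\langle Y_1 u_h,u_h\rangle$ with $Y_1=\sum_{j\in\BE^1}Op_h(\tilde\chi_j^1)^*Op_h(\tilde\chi_j^1)$. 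At that point either the $H_1$-side tube localization is lost (so the Egorov step on the $H_2$ side fails), or you must carry the non-pseudodifferential operator $B_h^* Y_1 B_h$ through Lemma~\ref{l:mainEst}, which is not in its hypotheses.

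What resolves this, and what your proposal is missing, is the factorization used in Lemma~\ref{l:vol}: one writes $f\sub{t_0,\Tt}(\lambda)=g^1\sub{t_0,\Tt}(\lambda)\,g^2\sub{t_0,\Tt}(\lambda)$ with both $g^1$ and $g^2$ Schwartz in $\lambda t_0$ (so each produces a genuine $O(h/t_0)$ quasimode for $P\sub{E}$). Then
$$
\Big\langle A_1 f\sub{t_0,\Tt,h}(P\sub{E}) A_2^*\tilde X_2\delta\sub{H_2},\;\tilde X_1^*\delta\sub{H_1}\Big\rangle
\le \big\| g^1(P\sub{E})^*A_1^*\tilde X_1^*\delta\sub{H_1}\big\|\cdot\big\| g^2(P\sub{E})A_2^*\tilde X_2\delta\sub{H_2}\big\|,
$$
and each factor is a single average over $H_i$ of an honest quasimode, so Lemma~\ref{l:basicEst} (with $t_\ell=T_\ell=t_0$) gives $C_0 h^{(1-k_i)/2}R(h)^{(n-1)/2}|\BE^i|^{1/2}$ \emph{symmetrically}, yielding exactly $R(h)^{n-1}|\BE^1|^{1/2}|\BE^2|^{1/2}$. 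Without splitting $f$ in this way, the Cauchy--Schwarz does not distribute the $H_1$ and $H_2$ contributions evenly, and your scheme loses a square root on one side.

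One further point you should also flag: Lemma~\ref{l:mainEst} is stated for $A$ in a bounded subset of $\Psi^\ell_\delta(M)$. The operator you would need in the ``Kuznecov sum on $H_2\times H_2$'' step, namely $A_2 B_h^* Op_h(\tilde\chi_j^1)^*Op_h(\tilde\chi_j^1)B_hA_2^*$, is not a $\Psi$DO (it involves $e^{itP/h}$ on the non-looping time window), so the lemma is not directly applicable; the reduction must be done through the duality formulation, which is where the factorization becomes essential.
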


%%%%%%%%%%%%%%%%%%%%%%%%%%%%%%%%%%%%%%%%%%%%%%%%%%%%%%%%%%%%%%%%%%%%%%%%%%%%%%%

We prove the proposition at the end of the section. The proof hinges on four lemmas.
The first one, Lemma \ref{l:toShortTime2}, rewrites the left hand side in \eqref{e:toShortTime 0} in terms of the function
\begin{equation}\label{e:def of f}
{f\sub{S,T,h}(\lambda):=f\sub{S,T}(h^{-1}\lambda)},
\qquad
f\sub{S,T}(\lambda):=\frac{1}{i}\int_\re \frac{1}{\tau}\,\hat{\rho}\big(\tfrac{\tau}{T}\big)\big(1-\hat{\rho}\big(\tfrac{\tau}{S}\big)\big) e^{-i{\tau}\lambda}d\tau,
\end{equation}
where $S,T$ are two positive constants with $S<T$, and $\rho$ is as in \eqref{e:rho def}
%%%%%%%%%%%%%%%%%%%%%%%%%%%%%%%%%%%%%%%%%%%%%%%%%%%%%%%%%%%%%%%%%%%%%%%%%%%%%%%
\begin{remark}We note that for all $N>0$
\begin{gather}\label{e:bound on f}
|f\sub{S, T}(\lambda)|\leq C\sub{N}\langle \lambda S\rangle^{-N},\qquad 
\supp\hat{\rho}\big(\tfrac{\tau}{T}\big)\big(1-\hat{\rho}\big(\tfrac{\tau}{S}\big)\big) \subset \{ \tau \in \R:\; |\tau| \in [S, 2T]\}.
\end{gather}
\end{remark}

\begin{lemma} \label{l:toShortTime2}
Suppose $k>0$ and $P\in \Psi^k(M)$ is self-adjoint with symbol satisfying~\eqref{e:positivity}. Then, for all $N>0$, 
$$
(\rho\sub{h,\Tt}-\rho\sub{h,t_0})*\Pi_h(E)={f\sub{t_0,\Tt,h}\big(P\sub{E}\big)}+O(h^N)_{\smooth}.
$$
\end{lemma}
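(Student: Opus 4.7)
The plan is to reduce the operator-valued convolution on the left to a function of $P\sub{E}$ via functional calculus, then identify that function with $f\sub{t_0,\Tt,h}(P\sub{E})$ through a Fourier-integral computation that exploits the support properties of $\hat\rho$. Writing the spectral decomposition
\[
\Pi_h(s)=\sum_j \phi\sub{E_j(h)}\overline{\phi\sub{E_j(h)}}\,H\!\big(s-E_j(h)\big)
\]
with $H$ the Heaviside function, a direct computation shows that for any $F\in L^1(\mathbb{R})$ with $\int F=0$,
\[
(F*\Pi_h)(E)=\mc{I}F\big(E-P\big),\qquad \mc{I}F(\mu):=\int_{-\infty}^{\mu} F(v)\,dv,
\]
interpreted via the functional calculus for the self-adjoint $P$. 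I would apply this with $F:=\rho\sub{h,\Tt}-\rho\sub{h,t_0}$; since $\int \rho\sub{h,T}=\hat\rho(0)=1$ for each $T>0$ by the normalization of $\rho$, one has $\int F=0$ and $G:=\mc{I}F\in\mathcal{S}(\mathbb{R})$.

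Next, I would compute $G$ by Fourier inversion. Since $\hat F(\tau)=\hat\rho(h\tau/\Tt)-\hat\rho(h\tau/t_0)$ vanishes at $\tau=0$, the quotient $\hat F(\tau)/(i\tau)$ is smooth and rapidly decaying, and rescaling $\sigma=h\tau$ gives
\[
G(-P\sub{E})=\frac{1}{2\pi}\int_{\mathbb{R}}\frac{\hat\rho(\sigma/\Tt)-\hat\rho(\sigma/t_0)}{i\sigma}\,e^{-i\sigma P\sub{E}/h}\,d\sigma.
\]
The algebraic identity $A-B=A(1-B)-(1-A)B$ with $A=\hat\rho(\sigma/\Tt)$ and $B=\hat\rho(\sigma/t_0)$ splits the integrand into two pieces. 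The $A(1-B)$ piece reproduces $f\sub{t_0,\Tt,h}(P\sub{E})$ by the definition \eqref{e:def of f}. The $(1-A)B$ piece vanishes identically once $h$ is small enough that $\Tt(h)>2t_0$: the factor $1-\hat\rho(\sigma/\Tt)$ is supported in $|\sigma|\geq\Tt$ (since $\hat\rho\equiv1$ on $[-1,1]$), while $\hat\rho(\sigma/t_0)$ is supported in $|\sigma|\leq 2t_0$, so the two supports are disjoint.

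The hard part will be to make the oscillatory functional calculus rigorous on the unbounded operator $P$; this is where the $O(h^N)_{\smooth}$ remainder enters. The plan is to microlocalize in the spectrum with a cutoff $\psi(h^\e P\sub{E})$, where $\psi\in C_c^\infty(\mathbb{R})$ equals $1$ near $0$ and $0<\e<\tfrac12$. On $\ran\psi(h^\e P\sub{E})$ all the manipulations above are identities between bounded operators and the derivation is exact. On the complement, where $|P\sub{E}|\gtrsim h^{-\e}$, the classical ellipticity~\eqref{e:positivity} of $P$ together with the Paley--Wiener-type decay of $G$ and $f\sub{t_0,\Tt,h}$ (which follows from $\supp\hat\rho\subset[-2,2]$) forces both sides to be $O(h^N)\colon H\sub{\scl}^{-N}\to H\sub{\scl}^{N}$. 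Combining these two regimes yields the claimed identity modulo a smoothing operator of size $O(h^N)$.
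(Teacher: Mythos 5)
Your argument is correct and rests on the same core computation as the paper's: both proofs hinge on the observation that, because $\hat\rho\equiv 1$ on $[-1,1]$ and $\supp\hat\rho\subset[-2,2]$, one has $\hat\rho(\sigma/\Tt)\hat\rho(\sigma/t_0)=\hat\rho(\sigma/t_0)$ once $\Tt\geq 2t_0$ (equivalently, your ``$(1-A)B$ piece vanishes''), and both then match the definition \eqref{e:def of f} by the same change of variables $\sigma=h\tau$. The genuine difference lies in how the lower endpoint is treated. The paper proves the exact identity \eqref{e:toShortTime} on $[E_1,E_2]$ with $E_1\leq -3a$ \emph{fixed}, and then spends the second half of the argument showing that the boundary contributions $f\sub{t_0,\Tt,h}(P\sub{E_1})$ and $\rho\sub{h,t}*\Pi_h(E_1)$ are $O(h^N)_{\smooth}$, invoking ellipticity of $P\sub{E_1}$ and the sharp G\aa rding inequality. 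Your identity $(F*\Pi_h)(E)=\mathcal{I}F(E-P)$ in effect puts $E_1=-\infty$, so those boundary terms are identically zero and no G\aa rding argument is needed; this is a real simplification and yields the (slightly stronger) conclusion that the identity is exact for $h$ small enough that $\Tt(h)\geq 2t_0$.

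Two remarks. The first is cosmetic: carried out with the standard convention $\hat g(\tau)=\int g(t)e^{-it\tau}dt$, your computation yields $\mathcal{I}F(E-P)=\tfrac{1}{2\pi}\,f\sub{t_0,\Tt,h}(P\sub{E})$, a factor off from the stated identity; but the paper's own derivation of \eqref{e:toShortTime} drops exactly the same $\tfrac{1}{2\pi}$ when passing from $\rho\sub{h,T}*\partial_s\Pi_h$ to its Fourier representation, so this is an inherited convention mismatch and not an error in your argument. The second is more substantive: the final paragraph about microlocalizing with $\psi(h^\e P\sub{E})$ is unnecessary and, as phrased, somewhat misdirected. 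Since $\int F=0$, the antiderivative $\mathcal{I}F$ lies in $\mathcal{S}(\re)$, so $\mathcal{I}F(E-P)$ is ordinary (rigorous) functional calculus of the self-adjoint operator $P$, and the operator-valued integral you write converges strongly by Stone's theorem and equals $\mathcal{I}F(E-P)$; likewise $f\sub{t_0,\Tt,h}$ is Schwartz-class by \eqref{e:bound on f}, so $f\sub{t_0,\Tt,h}(P\sub{E})$ is honest functional calculus as well. There is no gap for the $O(h^N)_{\smooth}$ remainder to absorb and no spectral cutoff is required; the error term in the lemma's statement simply comes for free.
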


\begin{proof}
First, we prove that if $P$ is self-adjoint {$E_1, E_2 \in \re$, then}
\begin{equation}\label{e:toShortTime}
\int_{E_1}^{E_2}(\rho\sub{h,\Tt(h)}-\rho\sub{h,t_0})*\partial_s\Pi_h(s)ds= {f\sub{t_0,\Tt(h),h}\big(P\sub{E_2}\big)-f\sub{t_0,\Tt(h),h}\big(P\sub{E_1}\big).}
\end{equation}
 To ease notation write $\Tt$ for $\Tt(h)$. To prove \eqref{e:toShortTime} we write
\begin{align*}
\int_{E_1}^{E_2}(\rho\sub{h,\Tt}-\rho\sub{h,t_0})*\partial_s\Pi_h(s)ds
%&=\int_{E_1}^{E_2} (\rho\sub{h,\Tt}(-P_s)+\rho\sub{h,t_0}(-P_s) )ds\\
%&=\int_{E_1}^{E_2}\int_\re \Big( \hat{\rho}\big(\tfrac{w}{\sigma\sub{h,\Tt}}\big)-\hat{\rho}\big(\tfrac{w}{\sigma\sub{h,t_0}}\big)\Big)e^{-iw(P-s)}dw ds\\
&=\int_{E_1}^{E_2}\int_\re \hat{\rho}\big(\tfrac{w}{\sigma\sub{h,\Tt}}\big)\big[1-\hat{\rho}\big(\tfrac{w}{\sigma\sub{h,t_0}}\big)\big]e^{-iw(P-s)}dw ds,
\end{align*}
where we use $\hat{\rho}\big(\tfrac{w}{\sigma\sub{h,t_0}}\big)=\hat{\rho}\big(\tfrac{w}{\sigma\sub{h,\Tt}}\big)\hat{\rho}\big(\tfrac{w}{\sigma\sub{h,t_0}}\big)$.
Putting $\tau:=hw$,~\eqref{e:toShortTime} follows.
%yields
%\begin{align*}
%\int_{E_1}^{E_2}\partial_s (\rho\sub{h,\Tt}-\rho\sub{h,t_0})*\Pi_h(s)ds
%&=\frac{1}{h}\int_{E_1}^{E_2}\int_\re \hat{\rho}\big(\tfrac{\tau}{\Tt}\big)\big(1-\hat{\rho}\big(\tfrac{\tau}{t_0}\big)\big)e^{-i\frac{\tau}{h}(P-s)}d\tau ds\\
%&=\frac{1}{i}\int_\re \frac{1}{\tau}\,\hat{\rho}\big(\tfrac{\tau}{\Tt}\big)\big(1-\hat{\rho}\big(\tfrac{\tau}{t_0}\big)\big)\Big(e^{-i\frac{\tau}{h}(P-E_2)}-e^{-i\frac{\tau}{h}(P-E_1)}\Big)d\tau\\
%&={f\sub{t_0,\Tt,h}\big(P\sub{E_2}\big)-f\sub{t_0,\Tt,h}\big(P\sub{E_1}\big)}.
%\end{align*}
%This together with 

{Next, let $N>0$. By \eqref{e:toShortTime} it suffices to find $E_1 \in \re$ such that for all ${t>c>0}$
\begin{equation}\label{e:claim with fs}
\big\|{f_{t_0,\Tt,h}\big(P\sub{E_1}\big)}\big\|_{\smooth}\leq C\sub{N} h^{2N}, 
\qquad
\|\rho\sub{h,t}*\Pi_h(E_1)\|_{\smooth} = O(h^N).
\end{equation}
To prove the first claim in \eqref{e:claim with fs},} note that by {\eqref{e:bound on f}} for all $N>0$ there is $C\sub{N}>0$ such that
$$
\big\|P\sub{E_1}^N{f_{t_0,\Tt,h}\big(P\sub{E_1}\big)}P\sub{E_1}^N\big\|_{L^2\to L^2}\leq C\sub{N}h^{2N}. 
$$
Next, since $P$ satisfies~\eqref{e:positivity}, there is $a>0$ such that $p(x,\xi)>-a$ for all $(x, \xi) \in T^*M$. In particular, for $E_1\leq -2a$, $P\sub{E_1}$ is elliptic and we have
$
P\sub{E_1}^{-1}:H_{\textup{scl}}^s(M)\to H_{\textup{scl}}^{s+k}(M)=O_s(1)
$
 for all $s \in \re$.
Then, for $E_1\leq {-2a}$ {the first claim in \eqref{e:claim with fs} follows}.

Next, by the sharp G\aa rding inequality, there is $C>0$ such that $\Pi_h(s)\equiv 0$ for $s\leq -a-Ch$. Thus, for $E_1\leq -3a$ and all $N,M\geq 0$ {there is $C\sub{M,N}>0$ such that}
\begin{gather*}
\|({\rho\sub{h,t}}*\Pi_h)(E_1)\|_{\smooth}\leq \int_{\re}\tfrac{t}{h}\rho\big(\tfrac{t}{h}s\big)\|\Pi_h(E_1-s)\|_{\smooth}ds
\leq C\sub{M,N} \int_{s\leq -a}\tfrac{t}{h}\big\langle\tfrac{t}{h}s\big\rangle^{-M}\langle s\rangle^{2N/k}.
\end{gather*}
{The claim follows after choosing $M$ large enough.}
\end{proof}

%%%%%%%%%%%%%%%%%%%%%%%%%%%%%%%%%%%%%%%%%%%%%%%%%%%%%%%%%%%%%%%%%%%%%%%%%%%%%%%
Let $H_1, H_2$, $t_0, T(h)$, $\tau,$ and $ R(h)$ be as in Proposition \ref{P:toShortTime}. 
Since {$(H_1,H_2)$ is a $(t_0, \Ti)$ non-looping pair} in the window $[a,b]$ via {$\tau_0$-coverings}, 
for $i=1,2$ and $h>0$ we let 
\begin{equation}\label{e:tubesj}
 \{\T_j^i\}_{j \in \J^i(h)}\quad\text{a $(\mathfrak{D}_n,\tau, R(h))$-good cover of $\Sab^{H_i}$ satisfying $(1)$ and $(2)$ in Definition \ref{d:non loop cov}}.
\end{equation} 
{We study $A_1{f\sub{t_0,\Tt,h}\big(P\sub{E}\big)}\,A_2^*$ by understanding the behavior of
\begin{equation}
 \Fjl:= Op_h(\chi\sub{\T_j^1})A_1{f\sub{t_0,\Tt,h}\big(P\sub{E}\big)}\,A_2^*Op_h(\chi\sub{\T_{\ell}^2})
\end{equation}
for $j \in \J^1(h)$ and $k \in \J^2(h)$.
 Next, we study the case when $\T_j^1$ does not loop through $\T_k^2$. }
%%%%%%%%%%%%%%%%%%%%%%%%%%%%%%%%%%%%%%%%%%%%%%%%%%%%%%%%%%%%%%%%%%%%%%%%%%%%%%%
\begin{lemma}
\label{l:prop}
Assume $H_1$ and $H_2$ are conormally transverse for $p$ in the window $[a,b]$.
 For $i=1,2$ let $\{\T_j^i\}_{j\in \J^i(h)}$ as in \eqref{e:tubesj} and $j \in \J^1(h)$, $\ell \in \J^2(h)$ be such that
$$
\varphi_t(\T_j^1)\cap \T_{\ell}^2=\emptyset,\qquad |t|\in [t_0{+\tau},T(h){-\tau}].
$$
 Let {$K>0$ and} $\mc{V}$ be a bounded subset of $S_\delta(T^*M;[0,1])$.
Then, there exists $h_0>0$ and for all $N>0$ there exists $C\sub{N}>0$ such that for all $0<h<h_0$, $E\in[a-Kh,b+Kh]$,
and every $\delta$-partition $\{\chi\sub{\T_j^i}\}_{j\in \JE^i(h)}\subset \mathcal{V}$ associated to $\{\T_j\}_{j\in \JE^i(h)}$, { $i=1,2$},
$$
\|\Fjl\|_{H^{-N}_{\scl}(M) \to H^N_{\scl}(M)}\leq C\sub{N}h^N.
$$
%In particular,
%$$
%\Big|\int_{H_1}\int_{H_2}\Big[Op_h(\chi\sub{\T_j^1})A_1 \,f\sub{t_0,\Tt(h)}\Big(\tfrac{P\sub{E}}{h}\Big)\,A_2^*\, Op_h(\chi\sub{\T_k^2})\Big](x,y)\,\ds{H_2}(y)\ds{H_1}(x)\Big|\leq C\sub{N}h^N.
%$$
\end{lemma}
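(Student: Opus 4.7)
The idea is to express $\Fjl$ through a Schr\"odinger propagator and apply Egorov's theorem in conjunction with the non-looping hypothesis. First, by Lemma~\ref{l:toShortTime2}, up to $O(h^N)_{\smooth}$ errors we have $(\rho\sub{h,\Tt}-\rho\sub{h,t_0})*\Pi_h(E)= f\sub{t_0,\Tt,h}(P\sub{E})$, so invoking the oscillatory expression \eqref{e:def of f} for $f\sub{t_0,\Tt,h}(P\sub{E})$ we can write
\[
\Fjl = \frac{1}{i}\int_\R \frac{1}{s}\hat\rho\bigl(\tfrac{s}{\Tt(h)}\bigr)\bigl(1-\hat\rho\bigl(\tfrac{s}{t_0}\bigr)\bigr)\,Op_h(\chi\sub{\T_j^1})\,A_1\, e^{-isP\sub{E}/h}\, A_2^*\, Op_h(\chi\sub{\T_\ell^2})\,ds + O(h^N)_{\smooth}.
\]
The integrand is compactly supported in $|s| \in [t_0, 2\Tt(h)] = [t_0, T(h)]$ with the scalar factor uniformly bounded on this set.

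Second, the assumption $\Omega(\Ti)\Lambda < 1-2\delta$ combined with $R(h) \ge h^\delta$ and Lemma~\ref{l:sublog}(3) places $T(h)$ strictly below the Ehrenfest time: for some $\e>0$ and all small $h$, $\Lambda\sub{\max}T(h) \le (1-2\delta-\e)\,\tfrac{1}{2}\log h^{-1}$. Hence Egorov's theorem applies uniformly on the integration window, yielding, for any $N>0$ and any $|s|\le T(h)$,
\[
e^{isP\sub{E}/h}\,Op_h(\chi\sub{\T_j^1})\,A_1\, e^{-isP\sub{E}/h} = Op_h(a\sub{j,s}) + O(h^N)_{\Psi^{-N}},
\]
where $a\sub{j,s}$ is a symbol in $S_\delta$ with essential support contained in $\varphi\sub{-s}(\T_j^1 \cap \MSh(A_1))$. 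Rearranging and using unitarity of $e^{-isP\sub{E}/h}$ on all $H^s_{\scl}$ spaces defined through $P$, the operator under the integral equals
\[
e^{-isP\sub{E}/h}\,Op_h(a\sub{j,s})\,A_2^*\,Op_h(\chi\sub{\T_\ell^2})
\]
modulo $O(h^N)_{H^{-N}_{\scl}\to H^N_{\scl}}$, and the semiclassical composition calculus shows the symbol of the pseudodifferential factor is essentially supported in $\varphi\sub{-s}(\T_j^1)\cap\MSh(A_2)\cap\T_\ell^2$.

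Third, by the non-looping hypothesis $\varphi_t(\T_j^1)\cap \T_\ell^2 = \emptyset$ for $|t|\in[t_0+\tau, T(h)-\tau]$, so on this core of the integration range the essential symbol support is empty and the integrand is $O(h^N)_{H^{-N}_{\scl}\to H^N_{\scl}}$ for every $N$. Since $T(h)=O(\log h^{-1})$ and the scalar cutoff is bounded, the contribution to $\Fjl$ is $O(h^M)$ for every $M$. The two residual margins $|s|\in [t_0,t_0+\tau]\cup [T(h)-\tau, T(h)]$ have length bounded independent of $h$ and may be absorbed by shrinking the cutoffs $\hat\rho(\cdot/\Tt(h))$ and $\hat\rho(\cdot/t_0)$ to narrower ones supported strictly inside $[t_0+\tau,T(h)-\tau]$; the replacement alters $\rho\sub{h,\Tt}-\rho\sub{h,t_0}$ only by convolution against a Schwartz function whose rescaling produces an $O(h^\infty)$ smoothing operator, and on the shrunk window the previous argument covers the entire integration range.

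\textbf{Main obstacle.} The delicate step is the uniform control of Egorov's theorem at logarithmic propagation times: only the precise quantitative inequality $\Omega(\Ti)\Lambda<1-2\delta$ keeps the sub-principal losses in Egorov's expansion at a strictly $h$-positive size, which is what allows us to promote them to $O(h^N)$ errors for every $N$. Once Egorov is under quantitative control, the reduction to the empty intersection $\varphi\sub{-s}(\T_j^1)\cap \T_\ell^2 = \emptyset$ is straightforward, and the margin contribution is handled by a cosmetic adjustment of the $\hat\rho$-cutoffs.
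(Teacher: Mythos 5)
Your proposal tracks the paper's argument closely: expand $f\sub{t_0,\Tt,h}\big(P\sub{E}\big)$ as the oscillatory integral~\eqref{e:def of f}, observe via~\eqref{e:bound on f} that the integrand is supported on $|s|\in[t_0,T(h)]$, apply Egorov's theorem below the Ehrenfest time (guaranteed by $\Omega(\Ti)\Lambda<1-2\delta$), and use the disjointness of $\varphi_s(\T_j^1)$ from $\T_\ell^2$ to kill the propagator sandwiched between the two tube cutoffs. This is precisely the paper's one-paragraph proof, and the core reasoning is sound. The appeal to Lemma~\ref{l:toShortTime2} at the start is superfluous (note $\Fjl$ is already defined in terms of $f\sub{t_0,\Tt,h}\big(P\sub{E}\big)$, not in terms of $(\rho\sub{h,\Tt}-\rho\sub{h,t_0})*\Pi_h$), but harmless.

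The genuine problem is the margin-handling paragraph. You propose to shrink the cutoffs $\hat\rho(\cdot/\Tt)$ and $\hat\rho(\cdot/t_0)$ so that the integrand is supported inside $[t_0+\tau,T(h)-\tau]$, and you claim this alters $\rho\sub{h,\Tt}-\rho\sub{h,t_0}$ only by ``convolution against a Schwartz function whose rescaling produces an $O(h^\infty)$ smoothing operator.'' This is false. Replacing $\Tt$ by $\Tt'=(T(h)-\tau)/2$, or narrowing $\hat\rho$, changes the smoothed projector at a \emph{polynomial} scale in $h$: the difference $\big(\rho\sub{h,\Tt}-\rho\sub{h,\Tt'}\big)*\Pi_h$ is still an averaging of the spectral projector over a window of width $\sim h/T(h)$, and its kernel on the diagonal is of size $\sim h^{1-n}/T(h)^2$, nowhere near $O(h^\infty)$. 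More concretely, for a single pair $(\T_j^1,\T_\ell^2)$ the margin contribution to $\Fjl$ — when the disjointness hypothesis does \emph{not} extend to $|s|\in[t_0,t_0+\tau]\cup[T(h)-\tau,T(h)]$ — is controlled only by the volume-type bound of Lemma~\ref{l:vol}, i.e. $O(h^{1-n}R(h)^{2n-1})$, which is far from $O(h^N)$. The correct resolution is \emph{not} to alter the cutoffs but to absorb the margin widths into the dynamical constants: the transition from the distance-based non-looping hypothesis to its covering version (Lemma~\ref{l:non loop implies cov} and Proposition~\ref{p:non loop implies cov}) already shifts $t_0\mapsto t_0+3\tau_0$ and $\Ti(R)\mapsto\Ti(4R)-3\tau_0$, and Theorem~\ref{t:MAIN2} replaces $t_0$ by $t_0+\e$. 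Those translations ensure that, by the time Lemma~\ref{l:prop} is invoked, the hypothesis can be arranged to cover the full integration window rather than $[t_0+\tau,T(h)-\tau]$. Your fix substitutes an incorrect step for what is actually bookkeeping of the time parameters.
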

%%%%%%%%%%%%%%%%%%%%%%%%%%%%%%%%%%%%%%%%%%%%%%%%%%%%%%%%%%%%%%%%%%%%%%%%%%%%%%%
\begin{proof}
By Egorov's theorem, for all $N>0$ there exist $h_0>0$ and $C\sub{N}>0$ such that for all $0<h<h_0$, and $E\in[a-Kh,b+Kh]$,
$$
\big\|Op_h(\chi\sub{\T_j^1})A_1\, e^{-i t \frac{P\sub{E}}{h}}A_2^*\,Op_h(\chi\sub{\T_{\ell}^2})\big\|_{H^{-N}_{\scl}(M) \to H^N_{\scl}(M)}\leq C\sub{N}h^N ,\qquad |t|\in [t_0{+\tau},T(h){-\tau}]
$$
{(see e.g.~\cite[Proposition 3.9]{DyGu:14})}.
The claim follows from the definition \eqref{e:def of f} together with the facts that by \eqref{e:bound on f} the support of its integrand has $\tau \in [t_0, 2 \Tt(h)]$,
and $\Tt(h)=\tfrac{1}{2}T(h)$. 
\end{proof}

{The next lemma provides an estimate for $\Fjl$ based on volumes of tubes. }
%%%%%%%%%%%%%%%%%%%%%%%%%%%%%%%%%%%%%%%%%%%%%%%%%%%%%%%%%%%%%%%%%%%%%%%%%%%%%%%
\begin{lemma}
\label{l:vol}
Assume $H_1$ and $H_2$ are conormally transverse for $p$ in the window $[a,b]$.
{Let $A_1$, $A_2$, $\tau_0$, $R_0$, $\tau$, $\delta$, and $R(h)$ be as in Proposition \ref{P:toShortTime}.} 
For $i=1,2$ let $\{\T_j^i\}_{j\in \J^i(h)}$ be a $(\mathfrak{D}_n, \tau, R(h))$-good covering of $\Sab^{H_i}$. 
 {Let $K>0$} and $\mc{V}$ a bounded subset of $S_\delta(T^*M;[0,1])$. Then, there are $C\sub{0}=C\sub{0}(n,k_1, k_2,\FR^1, \FR^2, A_1,A_2,\mc{V})$ and $h_0>0$, and for all $N>0$ there exists $C\sub{N}>0$
such that the following holds. 
For all $0<h<h_0$, $E\in [a-Kh,b+Kh]$, 
all $\delta$-partitions $\{\chi\sub{\T_j^i}\}_{j\in \JE^i(h)}\subset \mathcal{V}$ and $\mc{I}_i \subset \JE^i(h)$ for $i=1,2$, and all {$t_0, \tilde T$ with $0<t_0<\Tt$},
\begin{align*}
&\bigg|\int_{H_1}\int_{H_2} \sum_{\ell\in \mc{I}_1,j\in \mc{I}_2}\!\!\! \Fjl(x,y)\ds{H_2}(y)\ds{H_1}(x)\bigg|\leq C\sub{0} \tau^{-1} h^{\frac{2-k_1-k_2}{2}}R(h)^{n-1}|\mc{I}_1|^{\frac{1}{2}}|\mc{I} _2|^{\frac{1}{2}} + C\sub{N}h^N.
\end{align*}
\end{lemma}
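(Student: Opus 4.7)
The argument should proceed by two applications of Lemma~\ref{l:mainEst}, one in each submanifold variable. To set things up, let $B_1:=\sum_{j\in\mc{I}_1}Op_h(\chi\sub{\T_j^1})A_1$ and $B_2:=\sum_{\ell\in\mc{I}_2}Op_h(\chi\sub{\T_\ell^2})A_2$, so that the kernel $\sum_{j,\ell}\Fjl(x,y)$ coincides with the Schwartz kernel of $B_1\,f_{t_0,\Tt,h}(P\sub{E})\,B_2^*$ and the integral in question becomes
\[
 I=\langle f_{t_0,\Tt,h}(P\sub{E})B_2^*\delta\sub{H_2},\,B_1^*\delta\sub{H_1}\rangle.
\]
The plan is to view $I$ as an $H_1$-average of the quasimode $w:=f_{t_0,\Tt,h}(P\sub{E})B_2^*\delta\sub{H_2}$, reducing the task to controlling $\|w\|\sub{L^2(M)}$, and then to control the resulting factor $\|B_2^*\delta\sub{H_2}\|\sub{L^2}$ by a $TT^*$-style second application of Lemma~\ref{l:mainEst} in the $H_2$ variable.

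Concretely, I would first apply Lemma~\ref{l:mainEst} with $A=B_1$ and $u=w$ to obtain
\[
h^{(k_1-1)/2}|I|\leq C\,R(h)^{(n-1)/2}\tau^{-1/2}\!\!\!\sum_{j'\in\mc{I}_1}\!\!\|Op_h(\tilde\chi\sub{\T_{j'}^1})w\|\sub{L^2(M)}+\text{(error)},
\]
where the relevant index set reduces to (essentially) $\mc{I}_1$ because $\MSh(B_1)\subset\bigcup_{j\in\mc{I}_1}\T_j^1$. Cauchy--Schwarz in $j'$, combined with almost-orthogonality of the tube localizers in the $(\mathfrak{D}_n,\tau,R(h))$-good cover, upgrades the sum to $C|\mc{I}_1|^{1/2}\|w\|\sub{L^2}$. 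Provided $f_{t_0,\Tt,h}(P\sub{E})$ has operator norm bounded by an absolute constant on $L^2$, matters then reduce to estimating $\|B_2^*\delta\sub{H_2}\|\sub{L^2}^2=\int_{H_2}B_2 B_2^*\delta\sub{H_2}\,\ds{H_2}$, and a second application of Lemma~\ref{l:mainEst}, this time with $A=B_2$ and $u=B_2^*\delta\sub{H_2}$, together with the same Cauchy--Schwarz/almost-orthogonality argument, yields
\[
 \|B_2^*\delta\sub{H_2}\|\sub{L^2}\leq C\,h^{(1-k_2)/2}R(h)^{(n-1)/2}\tau^{-1/2}|\mc{I}_2|^{1/2}+(\text{l.o.t.}).
\]
Multiplying the two bounds produces the claimed main term $C\sub{0}\tau^{-1}h^{(2-k_1-k_2)/2}R(h)^{n-1}|\mc{I}_1|^{1/2}|\mc{I}_2|^{1/2}$. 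The $C\sub{N}h^N$ remainder comes from the $Q^{A,\psi}_{E,h}$ errors produced by each application of Lemma~\ref{l:mainEst}: these are localized to $\{|P\sub{E}|\geq \tfrac{1}{4}h^\delta\}$, and the decay in~\eqref{e:bound on f} shows that $(1-\psi(P\sub{E}/h^\delta))f_{t_0,\Tt,h}(P\sub{E})$ is $O(h^\infty)$ in every Sobolev norm, so these contributions are genuinely $O(h^N)$.

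The most delicate step, and the main obstacle to executing the plan cleanly, is the sharp control of $\|f_{t_0,\Tt,h}(P\sub{E})B_2^*\delta\sub{H_2}\|\sub{L^2}$. A naive use of $\|f_{t_0,\Tt,h}\|\sub{L^\infty}=O(\log(\Tt(h)/t_0))$ would introduce an unwanted $\log\log h^{-1}$ loss. To avoid it one should work directly from the Fourier representation~\eqref{e:def of f}, writing
\[
 f_{t_0,\Tt,h}(P\sub{E})=\tfrac{1}{i}\int s^{-1}\hat\rho(s/\Tt)(1-\hat\rho(s/t_0))\,e^{-isP\sub{E}/h}\,ds,
\]
and transferring the time-$s$ propagation onto $B_2^*\delta\sub{H_2}$ via the unitarity of $e^{-isP\sub{E}/h}$; alternatively one can split the multiplier into a bulk piece on $|\lambda|\leq h/t_0$ (where $f_{t_0,\Tt,h}$ is itself $O(1)$) and a tail on which $f_{t_0,\Tt,h}$ is $O((\lambda t_0/h)^{-N})$, handling each by the tube-localization scheme described above. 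Once a sharp $h$-uniform bound is in hand, the two applications of Lemma~\ref{l:mainEst} assemble into the stated estimate.
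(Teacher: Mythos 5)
Your overall skeleton---Cauchy--Schwarz followed by one application of the averages estimate in each slot---is the right idea, and your concern about a possible $\log(\Tt/t_0)$ loss in $\|f\sub{t_0,\Tt,h}(P\sub{E})\|_{L^2\to L^2}$ is a red herring: the bound~\eqref{e:bound on f} gives $|f\sub{S,T}(\lambda)|\leq C_N\langle\lambda S\rangle^{-N}$ with $C_N$ uniform in $0<S<T$ (for $|\lambda|\gtrsim S^{-1}$ one integrates by parts and the $\tau^{-N-1}$ factors are integrable with no logarithm, while for $|\lambda|\lesssim S^{-1}$ the $\tau^{-1}$ singularity is controlled by Dirichlet-type cancellation), so $\|f\sub{t_0,\Tt,h}(P\sub{E})\|_{L^2\to L^2}\leq C$ needs no rescue.

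The genuine gap is the $TT^*$ step. You propose to reduce to $\|B_2^*\delta\sub{H_2}\|_{L^2}$ and then apply Lemma~\ref{l:mainEst} a second time with $u=B_2^*\delta\sub{H_2}$, but this $u$ is \emph{not} an $O(h)$-quasimode for $P$: the tubes $\T^2_\ell$ sit in $\{|p-E|\lesssim R(h)\}$ with $R(h)\geq h^\delta\gg h$, so $\|P\sub{E}B_2^*\delta\sub{H_2}\|\sim R(h)\|B_2^*\delta\sub{H_2}\|$, the $h^{-1}\|Op_h(\tilde\chi)P\sub{E}u\|$ terms in~\eqref{e:mainEst} dominate by a factor $R(h)/h\geq h^{\delta-1}$, and the $Q^{A,\psi}_{E,h}$ remainder is not $O(h^N)$ since $(1-\psi(P\sub{E}/h^\delta))B_2 B_2^*\delta\sub{H_2}$ need not vanish. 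Indeed the claimed bound $\|B_2^*\delta\sub{H_2}\|\lesssim h^{(1-k_2)/2}R(h)^{(n-1)/2}|\mc{I}_2|^{1/2}$ is simply false: already $\|\delta\sub{H_2}\|_{H_{\scl}^{-s}}\sim h^{-k_2/2}$ for $s>k_2/2$, so you are missing at least a factor of $h^{-1/2}$. The paper sidesteps this by \emph{not} putting all of $f$ on one side of Cauchy--Schwarz. It factors $f\sub{t_0,\Tt}=g^1\sub{t_0,\Tt}g^2\sub{t_0,\Tt}$ with $g^2\sub{t_0,\Tt}(\lambda)=\langle t_0\lambda\rangle^{-N_0}$, writes
\begin{align*}
\bigg|\int_{H_1}\!\!\int_{H_2}\sum_{j,\ell}\Fjl\,\ds{H_2}\ds{H_1}\bigg|
\leq\Big\|\!\sum_{\ell\in\mc{I}_1}\!g^1\sub{t_0,\Tt,h}(P\sub{E})A_1^*Op_h(\chi\sub{\T^1_\ell})\delta\sub{H_1}\Big\|_{L^2}\Big\|\!\sum_{\ell\in\mc{I}_2}\!g^2\sub{t_0,\Tt,h}(P\sub{E})A_2^*Op_h(\chi\sub{\T^2_\ell})\delta\sub{H_2}\Big\|_{L^2},
\end{align*}
and bounds each factor by duality against $\|w\|_{L^2}=1$. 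The function fed to Lemma~\ref{l:basicEst} is then $g^i\sub{t_0,\Tt,h}(P\sub{E})w$, which \emph{is} a genuine $O(h/t_0)$-quasimode (since $\|g^i\sub{t_0,\Tt,h}(P\sub{E})\|\leq C$ and $\|P\sub{E}g^i\sub{t_0,\Tt,h}(P\sub{E})\|\leq Ch/t_0$), and with the trivial density $t_1=T_1=t_0$ each factor contributes $h^{(1-k_i)/2}R(h)^{(n-1)/2}\tau^{-1/2}|\mc{I}_i|^{1/2}$. The symmetric split of $f$ into two bounded multipliers, rather than a one-sided $TT^*$ argument, is the idea your proof is missing.
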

%%%%%%%%%%%%%%%%%%%%%%%%%%%%%%%%%%%%%%%%%%%%%%%%%%%%%%%%%%%%%%%%%%%%%%%%%%%%%%%
\begin{proof}
The first step in our proof is to define for $0<t_0<\Tt$ the functions
$$
 g^2\sub{t_0,\Tt}(\lambda)g^1\sub{t_0,\Tt}(\lambda):=f\sub{t_0,\Tt}(\lambda), \qquad \qquad g^2\sub{t_0,\Tt}(\lambda):=\langle t_0\lambda \rangle^{-N_0},
$$
where $N_0 \geq 1$ will be chosen later.
Note that by \eqref{e:bound on f} for all $L>0$ there is $C_{L}>0$ such that
\begin{equation}\label{e:bound on g1}
|g^1\sub{t_0,\Tt}(\lambda)|\leq C_{L}\langle t_0 \lambda\rangle^{-L+1}.
\end{equation}
Since
${f\sub{t_0,\Tt,h}\big(P\sub{E}\big)= g^1\sub{t_0,\Tt,h}\big(P\sub{E}\big)g^2\sub{t_0,\Tt,h}\big(P\sub{E}\big)},$
we may use Cauchy-Schwarz to bound
\begin{align*}
&\Big|\int_{H_1}\int_{H_2} \sum_{\ell\in \mc{I}_1,j\in \mc{I}_2}\Big[ \Fjl\Big](x,y)\ds{H_2}(y)\ds{H_1}(x)\Big|\\
&\qquad\leq 
\Big\|\sum_{\ell\in \mc{I}_1} {g^1\sub{t_0,\Tt}\big(P\sub{E}\big)}A_1^*\,Op_h(\chi\sub{\T_\ell^1})\delta\sub{H_1}\Big\|_\LM
\Big\|\sum_{\ell\in \mc{I}_2}{g^2\sub{t_0,\Tt,h}\big(P\sub{E}\big)}A_2^*\,Op_h(\chi\sub{\T_\ell^2})\delta\sub{H_2}\Big\|_\LM.
\end{align*}

Next, we use that for $i=1,2$,
$$
\Big\|\sum_{\ell\in \mc{I}_i} {g^i\sub{t_0,\Tt,h}\big(P\sub{E}\big)}A_i^*\,Op_h(\chi\sub{\T_\ell^i})\delta\sub{H_i}\Big\|_\LM
\!\!\!\leq \sup_{\|w\|=1}
\Big|\int_{H_i}\sum_{\ell\in \mc{I}_i}Op_h(\chi\sub{\T_{\ell}^i})A_i \, {g^i\sub{t_0,\Tt,h}\big(P\sub{E}\big)}w \,\ds{H_i}\Big|.
$$
Thus, let $w \in L^2(M)$ and fix $i \in \{1,2\}$. We next apply Lemma~\ref{l:basicEst} to the function $u={g^i\sub{t_0,\Tt,h}\big(P\sub{E}\big)}w$ and operator $A=\sum_{j\in \mc{I}_i}Op_h(\chi\sub{\T_{j}^i}) A_i\in \Psi_\delta^\infty(M)$. Here, we use that $\MSh(A) \subset \cup_{j \in \mc{I}_i}\T_j^i$ and that $\tfrac{1}{h}[P\sub{E},A] \in \Psi^\infty_\delta(M)$ {(see the definition of a $\delta$-partition~\eqref{e: chi H})}. In particular, we may fix $\mc{W}\subset \Psi^\infty_\delta(M)$ such that $\tfrac{1}{h}[P\sub{E},A] \in \mc{W}$ regardless of the choice of cover and $\delta$-partition {contained in $\mc{V}$}. Then, the constant $C\sub{0}^i$ provided by the Lemma depends on $A_i$ instead of $\mc{W}$. 

Fix $\psi \in C^\infty_0(\re;[0,1])$ with $\psi(t)=1$ for $|t| \leq \tfrac{1}{4}$ and $\psi(t)=0$ for $|t| \geq 1$.
By Lemma~\ref{l:basicEst} with $t_1=t_0$, $T_1=t_0$, and $\G_\ell =\emptyset$ for all $\ell >1$, we obtain that there are $C\sub{0}^i=C\sub{0}^i(n,k_i,\FR^i,A_i)>0$, $C>0$, there exist $h_0>0$ and for all $N>0$ there is $C\sub{N}>0$ such that for all $0<h<h_0$
\begin{align*}
&h^{\frac{k_i-1}{2}}\Big|\int _{H_i} \sum_{j\in \mc{I}_i} Op_h(\chi\sub{\T_j^i})A_i \,{g^i\sub{t_0,\Tt,h}\big(P\sub{E}\big)}w\, \ds{H_i}\Big|= Q^{A,\psi}_{E,h}(C, C\sub{N}, {g^i\sub{t_0,\Tt,h}\big(P\sub{E}\big)}w)\\
&\qquad \qquad \qquad + C\sub{0}^i\,R(h)^{\frac{n-1}{2}}|\mc{I}_i|^{\frac{1}{2}}
\Big(\frac{1}{\tau^{\frac{1}{2}} }\big\|{g^i\sub{t_0,\Tt,h}\big(P\sub{E}\big)}w\big\|_\LM+ \tfrac{t_0}{h}\big\|P\sub{E}{g^i\sub{t_0,\Tt,h}\big(P\sub{E}\big)}w\big\|_\LM\Big).
%&\hspace{3cm}+ Q^{A,\psi}_{E,h}(C, C\sub{N}, {g^i\sub{S,T,h}\big(P\sub{E}\big)}w).
%&\qquad \qquad \qquad \;\;\; +Ch^{-\tfrac{1}{2}-\delta}\Big\|\big(1-\psi\big(\tfrac{P\sub{E}}{h^\delta}\big)\big)P\sub{E}Ag^i\sub{S,T}\Big(\tfrac{P\sub{E}}{h}\Big)w\Big\|_{H\sub{\scl}^{\frac{k_i-2m+1}{2}}}\\
%&\qquad \qquad \qquad \;\;\; +C\sub{N}h^N\Big(\Big\|g^i\sub{S,T}\Big(\tfrac{P\sub{E}}{h}\Big)w\Big\|_\LM+\Big\|P\sub{E}g^i\sub{S,T}\Big(\tfrac{P\sub{E}}{h}\Big)w\Big\|_{H\sub{\scl}^{\frac{k_i-2m+1}{2}}}\Big).
\end{align*}
By the definitions $g^i\sub{t_0,\Tt}$, $i=1,2$ and \eqref{e:bound on g1} there exists $C>0$ such that for all $t_0,\Tt$ with $t_0<\Tt$,
\begin{gather*}
\big\|{g^i\sub{t_0,\Tt,h}\big(P\sub{E}\big)}\big\|_{L^2\to L^2}\leq C,\qquad
 \big\| P\sub{E}{g^i\sub{t_0,\Tt,h}\big(P\sub{E}\big)}\big\|_{L^2\to L^2}\leq C\frac{h}{t_0},\quad i=1,2.
\end{gather*}
In addition, note that for $i=1,2$ there exists $C\sub{N_0}>0$ such that 
$$
\big\|\big(1-\psi\big(\tfrac{P\sub{E}}{h^\delta}\big)\big)P\sub{E}A\,{g^i\sub{t_0,\Tt,h}\big(P\sub{E}\big)}\big\|_{L^2 \to L^2}\leq C\sub{N_0}h^{N_0(1-\delta)+\delta}. 
$$
The claim follows from choosing $N_0$ large enough that $N_0(1-\delta)+\delta \geq N$.
\end{proof}

%%%%%%%%%%%%%%%%%%%%%%%%%%%%%%%%%%%%%%%%%%%%%%%%%%%%%%%%%%%%%%%%%%%%%%%%%%%%%%%%%%%%%%%%%%%%%%%%%%%%%
\begin{lemma} \label{l:Q(h)}
Assume the same assumptions as in Proposition \ref{P:toShortTime}.
 For $i=1,2$ let $\{\T_j^i\}_{j\in \J^i(h)}$ be as in \eqref{e:tubesj},
%For $j \in \J^1(h)$ and $\ell \in \J^2(h)$ define
%\begin{equation}\label{e:I def}
% I_{j,\ell}(h):=\gamma\sub{H_1}A_1Op_h(\chi\sub{\mc{T}^1_j}){f\sub{t_0, \Tt(h),h}\big(P\sub{E}\big)}Op_h(\chi\sub{\mc{T}^2_\ell})A_2^*\delta\sub{H_2},
%\end{equation}
%and 
$\mc{V}$ be a bounded subset of $S_\delta(T^*M;[0,1])$ {and $K>0$}.
There exists $h_0>0$, and for all $N>0$ there exists $C\sub{N}>0$
such that for all $0<h<h_0$, $E\in [a-Kh,b+Kh]$, 
and every $\delta$-partition $\{\chi\sub{\T_j^i}\}_{j\in \JE^i(h)}\subset \mathcal{V}$ associated to $\{\T_j^i\}_{j\in \JE^i(h)}$,
%$$\|Q(h)\|_{H\sub{\scl}^{-N}(H_2)\to H\sub{\scl}^N(H_1)}\leq C\sub{\!N}h^N.$$
{
\begin{equation*}\label{e:onlyNear}
\Big\|\gamma\sub{H_1}A_1{f\sub{t_0, \Tt,h}\big(P\sub{E}\big)}A_2^*\delta\sub{H_2}
-\sum_{j\in \J^1\sub{{E}}(h),\, \ell \in \J^2\sub{{E}}(h)}\!\!\!\!\!\!\!\!\!\!\!\!\!\gamma\sub{H_1} \Fjl \delta\sub{H_2}\Big\|_{H\sub{\scl}^{-N}(H_2)\to H\sub{\scl}^N(H_1)} 
\leq C\sub{N}h^N.
\end{equation*}}
%\vspace{-0.5cm}
\end{lemma}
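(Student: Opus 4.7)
The approach is to decompose the error into contributions from incompleteness of the $\delta$-partition on the $H_1$ side and on the $H_2$ side, and to show each is $O(h^N)$ by combining spectral localization of $f\sub{t_0,\Tt,h}(P\sub{E})$ to the energy shell $p^{-1}(E)$ with the observation that $N^*\!H_i \cap p^{-1}(E)=\SE^{H_i}$, where each partition is by hypothesis complete.

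Let $b_{E,i}:=1-\sum_{j\in \J\sub{E}^i(h)}\chi\sub{\T_j^i}$, which lies in $S_\delta$ uniformly in $E,h$ and, by the $\delta$-partition property in \eqref{e: chi H}, vanishes identically on $\Lambda^{\tau/2}_{\SE^{H_i}}(R(h)/2)$. A telescoping identity rewrites the left-hand side of the lemma as
\begin{align*}
I_1+I_2
&:= \gamma\sub{H_1} Op_h(b_{E,1})\, A_1\, f\sub{t_0,\Tt,h}(P\sub{E})\, A_2^* \,\delta\sub{H_2}\\
&\qquad + \sum_{j\in \J\sub{E}^1(h)} \gamma\sub{H_1} Op_h(\chi\sub{\T_j^1})\, A_1\, f\sub{t_0,\Tt,h}(P\sub{E})\, A_2^*\, Op_h(b_{E,2})\, \delta\sub{H_2}.
\end{align*}
By symmetry between the two factors, it suffices to bound $I_1$ in the claimed norm.

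Next I localize $f\sub{t_0,\Tt,h}(P\sub{E})$ to the $h^{\delta_0}$-thickened energy shell. Fix $\delta_0\in(\delta,1)$ and $\psi\in C_c^\infty(\re;[0,1])$ with $\psi\equiv 1$ on $[-1,1]$ and $\supp\psi\subset[-2,2]$. The pointwise bound $|f\sub{t_0,\Tt}(\lambda)|\leq C_N\langle t_0\lambda\rangle^{-N}$ from \eqref{e:bound on f}, combined with spectral theory and the classical ellipticity of $P$ to absorb polynomial growth at high energies, yields
\[
\big\|(1-\psi(P\sub{E}/h^{\delta_0}))\, f\sub{t_0,\Tt,h}(P\sub{E})\big\|_{H\sub{\scl}^{-N}(M)\to H\sub{\scl}^N(M)}=O(h^N).
\]
The Helffer--Sj\"ostrand functional calculus places $\psi(P\sub{E}/h^{\delta_0})\in \Psi_{\delta_0}^{-\infty}(M)$ with principal symbol $\psi(p\sub{E}/h^{\delta_0})$, supported in $\{|p-E|\leq 2h^{\delta_0}\}$.

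To exploit this, pair $I_1$ with unit-norm test elements $u\in H\sub{\scl}^{-N}(H_2)$, $v\in H\sub{\scl}^{-N}(H_1)$; duality rewrites
\[
\langle I_1 u,v\rangle=\big\langle f\sub{t_0,\Tt,h}(P\sub{E})\, A_2^*\delta\sub{H_2}u,\; w\big\rangle\sub{\!M},
\qquad
w:=A_1^* Op_h(b_{E,1})^* \gamma\sub{H_1}^* v.
\]
Since $\WFh(\gamma\sub{H_1}^* v)\subset N^*\!H_1$, pseudodifferential composition and the support condition on $b_{E,1}$ give
\[
\WFh(w)\subset N^*\!H_1 \cap \supp b_{E,1}\cap \MSh(A_1)\subset N^*\!H_1\setminus \Lambda^{\tau/2}_{\SE^{H_1}}(R(h)/2).
\]
Uniform conormal transversality of $H_1$ with constant $\FR^1$ (see \eqref{e:FR}) makes $p-E$ a submersion on $N^*\!H_1$ near $\SE^{H_1}$ with derivative bounded below by $\FR^1$, so
\[
N^*\!H_1 \cap \{|p-E|\leq 2h^{\delta_0}\}\subset \Lambda_{\SE^{H_1}}\!\big(Ch^{\delta_0}/\FR^1\big).
\]
Because $\delta_0>\delta$ and $R(h)\geq h^\delta$, for $h$ small this is contained in $\Lambda^{\tau/2}_{\SE^{H_1}}(R(h)/2)$ and is therefore disjoint from $\WFh(w)$.

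Collecting the pieces, inserting the approximation $f\sub{t_0,\Tt,h}(P\sub{E})\approx \psi(P\sub{E}/h^{\delta_0})\, f\sub{t_0,\Tt,h}(P\sub{E})$ costs $O(h^N)$ thanks to the Sobolev mapping estimate on $f\sub{t_0,\Tt,h}(P\sub{E}) A_2^*\delta\sub{H_2}u$, and the remaining pairing is handled by a microlocal elliptic parametrix in the $\Psi_{\delta_0}$ calculus: the disjointness of $\MSh(\psi(P\sub{E}/h^{\delta_0}))$ from $\WFh(w)$ produces
\[
\|\psi(P\sub{E}/h^{\delta_0}) w\|_{L^2(M)}\leq C_N h^N\|v\|_{H\sub{\scl}^{-N}(H_1)}.
\]
This yields $|\langle I_1 u,v\rangle|\leq C_N h^N$ uniformly in $E\in[a-Kh,b+Kh]$ and in the $\delta$-partition drawn from $\mc{V}$; the estimate for $I_2$ follows by the symmetric argument with the roles of $H_1$ and $H_2$ interchanged.

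The main technical obstacle is expected to be the quantitative microlocal elliptic estimate in the final step, which requires interoperating the $S_\delta$ calculus of $Op_h(b_{E,1})$ with the $S_{\delta_0}$ calculus of $\psi(P\sub{E}/h^{\delta_0})$ while keeping all error constants uniform in $E$ and in the choice of $\delta$-partition from $\mc{V}$. The flexibility of choosing $\delta_0\in(\delta,1)$ is what drives the proof: it makes the $h^{\delta_0}$-thickened energy shell much thinner than the $R(h)/2$-neighborhood of $\SE^{H_1}$ on which $b_{E,1}$ vanishes, which is how conormal transversality converts microsupport disjointness into a quantitative $h^N$ gain.
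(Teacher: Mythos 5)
Your strategy is structurally very close to the paper's: both decompose the error into a factor missed on the $H_1$ side and one missed on the $H_2$ side, and both use that $f\sub{t_0,\Tt,h}(P\sub{E})$ is spectrally concentrated near the shell $p^{-1}(E)$ (your $\psi(P\sub{E}/h^{\delta_0})$ plays the role of \eqref{e:away3}), plus the fact that the leftover symbol is microsupported away from $\Lambda^{\tau}_{\SE^{H_i}}(R(h)/2)$. The paper does not re-derive the resulting microlocal estimate from scratch, but routes it through Lemma~\ref{l:mainEst} (applied with $A=A_1 G_1$, so that $\IE(h)=\emptyset$), and Lemma~\ref{l:mainEst} in turn invokes \cite[Proposition~3.2]{CG19a}. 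Your argument attempts to prove the needed microlocal estimate directly; this is a legitimate alternative route, but it is missing the ingredient that \cite[Proposition~3.2]{CG19a} supplies.

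The gap is the assertion $\WFh(\gamma\sub{H_1}^*v)\subset N^*H_1$. This is correct for $v\in C^\infty(H_1)$ fixed independently of $h$, but you need the estimate to hold uniformly over $v$ in the unit ball of $H^{-N}\sub{\scl}(H_1)$, and such $v$ can carry $h$-scale oscillations tangent to $H_1$: the $h$-wavefront set of $v\,\delta\sub{H_1}$ then sits over \emph{tangential} covectors $(x,\xi')\in T^*H_1$, not just conormal ones. Those tangential points can lie in $p^{-1}(E)$ (and hence in $\MSh(\psi(P\sub{E}/h^{\delta_0}))$) while lying well outside the tubes $\T_j^1$ — which are built around $\SE^{H_1}\subset N^*H_1$ — so $b_{E,1}$ does not vanish there. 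Consequently the microsupport disjointness that drives your elliptic estimate fails for these $v$, and $\|\psi(P\sub{E}/h^{\delta_0})w\|\sub{L^2}$ is not $O(h^N)\|v\|_{H^{-N}\sub{\scl}}$ in general. To close the gap you would need to first insert a near-conormal cutoff $Op_h(\beta_\delta)$ (in the tangential frequency) and show the far-tangential piece is $O(h^N)$ by an argument of the type of \cite[Proposition~3.2]{CG19a}; this is exactly the content that Lemma~\ref{l:mainEst}, and hence the paper's proof, already builds in. A secondary, smaller point: take $\delta_0\in(\delta,\tfrac12)$ rather than $(\delta,1)$, so that $\psi(P\sub{E}/h^{\delta_0})$ stays inside a usable $\Psi_{\delta_0}$ calculus; the choice is still possible since $\delta<\tfrac12$, and the thinness $h^{\delta_0}\ll R(h)$ is all that the conormal-transversality step needs.
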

%%%%%%%%%%%%%%%%%%%%%%%%%%%%%%%%%%%%%%%%%%%%%%%%%%%%%%%%%%%%%%%%%%%%%%%%%%%%%%%%%%%%%%%%%%%%%%%%%%%%%
\begin{proof}
 {Let $K>0$} {and $\psi \in C^\infty_c((-1,1);[0,1])$ with $\psi(t)=1$ for $|t| \leq \tfrac{1}{4}$.}
 {We claim there exists $h_0>0$ such that for all $N>0$ there is $C\sub{N}>0$ so that for $0<h<h_0$,}
\begin{equation}
 \label{e:away3}
\|\big(1-{\psi}\big(\tfrac{P\sub{E}}{h^\delta}\big)\big)f\sub{t_0, \Tt,h}\big(P\sub{E}\big)\|_{H_{\textup{scl}}^{-N}(M)\to H_{\textup{scl}}^N(M)}\leq C\sub{N} h^{N},\qquad\qquad E\in[a-Kh,b+Kh].
\end{equation}
To see this, first note that for ${\tilde \psi}\in C_c^\infty$ with $\supp\tilde \psi \subset \{{\psi}\equiv 1\}$ {and $L>0$},
$$\big(1-{\psi}\big(\tfrac{P\sub{E}}{h^\delta}\big)\big)f\sub{t_0, \Tt,h}\big(P\sub{E}\big)
=P\sub{E}^{-L}\big(1-{\psi}\big(\tfrac{P\sub{E}}{h^\delta}\big)\big)P\sub{E}^L f\sub{t_0, \Tt,h}\big(P\sub{E}\big)P\sub{E}^LP\sub{E}^{-L}\big(1-\tilde \psi\big(\tfrac{P\sub{E}}{h^\delta}\big)\big).$$
Now, since $P\sub{E}$ is classically elliptic in $\Psi^m(M)$, for all $s\in \mathbb{R}$, 
\begin{equation} 
\label{e:imElliptic}
P\sub{E}^{-L}\big(1-\psi\big(\tfrac{P\sub{E}}{h^\delta}\big)\big)=O_{L,s}(h^{-\delta L})_{{H_{\textup{scl}}^{s}(M)\to H_{\textup{scl}}^{s+mL}(M)}}.
\end{equation}
{Note that~\eqref{e:imElliptic} also holds with $\tilde \psi$ in place of $\psi$.}
In addition, by~\eqref{e:bound on f}
\begin{equation}
\label{e:imSmall}
P\sub{E}^L f\sub{t_0, \Tt,h}\big(P\sub{E}\big)P\sub{E}^L=O\sub{L}(h^{2L})_{L^2(M)\to L^2(M)}.
\end{equation}
Taking $L>\max(N/m,N/(2(1-\delta)))$ and combining~\eqref{e:imElliptic} and~\eqref{e:imSmall} we obtain~\eqref{e:away3}.% 

Next, for $i=1,2$ we define 
$
{G_i}:=\Id-\sum_{j \in \J\sub{{E}}^i(h)} \!\!\!Op_h(\chi\sub{\mc{T}^i_j}),
$
and note that {$\MSh(G_i)\cap \Lambda^\tau_{\SE^{H_i}}(R(h)/2)=\emptyset$}. Therefore, combining Lemma~\ref{l:mainEst} together with~\eqref{e:away3}, {there exists $h_0>0$ such that for all $N>0$ there is $C\sub{N}>0$ so that for all $0<h<h_0$,}
{
\begin{equation}\label{e:meta1}
 \big\| \gamma\sub{H_1}A_1G_1 {f\sub{t_0, \Tt,h}}\big(P\sub{E}\big)A_2^*\delta\sub{H_2} \big\|_{H_\text{scl}^{-N}(H_2)\to H_\text{scl}^N(H_1)}
\leq C\sub{N}h^N,\qquad E\in[a-Kh,b+Kh].
\end{equation}
}
In particular, the lemma follows from applying \eqref{e:meta1} and its analogs since
\begin{align*}
&{\gamma\sub{H_1}A_1{f\sub{t_0, \Tt,h}\big(P\sub{E}\big)}A_2^*\delta\sub{H_2}
-\!\!\!\!\!\!\!\!\!\sum_{j\in \J_E^1(h),\ell \in \J_E^2(h)}\!\!\!\!\!\!\!\!\!\!\gamma\sub{H_1} \Fjl \delta\sub{H_2}}\\
&=\gamma\sub{H_1}A_1G_1f\sub{t_0, \Tt,h}\big(P\sub{E}\big)A_2^*\delta\sub{H_2}+\gamma\sub{H_1}A_1 f\sub{t_0, \Tt,h}\big(P\sub{E}\big)G_2A_2^*\delta\sub{H_2}+\gamma\sub{H_1}A_1 G_1 f\sub{t_0, \Tt,h}\big(P\sub{E}\big)G_2A_2^*\delta\sub{H_2}. \qedhere
\end{align*}
\end{proof}

%%%%%%%%%%%%%%%%%%%%%%%%%%%%%%%%%%%%%%%%%%%%%%%%%%%%%%%%%%%%%%%%%%%%%%%%%%%%%%%%%%%%%%%%%%%%%%%%%%%%%
\noindent{\bf Proof of Proposition \ref{P:toShortTime}.}
Since $(H_1,H_2)$ is a $(t_0, \Ti)$ non-looping pair in the window $[a,b]$ via {$\tau_0$-coverings}, 
for $i=1,2$ and $h>0$ we may work with $\{\T_j^i\}_{j\in \J^i(h)}$, as in~\eqref{e:tubesj} {and} $\{\chi\sub{\T_j^i}\}_{j\in \J^i(h)}$ a $\delta$-partition associated $\{\T^i_j\}$
{For each $E\in [a,b]$ and $i=1,2,$ let $\J^i\sub{E,h}=\BE^i(h)\cup\mc{G}^i\sub{E}(h)$ be a partition of indices such that property $(1)$ of Definition \ref{d:non loop cov} with $r=R(h)$.}
%for $i,k=1,2$ with} $i\neq k$ and $\ell\in \mc{G}^i\sub{E}(h)$ then
%$$
%\Big( \bigcup_{|t|\in[t_0,T(h)]}\varphi_t(\supp \chi\sub{\mc{T}^i_\ell})\Big)\bigcap \Big(\bigcup_{j \in \J^k\sub{E,h}}\supp \chi\sub{\mc{T}^k_j}\Big)=\emptyset.
%$$
%Let $I_{j, \ell}(h)$ be as in Lemma \ref{l:Q(h)}.
Then, by Lemma~\ref{l:prop}, 
 {for $K>0$ there exists $h_0>0$ such that the following holds: For all $N>0$ there is $C\sub{N}>0$ so that for all $0<h<h_0$, $E\in [a-Kh,b+Kh]$,}
%for all $N>0$ and ${K}>0$ there exist $h_0>0$ and $C\sub{N}>0$ such that for all $0<h<h_0$, all $E\in[a-Kh,b+Kh]$, 
{and $i,k=1,2$ with $i\neq k$,}
\begin{equation}
 \label{e:nonLoop}
 \begin{gathered}
\Big|\int_{H_1} \int_{H_2}\sum_{j \in {\J^k\sub{E}(h)}}\sum_{\ell\in {\mc{G}^i\sub{E}(h)}}[\Fjl](x,y) \ds{H_2}(y)\ds{H_1}(x)\Big|
\leq C\sub{N}h^N.%\\
%\red{\Big|\int_{H_1} \int_{H_2}\sum_{j \in \G^1\sub{E}(h)}\sum_{\ell\in \mc{J}_h^2}I_{j,\ell}(h)(x,y) \ds{H_2}(y)\ds{H_1}(x)\Big|\leq C\sub{N}h^N}
\end{gathered}
\end{equation}
Therefore, considering the remaining term, and applying Lemma~\ref{l:vol} we obtain the following. There is $C\sub{0}=C\sub{0} (n,k_1, k_2,\FR^1, \FR^2,A_1,A_2)>0$ {and for $K>0$ there exists $h_0>0$ such that the following holds: For all $N>0$ there is $C\sub{N}>0$ so that for all $0<h<h_0$, $E\in [a-Kh,b+Kh]$,}
\begin{align}
 \label{e:finalVol}
&\Big|\int_{H_1} \int_{H_2}\sum_{j \in \B^1\sub{E}(h)}\sum_{\ell\in \BE^2(h)}[\Fjl](x,y)\ds{H_2}(y)\ds{H_1}(x)\Big| \notag\\
&\qquad\quad \leq C\sub{0}h^{\frac{2-k_1-k_2}{2}}R(h)^{n-1}{|\B^1\sub{E}(h)|^{\frac{1}{2}}}|\B^2\sub{E}(h)|^{\frac{1}{2}}+C\sub{N}h^N \leq C\sub{0}\Cnl h^{\frac{2-k_1-k_2}{2}}\big/T(h).
\end{align}
To get the last line we used that our covering satisfies property $(2)$ of Definition \ref{d:non loop cov}.
Combining Lemma \ref{l:Q(h)} with ~\eqref{e:onlyNear}, ~\eqref{e:nonLoop}, and~\eqref{e:finalVol}, we obtain the claim.

%%%%%%%%%%%%%%%%%%%%%%%%%%%%%%%%%%%%%%%%%%%%%%
%%%%%%%%%%%%%%%%%%%%%%%%%%%%%%%%%%%%%%%%%%%%%%
\subsection{Proof of Theorem \ref{t:MAIN} }
%%%%%%%%%%%%%%%%%%%%%%%%%%%%%%%%%%%%%%%%%%%%%%
%%%%%%%%%%%%%%%%%%%%%%%%%%%%%%%%%%%%%%%%%%%%%%
\label{s:imTheMAIN}
 {Since for $i=1,2$ the submanifold $H_i$ is $ T_i(h)$ non-recurrent} in the window $[a,b]$ via ${\tau_0}$-coverings {with constant $\Cnr^i$}, we may apply Proposition \ref{p:error-smooth} {to obtain the existence of $C\sub{0}=C\sub{0}(n,k_1, k_2,\FR^1, \FR^2,A_1,A_2,\Cnr^1, \Cnr^2)$ and for all $K>0$ obtain $h_0>0$ such that} for all $0<h\leq h_0$ and ${s}\in[a-Kh,b+Kh]$,
\begin{equation}\label{e:MAIN1}
\Big|\HAs(s)-\rho\sub{h,{\widetilde{T}_{\max}(h)}}*\HAs(s)\Big|\leq C\sub{0}\,h^{\frac{2-k_1-k_2}{2}}\big/\,T(h),
\end{equation}
{where $T(h)=(T_1(h)T_2(h))^{\tfrac{1}{2}}$ {and $T_{\max}(h)=\max(T_1(h),T_2(h))$.} Note that we are actually applying the proposition only using that $H_i$ is $\tfrac{1}{2} T_i(h)$ non-recurrent.}

On the other hand, since {$(H_1,H_2)$ is a $(t_0, {\Ti_{\max}})$ non-looping pair} in the window $[a,b]$ via {$\tau_0$ coverings}, we may apply Proposition \ref{P:toShortTime} to obtain that there exist $C\sub{1}=C\sub{1}(n,k_1, k_2,\FR^1, \FR^2,A_1,A_2,\Cnl)>0$ and {for all $K>0$} {there is} $h_0>0$ such that for all $0<h<h_0$ and all $s\in[a-Kh,b+Kh]$
\begin{equation}\label{e:MAIN2}
\Big| (\rho\sub{h,{\widetilde{T}_{\max}}(h)}-\rho\sub{h,t_0})*\HAs(s) \Big|\leq C\sub{1} \,h^{\frac{2-k_1-k_2}{2}}\big/\,T(h).
\end{equation}
{The result follows from combining \eqref{e:MAIN1} with \eqref{e:MAIN2}.
%we conclude that {for all $K>0$} there exists $h_0>0$ such that for all $0<h<h_0$ and all $s\in[a-Kh,b+Kh]$,
%\begin{equation*}
%\Big|\RAs(t_0,h; s)\Big|\leq (C\sub{0}+C\sub{1})\frac{h^{\frac{2-k_1-k_2}{2}}}{T(h)}.
%\end{equation*}
%
We note that $H_1$ and $H_2$ may be replaced by $\tilde H_{1,h}$ and $\tilde H_{2,h}$ since $\Cnl$, $\Cnr^1$, and $\Cnr^2$ are uniform for $\{\tilde H_{1,h}\}_h$ and $\{\tilde H_{2,h}\}_h$.}
%%%%%%%%%%%%%%%%%%%%%%%%%%%%%%%%%%%%%%%%%%%%%%
%%%%%%%%%%%%%%%%%%%%%%%%%%%%%%%%%%%%%%%%%%%%%%
\subsection{Proof of Theorem \ref{t:MAIN2} }\label{s:Main2}
%%%%%%%%%%%%%%%%%%%%%%%%%%%%%%%%%%%%%%%%%%%%%%
%%%%%%%%%%%%%%%%%%%%%%%%%%%%%%%%%%%%%%%%%%%%%%

{Let $0<\tau{<\min(\tau_0,\e/3)}$. By Proposition \ref{p:non rec implies cov} there exists $c_0>0$, $\Cnr=\Cnr(M,p,\ti,R_0)>0$ such that for $j=1,2$, the submanifold $H_j$ is $c \Ti_i(R)$ non-recurrent in the window $[a,b]$ via $\tau$ coverings with constant $\Cnr$. }

{Now, since $(H_1,H_2)$ is a $(t_0,{\Ti}_{\max})$ non-looping pair in the window $[a,b]$ with constant $\Cnl$. Proposition~\ref{p:non loop implies cov} implies there is $\widetilde{\Cnl}=\widetilde{\Cnl}(p,a,b,n,\Cnl, H_1,H_2)$ such that $(H_1,H_2)$ is a $(t_0+3\tau_0,\tilde{\Ti})$ non-looping pair in the window $[a,b]$ via $\tau_0$-coverings with constant $\widetilde{\Cnl}$ where $\tilde{\Ti}(R)=\Ti_{\max}(4R)-3\tau_0$. Since $\Ti_j$ are sub-logarithmic, there is $c_1>0$ such that $\tilde{\Ti}(R)\geq c_1 \Ti_{\max}(R)$. The proof now follows from a direct application of Theorem~\ref{t:MAIN} with $\Ti_j$ replaced by $\min(c_0,c_1)\Ti_j$ and $t_0$ by $t_0+\e$.}

%%%%%%%%%%%%%%%%%%%%%%%%%%%%%%%%%%%%%%%%%%%%%%%%%%%%%%%%%%%%%%%%%%%%%%%%%%%%%%%
%%%%%%%%%%%%%%%%%%%%%%%%%%%%%%%%%%%%%%%%%%%%%%%%%%%%%%%%%%%%%%%%%%%%%%%%%%%%%%%
%%%%%%%%%%%%%%%%%%%%%%%%%%%%%%%%%%%%%%%%%%%%%%%%%%%%%%%%%%%%%%%%%%%%%%%%%%%%%%%
%%%%%%%%%%%%%%%%%%%%%%%%%%%%%%%%%%%%%%%%%%%%%%%%%%%%%%%%%%%%%%%%%%%%%%%%%%%%%%%
\section{The Weyl law}
%%%%%%%%%%%%%%%%%%%%%%%%%%%%%%%%%%%%%%%%%%%%%%%%%%%%%%%%%%%%%%%%%%%%%%%%%%%%%%%
%%%%%%%%%%%%%%%%%%%%%%%%%%%%%%%%%%%%%%%%%%%%%%%%%%%%%%%%%%%%%%%%%%%%%%%%%%%%%%%
%%%%%%%%%%%%%%%%%%%%%%%%%%%%%%%%%%%%%%%%%%%%%%%%%%%%%%%%%%%%%%%%%%%%%%%%%%%%%%%
%%%%%%%%%%%%%%%%%%%%%%%%%%%%%%%%%%%%%%%%%%%%%%%%%%%%%%%%%%%%%%%%%%%%%%%%%%%%%%%

\label{s:weyl}
In order to improve remainders in the Weyl law itself, we let $\Delta \subset M \times M$ be the diagonal, and for $A_1, A_2\in \Psi^\infty(M)$ consider the integral
$$
\int_{M}[A_1\1_{(-\infty,s]}(P)A_2](x,x)\dv_g(x)= \int_{\Delta}\Big((A_1\otimes A_2^*)\1_{(-\infty,s]}(P)\Big)(x,y) d\sigma\sub{\!\Delta}(x,y),
$$
{where $d\sigma\sub{\!\Delta}$ is the Riemannian volume form induced on $\Delta$ by the product metric on $M \times M$.}
To ease notation, we write $\Pt=(P-t)\otimes 1={P\otimes 1-t\Id}.$
We will view $\Delta$ as a hypersurface of codimension $n$ in $M\times M$, and the kernel of $\1_{[t-s,t]}(P)$ as a quasimode for $\Pt$. In particular, observe that {for any operator $B:L^2(M)\to L^2(M)$}
\begin{equation}\label{e:bee}
\|\Pt \1_{[t-s,t]}(P){B}\|\sub{L^2(M\times M)}\leq |s|\|\1_{[t-s,t]}(P){B}\|\sub{L^2(M\times M)}.
\end{equation}
In addition, note that for $(x,\xi,y,\eta) \in T^*M \times T^*M$
$$
\sigma(\Pt)(x,\xi,y,\eta)=p(x,\xi)-t=:{\p}(x,\xi,y,\eta)-t=:\pt(x,\xi,y,\eta).
$$
Therefore, for all $c>0$, there is $C>0$ such that if $c|\eta|\leq |\xi|$ and $|\xi|\geq C$, then
$$
|\sigma(\Pt)(x,\xi,y,\eta)|\geq \tfrac{1}{C}|(\xi,\eta)|^m.
$$
In particular, since we work near the $\p$ flow-out of
$
N^*\Delta\cap \{\p=t\}
$
where $t\in[a,b]$, and
$$
N^*\Delta=\{ (x,\xi,x,-\xi):\; (x,\xi)\in T^*M\},
$$
we may work as though $\Pt$ were elliptic in $\Psi^m(M\times M)$, and apply the results of the previous sections by accepting $O(h^\infty)$ errors. We will do this without further comment. 

We next describe the tubes relevant in this section. We will work microlocally near a point $\rho_0\in N^*\Delta\cap \p^{-1}([a,b])$. Let $\pi\sub{R},\pi\sub{L}:T^*(M\times M)\to T^*M$ denote the projections to the right and left factor, and let $\hyp_{\pi\sub{L}(\rho_0)}\subset T^*M$ be a transversal to the flow for $p$ containing $\pi\sub{L}(\rho_0)$. (Such a hypersurface exists since $dp(\rho)\neq 0$ on $p^{-1}([a,b])$.) Define a transversal to the flow for $\p$ by 
$$
\hyp_{\rho_0}:=\hyp_{\pi\sub{L}(\rho_0)}\times T^*M,
$$
and let $U$ be a neighborhood of $\rho_0$ in $N^*\Delta$ such that 
$
U\cap \p^{-1}([a,b])\subset \hyp_{\rho_0}.
$
We will use the metric $\tilde d$ on $T^*M\times M$ defined by
$
\tilde d\Big((\rho\sub{L},\rho\sub{R}),(q\sub{L},q\sub{R})\Big):=\max\Big( d(\rho\sub{L},q\sub{L}), d(\rho\sub{R},q\sub{R})\Big),
$
for $(\rho\sub{L},\rho\sub{R}),(q\sub{L},q\sub{R})\in T^*M \times M.$
With this definition, for $\rho=(\rho\sub{L},\rho\sub{R})\in N^*\Delta\cap \{\pt=0\}$, 
$$
\T_\rho:=\Lambda_\rho^\tau (r)= \tilde{\Lambda}_{\rho\sub{L}}^\tau(r)\times B(\rho\sub{R},r)
$$
where $\Lambda_A^\tau(r)$ is defined by~\eqref{e:tube} with $\varphi_t$ the Hamiltonian flow for $\p$ and {$\tilde \T=\tilde{\Lambda}_{\rho\sub{L}}^\tau(r)$} denotes a tube with respect to $p$ and the hypersurface $\hyp_{\pi\sub{L}(\rho_0)}$.
In particular, when we use cutoffs with respect to a tube $\T$, we will always work with cutoffs of the form
$$
\chi\sub{\T}(x,\xi, y, \eta)=\chi\sub{\tilde{\T}}(x,\xi)\chi_{\rho\sub{R}}(y,{\eta}),\qquad\qquad
\supp \chi_{\rho\sub{R}}\subset B(\rho\sub{R},{r}).
$$
We will refer only to this tube in $T^*M$, leaving the other implicit and will think of the kernel of $A_1\1_{[a,b]}(P)A_2$ as that of $\1_{[a,b]}(P)$ acted on by $A_1\otimes A_2^{{t}}.$
Before we start our proof of the improved Weyl remainder, we need a dynamical lemma.
%%%%%%%%%%%%%%%%%%%%%%%%%%%%%%%%%%%%%%%%%%%%%%%%%%%%%%%%%%%%%%%%%%%%%%%%%%%%%%%
\begin{lemma}
Let $\Cnp>0$, $a\leq b$, and $U\subset T^*M$ satisfying $d\pi\sub{M}H_p\neq 0$ on $p^{-1}([a,b])\cap \overline{U}$. Then there are $\tau_0>0$ and $\widetilde{\Cnp}=\widetilde{\Cnp}(p,U,\Cnp)$ such that the following holds. If $U$ is $(t_0,\Ti)$ non-periodic for $p$ in the window $[a,b]$ with constant $\Cnp$, then $N^*\Delta\cap (U\times T^*M)$ is $(t_0+{3}\tau_0,\Ti({16}R)-{3}\tau_0)$ non-looping for $\p$ via {$\tau_0$-coverings} in the window $[a,b]$ with constant $\widetilde{\Cnp}$.
\end{lemma}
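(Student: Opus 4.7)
The plan is to exploit the product structure of the Hamiltonian flow of $\p=p\otimes 1$: since $\p$ depends only on the left factor, $\varphi_t^{\p}(\rho\sub{L},\rho\sub{R})=(\varphi_t^p(\rho\sub{L}),\rho\sub{R})$, so the right factor is frozen. Combined with the parametrization of $N^*\Delta=\{(x,\xi,x,-\xi)\}$ by $T^*M$ via $\iota(x,\xi):=(x,-\xi)$, this forces any near-looping event for $\Delta$ under $\varphi_t^{\p}$ to reduce to a near-periodic event for $\varphi_t^p$ at the corresponding point of $U$.

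First I would verify that the hypothesis $d\pi\sub{M}H_p\neq 0$ on $p^{-1}([a,b])\cap\overline{U}$ implies conormal transversality of $\Delta$ for $\p$ in the window $[a,b]$, using local defining functions $f_i(x,y)=x_i-y_i$ and the identity $H_\p f_i=\partial_{\xi_i}p$. Together with compactness of the relevant energy shell, this produces a uniform $\tau_0>0$ so that $(\mathfrak{D}_{2n},\tau,r)$-good covers of $\Sigma^{\Delta}_{[a,b]}\cap(U\times T^*M)$ exist for all $0<\tau<\tau_0$ and $r$ small.

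The central dynamical step is to show that if $\rho=(\rho\sub{L},\iota(\rho\sub{L}))\in\Sigma^{\Delta}_E\cap(U\times T^*M)$ lies in $\mc{L}^{R,E}\sub{\Delta,\Delta}(t_0,T)$ then $\rho\sub{L}\in\mc{P}^{CR}\sub{U}(t_0,T)\cap p^{-1}(E)$ for a constant $C$ depending only on $\iota$ and on the comparison between the ambient metric on $T^*(M\times M)$ and the product metric. Indeed, given $\rho_1=(\rho_{1,L},\rho_{1,R})\in B(\rho,R)$ with $\varphi_t^{\p}(\rho_1)\in B(\Sigma^{\Delta}_E,R)$, the fact that $\rho_{1,R}$ is frozen and approximates $\iota(\rho\sub{L})$ forces $\varphi_t^p(\rho_{1,L})$ to approximate $\iota^{-1}(\rho_{1,R})\approx\rho\sub{L}$ up to $O(R)$, so $\rho\sub{L}\in\mc{P}^{CR}\sub{U}(t_0,T)$. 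A parallel estimate gives the measure comparison
\[
\mu\sub{\Sigma^{\Delta}_E}\!\Big(B\big(\mc{L}^{R,E}\sub{\Delta,\Delta}(t_0,T),R\big)\cap(U\times T^*M)\Big)\;\leq\; C(p,U)\,\mu\sub{p^{-1}(E)}\!\Big(B\big(\mc{P}^{CR}\sub{U}(t_0,T),CR\big)\cap U\Big).
\]

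Finally I would apply Lemma~\ref{l:non loop implies cov} with $H_1=H_2=\Delta$ in the ambient manifold $M\times M$ of dimension $2n$, taking $S=4r$ and $T=\Ti(16r)$, which is admissible since $S\geq 4r$. The lemma produces a splitting $\JE(r)=\GE(r)\cup\BE(r)$ with the $\GE(r)$ tubes non-self-looping for times in $[t_0+2(\tau+r),\Ti(16r)-2(\tau+r)]\supset[t_0+3\tau_0,\Ti(16r)-3\tau_0]$ (for $r$ small and $\tau<\tau_0$) and $|\BE(r)|\lesssim r^{1-2n}\mu\sub{\Sigma^{\Delta}_E}(B(\mc{L}^{4r,E}\sub{\Delta,\Delta}(t_0,\Ti(16r)),4r))$. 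Using the previous translation and the non-periodic hypothesis on $U$ applied at the scale $16r$ (chosen so $C\cdot 4r=16r$), the right-hand side is bounded by $C(p,U)\Cnp/\Ti(16r)$. Squaring yields $r^{2(2n-1)}|\BE(r)|^2(\Ti(16r)-3\tau_0)^2\leq\mathfrak{D}_{2n}^2\widetilde{\Cnp}$ for a suitable $\widetilde{\Cnp}=\widetilde{\Cnp}(p,U,\Cnp)$, verifying condition~(2) of Definition~\ref{d:non loop cov}. The main obstacle is the careful tracking of metric-dependent Lipschitz constants so that the factor $16$ inside $\Ti(16R)$ is consistent with the combination of the minimum-allowed scale $S=4r$ in Lemma~\ref{l:non loop implies cov} and the dilation by the constant $C$ coming from the product-metric/$\iota$ comparison.
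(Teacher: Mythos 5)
Your proposal is correct and mirrors the paper's proof: the key step in both is that $\p=p\otimes 1$ freezes the right factor of the flow, so $\pi\sub{L}$ carries $B\big(\mc{L}^{R,E}\sub{\Delta_U,\Delta_U}(t_0,T),R\big)$ into $B\big(\mc{P}^{4R}\sub{U}(t_0,T),2R\big)$ and the measure comparison then follows from $\pi\sub{L}$ being a diffeomorphism on $\SE^\Delta\cap\Delta_U$. The only cosmetic difference is that you invoke Lemma~\ref{l:non loop implies cov} directly with $S=4r$ rather than concluding that $\Delta_U$ is $(t_0,\Ti(4\cdot))$ non-looping and then citing Proposition~\ref{p:non loop implies cov}; this unwinds one layer but produces the same factor $16$ and the same time shift $3\tau_0$.
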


%%%%%%%%%%%%%%%%%%%%%%%%%%%%%%%%%%%%%%%%%%%%%%%%%%%%%%%%%%%%%%%%%%%%%%%%%%%%%%%
%\blue{\cs rewrote the proof, you'll want to read it all\cs}
\begin{proof}
Let $E\in [a,b]$. We work with $\mc{L}\sub{\Delta,\Delta}^{{R,E}}(t_0,T)$ as defined in Definition~\ref{d:non loop gral} but with $p$ replaced by $\p$, $\varphi_t^{\p}:=\exp(t H_{\bf p})$, and $\Sigma\sub{E}^{\Delta}=N^*\!\Delta \cap \{\p=\!E\}$.
First, we claim 
\begin{equation}\label{e:wild claim}
 \pi\sub{L} \Big(B\sub{\Sigma\sub{E}^{\Delta}}\!\big({\scriptstyle \mc{L}\sub{\Delta_U,\Delta_U}^{{R,E}}\!(t_0,T)}, R\big) \Big) \subset B\sub{\Sigma\sub{E}^{\Delta}}\!\big({\scriptstyle \mc{P}_{{U}}^{R}(t_0,T)}, {2}R\big).
\end{equation}
Here, through a slight abuse of notation, we write 
$\mc{L}^{R,E}\sub{\Delta_U,\Delta_U}$ for~\eqref{e:L} with $S^*_xM$ and $S^*_yM$ replaced by $\Delta_U:=N^*\Delta \cap (U\times T^*M)$ and $\varphi_t=\exp(tH_\p)$.
To {prove \eqref{e:wild claim}} suppose $\rho_0\in B\sub{\Sigma\sub{E}^{\Delta}}\!\big({\scriptstyle \mc{L}\sub{\Delta_U,\Delta_U}^{{R,E}}\!(t_0,T)}, R\big)$.
Then, there {are} $\rho_1 \in \Sigma\sub{E}^{\Delta}{\cap\Delta_U}$ and $\rho_1'\in T^*(M\times M)$ such that 
$$
\tilde{d}(\rho_0, \rho_1)<R,\qquad {\tilde{d}(\rho_1,\rho_1')<R}, \qquad \text{and} \qquad
\bigcup_{t_0\leq |t|\leq T}\varphi_t^\p({\rho_1'})\cap B({\scriptstyle \Sigma\sub{E}^{\Delta}},R)\neq \emptyset.
$$
Therefore, there is $\rho_2 \in \Sigma\sub{E}^{\Delta}$ such that 
$\tilde d\big(\varphi_t^\p(\rho_1'), \rho_2\big)<R$ for some $t_0\leq |t|\leq T$.
{Let $\rho_1'=(x',\xi',y',-\eta')$ with $(x',\xi'),(y',\eta')\in T^*M$. Then}, since $\rho_1=(x,\xi,x,-\xi)$ and $\rho_2=(y,\eta,y,-\eta)$ for some $(x,\xi)\in T^*M$ and $(y,\eta)\in T^*M$,
{we have
%\begin{gather*}
%\max\Big(d \Big(\varphi_t(x',\xi'),(y,\eta)\Big),\; d\Big((y',-\eta'),(y,-\eta)\Big),\;
%d\Big((x',\xi'),(x,\xi)\Big), \; d\Big((y',-\eta'),(x,-\xi)\Big)\Big)<R.
%\end{gather*}
%This implies 
$d(\varphi_t(x',\xi'),(x',\xi'))<4R$
and 
$\pi\sub{L}({\rho_1'})=({x',\xi'})\in \mc{P}^{{4}R}\sub{U}(t_0,T).$
On the other hand, since
%since $\tilde{d}(\rho_0,\rho_1)<R$ and {$\tilde{d}(\rho_1,\rho'_1)<R$} we have 
$d(\pi\sub{L}(\rho_0),\pi\sub{L}({\rho'_1}))<{2}R$
we obtain $\pi\sub{L}(\rho_0)\in B\sub{S^*\!M}\!\big({\scriptstyle \mc{P}_U^{{4}R}(t_0,T)}, {2}R\big)$. }
This proves claim \eqref{e:wild claim}.

Next, note that since $\pi\sub{L}:{\Delta_U}\cap {\SE^{\Delta} \to \{p=E\}\cap U}$ is a diffeomorphism {for $E\in[a,b]$}, it follows that there exists $C={C(p)}>0$ such that {for all $E \in [a,b]$}
$$
\mu\sub{E}\Big( B\sub{\Sigma\sub{E}^{\Delta}}\!\big({\scriptstyle \mc{L}\sub{\Delta_U,\Delta_U}^{{R,E}}\!(t_0,T)}, R\big)\Big)\leq C \mu\sub{{S^*M}}\Big(B\sub{S^*\!M}\!\big({\scriptstyle \mc{P}_U^{{4}R}(t_0,T)}, {2}R\big)\Big).
$$
Hence, if $U$ is $(t_0,{\Ti})$ non-periodic for $p$ at energy $E$, we have
$$
\mu\sub{E}\Big( B\sub{\Sigma\sub{E}^{\Delta}}\!\big({\scriptstyle \mc{L}\sub{\Delta_U,\Delta_U}^{{R,E}}\!(t_0,{\Ti({4}R))}}, R\big)\Big)\,{\Ti}({4}R)\leq C\mu\sub{{S^*M}}\Big(B\sub{S^*\!M}\!\big({\scriptstyle \mc{P}_U^{{4}R}(t_0,{\Ti({4}R)})}, {4}R\big)\Big)\,{{\Ti}({4}R)}\leq C\Cnp,
$$
and so $\Delta_U$ is $(t_0,\Ti({4}R))$ non-looping for ${\bf p}$ at energy $E$.
The result follows from Corollary~\ref{p:non loop implies cov}.
\end{proof}

%%%%%%%%%%%%%%%%%%%%%%%%%%%%%%%%%%%%%%%%%%%%%%%%%%%%%%%%%%%%%%%%%%%%%%%%%%%%%%%

%We next study the $L^2$ norm of the kernel of the operator $\1_{[a,b]}(P)$.
In what follows, we write $\|\cdot\|\sub{HS}$ for the Hilbert-Schmidt norm.% since $\1_{[a,b]}(P)$ is an orthogonal projector and trace class, 
%$$
%\|\1_{[a,b]}(P)\|\sub{L^2(M\times M)}^2\!\!\!\!\!\!=\|\1_{[a,b]}(P)\|^2\sub{HS}=\tr %\1_{[a,b]}(P)^2=\tr \1_{[a,b]}(P)=\!\int_\Delta\!\1_{[a,b]}(P)d\sigma\sub{\!\Delta}.
%$$}
%%%%%%%%%%%%%%%%%%%%%%%%%%%%%%%%%%%%%%%%%%%%%%%%%%%%%%%%%%%%%%%%%%%%%%%%%%%%%%%
%%%%%%%%%%%%%%%%%%%%%%%%%%%%%%%%%%%%%%%%%%%%%%%%%%%%%%%%%%%%%%%%%%%%%%%%%%%%%%%
%\begin{lemma}
%\label{l:basicTrace}
%{For all $a,b \in \re$} we have 
%$
%\int_{\Delta}\1_{[a,b]}(P)d\sigma\sub{\!\Delta}=\|\1_{[a,b]}{(P)}\|\sub{L^2(M\times M)}^2.
%$
%\end{lemma}
%%%%%%%%%%%%%%%%%%%%%%%%%%%%%%%%%%%%%%%%%%%%%%%%%%%%%%%%%%%%%%%%%%%%%%%%%%%%%%%
%\begin{proof}
%Observe that since $\1_{[a,b]}(P)$ is an orthogonal projector and trace class,
%$$
%\|\1_{[a,b]}(P)\|\sub{L^2(M\times M)}^2\!\!\!\!\!\!=\|\1_{[a,b]}(P)\|^2\sub{HS}=\tr \1_{[a,b]}(P)^2=\tr %\1_{[a,b]}(P)=\!\int_\Delta\!\1_{[a,b]}(P)d\sigma\sub{\!\Delta}.
%$$\vspace{-1.1cm}
%\end{proof}

%%%%%%%%%%%%%%%%%%%%%%%%%%%%%%%%%%%%%%%%%%%%%%%%%%%%%%%%%%%%%%%%%%%%%%%%%%%%%%%
\begin{lemma}\label{l:HS}
Let $\mc{V}\subset S_\delta(T^*M;[0,1])$ be a bounded subset. Then, there are $C>0$ {and $h_0>0$}, and for all $N>0$ there exists $C\sub{N}>0$, such that for all {$t \in [a,b]$,} $\chi \in \mc{V}$, {$0<h<h_0$,} and $|s|\leq 2h$, 
\begin{gather} 
\label{e:purplePeopleEater}
\|\1_{[t-s,t]}(P)Op_h(\chi)\|^2\sub{HS}\leq C h^{1-n} {\mu\sub{{p^{-1}(t)}}}(\supp \chi\cap p^{-1}(t)) +C\sub{N}h^N,\\
\label{e:tomato}
h^{-2}\|P_t\1_{[t-s,t]}(P)Op_h(\chi)\|^2\sub{HS}\leq C h^{1-n}{\mu\sub{{p^{-1}(t)}}}(\supp \chi\cap p^{-1}(t)) +C\sub{N}h^N.
\end{gather}
%{where $\mu_t$ is the Liouville measure on $p^{-1}(t)$.}
\end{lemma}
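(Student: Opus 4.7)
The plan is to reduce \eqref{e:tomato} to \eqref{e:purplePeopleEater} and then prove the latter by comparing $\1_{[t-s,t]}(P)$ against a smoothed projector. For the reduction, every $u$ in the range of $\1_{[t-s,t]}(P)$ is a combination of eigenfunctions $\phi_{E_j}$ with $|E_j - t|\leq |s| \leq 2h$, so $\|P_t u\|\leq 2h\|u\|$. Applied column-wise this gives $\|P_t \1_{[t-s,t]}(P) Op_h(\chi)\|_{HS} \leq 2h\,\|\1_{[t-s,t]}(P) Op_h(\chi)\|_{HS}$, and hence \eqref{e:tomato} follows from \eqref{e:purplePeopleEater}.

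For \eqref{e:purplePeopleEater}, I would fix $\psi \in C_c^\infty((-4,4);[0,1])$ with $\psi \equiv 1$ on $[-2,2]$. Since $|s|\leq 2h$, every eigenvalue $E_j \in [t-s,t]$ satisfies $\psi((E_j-t)/h) = 1$, so $\1_{[t-s,t]}(P) = \1_{[t-s,t]}(P)\psi(P_t/h)$, and therefore
\[
\|\1_{[t-s,t]}(P) Op_h(\chi)\|_{HS}^2 \leq \tr\bigl(\psi(P_t/h)^2 Op_h(\chi) Op_h(\chi)^*\bigr).
\]
Next I would compute this trace using semiclassical functional calculus: with $\varphi := \psi^2$, the Helffer--Sj\"ostrand representation gives $\varphi(P_t/h) = Op_h(b_h) + O(h^\infty)_{\smooth}$, where $b_h$ is supported in $\{|p-t| \leq 4h\}$ with principal part $\varphi((p-t)/h)$. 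Combined with $\sigma(Op_h(\chi)Op_h(\chi)^*) = |\chi|^2 + O(h^{1-2\delta})$ and the standard semiclassical trace formula, this yields
\[
\tr\bigl(\varphi(P_t/h) Op_h(\chi) Op_h(\chi)^*\bigr) = (2\pi h)^{-n}\!\int\! \varphi((p-t)/h)|\chi|^2\,dx\,d\xi + \mathcal E(h,\chi).
\]
The coarea formula with $u = hv$ then converts the leading integral into $h \int \varphi(v)\, \int_{p^{-1}(t+hv)}|\chi|^2\,d\mu\, dv$, which is bounded by $C h\,\mu_{p^{-1}(t)}(\supp\chi \cap p^{-1}(t)) + O(h^2)$ using smoothness of the inner integral in $v$ (classical ellipticity of $p$ and $dp \neq 0$ on $p^{-1}(t) \cap \supp \chi$ ensure the level sets vary smoothly in the relevant range). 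Multiplying by $(2\pi h)^{-n}$ produces the claimed $C h^{1-n}\mu$ main term.

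It remains to show that the remaining errors can be absorbed into $C_N h^N$. When $\supp\chi$ is bounded away from $p^{-1}(t)$, iterating the identity $\psi(P_t/h) P_t^N = h^N g_N(P_t/h)$ (with $g_N(\lambda) = \lambda^N \psi(\lambda) \in C_c^\infty$) against an elliptic parametrix for $P_t^{-N}$ on $\supp\chi$ produces $\|\psi(P_t/h) Op_h(\chi)\|_{HS} = O(h^N)$ for every $N$; when $\supp\chi$ does meet $p^{-1}(t)$, the main term $C h^{1-n}\mu$ already dominates the subprincipal contributions coming from $\mathcal E(h,\chi)$. The hard part will be the symbol-class mismatch: $\varphi((p-t)/h)$ oscillates on scale $h$ whereas $\chi \in S_\delta$ varies on the larger scale $h^\delta$, so their product does not lie in a standard class. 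I would handle this either through Helffer--Sj\"ostrand or by writing $\varphi(P_t/h) = (2\pi)^{-1} \int \hat\varphi(\tau) e^{i\tau P_t/h}\, d\tau$ and applying Egorov's theorem on the bounded interval $\supp \hat\varphi$, with uniformity in $\chi \in \mathcal V$ following since all resulting constants depend only on finitely many $S_\delta$-seminorms of $\chi$.
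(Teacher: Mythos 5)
Your reduction of \eqref{e:tomato} to \eqref{e:purplePeopleEater} is correct and in fact cleaner than the paper's: since every eigenvalue $E_j\in[t-s,t]$ with $|s|\le 2h$ has $|E_j-t|\le 2h$, the operator $P_t$ has norm $\le 2h$ on the range of $\1_{[t-s,t]}(P)$, and applying this column-by-column gives $\|P_t\1_{[t-s,t]}(P)Op_h(\chi)\|\sub{HS}\le 2h\,\|\1_{[t-s,t]}(P)Op_h(\chi)\|\sub{HS}$. The paper instead routes through the identity $\tfrac{P_t}{h}\psi\big(\tfrac{P_t}{h}\big)=(\lambda\mapsto\lambda\psi(\lambda))\big(\tfrac{P_t}{h}\big)$ applied to the smoothed projector; both work, yours is more elementary.

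The proof of \eqref{e:purplePeopleEater}, however, has a genuine gap, and it comes precisely from the choice $\psi\in C_c^\infty$ with $\psi\equiv1$ on $[-2,2]$. By Paley--Wiener, $\widehat{\psi}$ (and hence $\widehat{\varphi}=\widehat{\psi^2}$) then cannot be compactly supported, so the Fourier-inversion route you offer (``apply Egorov's theorem on the bounded interval $\supp\widehat{\varphi}$'') is unavailable: there is no such bounded interval, and the wave group $e^{i\tau P_t/h}$ has no useful parametrix for $|\tau|$ beyond Ehrenfest time. Your other proposed route, Helffer--Sj\"ostrand, also does not give $\varphi(P_t/h)=Op_h(b_h)+O(h^\infty)_{\smooth}$: the function $\lambda\mapsto\varphi\big(\tfrac{\lambda-t}{h}\big)$ lives at scale $h$, so $\varphi(P_t/h)$ is a (local) semiclassical Fourier integral operator associated with the energy surface $\{p=t\}$, not a pseudodifferential operator in any class $\Psi_\delta$ with $\delta<1$; the almost-analytic extension is supported in a strip of width $O(h)$ and the resolvent bounds degenerate. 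You flag the symbol-class mismatch yourself, but neither of your two fixes actually resolves it.

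The paper's resolution inverts your trade-off. It takes $\psi\in\mc{S}(\re;\re)$ with $\psi(0)=1$ and $\supp\widehat{\psi}\subset[-1,1]$, then rescales so that $\psi>\tfrac{1}{2}$ on $[-2,2]$ (this is possible by continuity at $0$, though $\psi\equiv1$ is not). Since the spectral projector only sees $\psi$ on the finitely many points $(E_j-t)/h\in[-2,2]$ where $\psi>\tfrac{1}{2}$, one may write $\1_{[t-s,t]}(P)=Z_s\,\psi\big(\tfrac{P_t}{h}\big)$ with $[Z_s,P]=0$ and $\|Z_s\|_{L^2\to L^2}\le 3$, reducing to a Hilbert--Schmidt bound for $\psi(P_t/h)Op_h(\chi)$. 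Now the compact support of $\widehat{\psi}$ lets one write $\psi(P_t/h)(x,y)$ as an explicit short-time oscillatory integral via the wave parametrix for $|\tau|\le 1$, after which the $L^2$-norm of the kernel is estimated by stationary phase as in~\cite[Lemma 3.11]{DyGu14}. In short: you need $\widehat{\psi}$ compactly supported, not $\psi$; replace ``$\psi\equiv1$ on $[-2,2]$'' by ``$\psi>\tfrac{1}{2}$ on $[-2,2]$'' and the factorization $Z_s\,\psi(P_t/h)$, and the argument closes.
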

%%%%%%%%%%%%%%%%%%%%%%%%%%%%%%%%%%%%%%%%%%%%%%%%%%%%%%%%%%%%%%%%%%%%%%%%%%%%%%%
\begin{proof}
We follow the proof of~\cite[Lemma 3.11]{DyGu:14}.
Let $\psi \in \mc{S}(\mathbb{R};\mathbb{R})$ with $\psi(0)=1$ and $\supp \hat{\psi}\subset[-1,1]$. Define $\psi_\e(s):=\psi(\e s).$
Then, there is $\e_0>0$ small enough so that $\psi_{\e_0}(s)>\frac{1}{2}$ on $[-2,2]$. Abusing notation slightly, put $\psi=\psi_{\e_0}$. 
Then, {there exists an operator $Z_s$ such that }
$
\1_{[t-s,t]}(P)={Z_s}\psi\big(\tfrac{P_t}{h}\big),
$
{$[Z_s,P]=0$, and
$\|Z_s\|_{L^2\to L^2}\leq 3$
for $|s|\leq 2h$.}
Therefore,
$
\|\1_{[t-s,t]}(P)Op_h(\chi)\|\sub{HS}\leq {3}\big\|\psi\big(\tfrac{P_t}{h}\big)Op_h(\chi)\big\|\sub{HS}
$
and the Hilbert--Schmidt norm is the $L^2$ norm of the kernel. 
Next, we recall that after application of a microlocal partition of unity, we may write 
$$
\psi\Big(\tfrac{P_t}{h}\Big)(x,y)=h^{-n}\int_\re\int_{\re^n} \hat{\psi}(\tau)e^{\frac{i}{h}(\varphi(\tau,x,\eta)-\langle y,\eta\rangle-t\tau)}a(\tau,x,y,\eta)d\eta d\tau +O(h^\infty)\sub{HS}
$$
for a symbol $a\sim \sum_j h^ja_j$ and phase $\varphi$ solving
$
\partial_t\varphi=p(x,\partial_x\varphi)
$
and
$
\varphi(0,x,\eta)=\langle x,\eta\rangle.
$
At this point the proof of~\eqref{e:purplePeopleEater} follows exactly as in~\cite[Lemma 3.11]{DyGu:14}.

To obtain~\eqref{e:tomato}, we write
$
P_t\1_{[t-s,t]}(P)=Z_sP_t\psi\big(\tfrac{P_t}{h}\big)
$
and note that 
$
\tfrac{P_t}{h}\psi\big(\tfrac{P_t}{h}\big)=(t\psi)\big(\tfrac{P_t}{h}\big).
$
Hence the same argument applies with $\widehat{t\psi}(\tau)=-i\partial_\tau\hat{\psi}(\tau)$ replacing $\hat{\psi}(\tau)$.
\end{proof}
%%%%%%%%%%%%%%%%%%%%%%%%%%%%%%%%%%%%%%%%%%%%%%%%%%%%%%%%%%%%%%%%%%%%%%%%%%%%%%%
We will also need the following trace bound for $\1_{[t-s,t]}$. 
%%%%%%%%%%%%%%%%%%%%%%%%%%%%%%%%%%%%%%%%%%%%%%%%%%%%%%%%%%%%%%%%%%%%%%%%%%%%%%%
\begin{lemma}
\label{l:advancedTrace}
Suppose $a,b\in \mathbb{R}$, $\e_0>0$, $\ell_1,\ell_2\in \mathbb{R}$, {$\mc{V}_1\subset \Psi^{\ell_1}(M)$, and $\mc{V}_2\subset \Psi_\delta^{\ell_2}(M)$} bounded subsets, {$U\subset T^*M$ open} such that {$|d\pi\sub{M}H_p|>c >0$} on $p^{-1}([a-\e_0,b+\e_0])\cap U$. {Let $\tau_0$,$R_0$, $\delta$, $R(h)$, and $\tau$ be as in Lemma~\ref{l:mainEst}.} Let $\{\T_j\}_{j\in \J(h)}$ be a $(\mathfrak{D},\tau,R(h))$ good covering of ${\bf{p}}^{-1}([a,b])\cap N^*\Delta{\cap (U\times T^*M)}$ and $\mc{V}\subset S_\delta(T^*M\times T^*M; [0,1])$ bounded. Then, there is $C\sub{0}>0$ such that for all $\{\chi\sub{\T_j}\}_{j\in \J(h)}\subset \mc{V}$ partitions for $\{\T_j\}_{j\in \J(h)}$, $j \in \J(h)$,  {$A_1\in \mc{V}_1$, $A_2\in \mc{V}_2$}, and $|s|\leq \e_0$
$$
\Big|\int_{\Delta} Op_h(\chi\sub{\T_j})A_1\1_{[t-s,t]}(P)A_2 d\sigma\sub{\!\Delta}\Big|\leq C\sub{0}h^{1-n}R(h)^{2n-1}\Big\langle \frac{s}{h}\Big\rangle.
$$
\end{lemma}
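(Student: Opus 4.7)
The plan is to view the Schwartz kernel of $\1_{[t-s,t]}(P)$ as a distribution $u$ on the product manifold $M\times M$ which serves as a quasimode for $\Pt=P\otimes 1-t$, and to apply the averaging estimate of Lemma~\ref{l:mainEst} to the diagonal $\Delta$, regarded as a submanifold of codimension $n$ in $M\times M$. The microlocalization by $Op_h(\chi\sub{\T_j})$ restricts attention to a compact neighborhood of $N^*\Delta\cap \p^{-1}([a,b])$, where $\Pt$ behaves as though it were classically elliptic in $\Psi^m(M\times M)$ modulo $O(h^\infty)$, as justified at the opening of Section~\ref{s:weyl}.

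First I would reduce to the case $|s|\leq 2h$ by partitioning $[t-s,t]$ into $O(\langle s/h\rangle)$ subintervals $I_k$ of length at most $2h$ and writing $\1_{[t-s,t]}(P)=\sum_k\1_{I_k}(P)$. For a single such $I_k$, let $u_k$ denote the Schwartz kernel of $\1_{I_k}(P)$ and take $A=Op_h(\chi\sub{\T_j})(A_1\otimes A_2^t)\in\Psi_\delta^{\ell_1+\ell_2}(M\times M)$. Since $\MSh(A)\subset\T_j$, the sum over tubes on the right-hand side of Lemma~\ref{l:mainEst} reduces to $O(1)$ neighbors of $\T_j$, yielding
$$
h^{\frac{n-1}{2}}\Big|\int_\Delta Au_k\,d\sigma\sub{\!\Delta}\Big|\leq C\sub{0}R(h)^{\frac{2n-1}{2}}\Big(\tau^{-\frac{1}{2}}\|Op_h(\tilde\chi\sub{\T_j})u_k\|\sub{L^2(M\times M)}+\tfrac{C}{h}\|Op_h(\tilde\chi\sub{\T_j})\Pt u_k\|\sub{L^2(M\times M)}\Big)+Q,
$$
with $Q$ negligible since $u_k$ is uniformly bounded in every scaled Sobolev space on $M\times M$.

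Next, I would control the two $L^2$ norms via Lemma~\ref{l:HS}. The factorization $\tilde\chi\sub{\T_j}(x,\xi,y,\eta)=\chi_1(x,\xi)\chi_2(y,\eta)$ inherited from the tube construction at the start of Section~\ref{s:weyl} identifies $Op_h(\tilde\chi\sub{\T_j})u_k$ as the Schwartz kernel of $Op_h(\chi_1)\1_{I_k}(P)Op_h(\bar\chi_2)^*$, whose $L^2(M\times M)$ norm equals the Hilbert--Schmidt norm of that operator. Combining $\|Op_h(\chi_1)\|\sub{L^2\to L^2}=O(1)$ with~\eqref{e:purplePeopleEater} gives
$$
\|Op_h(\tilde\chi\sub{\T_j})u_k\|^2\sub{L^2(M\times M)}\leq C\|\1_{I_k}(P)Op_h(\bar\chi_2)\|^2\sub{HS}\leq C h^{1-n}R(h)^{2n-1},
$$
using that $\chi_2$ is supported in a ball of radius $R(h)$ in $T^*M$ and that $p^{-1}(t)$ has codimension one, so the induced measure on $\supp\chi_2\cap p^{-1}(t)$ is $O(R(h)^{2n-1})$. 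The $\Pt u_k$ norm is controlled in the same way via~\eqref{e:tomato}, which supplies an additional factor of $h$ that exactly cancels the $1/h$ prefactor.

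Assembling these bounds yields $|\int_\Delta Au_k\,d\sigma\sub{\!\Delta}|\leq C\sub{0}h^{1-n}R(h)^{2n-1}$ for each short subinterval, and the claim follows by summing the $O(\langle s/h\rangle)$ contributions. The main technical obstacle is accommodating the possibly positive symbolic orders $\ell_1,\ell_2$ of $A_1,A_2$: this is handled by absorbing $A_2^t$ into $Op_h(\chi_2)$ via the elliptic parametrix construction, since the tube cutoff has compact support in the cotangent fiber, so the effective symbolic order becomes irrelevant modulo $O(h^\infty)$ remainders.
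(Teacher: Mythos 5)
Your proposal follows essentially the same route as the paper: reduce to $|s|\leq 2h$, view the Schwartz kernel of the short-window projector as a quasimode for $\Pt$ on $M\times M$, apply Lemma~\ref{l:mainEst} with $A=Op_h(\chi\sub{\T_j})(A_1\otimes A_2^t)$ and $k=n$ at the diagonal, factor the tube cutoff as a tensor product, and convert the resulting $L^2(M\times M)$ norms into Hilbert--Schmidt norms so that Lemma~\ref{l:HS} together with the volume bound $\mu\sub{p^{-1}(t)}(\supp\tilde\chi_{\rho_j}\cap p^{-1}(t))\leq CR(h)^{2n-1}$ gives the claimed $h^{1-n}R(h)^{2n-1}$ per block. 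This is exactly the paper's argument, including the treatment of $\Pt u_k$ via~\eqref{e:tomato} and the absorption of the error term $Q$ by ellipticity modulo $O(h^\infty)$.

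One small point worth tightening: when you split $[t-s,t]$ into subintervals $I_k$ of length at most $2h$, you should (as in the proof of Lemma~\ref{l:tiramisu}) shift the reference energy to the right endpoint $t_{k+1}$ of each $I_k$ rather than keeping $\Pt=(P-t)\otimes 1$ throughout; otherwise, for $I_k$ far from $t$, the term $\|Op_h(\tilde\chi\sub{\T_j})\Pt u_k\|$ carries a factor $|t-t_{k+1}|/h$ instead of the $O(1)$ factor that cancels the $1/h$ prefactor. With the shifted reference, Lemma~\ref{l:mainEst} applied at $E=t_{k+1}\in[a-\ep_0,b+\ep_0]$ (the hypotheses hold on this enlarged window) gives $\leq C\sub 0 h^{1-n}R(h)^{2n-1}$ for each block, and summing the $O(\langle s/h\rangle)$ blocks finishes the proof as you describe.
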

%%%%%%%%%%%%%%%%%%%%%%%%%%%%%%%%%%%%%%%%%%%%%%%%%%%%%%%%%%%%%%%%%%%%%%%%%%%%%%%
\begin{proof}
We first note that it suffices to prove the statement for $|s|\leq 2h$. Indeed, this is because we may apply the arguments from Lemma~\ref{l:tiramisu} and decompose
$
\1_{[t-s,t]}(P)=\sum_{k=0}^{k_0-1}\1_{[t_k,t_{k+1}]}(P),
$
with $|t_{k+1}-t_k|\leq 2h$. This allows us to obtain the result for $|s|\leq \e_0$.

Suppose $|s|\leq 2h$. Let $\tilde U \supset {B(U,2R(h))}$,
 $j \in \J(h)$, and $A:=Op_h(\chi\sub{\T_j})(A_1 \otimes A_2)$.
 Note that
 \begin{equation} 
\label{e:checkCommutator}
[{\bf{P_t}},A]=[{\bf{P_t}},Op_h(\chi\sub{\T_j})](A_1\otimes A_2)+Op_h(\chi\sub{\T_j})[P-t,A_1]\otimes A_2\in \Psi_\delta(M)
\end{equation}
with seminorms bounded by those of $\chi\sub{\T_j}$, $A_1$, and $A_2$. 
 {We next apply Lemma~\ref{l:mainEst} with $A:=Op_h(\chi\sub{\T_j})(A_1 \otimes A_2)$, ${\bf P_t}$ in place of $P_t$, $k=n$, $M\times M$ in place of $M$, and 
$
u:=\1_{[t-s,t]}(P){Op_h(\chi\sub{\tilde{U}})},
$
where the latter is viewed as a kernel on $M \times M$}.
%Therefore, we may apply Lemmas~\ref{l:mainEst} and~\ref{l:basicEst}}
Here, $\chi\sub{\tilde{U}}\in S_\delta(T^*M)$ with $\chi\sub{\tilde{U}}\equiv 1$ on $B(U,R(h))$, $\supp \chi\sub{\tilde{U}}\subset \tilde{U}$. 
Let $\tilde{\chi}\sub{\T_j}\in \mc{V}$ with $\supp \tilde{\chi}\sub{\T_j}\subset \T_j$ and $\tilde{\chi}\sub{\T_j}\equiv 1$ on $\supp \chi\sub{\T_j}$. 
Then, since $\MSh(A) \subset \T_j$, by Lemma~\ref{l:mainEst} there exist $C\sub{0}>0$ and $C>0$, such that 
$$
h^{\frac{n-1}{2}}\Big|\int_{\Delta} Op_h(\chi\sub{\T_j})A_1\1_{[t-s,t]}(P)A_2 d\sigma\sub{\!\Delta}\Big|
\leq C\sub{0}{R(h)^{\frac{2n-1}{2}}}
 \Big({\|Op_h(\tilde\chi\sub{\T_j})u\|_\LM}+\frac{C}{h}\|Op_h(\tilde \chi\sub{\T_j}){\bf{P}}_{t}u\|_\LM\Big).
%&\qquad +Ch^{-\frac{1}{2}-\delta}\big\|\big(1-\psi \big(\tfrac{{\bf{P}}_{t}}{h^\delta}\big)\big){\bf{P}}_{t}A\1_{[t-s,t]}(P)\big\|_\Hm
%+C\sub{N}h^N\Big(\|\1_{[t-s,t]}(P)\|_\LM+ \|{\bf{P}}_{t}\1_{[t-s,t]}(P)\|_\Hm\Big).
$$
{Note that we omit the analogous error terms appearing in the estimate of Lemma~\ref{l:mainEst} since these error terms can be dealt with by applying the bounds in \eqref{e:bat} and \eqref{e:coffin} in combination with \eqref{e:bee}.}

Next, since $Op_h(\tilde{\chi}\sub{\T_j})=Op_h(\tilde{\chi}\sub{\widetilde{\T}_j})\otimes Op_h(\tilde{\chi}_{\rho_j})$, where $\tilde{\chi}_{\rho_j}$ and $\tilde{\chi}\sub{\widetilde{\T}_j}$ are bounded in $S_\delta(T^*M;[0,1])$ by the seminorms in the set $\mc{V}$, we obtain
\begin{align*} 
&h^{\frac{n-1}{2}}R(h)^{-\frac{2n-1}{2}}\Big|\int_{\Delta} Op_h(\chi\sub{\T_j})A_1\1_{[t-s,t]}(P)A_2d\sigma\sub{\!\Delta}\Big|\\
&\leq C\sub{0}\|Op_h(\tilde{\chi}\sub{\widetilde{\T}_j})u Op_h(\tilde{\chi}_{\rho_j})\|\sub{HS}+ C\sub{0}Ch^{-1}\|Op_h(\tilde{\chi}\sub{\widetilde{\T}_j}) {P}_{t}uOp_h(\tilde{\chi}_{\rho_j})\|\sub{HS}
%&\leq C\sub{0}\Big(\|1_{[t-s,t]}(P)Op_h(\tilde{\chi}_{\rho_j})\|\sub{HS}+ Ch^{-1}\|{P}_{t}\1_{[t-s,t]}(P)Op_h( \chi\sub{\tilde{U}})Op_h(\tilde{\chi}_{\rho_j})\|\sub{HS}\Big)\red{+C\sub{N}h^N\|\1_{[t-s,t]}(P)\|\sub{HS}}\\
\leq C\sub{0}h^{\frac{1-n}{2}}R(h)^{\frac{2n-1}{2}},
\end{align*}
where $u$ is now viewed as an operator. In the last line we used Lemma~\ref{l:HS} and the existence of $C>0$ such that 
$
\mu_t\Big((\supp\tilde{\chi}_{\rho_j})\cap p^{-1}(t)\Big)\leq CR(h)^{2n-1}.
$
This finishes the proof when $|s|\leq 2h$.
\end{proof}
%%%%%%%%%%%%%%%%%%%%%%%%%%%%%%%%%%%%%%%%%%%%%%%%%%%%%%%%%%%%%%%%%%%%%%%%%%%%%%%

\begin{lemma}
\label{l:weylDyn}
Let $a,b,\e_0$, $\tau_0$, $\mc{V}_1,\mc{V}_2$ $R_0$, $\tau$, $\delta$, $R(h)$ and $\alpha$ as in Lemma \ref{l:basicEst}. 
Let ${N^*\Delta\cap (U\times T^*M)}$ be $\Ti$ non-looping for $\p$ in the window $[a,b]$ via {$\tau$-coverings} and let $\Cnp$ be the constant $\Cnl$ in Definition~\ref{d:non loop cov}. Then, there is $C\sub{0}=C\sub{0}(n,P, {\mc{V}_1, \mc{V}_2}, \Cnp,\e_0)>0$ {and for all $K>0$ there is $h_0>0$}
such that
for all $0<h\leq h_0$, {$A_1\in \mc{V}_1$, $A_2 \in \mc{V}_2$ with ${\MSh(A_2)\subset U}$}, $|s|\leq \e_0$, and $t\in [a-Kh,b+Kh]$, 
$$
h^{n-1}\Big|\int_{\Delta}A_1\1_{[t-s,t]}(P)A_2d\sigma\sub{\!\Delta} \Big|^2\leq C\sub{0} \frac{1}{T(h)}\Big\langle \frac{T(h)s}{h}\Big\rangle\|\1_{[t-s,t]}(P){Op_h(\chi\sub{\tilde{U}})}\|^2_{L^2},
$$
{where $\tilde{U}(h)\supset B(U,2R(h))$, $\chi\sub{\tilde{U}}\in S_\delta$, $\chi\sub{\tilde{U}}\equiv 1$ on $B(U,R(h))$, and $\supp \chi\sub{\tilde{U}}\subset \tilde{U}$.}
\end{lemma}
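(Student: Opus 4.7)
The plan is to treat $\Delta$ as a codimension $n$ submanifold of $M\times M$ and view the kernel $u:=\1_{[t-s,t]}(P)Op_h(\chi\sub{\tilde U})$ as an element of $L^2(M\times M)$, i.e.\ as a quasimode for $\Pt=P\otimes 1-t\cdot\Id$ satisfying $\|\Pt u\|\sub{L^2(M\times M)}\leq|s|\,\|u\|\sub{L^2(M\times M)}$ by~\eqref{e:bee}. Since $\MSh(A_2)\subset U$ while $\chi\sub{\tilde U}\equiv 1$ on $B(U,R(h))$, we have $A_2=Op_h(\chi\sub{\tilde U})A_2+O(h^\infty)\sub{\Psi^{-\infty}}$, which gives
$$
\int_{\Delta}A_1\,\1_{[t-s,t]}(P)\,A_2\,d\sigma\sub{\!\Delta}=\int_{\Delta}(A_1\otimes A_2^*)\,u\,d\sigma\sub{\!\Delta}+O(h^\infty).
$$
I will first prove the claim under the extra hypothesis $|s|\leq 2h$, and then extract the general case from the orthogonal decomposition $\1_{[t-s,t]}(P)=\sum_{k=0}^{k_0-1}\1_{[t_k,t_{k+1}]}(P)$ with $|t_{k+1}-t_k|\leq 2h$ and $k_0\leq|s|/h+1$. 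The reduction combines Cauchy--Schwarz $|\sum_k a_k|^2\leq k_0\sum_k|a_k|^2$ with the orthogonality $\sum_k\|\1_{[t_k,t_{k+1}]}(P)Op_h(\chi\sub{\tilde U})\|^2=\|\1_{[t-s,t]}(P)Op_h(\chi\sub{\tilde U})\|^2$ and the elementary inequality $k_0\leq 2\langle T(h)s/h\rangle/T(h)$ valid for $|s|\geq h$.

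To treat the $|s|\leq 2h$ case, I plan to apply Lemma~\ref{l:basicEst} with ambient manifold $M\times M$, submanifold $\Delta$, operator $\Pt$ and $A=A_1\otimes A_2^*\in\Psi_0^\infty(M\times M)$, acting on $u$. Let $\{\T_j\}_{j\in\J(h)}$ be a $(\mathfrak{D}_n,\tau,R(h))$-good cover of $\p^{-1}([a,b])\cap N^*\Delta\cap(U\times T^*M)$, built from the hypersurface construction at the start of \S\ref{s:weyl}. The $\Ti$ non-looping hypothesis (Definition~\ref{d:non loop cov} with $H_1=H_2$) yields, for each $t\in[a-Kh,b+Kh]$, a splitting $\JE(h)=\GE(h)\sqcup\BE(h)$ such that $\bigcup_{j\in\GE(h)}\T_j$ is $[t_0+\tau,T(h)-\tau]$ non-self-looping and $R(h)^{2n-1}|\BE(h)|T(h)\leq\mathfrak{D}_n\sqrt{\Cnp}$. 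Mimicking~\eqref{e:modifiedTs}, I set
$$
(\tilde T_0,\tilde t_0):=\begin{cases}\bigl(T(h)\langle T(h)s/h\rangle^{-1},\,t_0\bigr) & t_0\leq T(h)\langle T(h)s/h\rangle^{-1},\\ (1,1) & \text{otherwise,}\end{cases}
$$
for the good family and $(\tilde T_b,\tilde t_b):=(1,1)$ for the bad family. Since $\Omega(\Ti)\Lambda<1-2\delta$, Lemma~\ref{l:sublog}(3) gives $T(h)\leq 2\alpha T_e(h)$, so Lemma~\ref{l:cheat} endows the cover with the density demanded by Lemma~\ref{l:basicEst}, which then produces
$$
h^{\frac{n-1}{2}}\Big|\!\int_\Delta(A_1\otimes A_2^*)u\,d\sigma\sub{\!\Delta}\Big|\leq C\sub{0}R(h)^{\frac{2n-1}{2}}\Big[\Sigma_1\|u\|+h^{-1}\Sigma_2\|\Pt u\|\Big]+Q,
$$
with $\Sigma_1=\sum_{\ell}\tau^{-1/2}(|\mc{G}_{t,\ell}|\tilde t_\ell)^{1/2}\tilde T_\ell^{-1/2}$, $\Sigma_2=\sum_{\ell}(|\mc{G}_{t,\ell}|\tilde t_\ell \tilde T_\ell)^{1/2}$, and $Q$ an $O(h^N)$-type remainder controlled via~\eqref{e:bat}--\eqref{e:coffin}.

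The main obstacle will be the bookkeeping for $\Sigma_1,\Sigma_2$ and the handling of the $\Pt u$ contribution. Using $|\GE|\leq C_nR(h)^{1-2n}$ together with the non-looping bound $|\BE|^{1/2}\leq \mathfrak{D}_n^{1/2}\Cnp^{1/4}R(h)^{(1-2n)/2}T(h)^{-1/2}$, I expect $R(h)^{(2n-1)/2}\Sigma_1\lesssim\sqrt{\langle T(h)s/h\rangle/T(h)}$ and $R(h)^{(2n-1)/2}\Sigma_2\lesssim\sqrt{T(h)/\langle T(h)s/h\rangle}+T(h)^{-1/2}$. The delicate point is the $\Pt u$ term: combining $\|\Pt u\|\leq|s|\|u\|$ with the elementary inequality $T(h)|s|/h\leq\langle T(h)s/h\rangle$ gives $h^{-1}R(h)^{(2n-1)/2}\Sigma_2\|\Pt u\|\lesssim\sqrt{\langle T(h)s/h\rangle/T(h)}\|u\|$, where in the subcase $|s|\leq 2h$ the bound $|s|/h\leq 2$ absorbs the residual $T(h)^{-1/2}$ contribution in $\Sigma_2$. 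Squaring the resulting $h^{(n-1)/2}|\cdot|\lesssim\sqrt{\langle T(h)s/h\rangle/T(h)}\|u\|$ and combining with the reduction to $|s|\leq 2h$ described above completes the argument. The scheme mirrors Lemma~\ref{l:tiramisu}, with the key adaptation that the spectral projector is viewed as a quasimode on the product manifold $M\times M$, so that averages over the diagonal $\Delta$ can be attacked by the same geodesic-beam estimate of Lemma~\ref{l:basicEst}.
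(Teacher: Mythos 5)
Your proposal is correct and follows essentially the same route as the paper's own proof: view $u=\1_{[t-s,t]}(P)Op_h(\chi\sub{\tilde U})$ as a quasimode for $\Pt$ on $M\times M$, reduce to $|s|\leq 2h$, split the tube index set into good (non-self-looping up to $T(h)$) and bad families via the non-looping-by-coverings hypothesis, rescale the density times as in~\eqref{e:modifiedTs}--\eqref{e:modifiedTEsts}, and then feed this into Lemma~\ref{l:cheat} and Lemma~\ref{l:basicEst} with ambient manifold $M\times M$, submanifold $\Delta$ of codimension $n$, and $A=A_1\otimes A_2$. The one spot where you deviate is the reduction from general $|s|$ to $|s|\leq 2h$: you use Cauchy--Schwarz $|\sum_k a_k|^2\leq k_0\sum_k|a_k|^2$ together with the orthogonality $\sum_k\|\1_{[t_k,t_{k+1}]}(P)Op_h(\chi\sub{\tilde U})\|^2=\|\1_{[t-s,t]}(P)Op_h(\chi\sub{\tilde U})\|^2$ and $k_0\lesssim\langle T(h)s/h\rangle/T(h)$, whereas the paper simply invokes the telescoping scheme from Lemma~\ref{l:tiramisu}; since the present lemma bounds a square by a square, your orthogonality-based reduction is in fact the cleaner way to make that step precise. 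The remaining bookkeeping (absorbing the residual $T(h)^{-1/2}$ term in $\Sigma_2$ via $|s|/h\leq 2$, verifying $T(h)\leq 2\alpha T_e(h)$ from Lemma~\ref{l:sublog}(3), handling the lower-order error terms via~\eqref{e:bat},~\eqref{e:coffin}, and~\eqref{e:bee}) is all handled correctly.
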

%%%%%%%%%%%%%%%%%%%%%%%%%%%%%%%%%%%%%%%%%%%%%%%%%%%%%%%%%%%%%%%%%%%%%%%%%%%%%%%

\begin{proof}
Decomposing
$\1_{[t-s,t]}(P)=\sum_{k=0}^{k_0-1}\1_{[t_k,t_{k+1}]}(P),$
with $|t_{k+1}-t_k|\leq 2h$ and using the proof of Lemma~\ref{l:tiramisu} to obtain the result for $|s|\leq \e_0$,
it suffices to prove the statement for $|s|\leq 2h$.

From now on we assume $|s|\leq 2h$. Since $N^*\Delta \cap (U\times T^*M)$ is $\Ti$ non-looping in the window $[a,b]$ via {$\tau_0$-coverings}, for all $t\in [a-Kh,b+Kh]$, there is a partition of indices $\J\sub{t}(h)=\G\sub{t,0}(h)\sqcup \G\sub{t,1}(h)$ as described in Definition~\ref{d:non loop cov} {(with $H=\Delta$)}. Let $t_0=t_0$, $t_1=1$, $T_0(h)=T(h)$ and $T_1(h)=1$. Then, there is $\Cnp>0$ such that for all $t\in [a-Kh,b+Kh]$
\begin{equation}\label{e:tired}
\sum_{\ell=0}^1\sqrt{\frac{|\G\sub{t,\ell}(h)|t_\ell}{T_\ell}}\leq \frac{\Cnp R(h)^{\frac{1-2n}{2}}} {\sqrt{T(h)}}, \qquad\qquad \sum_{\ell=0}^1\sqrt{|\G\sub{t,\ell}(h)| t_\ell T_\ell}\leq \Cnp R(h)^{\frac{1-2n}{2}}\sqrt{T(h)}.
\end{equation}

Next, we argue as in~\eqref{e:modifiedTEsts}, and then {apply a combination of Lemma \ref{l:cheat}} and Lemma~\ref{l:basicEst} with { $A:=A_1 \otimes A_2$, ${\bf P_t}$ in place of $P\sub{E}$, $2n$ in place of $n$, $M\times M$ in place of $M$, $k=n$, and $u:=\1_{[t-s,t]}(P){Op_h(\chi\sub{\tilde{U}})}$, where $u$ is viewed as a kernel on $M \times M$}. 
Then, there is $C\sub{0}>0$ so that 
\begin{align*}
&h^{\frac{n-1}{2}}\Big|\int_{\Delta} \!\!\!A_1\1_{[t-s,t]}(P)A_2 d\sigma\sub{\!\Delta}\Big|\leq C\sub{0}R(h)^{\frac{2n-1}{2}}
\bigg(\sum_{\ell=0}^1\!\!\frac{(|\G\sub{t,\ell}(h)|{\tilde{t}}_\ell)^{\frac{1}{2}}}{(\tau \tilde T_\ell)^{\frac{1}{2}}}\|u\|_{L^2} +\!\!\sum_{\ell}\!\!\frac{(|\G\sub{t,\ell}(h)| \tilde{t}_\ell \tilde{T}_\ell)^{\frac{1}{2}}}{h}\|\Pt u\|_{L^2}\!\bigg),
%&\qquad\qquad\qquad \qquad +Ch^{-\frac{1}{2}-\delta}\big\|\big(1-\psi \big(\tfrac{\Pt}{h^\delta}\big)\big)\Pt A_1\otimes A_2^*\1_{[t-s,t]}(P)\big\|_{H_{\textup{scl}}^{\frac{n-2m+1}{2}}}\\
%&\qquad\qquad\qquad \qquad+C\sub{N}h^N\Big(\|\1_{[t-s,t]}(P)\|_{L^2}+ \|P_t \1_{[t-s,t]}(P)w\|_{H_{\textup{scl}}^{\frac{n-2m+1}{2}}}\Big).
\end{align*}
{where $\tilde{t}_\ell$ and $\tilde{T}_\ell$ are as in~\eqref{e:modifiedTs}}.
%As in the proof of Lemma~\ref{l:Lip}, for all $N>0$, there is $C\sub{N}>0$ such that for $t\in[a,b]$, $|s|\leq 1$,
%$$\|\Pt \1_{[t-s,t]}(P)\|_{L^2\to H_{\textup{scl}}^N}\leq C_N|s|,$$
%Furthermore, since $|s|\leq \e_0^{-1}h$, 
%$$\|(1-\psi(h^{-\delta}\Pt))P_tA_1\otimes \1_{[t-s,t]}(P)A_2\|_{L^2\to H^N_{\text{scl}}}\leq C\sub{N}h^N.$$
%Using these estimates above yields the desired result.
%
{We have used that, since $\MSh(A)\subset U \times T^*M$ and the tubes are a covering for ${\bf{p}}^{-1}([a,b])\cap N^*\Delta{\cap (U\times T^*M)}$, then $\MSh(A)\cap \Lambda_{\Sigma^\Delta_t}^\tau(R(h)/2) \subset \bigcup_{j \in \J\sub{t}(h)}\T_j$.
Also, note that we omit the analogous error terms appearing in the estimate of Lemma~\ref{l:basicEst} since these error terms can be dealt with by applying the bounds in \eqref{e:bat} and \eqref{e:coffin} in combination with \eqref{e:bee}.}

{The proof follows from applying the bounds in {\eqref{e:modifiedTEsts}} in combination with \eqref{e:bee}.}
\end{proof}

%%%%%%%%%%%%%%%%%%%%%%%%%%%%%%%%%%%%%%%%%%%%%%%%%%%%%%%%%%%%%%%%%%%%%%%%%%%%%%%

\begin{lemma}
\label{l:weylSobEst}
Let $\ell_i \in \mathbb{R}$, {$\mc{V}_i\subset \Psi_\delta^{\ell_i}(M)$ bounded for $i=1,2$. Then, there are $N_0>0$, $C>0$, $h_0>0$ such that for all $A_1\in \mc{V}_1$ and $A_2\in \mc{V}_2$, $s\in \mathbb{R}$ and $0<h<h_0$}
$$
\Big|\int A_1\1_{(-\infty,s]}(P)A_2 d\sigma\sub{\!\Delta}\Big|\leq Ch^{-\frac{n}{2}}\langle s\rangle^{N_0}\|\1_{(-\infty,s]}(P)\|_{L^2}.
$$
\end{lemma}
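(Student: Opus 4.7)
The plan is to mirror the proof of Lemma~\ref{l:bigS}, but now working on the product manifold $M\times M$ with $\Delta$ as the relevant submanifold (of codimension $n$). The key observation is that the kernel of $A_1\1_{(-\infty,s]}(P)A_2$ is exactly $(A_1\otimes A_2^t)u$, where $u\in \mc{D}'(M\times M)$ denotes the Schwartz kernel of $\1_{(-\infty,s]}(P)$. Hence
\[
\int_\Delta A_1\1_{(-\infty,s]}(P)A_2\, d\sigma_\Delta = \int_\Delta (A_1\otimes A_2^t)\,u\, d\sigma_\Delta,
\]
and we can apply Lemma~\ref{l:SobEst} in the product setting with operator $\Pt = P\otimes 1 - t$ (for any fixed $t$ in a bounded set), codimension $k=n$, and $A:=A_1\otimes A_2^t\in \Psi_\delta^{\ell_1+\ell_2}(M\times M)$. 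The hypothesis of Lemma~\ref{l:SobEst} is satisfied because $\sigma(\Pt)(x,\xi,x,-\xi)=p(x,\xi)-t$, and by~\eqref{e:positivity} this is bounded below by $|\xi|^m/(2C)$, hence by a constant multiple of $|(\xi,-\xi)|^m$, whenever $|(\xi,-\xi)|$ is sufficiently large; this is precisely ellipticity on $N^*\Delta$ at high frequency, as already discussed in the opening paragraphs of Section~\ref{s:weyl}.

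With any $\psi\in S^0(T^*(M\times M);[0,1])$ equal to $1$ on $N^*\Delta$ and any integer $r>\tfrac{n+2(\ell_1+\ell_2)}{2m}$, Lemma~\ref{l:SobEst} then yields constants $C\sub{0},h_0>0$ and, for every $N$, a $C\sub{N}>0$ such that for $0<h<h_0$,
\[
h^{\tfrac{n}{2}}\Big|\!\int_\Delta (A_1\otimes A_2^t)u\, d\sigma_\Delta\Big| \leq C\sub{0}\Big(\|Op_h(\psi) u\|_{L^2(M\times M)} + \|Op_h(\psi)\Pt^r u\|_{L^2(M\times M)}\Big) + C\sub{N}h^N\|u\|_{H_{\scl}^{-N}(M\times M)}.
\]

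It remains to bound each of the three terms by $\langle s\rangle^{N_0}\|\1_{(-\infty,s]}(P)\|_{L^2}$ for a sufficiently large $N_0$. Using $\|Op_h(\psi)\|_{L^2\to L^2}=O(1)$ and that the $L^2(M\times M)$ norm of the kernel of an operator equals its Hilbert--Schmidt norm (which we are identifying with $\|\1_{(-\infty,s]}(P)\|_{L^2}$), we immediately get
\[
\|Op_h(\psi)u\|_{L^2(M\times M)}\leq C\|\1_{(-\infty,s]}(P)\|_{L^2}, \qquad \|u\|_{H_{\scl}^{-N}(M\times M)}\leq C\|\1_{(-\infty,s]}(P)\|_{L^2}.
\]
For the middle term, observe that $\Pt^r u$ is the kernel of $(P-t)^r\1_{(-\infty,s]}(P)$, so by the spectral theorem and the fact that $|E_j-t|\leq \langle s\rangle + |t|$ whenever $E_j\leq s$ (and $|t|$ is bounded),
\[
\|Op_h(\psi)\Pt^r u\|_{L^2(M\times M)}\leq \|(P-t)^r\1_{(-\infty,s]}(P)\|_{L^2}\leq C\langle s\rangle^{r}\,\|\1_{(-\infty,s]}(P)\|_{L^2}.
\]

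Combining these three estimates and choosing $N_0=r$ gives the claim. The only delicate point is the identification of the Hilbert--Schmidt norm of $\1_{(-\infty,s]}(P)$ with the $L^2(M\times M)$ norm of its kernel $u$, and checking that $\Pt$ really satisfies the ellipticity hypothesis of Lemma~\ref{l:SobEst} on $N^*\Delta$; both are routine, so I do not expect a genuine obstacle.
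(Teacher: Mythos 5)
Your argument is correct and is essentially the paper's proof: both view the kernel of $\1_{(-\infty,s]}(P)$ as a distribution on $M\times M$, apply Lemma~\ref{l:SobEst} with $H=\Delta$, $k=n$, and $A=A_1\otimes A_2^t$, and then control $\|{\bf P}^r\1_{(-\infty,s]}(P)\|_{L^2}$ via the spectral theorem (using that $p$ is bounded below) to absorb the $\langle s\rangle^{N_0}$ factor. The only cosmetic difference is that you introduce an auxiliary bounded parameter $t$ in $\Pt$ whereas the paper works directly with $\mathbf{P}$; since $t$ ranges over a bounded set this is immaterial.
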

%%%%%%%%%%%%%%%%%%%%%%%%%%%%%%%%%%%%%%%%%%%%%%%%%%%%%%%%%%%%%%%%%%%%%%%%%%%%%%%
\begin{proof}
{We apply Lemma~\ref{l:SobEst} with $H=\Delta$, $A=A_1 \otimes A_2$, and $u=\1_{(-\infty,s]}(P)$.} Then, for $r>\frac{n+2(\ell_1+\ell_2)}{2m}$, there is $C>0$ such that for all $N>0$ there is $C_N>0$ such that 
\begin{align*}
h^{\frac{n}{2}}\Big|\int_{\Delta}A_1\1_{(-\infty,s]}(P)A_2 d\sigma\sub{\!\Delta}\Big|&\leq C(\|\1_{(-\infty,s]}(P)\|_{L^2}+\|{\bf{P}}^r \1_{(-\infty,s]}(P)\|_{L^2})+C_Nh^N\|\1_{(-\infty,s]}(P)\|_{L^2}.
\end{align*}
It follows from \eqref{e:bee} that the last term can be bounded by $C(1+|s|^r)\|\1_{(-\infty,s]}(P)\|_{L^2}$.
\end{proof}
%%%%%%%%%%%%%%%%%%%%%%%%%%%%%%%%%%%%%%%%%%%%%%%%%%%%%%%%%%%%%%%%%%%%%%%%%%%%%%%

\subsection{Proofs of Theorems~\ref{t:laplaceWeyl} and \ref{t:genWeyl}}

%%%%%%%%%%%%%

We claim that for ${E}\in[a-Kh,b+Kh]$ and {$A_1\in \mc{V}_1$, and $A_2\in \mc{V}_2$ with $\MSh(A_2)\subset U$},
\begin{equation}\label{e:JeffWillCookForYaicita}
h^{n-1}\Big|\int_{\Delta}A_1\Big(\1_{(-\infty,E]}(P)-\big(\rho_{h,t_0}*\1_{(-\infty,\cdot]}(P)\big)(E)\Big)A_2d\sigma\sub{\!\Delta}\Big|\leq C_0\big/T(h).
\end{equation}

We start by showing under the same assumptions that
\begin{gather}\label{e:ribs}
h^{n-1}\Big|\int_{\Delta}A_1\Big(\big(\rho\sub{h,T(h)}*\1_{(-\infty,\,\cdot\,]}(P) \big)({E})-\1_{(-\infty,{E}]}(P)\Big)A_2 d\sigma\sub{\!\Delta}\Big|\leq C_0\big/T(h),\\
\label{e:sauce}
h^{n-1}\Big|\int_{\Delta} A_1\Big( \big(\rho\sub{h,T(h)}*\1_{(-\infty,\,\cdot\,]}(P)\big){(E)}
-
 \big(\rho\sub{h,t_0}*\1_{(-\infty,\,\cdot\,]}(P)\big){(E)}\Big)A_2d\sigma\sub{\!\Delta} \Big| 
\leq C_0\big/T(h).
\end{gather}
for some $t_0$ independent of $h$. At the end of the section we will derive Theorems~\ref{t:laplaceWeyl} and~\ref{t:genWeyl} from~\eqref{e:JeffWillCookForYaicita}.

\subsubsection{Proof of \eqref{e:ribs}.}

Let $\tilde U,U_0 \subset T^*M$ with {$B(U_0,2R(h))\subset U\subset B(U_0,4R(h)) \subset \tilde U$}. Then, let $\chi\sub{\tilde{U}},\chi\sub{U_0},\chi\sub{\tilde{U}\setminus{U_0}}\in S_\delta(T^*M;[0,1])$ with $\chi\sub{\tilde{U}}\equiv 1$ on $U$, $\supp \chi\sub{\tilde{U}}\subset B(U_0,3R(h))$, $\chi\sub{U_0}\equiv 1$ on $B(U_0, R(h))$, $\supp \chi\sub{U_0}\subset U$, $\chi\sub{\tilde{U}\setminus{U_0}}\equiv 1$ on $\supp \chi\sub{\tilde{U}}(1-\chi\sub{U_0})$, $\supp \chi\sub{\tilde{U}\setminus {U_0}}\subset \tilde{U}\setminus U_0$.
By {Lemma~\ref{l:HS}} {and~\eqref{e:thePoachedChicken}} {there exists $C_0>0$ such that} for {$|s|\leq 2h$},
\begin{equation}
\label{e:volEstMe}
\begin{aligned}
h^{n-1}\big\|\1_{[t-s,t]}(P)Op_h(\chi\sub{\tilde{U}\setminus U_0})\big\|\sub{HS}^2
\leq C_0\mu\sub{{p^{-1}(t)}}(p^{-1}(t)\cap (\tilde{U}\setminus U_0))
\leq C_0 C\sub{U}\big/T(h).
\end{aligned}
\end{equation}
Note that when $U=T^*M$ this is an empty statement.
Then, for ${|s|\leq 2h}$, by Lemma~\ref{l:weylDyn}
\begin{align*}
&h^{n-1}\tr\Big(\1_{[t-s,t]}(P){Op_h(\chi\sub{U_0})}\Big)^2 \Big(\frac{1}{T(h)}\Big\langle \frac{T(h)s}{h}\Big\rangle\Big)^{-1}\leq C\sub{0}\| \1_{[t-s,t]}(P)Op_h(\chi\sub{\tilde{U}})\|^2_{L^2}\\
&\leq 
C\sub{0}\tr \1_{[t-s,t]}(P)Op_h(\chi\sub{U_0})+C\sub{0}\|\1_{[t-s,t]}(P)Op_h(\chi\sub{\tilde{U}\setminus U_0})\|\sub{HS}^2 +C\sub{N}h^N.
%&\leq 
%C\sub{0}\tr\Big( \1_{[t-s,t]}(P){Op_h(\chi\sub{U_0})}\Big)+C_0 C\sub{U}T(h)^{-1} +C\sub{N}h^N.
\end{align*}
Then, applying the quadratic formula with $x=\tr \1_{[t-s,t]}(P){Op_h(\chi\sub{U_0})}$, {for $|s|\leq 2h$} we have 
\begin{align*}
0\leq h^{n-1}\tr \1_{[t-s,t]}(P){Op_h(\chi\sub{U_0})}
%&\leq \frac{\frac{C_0}{T(h)}\big\langle \frac{T(h)s}{h}\big\rangle+\sqrt{\frac{C_0^2}{T^2(h)}\big\langle \frac{T(h)s}{h}\big\rangle^2+4\frac{C_0}{T(h)}\big\langle \frac{T(h)s}{h}\big\rangle h^{n-1}C_0\langle \frac{s}{h}\rangle \vol_M(B(\partial U,R(h)))}}{2h^{n-1}}\\
&\leq \frac{C_0}{T(h)}\Big \langle \frac{T(h)s}{h}\Big\rangle +
\frac{C\sub{U}C_0}{T(h)}+c_Nh^N.
\end{align*}
%Therefore, for $\red{|s|\leq 2h}$,
%%$$
%%\tr\1_{[t-s,t]}(P)1_U\leq h^{1-n}\frac{2C_0}{T(h)}\Big\langle \frac{T(h)s}{h}\Big\rangle.
%%$$
%%In particular, 
%$$
%\red{\|\1_{[t-s,t]}(P)Op_h(\chi\sub{\tilde{U}})\|_{HS}^2\leq h^{1-n}\frac{C_0}{T(h)}\Big\langle \frac{T(h)s}{h}\Big\rangle}.
%$$

{Next, for $|s|\leq \e_0$, splitting $\1_{[t-s,t]}(P)=\sum_{k=0}^{k_0-1}\1_{[t_k,t_{k+1}]}(P)$ as before, } we have by Lemma~\ref{l:weylDyn} and Lemma~\ref{l:weylSobEst} that there exists $N_0>0$ such that 
\begin{gather}
\label{e:diffL2b}
h^{n-1}\Big|\int_{\Delta}A_1\1_{[t-s,t]}(P)A_2d\sigma\sub{\Delta}\Big|\leq C\sub{0}\frac{1}{T(h)}\Big\langle \frac{T(h)s}{h}\Big\rangle,\\
\label{e:totalL2b}
h^{\frac{n}{2}}\Big|\int_{{\Delta}}A_1\1_{(-\infty,s]}(P)A_2d\sigma\sub{\Delta}\Big|\leq C\langle s \rangle^{N_0}\|\1_{(-\infty,s]}(P)\|_{L^2}\leq Ch^{-\frac{n}{2}}(1+|s|^{2N_0}),
\end{gather}
where to get the last inequality, we use Lemma~\ref{l:weylSobEst} with $U=M$, $A_1=A_2=\Id$.

In particular, combining ~\eqref{e:diffL2b} and~\eqref{e:totalL2b} together with Lemma~\ref{l:taub} implies~\eqref{e:ribs} holds.

%for $E\in[a-Kh,b+Kh]$ and \red{$A_1,A_2\in \Psi^\infty$ with $\WFh(A_2)\subset U$},
%$$
%\Big|\int_{\Delta}A_1\Big(\big(\rho\sub{h,T(h)}*\1_{(-\infty,\,\cdot\,]}(P) \big)(E)-\1_{(-\infty,E]}(P)\Big)A_2 d\sigma\sub{\!\Delta}\Big|\leq C_0\frac{h^{1-n}}{T(h)}.
%$$

\subsubsection{Proof of \eqref{e:sauce}.}

 Using Lemma~\ref{l:toShortTime2}, {the proof of \eqref{e:sauce}} amounts to understanding 
$$
 A_1\big(({\rho_{\sub{h,{\Tt(h)}}}}-\rho\sub{h,t_0})*\1_{(-\infty,\,\cdot\,]}(P)\big){(E)}A_2 = A_1 {f_{t_0,\Tt(h),h}}\big(P\sub{E}\big)A_2+O(h^\infty)_{\smooth},
$$
where $f_{S,T,h}$ is given by~\eqref{e:def of f}, and $\tilde{T}(h)=\frac{T(h)}{2}$.
In particular, for $E\in [a-Kh,b+Kh]$, we consider
$
\tr A_1{f_{t_0,\tilde{T},h}\big(P_E\big)}A_2.$
For this, we let $\{\T_j\}_{j\in \J(h)}$ be a $(\mathfrak{D},\tau,R(h))$-good covering of ${\bf{p}}^{-1}([a,b])\cap N^*\Delta\cap (U\times T^*M)$ and $\mc{V}\subset S_\delta(T^*M\times M;[0,1])$ a bounded subset. Let $\{\chi\sub{\T_j}\}_{j\in \J(h)}\subset \mc{V}$ be a partition associated to $\{\T_j\}_{j\in \JE(h)}$. 

%%%%%%%%%%%%%%%%%%%%%%%%%%%%%%%%%%%%%%%%%%%%%%%%%%%%%%%%%%%%%%%%%%%%%%%%%%%%%%%

\begin{lemma}
\label{l:weylVol} 
{Let $\mc{I}\subset \JE(h)$, {$\mc{V}_1\subset \Psi^{\ell_1}(M)$, $\mc{V}_2\subset \Psi_\delta^{\ell_2}(M)$ bounded subsets.} Then, there exist $C\sub{0}>0$ and $h_0>0$ such that for all {$A_1\in \mc{V}_1$, $A_2\in \mc{V}_2$}, $0<h<h_0$}
$$
\Big|\int_{\Delta} \sum_{j\in \mc{I}} Op_h(\chi\sub{\T_j})A_1 {f_{t_0,\tilde{T},h}\big(P\sub{E}\big)}A_2d\sigma\sub{\!\Delta}\Big|\leq C\sub{0} h^{1-n}R(h)^{2n-1}|\mc{I}|.
$$
\end{lemma}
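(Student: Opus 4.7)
The plan is to adapt the per-tube argument of Lemma~\ref{l:advancedTrace}, with the smoothed operator $f\sub{t_0,\tilde{T},h}(P\sub{E})$ taking the place of the sharp indicator $\1_{[t-s,t]}(P)$. Using the product form $Op_h(\chi\sub{\T_j}) = Op_h(\chi\sub{\tilde\T_j}) \otimes Op_h(\chi_{\rho_j})$ of the cutoffs set up at the start of Section~\ref{s:weyl}, each term in the sum over $j\in\mc I$ is the Schwartz kernel of an operator on $L^2(M)$ evaluated on the diagonal, so
\[
\int_\Delta Op_h(\chi\sub{\T_j}) A_1\, f\sub{t_0,\tilde{T},h}(P\sub{E})\, A_2\, d\sigma\sub{\!\Delta} = \tr\bigl(Op_h(\chi\sub{\tilde\T_j})\, A_1\, f\sub{t_0,\tilde{T},h}(P\sub{E})\, A_2\, Op_h(\chi_{\rho_j})^t\bigr),
\]
and it suffices to prove a per-tube bound of the form $Ch^{1-n}R(h)^{2n-1}$ and sum $|\mc I|$ such contributions.

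For the per-tube estimate I would apply Lemma~\ref{l:mainEst} with ambient space $M\times M$, submanifold $\Delta$ of codimension $n$, operator $A = Op_h(\chi\sub{\T_j})(A_1\otimes A_2)$, and $u$ equal to the Schwartz kernel of $f\sub{t_0,\tilde{T},h}(P\sub{E})$ viewed as a quasimode for $\mathbf{P}_E := P\otimes I - E$. The commutator $[\mathbf{P}_E,A]$ lies in $\Psi_\delta$ by the same computation as in~\eqref{e:checkCommutator}, and the averaging bound reduces matters to the two Hilbert--Schmidt norms $\|Op_h(\tilde\chi\sub{\tilde\T_j}) f\sub{t_0,\tilde{T},h}(P\sub{E}) Op_h(\tilde\chi_{\rho_j})\|_{HS}$ and $h^{-1}\|Op_h(\tilde\chi\sub{\tilde\T_j})(P-E) f\sub{t_0,\tilde{T},h}(P\sub{E}) Op_h(\tilde\chi_{\rho_j})\|_{HS}$. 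The remainder $Q^{A,\psi}_{E,h}$ is absorbed into the $C\sub{N}h^N$ error of the statement thanks to the rapid spectral decay of $f\sub{t_0,\tilde{T},h}$ on $|P\sub{E}|\geq h^\delta$, which allows arbitrarily many powers of $h$ to be extracted in the elliptic parametrix step as in~\eqref{e:away3}.

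The key technical input is a sharp spectral-localized Hilbert--Schmidt estimate analogous to Lemma~\ref{l:HS}, of the form
\[
\|f\sub{t_0,\tilde{T},h}(P\sub{E})\,Op_h(\chi)\|_{HS}^2 \leq C h^{1-n}\,\mu\sub{p^{-1}(E)}\bigl(\supp\chi\cap p^{-1}(E)\bigr) + O(h^\infty),
\]
together with a companion bound gaining a factor $h^2$ for $(P-E) f\sub{t_0,\tilde{T},h}(P\sub{E}) Op_h(\chi)$. Using the oscillatory representation
$f\sub{t_0,\tilde{T},h}(P\sub{E}) = \tfrac{1}{i}\int \hat\rho(\tau/\tilde{T})(1-\hat\rho(\tau/t_0))\tau^{-1}\, e^{-i\tau P\sub{E}/h}\,d\tau$
and mimicking the short-time parametrix computation of Lemma~\ref{l:HS}, one can establish these HS estimates. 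Applied to $\chi = \tilde\chi_{\rho_j}$, whose support meets $p^{-1}(E)$ in a $(2n-1)$-volume $\lesssim R(h)^{2n-1}$, the bounds combine with the $R(h)^{(2n-1)/2}$ factor produced by Lemma~\ref{l:mainEst} to yield the per-tube bound $Ch^{1-n}R(h)^{2n-1}$; summing over $j\in\mc I$ then gives the claim.

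The main obstacle is establishing the Hilbert--Schmidt estimate for the smoothed operator $f\sub{t_0,\tilde{T},h}(P\sub{E})$. Unlike the sharp projector handled in Lemma~\ref{l:HS} via the replacement $\psi(P_t/h)$, for $f\sub{t_0,\tilde{T},h}$ one must analyse an oscillatory $\tau$-integral of $\hat\rho(\tau/\tilde{T})(1-\hat\rho(\tau/t_0))/\tau$ against the semiclassical Schr\"odinger kernel $e^{-i\tau P\sub{E}/h}$, and extracting the spectral density factor $h^{1-n}$ rather than the naive $h^{-n}$ requires a careful $L^2$-in-$\tau$ Parseval-type argument on the support $\tau\in[t_0,2\tilde{T}]$. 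Once this is in place, the constant $C\sub{0}$ of the statement depends on $A_1$, $A_2$, the seminorms of the cutoffs in the bounded family $\mc{V}$, and the parameters $\tau$ and $t_0$.
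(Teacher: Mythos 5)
Your overall strategy (reduce to a per-tube Hilbert--Schmidt bound via Lemma~\ref{l:mainEst} applied with $u$ the kernel of $f_{t_0,\Tt,h}(P_E)$) is a genuinely different route from the paper's. The paper instead writes $f_{t_0,\Tt,h}(P_E) = \varrho_h * \partial_s\1_{(-\infty,\cdot]}(P)(E)$ with $\varrho_h(s) = f_{t_0,\Tt,h}(-s)$ and, since $\widehat{f_{t_0,\Tt}}(0)=0$, the Tauberian-type Lemma~\ref{l:taub} applies at $\sigma_h = h^{-1}$ \emph{at the level of the averaged quantity} $W_h(t):=\int_\Delta Op_h(\chi_{\T_j})A_1\1_{(-\infty,t]}(P)A_2\,d\sigma_\Delta$, with Lipschitz constant $L_h = Ch^{1-n}R(h)^{2n-1}$ supplied by Lemma~\ref{l:advancedTrace} (that is, by estimate~\eqref{e:peach}) and polynomial growth by Lemma~\ref{l:weylSobEst}. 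The crucial feature of this order of operations is that the \emph{only} spectral-localization input needed is the sharp bound at energy scale $O(h)$, which Lemma~\ref{l:HS} establishes using a genuinely short-time parametrix for $\psi(P_t/h)$ with $\hat\psi$ supported in an interval of fixed small length.

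Your proposal stumbles exactly at the point you flag as the main obstacle. The Hilbert--Schmidt estimate $\|f_{t_0,\Tt,h}(P_E)Op_h(\chi)\|_{HS}^2 \lesssim h^{1-n}\mu(\supp\chi\cap p^{-1}(E))$ cannot be obtained by ``mimicking the short-time parametrix computation of Lemma~\ref{l:HS}'' nor by a Parseval argument in the time variable: the Fourier support of the integrand lies in $|\tau|\in[t_0,2\Tt(h)]$ and, after squaring the HS norm, one is confronted with $\tr\bigl(Op_h(\chi)^* e^{-i\tau' P_E/h} Op_h(\chi)\bigr)$ for $|\tau'|$ up to $4\Tt(h)\sim\log h^{-1}$, i.e.\ precisely the long-time propagation that this paper is built to avoid. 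The Minkowski/Cauchy--Schwarz bound only yields $\|Op_h(\chi)\|_{HS}\sim h^{-n/2}$, losing a factor of $h^{1/2}$. A correct way to fill the gap is \emph{not} an $L^2$-in-$\tau$ argument, but rather an energy-window decomposition: write $|f_{t_0,\Tt}|^2 = \sum_k |f_{t_0,\Tt}|^2\1_{[k,k+1)}$, apply Lemma~\ref{l:HS} on each window $\{|P_E-kh|\leq h\}$ with weight $\sup_{|\lambda-k|\leq 1}|f_{t_0,\Tt}(\lambda)|^2\leq C_N\langle k t_0\rangle^{-2N}$, and sum. That is morally the same Tauberian summation the paper carries out once and for all via Lemma~\ref{l:taub}, and applying it already at the level of $W_h$ (rather than repeating it inside a new HS lemma) is both shorter and reuses Lemma~\ref{l:advancedTrace} directly. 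As written, your proof has a genuine gap; with the window summation replacing the proposed Parseval step, it would give a valid (though longer) alternative.
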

%%%%%%%%%%%%%%%%%%%%%%%%%%%%%%%%%%%%%%%%%%%%%%%%%%%%%%%%%%%%%%%%%%%%%%%%%%%%%%%
\begin{proof}
We first note that 
$
{f_{t_0,\Tt(h),h}(P\sub{E})}= {\varrho_h}*\partial_s \1_{(-\infty,\,\cdot\,]}({P})({E}), 
$
where
$
{\varrho_h(s)}:={f_{t_0,\Tt(h),h}(-s)}.
$
Then, since $\widehat{f_{t_0,\Tt(h)}}(0)=0$, we have
$\int_\re \partial_s\varrho_h(s)ds=0$.
In particular, by the estimates~\eqref{e:bound on f}, Lemma~\ref{l:taub} applies with $\sigma_h=h^{-1}$. Note that by Lemma~\ref{l:advancedTrace}, for $t\in[a-Kh,b+Kh]$, and $|s|\leq 1$,
\begin{equation}\label{e:peach}
\Big|\int_{\Delta}Op_h(\chi\sub{\T_j})A_1(\1_{(-\infty,t]}-\1_{(-\infty,t-s]})A_2 d\sigma\sub{\!\Delta}\Big|\leq Ch^{1-n}R(h)^{2n-1}\Big\langle \frac{s}{h}\Big\rangle.
\end{equation}
Also, by Lemma~\ref{l:weylSobEst}, there exists $N_0$ such that for $s\in \re$, 
\begin{equation}\label{e:pear}
\Big|\int_{\Delta}Op_h(\chi\sub{\T_j})A_1\1_{(-\infty,s]}(P)A_2 d\sigma\sub{\!\Delta}\Big|\leq Ch^{-n}\langle s\rangle^{N_0}.
\end{equation}
{The proof follows from Lemma~\ref{l:taub} using \eqref{e:peach} and \eqref{e:pear}, and by summing in $j\in \mathcal{I}$.}
%applies and the proof follows from summing in $j$ the bounds 
%\begin{equation*}
% \Big|\int_{\Delta}\!\! Op_h(\chi\sub{\T_j})A_1{f_{t_0,\Tt(h),h}\big(P\sub{E}\big)}A_2 d\sigma\sub{\!\Delta}\Big|
%% &=\Big|\int_{\Delta}\!\! Op_h(\chi\sub{\T_j})A_1(\partial_s\rho_h*\1_{(-\infty,s]})A_2 d\sigma\sub{\!\Delta}\Big|
% \leq Ch^{1-n}R(h)^{2n-1}. \qedhere
%\end{equation*}
\end{proof}
%%%%%%%%%%%%%%%%%%%%%%%%%%%%%%%%%%%%%%%%%%%%%%%%%%%%%%%%%%%%%%%%%%%%%%%%%%%%%%%

\begin{lemma}
\label{l:weylLoop}
{Let $\mc{V}_1, \mc{V}_2$ as in Lemma~\ref{l:weylVol}} and suppose $\T_j$ is a tube such that {$\tilde{\T}_j$}, its corresponding tube in $T^*M$, satisfies
$
\varphi_t(\tilde{\T}_j)\cap \tilde{\T}_j=\emptyset
$
for $|t|\in [t_0,T(h)].$
Then for all $N>0$ there is $C\sub{N}>0$ such that for all $A_1\in \mc{V}_1$, and $A_2\in \mc{V}_2$,
$$
\Big|\int_{\Delta} Op_h(\chi\sub{\T_j})A_1{f_{t_0,\tilde{T},h}\big(P\sub{E}\big)}A_2 d\sigma\sub{\!\Delta}\Big|\leq C\sub{N}h^N.
$$
\end{lemma}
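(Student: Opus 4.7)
The strategy mirrors that of Lemma~\ref{l:prop}: unfold the Fourier representation of $f_{t_0,\tilde T,h}(P\sub{E})$ to reduce to bounds on $e^{-i\tau P/h}$ for $\tau$ in the non-looping window $[t_0, T(h)]$, and then invoke Egorov's theorem. The first step is to rewrite the integral as a trace. Using the product structure of the tube $\T_j = \tilde{\T}_j \times B(\rho\sub{R}, r)$ and of the associated cutoff $\chi\sub{\T_j}(x,\xi,y,\eta) = \chi\sub{\tilde{\T}_j}(x,\xi)\chi_{\rho\sub{R}}(y,\eta)$, together with the convention that $A_1 f(P\sub{E}) A_2$ denotes the action of $A_1 \otimes A_2^t$ on the kernel of $f(P\sub{E})$, a direct computation yields
$$\int_\Delta Op_h(\chi\sub{\T_j}) A_1 f_{t_0,\tilde T,h}(P\sub{E}) A_2 \, d\sigma\sub{\!\Delta} \;=\; \tr\Big(Op_h(\chi_{\rho\sub{R}})^t \, Op_h(\chi\sub{\tilde{\T}_j}) \,A_1\, f_{t_0,\tilde T,h}(P\sub{E})\, A_2 \Big).$$
The requirement $\T_j\cap N^*\Delta \neq \emptyset$ forces $\rho\sub{R}$ to lie within $O(r)$ of the reflection of some point in $\tilde{\T}_j$; hence the microsupport of the transpose $Op_h(\chi_{\rho\sub{R}})^t$ lies within $O(r)$ of $\tilde{\T}_j$, and the product $Op_h(\chi_{\rho\sub{R}})^t Op_h(\chi\sub{\tilde{\T}_j})$ is a pseudodifferential operator with microsupport contained in a slight enlargement of $\tilde{\T}_j$.

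Next I will insert the Fourier representation~\eqref{e:def of f} and use cyclicity of the trace to write the above quantity as
$$\frac{1}{i}\int_{t_0\leq |\tau|\leq T(h)} \frac{\hat\rho(\tau/\tilde T)(1-\hat\rho(\tau/t_0))}{\tau}\, e^{i\tau E/h}\, \tr\Big(A_2\, Op_h(\chi_{\rho\sub{R}})^t\, Op_h(\chi\sub{\tilde{\T}_j})\, A_1\, e^{-i\tau P/h}\Big)\, d\tau.$$
For each such $\tau$, Egorov's theorem applies: the sub-logarithmic assumption $\Omega(\Ti)\Lambda < 1-2\delta$ of Proposition~\ref{P:toShortTime}, combined with Lemma~\ref{l:sublog}(3), guarantees $|\tau| \leq T(h) \leq 2\alpha T_e(h)$ for some $\alpha<1$. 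Since the microsupport of $A_2\, Op_h(\chi_{\rho\sub{R}})^t\, Op_h(\chi\sub{\tilde{\T}_j})\, A_1$ lies in a slight enlargement of $\tilde{\T}_j$ and $\varphi_\tau(\tilde{\T}_j) \cap \tilde{\T}_j = \emptyset$ by hypothesis, Egorov yields that the inner trace is $O(h^N)$ for every $N>0$, uniformly in $\tau$.

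Finally, integration in $\tau$ is harmless: the factor $1/\tau$ is tame on $|\tau|\geq t_0>0$, and the integration range $|\tau|\leq T(h) = O(\log h^{-1})$ contributes only a logarithmic factor readily absorbed into $h^N$ by enlarging $N$. The only delicate point is the uniformity of the Egorov estimate across the entire range $|\tau| \in [t_0, T(h)]$, but this is precisely the step already handled in the proof of Lemma~\ref{l:prop}, with the strict margin below Ehrenfest time provided by the sub-logarithmic hypothesis $\Omega(\Ti)\Lambda<1-2\delta$.
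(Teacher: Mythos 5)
Your proof is correct and follows essentially the same route as the paper: the paper phrases the key observation as $\exp(tH_{\bf p})(\T_j)\cap N^*\Delta=\emptyset$ and invokes the Egorov argument of Lemma~\ref{l:prop}, while you make the implicit reduction to $M$ explicit via cyclicity of the trace, using the product structure $\chi\sub{\T_j}=\chi\sub{\tilde{\T}_j}\otimes\chi_{\rho\sub{R}}$ and the fact that $\T_j\cap N^*\Delta\neq\emptyset$ forces $\MSh(Op_h(\chi_{\rho\sub{R}})^t)$ to sit in a small enlargement of $\tilde{\T}_j$. After cycling $Op_h(\chi_{\rho\sub{R}})^t$ to the right, one is in exactly the two-sided-cutoff configuration of Lemma~\ref{l:prop}, and the Egorov estimate $\|Op_h(\chi\sub{\tilde\T_j})A_1e^{-i\tau P/h}A_2\,Op_h(\chi_{\rho\sub R})^t\|_{H^{-N}_{\scl}\to H^N_{\scl}}=O(h^N)$ for $|\tau|\in[t_0,T(h)]$ completes the argument.
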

%%%%%%%%%%%%%%%%%%%%%%%%%%%%%%%%%%%%%%%%%%%%%%%%%%%%%%%%%%%%%%%%%%%%%%%%%%%%%%%
\begin{proof}
Note that the assumption on $\tilde{\T}_j$ implies 
$
\exp(tH_{\bf{p}})(\T_j)\cap N^*\Delta =\emptyset
$
for
$
|t|\in [t_0,T(h)].
$
Therefore, the same application of Egorov's theorem as in~Lemma~\ref{l:prop}, completes the proof.
\end{proof}
%%%%%%%%%%%%%%%%%%%%%%%%%%%%%%%%%%%%%%%%%%%%%%%%%%%%%%%%%%%%%%%%%%%%%%%%%%%%%%%
 Since ${U}$ is $\Ti$ non-periodic in the window $[a,b]$ via {$\tau$-coverings}, for all $E\in [a-Kh,b+Kh]$, there is a splitting 
$
\JE(h)=\BE(h)\cup\GE(h)
$
such that $\varphi_t(\tilde{\T}_j)\cap \tilde{\T}_j=\emptyset$
for $|t|\in [t_0,T(h)]$ for $j\in \GE(h)$,
and 
$
|\BE(h)|R(h)^{2n-1}\leq T^{-1}(h).
$
We write, using $\MSh(A_1\otimes A_2)\cap \Lambda_{\Sigma_t^\Delta}^\tau(R(h)/2)\subset \bigcup_{j\in \J_{h,E}}\T_j,$
\begin{align*}
\int_{\Delta} A_1{f_{t_0,\tilde{T},h}\big(P\sub{E}\big)}A_2 d\sigma\sub{\!\Delta}
&=\sum_{{j\in \GE(h) \cup \BE(h)}}\int_{\Delta} Op_h(\chi\sub{\T_j})A_1{f_{t_0,\tilde{T},h}\big(P\sub{E}\big)}A_2 d\sigma\sub{\!\Delta}%\\
%&\;\;\;+\sum_{j\in \BE(h)}\int_{\Delta} Op_h(\chi\sub{\T_j})A_1{f_{t_0,\tilde{T},h}\big(P\sub{E}\big)}A_2 d\sigma\sub{\!\Delta}
+O(h^\infty).
\end{align*}
{Applying Lemma~\ref{l:weylLoop} to the sum over $\GE(h)$ and Lemma~\ref{l:weylVol} to the sum over $\BE(h)$}, we have
$$
\Big|\int_{\Delta} A_1{f_{t_0,\tilde{T},h}\big(P\sub{E}\big)}A_2 d\sigma\sub{\!\Delta}\Big|\leq C h^{1-n}|\BE(h)|R(h)^{2n-1}+O(h^\infty)\leq C\big/T(h)
$$
for any $E\in[a-Kh,b+Kh]$. In particular~\eqref{e:sauce} holds.

\subsubsection{Completion of the proof of Theorem~\ref{t:genWeyl}}
\label{s:laplaceGen}

In order to complete the proof of Theorem~\ref{t:genWeyl}, we take $A_1=\Id$ and $A_2=A^{{t}}$ and apply~\eqref{e:JeffWillCookForYaicita} to obtain the theorem. \qed

\subsubsection{Proof of Theorem~\ref{t:laplaceWeyl}}
 \label{s:laplaceWeyl} 
{We assume $\subM\subset M$ is $\Ti$ non-periodic and let $P=Q$ as in~\eqref{e:goodApprox}. Then $|d\pi\sub{M}H_p|>c>0$ on $|\xi|_g>\frac{1}{2}>0$ so we may apply~\eqref{e:JeffWillCookForYaicita} for $E>\frac{1}{2}$.}
Let $0<\delta<\tfrac{1}{2}$.
Let $\chi_h\in C_c^\infty(M)$ as in~\cite[(19)]{CG17} i.e. such that $\chi_h\equiv 1$ in a neighborhood of $\partial \subM$, 
$\supp \chi_h\subset \{ d(x,\partial \subM)<2h^\delta\}$, 
$|\partial_x^\alpha \chi|\leq C_\alpha h^{-|\alpha|\delta},$ $\vol\sub{\!M}(\supp \chi_h)\leq C h^{\delta(n-\dim_{\textup{box}}\partial \subM)}.$

Let $R(h)\geq {h^\delta}$, and $T(h)=\Ti(R(h))$.
Then, put $A_1=1$ and $A_2=(1-\chi_h)1_{\subM}$ in~\eqref{e:JeffWillCookForYaicita} to obtain
$$
\Big|\int_{\Delta}\Big(\1_{(-\infty,1]}(P)-\rho_{h,t_0}*\1_{(-\infty,\cdot]}(P)({1})\Big)(1-\chi_h)1_{\subM}d\sigma\sub{\!\Delta}\Big|\leq C_0h^{1-n}\big/T(h).
$$
Next, since
$
\rho_{h,t_0}*\1_{(-\infty,\cdot]}(P)({1})(x,x)=\frac{\vol_{\mathbb{R}^n}(B^n)}{(2\pi h)^{n}}+O(h^{-n+2})
$
(apply Theorem~\ref{t:laplaceDiag} with $\Ti=1$),
$$
\Big|\int_{\subM} (1-\chi_h(x))\Big({\Pi_h(1,x,x)
}-(2\pi h)^{-n}\vol_{\mathbb{R}^n}(B^n)\Big)\dv_g(x)\Big|\leq C_0h^{1-n}\big/T(h).
$$
Also, since 
$\Pi_h(1,x,x)=(2\pi h)^{-n}\vol_{\mathbb{R}^n}(B^n)|= O(h^{1-n})$
(apply Theorem~\ref{t:laplaceDiag} with $\Ti=\inj M$),
$$
\Big|\int_{\subM}\chi_h(x)\Big(\Pi_h(1,x,x)-(2\pi h)^{-n}\vol_{\mathbb{R}^n}(B^n)\Big)\dv_g(x)\Big|\leq Ch^{1-n+\delta(n-\dim_{\textup{box}}(\partial \subM))},
$$
where we used $ \vol(\supp \chi_h)\leq h^{\delta(n-\dim_{\textup{box}}(\partial \subM))}$.
In particular, 
\begin{equation*}
\Big|\int_{\subM}\Pi_h(1,x,x)\dv_g(x)-(2\pi h)^{-n}\vol_{\mathbb{R}^n}(B^n)\vol_M(\subM)\Big|\leq Ch^{1-n}\Big(T(h)^{-1}+Ch^{\delta(n-\dim_{\textup{box}}\partial \subM)}\Big).\qed
\end{equation*}

%\begin{lemma}[From semiclassical asymptotics to homogeneous asymptotics]
%Let $L:C^\infty(M\times M)\to \mathbb{C}$ be a linear operator and suppose that there are $k> 1$, $h_0>0$, $T:(0,\infty)\to (0,\infty)$ decreasing with $T(h)\geq 1$, and $C_{L,0},C_{L,1}\in \mathbb{R}$ such that for $0<h<h_0$,
%$$
%\Big|L(\Pi_h(2,\cdot,\cdot)-\Pi_h(1,\cdot,\cdot))-(2^k-1)h^{-k}C_{L,0}-(2^{k-1}-1)h^{1-k}C_{L,1}\Big|\leq C\sub{0}\frac{h^{1-k}}{T(h)}.
%$$
%Then, there are $\lambda_0>0$ and $C>0$ such that for $\lambda>\lambda_0$,
%$$
%\big|L\Pi_\lambda -\lambda^{k}C_{L,0}-\lambda^{k-1}C_{L,1}\big|\leq C \frac{\lambda^{k-1}}{T(2\lambda^{-1})}.
%$$
%\end{lemma}
%\begin{proof}
%Let $\lambda>0$ and put $h_j=\lambda^{-1}2^{j}$, $j=1,2,\dots$. Then, let $J=J(\lambda)$ such that $\lambda^{-1}2^{J}>h_0$, and $\lambda^{-1}2^{J-1}<h_0$. Then, 
%\begin{align*}
%&\Big|L\Pi_\lambda(x,y)-\lambda^kC_{L,0}-\lambda^{k-1}C_{L,1}\Big|\\
%&=\Big|L\Pi_{h_{J}}(1,x,y)+\sum_{j=1}^{J(\lambda)-1}\Big(L(\Pi_{h_j}(2,x,y)-\Pi_{h_j}(1,x,y))\Big)-\lambda^kC_{L,0}-\lambda^{k-1}C_{L,1}\Big|\\
%&\leq C_{h_0}+2^{J(\lambda)k}C_{L,0}\lambda^k+2^{J(\lambda)(k-1)}C_{L,1}\lambda^{k-1} +\sum_{j=1}^{J(\lambda)-1}C\sub{0}\frac{\lambda^{k-1}2^{-j(k-1)}}{T(2^{j}\lambda^{-1})}\\
%&\leq C_{h_0}+\frac{C\sub{0}\lambda^{k-1}}{T(2\lambda^{-1})(1-2^{1-k})}.
%\end{align*}
%\end{proof}

%%%%%%%%%%%%%%%%%%%%%%%%%%%%%%%%%%%%%%%%%%%%%%%%%%%%%%%%%%%
%%%%%%%%%%%%%%%%%%%%%%%%%%%%%%%%%%%%%%%%%%%%%%%%%%%%%%%%%%%
%%%%%%%%%%%%%%%%%%%%%%%%%%%%%%%%%%%%%%%%%%%%%%%%%%%%%%%%%%%
\appendix
%%%%%%%%%%%%%%%%%%%%%%%%%%%%%%%%%%%%%%%%%%%%%%%%%%%%%%%%%%%
%%%%%%%%%%%%%%%%%%%%%%%%%%%%%%%%%%%%%%%%%%%%%%%%%%%%%%%%%%%
%%%%%%%%%%%%%%%%%%%%%%%%%%%%%%%%%%%%%%%%%%%%%%%%%%%%%%%%%%%

\section{Index of Notation}\label{s:index}
{In general we denote points in $\TM$ by $\rho$. When position and momentum need to be distinguished we write $\rho=(x,\xi)$ for $x\in M$ and $\xi \in T_x^*M$. The natural projection is $\pi\sub{M}:T^*M \to M$. Sets of indices are denoted in calligraphic font (e.g., $\mc{J}$). Next, we list symbols that are used repeatedly in the text along with the location where they are first defined.
\begin{tabular*}{\textwidth}{l@{\extracolsep{\fill}}lllll}
$\rho_\sigma$ & \eqref{e:rho def lambda}&$E\sub{H_1,H_2}^{A_1,A_2}$ &\eqref{e:gralremainder}&$\mc{K}_\alpha$ &\eqref{e:curvature}\\
$E_\lambda^{t_0}$& \eqref{e:smoothie}&$\Lambda_{_{\!A}}^\tau(r)$ & \eqref{e:tube}&$|H_p r\sub{H}|$ & \eqref{e:HprH}\\
$\Lambda_{\max}$ &\eqref{e:Lmax}&$\mc{Z}$& \eqref{e:mc H}&$\FR$ & \eqref{e:FR}\\
$T_e(h)$ &\eqref{e:Lmax}&$\tau_{\inj}$ & \eqref{e:tau_inj} &$\rho\sub{h, T}$ & \eqref{e:rho def}\\
 $\Sab$ & \eqref{e:SigmaAB}& $\mc{J}\sub{E}(h)$ & \eqref{e:J_E}&$P\sub{E}$ & \eqref{e:pE}
\end{tabular*}
For $U\subset V \subset T^*M$ we write $B_V(U, R)=\{\rho \in V: d(U, \rho)<R\}$ and $B(U,R)=B\sub{T^*M}(U,R)$. 
For $A \subset T^*M$ we write $\mu_A$ for the Liouville measure induced on $A$. The injectivity radius of $M$ is denoted by $\inj M$. For the definitions of the semiclassical objects
$\Psi^\ell(M)$, $\Psi^\ell_\delta(M)$, $S^\ell(T^*M)$, $S^\ell_\delta(T^*M)$, $\WFh$, $\MSh$, $H^N_{\text{scl}}(M)$, 
we refer the reader to \cite[Appendix A.2]{CG19a}. For the definition of $[t,T]$\text{ non-self looping}, see~\eqref{e:nonsl}, that of $(\mathfrak{D},\tau,r)$ good covers, see \eqref{e:good cover}.
} {Non-periodic, non-looping, and non-recurrent are defined in Definitions~\ref{d:non period gral},~\ref{d:non loop gral}, and ~\ref{d:non rec gral} respectively. For non-looping via coverings and non-recurrent via coverings, see Definitions~\ref{d:non loop cov} and~\ref{d:non rec cov}.}

%%%%%%%%%%%%%%%%%%%%%%%%%%%%%%%%%%%%%%%%%%%%%%%%%%%%%%%%%%%
%%%%%%%%%%%%%%%%%%%%%%%%%%%%%%%%%%%%%%%%%%%%%%%%%%%%%%%%%%%
%%%%%%%%%%%%%%%%%%%%%%%%%%%%%%%%%%%%%%%%%%%%%%%%%%%%%%%%%%%

\section{Examples}
\label{a:concrete}

\numberwithin{lemma}{subsection}
In this section, we verify our dynamical conditions in some concrete examples (some of which are displayed in Tables~\ref{ta:exWeyl} and~\ref{ta:ex}). In particular, we verify that certain subsets of manifolds are non-periodic (see Definition~\ref{d:non periodic}), that various pairs of submanifolds $(H_1,H_2)$ are non-looping (see Definition~\ref{d:non looping through}), and that certain submanifolds are non-recurrent {either via coverings (see Definition~\ref{d:non rec cov}) or simply non-recurrent (see Definition~\ref{d:non rec})}. Recall also that if $(H_1,H_1)$ is a non-looping pair, then $H_1$ is non-looping and hence also non-recurrent. Once these conditions are verified, one can directly apply the relevant theorems (Theorem~\ref{t:laplaceWeyl},~\ref{t:laplaceDiag},~\ref{t:laplaceoff}, and~\ref{t:laplaceOff for H}).

\subsection{Manifolds without conjugate points and generalizations}
\label{s:noConj}

Let ${\Xi}$ denote the collection of maximal unit speed geodesics for $(M,g)$. {For $m$ a positive integer, $R>0$, $T\in \R$, and $x \in M$} define
$$
{\Xi}_{x}^{m,R,T}:=\big\{\gamma\in \Xi: \gamma(0)=x,\,\exists\text{ at least }m\text{ conjugate points to } x \text{ in }\gamma(T-R,T+R)\big\},
$$
where we count conjugate points with multiplicity. Next, for a set $\subM \subset M$ write
$$
\mc{C}_{_{\!\subM}}^{m,R,T}:=\bigcup_{x\in \subM}\{\gamma(T): \gamma\in \Xi_{x}^{m,R,T}\}.%\qquad% \mc{C}_{_{\!V}}^{m}:=\bigcap_{r>0}\,\bigcap_{T>0}\,\overline{\bigcup_{t>T}\mc{C}_{_{\!V}}^{m,r,t}}.
$$
Note that if $\Ti(R) \to \infty$ as $R\to 0^+$, then saying $y \in \mc{C}_{x}^{n-1,R,\Ti(R)}$ for $R$ small indicates that $x$ behaves like a point that is maximally conjugate to $y$. Note that if $(M,g)$ has no conjugate points, then $\mc{C}_{x}^{m,r,T} =\emptyset$ for all $x \in M$ and $r<|T|$.

\begin{lemma}
\label{l:noconj}
Let $\alpha>0$, $t_0>0$ and $\Ti(R)=\alpha\log R^{-1}$. Then there are $\Cnl>0$ and $c>0$ such that if $H_1,H_2\subset M$ of co-dimension ${k_1},{k_2}$, and 
$$
d\big(H_1,\mc{C}_{H_2}^{{k_1+k_2}-n-1,R,\Ti(R)}\big)>R
$$
%(where $r_t=\tfrac{1}{a}e^{-at}$ {old}), 
for all $R<e^{-t_0/\alpha}$,
then $(H_1,H_2)$ is a $(t_0,c\log R^{-1})$ non-looping pair with constant $\Cnl$, for $p(x,\xi)=|\xi|_{g(x)}$.
\end{lemma}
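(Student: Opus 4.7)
The plan is to verify Definition~\ref{d:non loop gral} at energy $E=1$ for the pair $(H_1, H_2)$. Since $p(x,\xi)=|\xi|_{g(x)}$, one has $\Sigma^{H_i}_1 = S\!N^*\!H_i$, so the task reduces to bounding $\mu_{S\!N^*\!H_i}\big(B_{S\!N^*\!H_i}(\mc{L}^{R,1}_{H_i,H_j}(t_0, \Ti(R)), R)\big)$ for $\{i,j\}=\{1,2\}$ and showing that the product of the two measures, times $\Ti(R)^2$, stays bounded as $R\to 0^+$.

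The first step is a single-time transversality estimate. Given $t \in [t_0, \Ti(R)]$ and a looping direction $\rho_0 \in S\!N^*\!H_1$ with $\varphi_t(\rho_0) \in S\!N^*\!H_2$, consider the endpoint map $\Phi_t : S\!N^*\!H_1 \to S^*\!M$, $\Phi_t(\rho) := \varphi_t(\rho)$, composed with a local projection $\pi_\perp$ onto the $n$-dimensional normal bundle of $S\!N^*\!H_2$ in $S^*\!M$, yielding $\Psi_t := \pi_\perp \circ \Phi_t$. A standard Jacobi-field computation shows that the kernel of $d\Psi_t(\rho_0)$ injects into the space of Jacobi fields along the connecting geodesic that vanish at its initial endpoint; in particular, the corank of $d\Psi_t(\rho_0)$ is bounded above by the number of points conjugate to the base point of $\rho_0$ along this geodesic. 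By the hypothesis and the symmetry of conjugate points along a geodesic, this count is at most $m:= k_1+k_2-n-2$ for every geodesic contributing to $\mc{L}^{R,1}_{H_1,H_2}(t_0,\Ti(R))$, and a local Morse-lemma style argument then yields a uniform measure bound
\[
\mu_{S\!N^*\!H_1}\big(\{\rho : \Psi_t(\rho) \in B(0,r)\}\big) \leq C\, r^{n-1-m/2}
\]
in a neighborhood of each looping direction.

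The second step assembles these bounds over $t$. Using $\|d\varphi_t\|_{L^\infty} \leq C_0 e^{\Lambda_{\max}|t|}$ on the relevant energy shell, one has $\varphi_t(B(\rho,R)) \subset B(\varphi_t(\rho), C_0 e^{\Lambda_{\max} t} R)$, so covering $[t_0, \Ti(R)]$ by $\sim e^{\Lambda_{\max}\Ti(R)}\Ti(R)/R$ intervals of length $R e^{-\Lambda_{\max} t}$ and applying the single-time bound with $r = C_0 e^{\Lambda_{\max}\Ti(R)} R$ produces
\[
\mu_{S\!N^*\!H_1}\big(B(\mc{L}^{R,1}_{H_1,H_2}(t_0,\Ti(R)),R)\big) \lesssim e^{\Lambda_{\max}\Ti(R)}\Ti(R) R^{-1}\cdot \big(e^{\Lambda_{\max}\Ti(R)} R\big)^{n-1-m/2}.
\]
The analogous bound with $H_1, H_2$ exchanged follows by time reversal of the geodesic flow, since the hypothesis and the conjugate-point count are symmetric in the two submanifolds. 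Multiplying the two measures and then by $\Ti(R)^2$ yields a bound of the form $C(\log R^{-1})^{O(1)} R^{\sigma}$ with $\sigma>0$ provided $\alpha$ is chosen small enough; replacing $\Ti$ by $c\log R^{-1}$ for a suitable $0<c<\alpha$ then delivers the non-looping pair condition with some $\Cnl$ depending only on the data.

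The main obstacle is Step~1: identifying the corank of $d\Psi_t$ at a looping direction with the ordinary conjugate-point count appearing in the hypothesis, and extracting the quantitative measure estimate on preimages. The rank statement itself is a standard consequence of Jacobi-field / Morse-index theory for geodesic endpoint maps, but the quantitative form requires careful local analysis to handle the mixed conormal boundary conditions at $H_1$ and $H_2$ and to control the possible higher-order vanishing of $\Psi_t$ along its kernel directions.
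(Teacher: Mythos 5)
The paper's proof of Lemma~\ref{l:noconj} is a two-line citation: it invokes \cite[Proposition~2.2, Lemma~4.1]{CG19dyn} to get that $(H_1,H_2)$ is a $(t_0, c\log h^{-1})$ non-looping pair \emph{via coverings} (Definition~\ref{d:non loop cov}), and then applies Lemma~\ref{l:4minutes} to convert this into the non-looping condition of Definition~\ref{d:non looping through}. Your proposal tries to verify Definition~\ref{d:non looping through} directly by a Jacobi-field rank argument, so the two routes are genuinely different — but yours has a gap you have already flagged, and it is a real one, not a technicality.

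The problem is your Step~1. A corank bound on $d\Psi_t$ at a looping direction does \emph{not} yield the measure estimate $\mu(\{\rho : |\Psi_t(\rho)|<r\}) \lesssim r^{\,n-1-m/2}$. That estimate would follow from a Morse-lemma normal form, which requires the Hessian of $\Psi_t$ restricted to $\ker d\Psi_t(\rho_0)$ to be non-degenerate — and nothing in the conjugate-point hypothesis supplies that quantitatively. A one-variable model shows the issue: a map vanishing to order $k$ has preimage measure $\sim r^{1/k}$, arbitrarily worse than $r^{1/2}$. The hypothesis on $\mc{C}^{k_1+k_2-n-1,R,\Ti(R)}_{H_2}$ controls how many Jacobi fields vanish at both endpoints, i.e.\ the \emph{rank} of $d\Psi_t$, but says nothing about the second-order behaviour in the kernel directions. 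In the cited work \cite{CG19dyn} this difficulty is avoided entirely: the analysis there is organised around coverings by bicharacteristic tubes and bounds the \emph{number of bad tubes} rather than the measure of a neighbourhood of the looping set, which is precisely why the non-looping-via-coverings formulation (Definition~\ref{d:non loop cov}) is introduced. Converting back to the measure-theoretic statement is then done by Lemma~\ref{l:4minutes}, which is a soft counting argument.

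There is a second, independent issue in your Step~2. The single-time measure bound (even if it held) is a \emph{local} bound near one looping direction, and the number of looping directions from $SN^*H_1$ to $SN^*H_2$ at a fixed time $t$ can grow with $t$ — exponentially when there is hyperbolicity, but in any case it is not accounted for in the sum over your $\sim e^{\Lambda\Ti(R)}\Ti(R)/R$ time intervals. Once you track that factor, and set $\Ti(R)=\alpha\log R^{-1}$ so $e^{\Lambda\Ti(R)}=R^{-\Lambda\alpha}$, the exponent of $R$ in your final bound does not stay positive uniformly in the codimensions; the case $k_1=k_2=n$ on a surface ($n=2$, $m=0$) is already borderline. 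The tube-covering machinery again avoids this, because the $(\mathfrak{D},\tau,r)$-good-cover structure bounds the number of tubes that a flowed tube can meet without introducing an extra count of looping directions. So while the dimension count and the identification of conjugate points as the mechanism are the right intuition, the quantitative estimate you want cannot be extracted this way; you should either prove a quantitative Jacobi-field non-degeneracy result along the lines of \cite{CG19dyn}, or route the argument through the coverings framework as the paper does.
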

\begin{proof}
By~\cite[Proposition 2.2, Lemma 4.1]{CG19dyn} there exist $\tau>0$, $\delta>0$, $\Cnl>0$, $C>0$, such that the pair $(H_1,H_2)$ is a $(t_0,T(h))$ non-looping via $(\tau,h^\delta)$ coverings with constant $\Cnl$ in the window $[a,b]$ for any $0<a<b$, where
$
T(h)=c\log h^{-1}
$
for some $c>0$ depending on $(M,g,\alpha)$. Combining this result with Lemma~\ref{l:4minutes} {completes the proof}.
\end{proof}

\begin{remark}
We note that~\cite[Proposition 2.2]{CG19dyn} was only proved for $H_1=H_2$. However, the same argument works for the general case. 
\end{remark}

\subsubsection{Product Manifolds}
\label{s:prod}

Let $(M_i,g_i)$, $i=1,2$, be two compact Riemannian manifolds. Let $M=M_1\times M_2$ endowed with the product metric $g=g_1\oplus g_2$. By~\cite[Lemma 1.1]{CG19a} we have $\mc{C}^{n-1,r,T}_{x}=\emptyset$ for $0<r<|T|$. Therefore, by Lemma~\ref{l:noconj} for every $\alpha,t_0>0$ there is $\Cnl$ such that every $x\in M$ is $(t_0,\alpha \log R^{-1})$ non-looping with constant $\Cnl$ for $|\xi|_{g(x)}$. Note that, integrating over $M$, {and using}
$$
{\mu\sub{S^*M}(A)=\int_{M}\mu\sub{S^*_xM}(A\cap S^*_xM)\dv_g},
$$
this also implies $M$ is $\alpha \log R^{-1}$ non-periodic. We point out that although $\mc{C}^{{n-1},r,T}_x$ is empty for $0<r<|T|$, $M$ may, and often does, have conjugate points. For example, this is the case when $M^1=S^{n_1}$ with $n_1\geq 2$. 

\subsubsection{Flow invariance of non-looping condition}
\label{s:flowInv}
In this section, we show that non-looping properties of a pair $(H_1,H_2)$ are inherited by their flow-outs $H^t:=\pi (\varphi_t(\SNH)).$ Note, for example, that a geodesic sphere is given by $H^t$ when $H=\{x\}$ is a point for some $t>0$.
\begin{lemma}
\label{l:flowInv}
Suppose $(H_1,H_2)$ is a $(t_0,\Ti)$ non-looping pair. Then, for all $s,t\in \mathbb{R}$ there exists $C>0$ such that $(H^t_1,H^s_2)$ is a $(t_0+|t|+|s|,\tilde{\Ti})$ non-looping pair where $\tilde{\Ti}(R)=\Ti(CR)-(|t|+|s|).$
\end{lemma}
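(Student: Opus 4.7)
The plan is to exploit the fact that the homogeneous geodesic flow provides a diffeomorphism between the unit conormal bundles $SN^*H_i$ and $SN^*H_i^t$. Since $N^*H_i$ is a Lagrangian submanifold of $T^*M\setminus 0$ and $\varphi_\tau$ is a homogeneous symplectomorphism, $\varphi_t(N^*H_i)=N^*H_i^t$, and restricting to the unit cosphere bundle gives a smooth bijection $\varphi_t:SN^*H_i\to SN^*H_i^t$. Moreover, for $t$ in a compact interval, the maps $\varphi_{\pm t}$ are Lipschitz on neighborhoods of the relevant sets in $T^*M$, and the induced Jacobian between the measures $\mu_{SN^*H_i}$ and $\mu_{SN^*H_i^t}$ is bounded by a constant $\tilde{C}_t$ depending only on $t$ (and $H_i$).

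The main step is a set-theoretic comparison of looping sets. Suppose $\tilde{\rho}\in \mathcal{L}^R_{H_1^t,H_2^s}(t_0+|t|+|s|,T)$, and write $\tilde{\rho}=\varphi_t(\tilde{\rho}_1)$ with $\tilde{\rho}_1\in SN^*H_1$. By definition, there exist $\rho'\in B(\tilde{\rho},R)$ and $\tau$ with $t_0+|t|+|s|\leq |\tau|\leq T$ such that $\varphi_\tau(\rho')\in B(SN^*H_2^s,R)$. Setting $\rho_1':=\varphi_{-t}(\rho')$ and using $SN^*H_2^s=\varphi_s(SN^*H_2)$, the relation $\varphi_{\tau+t-s}(\rho_1')=\varphi_{-s}(\varphi_\tau(\rho'))$ shows that $\varphi_{\tau'}(\rho_1')\in B(SN^*H_2,CR)$ with $\tau':=\tau+t-s$ satisfying $|\tau'|\in[t_0,T+|t|+|s|]$, where $C=C(s,t)$ is a Lipschitz constant for $\varphi_{\pm t},\varphi_{\pm s}$. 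Combined with $d(\rho_1',\tilde{\rho}_1)\leq CR$, this yields $\tilde{\rho}_1\in \mathcal{L}^{CR}_{H_1,H_2}(t_0,T+|t|+|s|)$.

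Now I would choose $T=\tilde{\Ti}(R):=\Ti(CR)-(|t|+|s|)$, so that $T+|t|+|s|=\Ti(CR)$ matches the non-looping hypothesis on $(H_1,H_2)$ at scale $R'':=CR$. Transporting the inclusion and the ball of radius $R$ around $\tilde{\rho}$ through $\varphi_{-t}$ (using the Lipschitz constant), and similarly for $H_2^s$ via $\varphi_{-s}$, one obtains
\[
\mu_{SN^*H_1^t}\Big(B_{SN^*H_1^t}\big(\mathcal{L}^R_{H_1^t,H_2^s}(t_0+|t|+|s|,\tilde{\Ti}(R)),R\big)\Big)\leq \tilde{C}_t\,\mu_{SN^*H_1}\Big(B_{SN^*H_1}\big(\mathcal{L}^{R''}_{H_1,H_2}(t_0,\Ti(R'')),R''\big)\Big),
\]
and an analogous bound with the roles of $H_1$ and $H_2$ exchanged.

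Multiplying the two estimates, using $\tilde{\Ti}(R)\leq \Ti(R'')$, and taking $\limsup_{R\to 0^+}$ (so $R''\to 0^+$ as well), the non-looping hypothesis for $(H_1,H_2)$ gives
\[
\limsup_{R\to 0^+}\mu_{SN^*H_1^t}(\cdots)\,\mu_{SN^*H_2^s}(\cdots)\,\tilde{\Ti}(R)^2\leq \tilde{C}_t\tilde{C}_s\,\Cnl,
\]
which verifies that $(H_1^t,H_2^s)$ is a $(t_0+|t|+|s|,\tilde{\Ti})$ non-looping pair with constant $\tilde{C}_t\tilde{C}_s\Cnl$. The main subtlety to keep track of will be ensuring that all Lipschitz constants and Jacobian bounds are uniform on the compact pieces of $T^*M$ relevant to the looping sets, and that the flow-outs $H_1^t$ and $H_2^s$ are indeed smooth submanifolds—which is the standing assumption when one speaks of non-looping of $(H_1^t,H_2^s)$.
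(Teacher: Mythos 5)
Your argument follows essentially the same route as the paper's proof: transport the looping condition along $\varphi_{\pm t}$ and $\varphi_{\pm s}$, track the resulting shift of the time window, and compare measures using Lipschitz and Jacobian bounds for the flow on compact sets. Your time-window bookkeeping (starting from the window $[t_0+|t|+|s|,T]$ with $T=\Ti(CR)-(|t|+|s|)$ and landing in $[t_0,\Ti(CR)]$) is correct.

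However, there is a concrete gap at the very start. You assert that $\varphi_t(N^*H_i)=N^*H_i^t$, so that $\varphi_t:SN^*H_i\to SN^*H_i^t$ is a bijection. This is false in general. Take $H=\{x_0\}$ with $0<t<\inj(M)$: then $\varphi_t(SN^*\{x_0\})$ covers only the \emph{outward}-pointing unit conormals to the geodesic sphere $H^t$, while the inward-pointing ones are given by $\varphi_{-t}(SN^*\{x_0\})$. In general, since $N^*H$ is symmetric under $\xi\mapsto -\xi$ and $-\varphi_t(\rho)=\varphi_{-t}(-\rho)$, one has
$SN^*H^t=\varphi_t(SN^*H)\cup\varphi_{-t}(SN^*H)$,
and generically these two pieces are distinct. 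Consequently, a point $\tilde\rho\in SN^*H_1^t$ need not be of the form $\varphi_t(\tilde\rho_1)$ with $\tilde\rho_1\in SN^*H_1$; it may instead lie in $\varphi_{-t}(SN^*H_1)$. The same issue appears in your use of $SN^*H_2^s=\varphi_s(SN^*H_2)$. The paper handles this with a case analysis on the sign; as written, your argument covers only one of the four sign combinations $(\pm t,\pm s)$, so the inclusion of looping sets and the measure comparison do not account for all of $SN^*H_1^t$. The fix is routine---repeat the identical estimate for each sign and take a union in the measure bound, which at most multiplies the constant by a factor $4$---but it must be stated, since without it the argument is incomplete.
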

\begin{proof}
First, note that $\SNH^t_j=\varphi_t(\SNH_j)\cup \varphi_{-t}(\SNH_j)$ for $j=1,2$. Let $T>0$ and suppose $\rho \in B(\smalleq{\mc{L}^{{R,1}}\sub{H^t_1,H^s_2}(t_0,T)},R)$. Then, there is $q_1\in \mc{L}^{{R,1}}\sub{H^t_1,H^s_2}(t_0,T)$ such that $d(q_{{1}},\rho)<R$. In particular, there are $q_2\in T^*M$ and $t_0\leq |t_1|\leq T$ such that $d(q_1,q_2)< R$ and 
$
d(\varphi_{t_1}(q_2), \SNH^s_2)<R.
$

Now, either $\varphi_{-t}(q_1)\in \SNH_1$ or $\varphi_t(q_1)\in \SNH_1$. We consider the case $\varphi_{t}(q_1)\in \SNH_1$, the other begin similar. Then, there exist $C_t,C_s>0$ such that
$$
d(\varphi_t(q_1),\varphi_t(q_2))<C_tR,\qquad d(\varphi_{-t+t_1\pm s}\circ\varphi_t(q_2),\SNH_2)<C_sR.
$$
In particular, letting $C=\max(C_t,C_s)$,
$
\varphi_t(q_1)\in \mc{L}\sub{H_1,H_2}^{CR}(t_0+|t|+|s|,T-(|t|+|s|)),
$
and, since $d(\varphi_t(\rho),\varphi_t(q_1))<CR$, 
$$
\varphi_t(\rho)\in B\big(\smalleq{\mc{L}\sub{H_1,H_2}^{{CR,1}}(t_0+|t|+|s|,T-(|t|+|s|))},CR\big).
$$
Repeating this argument when $\varphi_{-t}(q_1)\in \SNH_1$, we obtain
$$
B\sub{\SNH_1^t}(\smalleq{\mc{L}\sub{H^t_1,H^s_2}^{{R,1}}(t_0,T)},R)\subset \bigcup_{\pm}\varphi_{\pm t}\big(B\sub{\SNH_1}(\smalleq{\mc{L}\sub{H_1,H_2}^{{CR,1}}(t_0+|t|+|s|,T-(|t|+|s|))},CR)\big).
$$
In particular, {there is $C>0$} such that 
\begin{align*}
&\mu\sub{\SNH_1^t}\Big(B\sub{\SNH_1^t}(\smalleq{\mc{L}\sub{H^t_1,H^s_2}^{{R,1}}(t_0,T)},R)\Big)\leq \sum_{\pm}C\mu\sub{\SNH_1}\Big(B\sub{\SNH_1}(\smalleq{\mc{L}\sub{H_1,H_2}^{{CR,1}}(t_0+|t|+|s|,T-(|t|+|s|))},CR)\Big).
\end{align*}
Therefore, since $(H_1,H_2)$ is a $(t_0,\Ti)$ non-looping pair, $(H_1^t,H_2^s)$ is a $(t_0+|t|+|s|,\tilde{\Ti})$ non-looping pair with $\tilde{\Ti}(R)=\Ti(CR)-|t|-|s|.$
\end{proof}

Now, by Lemma~\ref{l:noconj}, in the case 
$
d\big(y,\mc{C}_x^{n-1,R,\Ti(R)}\big)>R, 
$
for $R<e^{-t_0/\alpha}$ and $\Ti(R)=\alpha \log R^{-1}$, 
we have $(x,y)$ is a $(t_0,c\log R^{-1})$ non-looping pair. Hence, by Lemma~\ref{l:flowInv} that the geodesic spheres generated by $x$ and $y$ form a non-looping pair with {\resfun} $\Ti(R)=\tilde{C}\log R^{-1}$ for some $\tilde C>0$.

\subsection{Surfaces of revolution}
\label{s:perturb}
Consider $M=S^2$ with the metric a
$
\iota^*g
$
where 
\begin{equation}
\label{e:revolveMetric}
g(s, \theta)=ds^2+\alpha^2(s)d\theta^2,
\end{equation}
and $\iota:[-\frac \pi 2,\frac \pi 2]\times \re/2\pi \mathbb{Z}\to S^2$, with
$
\iota(s,\theta)=(\cos(s)\cos(\theta),\cos(s)\sin(\theta),\sin(s)).
$
%surface of revolution $$ parametrized by coordinates $(s, \theta)\in [-\frac\pi 2 \frac \pi 2]\times S^1$, with metric 
Here, $\alpha$ is a smooth function satisfying $\alpha(\pm\frac{\pi}{2})=0$ and $\pm \alpha'(\pm \pi/2)=1$. 
This assumption implies $g$ is a smooth Riemannian metric. Furthermore, we assume $-s\alpha'(s)>0$ for $s\neq 0$ and $\alpha''(0)<0$. Note that the round sphere is given by $\alpha(s)=\cos (s)$.

For a unit speed geodesic, $t \mapsto (s(t),\theta(t))$ with $(s(0),\theta(0))=(0,0)$, $\dot{\theta}(0)>0$, $\dot{s}(0)>0$, we have by the Clairaut formula (see e.g.~\cite[Proposition 4.7]{Besse})
$$
\big(\dot s(t)\big)^2+\alpha^2(s(t))\big(\dot \theta(t)\big)^2=1
\qquad \text{and }\qquad 
 \dot \theta(t)=\alpha(s_+)\alpha^{-2}(s(t))
$$
where $s_+$ is the maximal value of $s$ on the geodesic. 
In particular, putting $t(s_+)$ for the first time when $s(t)=s_+$, we have $s:[0,t(s_+)]\to [0,s_+]$ is invertible, 
$$
t(s)=\int_0^{s}\frac{\alpha(w)}{\sqrt{\alpha^2(w)-\alpha^2(s_+)}}dw,\qquad 
\theta(t(s_+))=\int_0^{t(s_+)}\frac{\alpha(s_+)}{\alpha^2(s(t))}dt
$$
and, changing variables to $w=s(t)$ and using 
$
\dot s(t)=\sqrt{1-\frac{\alpha^2(s_+)}{\alpha^2(s(t))}},
$
we have
$$
\theta(t(s_+))=\int_0^{s_+}\frac{\alpha(s_+)}{\alpha(w)}\frac{1}{\sqrt{\alpha^2(w)-\alpha^2(s_+)}}dw.
$$
We then define $\theta_+(s_+):=2\theta(t(s_+))$. If we instead suppose $\dot \theta>0$ and $\dot{s}<0$, we can define $\theta_-(s_-)$ analogously where $s_-$ is the minimal $s$ value on the trajectory. 
Now, there is a smooth function
$$
s_-:[0,\pi/2]\to [-\pi/2,0]
$$
such that if $s_+$ is the maximal $s$ value of a trajectory, then $s_-(s_+)$ is the minimal $s$ value. Moreover, $\partial_{s_+}s_-<0$.

Finally, note that for a trajectory with maximal $s$ value $s_+$, $s(0)=0$, $\dot{s}\neq 0$, if $T$ is the second return time to $s(0)=0$, then 
$$
\Theta_0(s_+)=\theta(T)-\theta(0),\qquad \Theta_0(s_+):= \theta_+(s_+)+\theta_-(s_-(s_+)).
$$
Note that apriori, $\theta(T)-\theta(0)$ could depend on the precise geodesic whose maximal $s$ value is $s_+$. However, the integrable torus, $\mathbb{T}_{s_+}$, consisting of all such geodesics has the same $\theta(T)-\theta(0)$ up to sign.

In the next lemmas, we reduce the study of dynamical properties on $(M,g)$ to the Poincar\'e section $\{s(0)=0,\dot{s}(0)>0\} \subset TM$. The function $\Theta_0:(0, \pi/2] \to \re$ is the change in $\theta$ after a return to the Poincar\'e section. In particular, $\mathbb{T}_{s_+}$ is a periodic torus (i.e. all its trajectories are periodic) if and only if for some $p,q\in \mathbb{Z}$, $q\neq 0$,
$$
\Theta_0(s_+)= 2\pi\, p/q.
$$

\begin{lemma}
\label{l:non-periodSurf}
Suppose there exists $b> 0$ such that 
$$
\partial_{s_+}\Theta_0(s_+)\neq 0,\qquad s_+\geq b.
$$
Then, there are $\Cnp,c>0$ such that every subset $U\subset \{s>b\}\cup \{s<s_-(b)\}$ is $\Ti$ non-periodic for $\Ti(R)=cR^{-1/3}$ with constant $\Cnp$.
\end{lemma}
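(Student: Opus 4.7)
The plan is to exploit the integrable structure given by the Clairaut invariant to reduce the non-periodic estimate to a one-dimensional resonance count governed by the twist condition $\partial_{s_+}\Theta_0\neq 0$. On each invariant torus parametrised by $s_+$, the flow is quasi-periodic with rotation number $\Theta_0(s_+)/(2\pi)$, and because the flow is integrable one has the linear derivative bound $\|d\varphi_t\|_{L^\infty}\leq C\langle t\rangle$. Consequently, if $\rho\in\mc{P}^R_U(t_0,T)$, then there exists $t\in[t_0,T]$ with $\varphi_t(\rho)\in B(\rho,C(1+T)R)$, so the set $\mc{P}^R_U(t_0,T)$ can be controlled by the set of $\rho$ on tori whose two frequencies are jointly $CTR$-resonant with $t$.

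First, I would introduce action--angle coordinates $(I,\theta_1,\theta_2)$ adapted to the Clairaut invariant $I=\alpha(s_+)$ (valid away from $\dot s=0$ and the poles, with the limiting behaviour handled separately, see below). Writing the frequencies as $\omega_1(I),\omega_2(I)$, one has $\omega_2/\omega_1=\Theta_0(s_+)/(2\pi)$ and $T_s(s_+)=2\pi/\omega_1$. By the hypothesis and compactness of $\{s_+\in[b,\pi/2]\}$, $|\partial_{s_+}\Theta_0|\geq c_0>0$ and $\omega_1$ is bounded above and below on $U$. The resonance condition becomes
\begin{equation*}
t=\tfrac{2\pi k}{\omega_1(I)}+O(TR),\qquad k\,\Theta_0(s_+)\in 2\pi\mathbb{Z}+O(TR),\qquad 1\leq k\lesssim T.
\end{equation*}

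Second, for each fixed $k\lesssim T$, the set of $s_+$ satisfying $k\Theta_0(s_+)\in 2\pi\mathbb{Z}+O(TR)$ is contained in at most $k$ intervals of length $\lesssim TR/(kc_0)$, giving total $s_+$-measure $\lesssim TR/c_0$. Summing over $k\in\{1,\dots,\lfloor T\rfloor\}$ produces an $s_+$-measure $\lesssim T^2R/c_0$. Fibring the Liouville measure on $S^*M$ over $s_+$ and using that the remaining toral variables contribute a bounded factor, we obtain $\mu_{S^*M}(\mc{P}^R_U(t_0,T))\lesssim T^2R$. Taking the $R$-neighbourhood only adds $O(R)$ (the $s_+$-direction thickens by $R$), hence
\begin{equation*}
\mu_{S^*M}\!\bigl(B(\mc{P}^R_U(t_0,\Ti(R)),R)\bigr)\cdot\Ti(R)\;\lesssim\;\Ti(R)^3R+R\,\Ti(R).
\end{equation*}
Choosing $\Ti(R)=cR^{-1/3}$ makes both terms uniformly bounded, and one may pick $\Cnp$ accordingly.

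The main obstacle will be the degeneration of the action--angle coordinates near the poles $s=\pm\pi/2$, where the singular fibre of the Clairaut map corresponds to meridian geodesics and the coordinates $(I,\theta_1,\theta_2)$ break down. To handle this I would work in a small neighbourhood of each pole using the chart provided by $\iota$, where the geodesic flow is smooth and trajectories with $s_+$ close to $\pi/2$ simply pass through the pole with a well-defined limiting $\theta$-shift; the function $\Theta_0$ extends smoothly up to $s_+=\pi/2$, and the twist hypothesis persists by assumption. Outside this chart the argument above applies verbatim. A secondary routine point is verifying the linear cocycle bound $\|d\varphi_t\|\lesssim\langle t\rangle$: in action--angle coordinates the linearised flow is unipotent with off-diagonal entry $t\,\partial_I\omega$, and across the pole one matches this with the smooth extension on $M$; this is standard for integrable surfaces and avoids any exponential loss.
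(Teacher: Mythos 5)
Your proposal is correct and follows essentially the same strategy as the paper's proof: reduce the near-periodicity condition, via the linear growth of $d\varphi_t$ on the integrable surface, to a near-rationality condition on $\Theta_0(s_+)$; bound the $s_+$-measure of the resonant set by $O(T^2R)$ using the twist hypothesis; transfer this to a Liouville-measure bound by fibring over the invariant tori $\mathbb{T}_{s_+}$; and balance $T^3R\lesssim 1$ to get $\Ti(R)=cR^{-1/3}$. The only cosmetic difference is that you set this up in action--angle coordinates (which forces the extra care near the poles that you flag), whereas the paper works directly on the Poincar\'e section $\{s=0\}$ with the explicit return-angle $\Theta_0$ and the natural $(s,\theta,\xi_s,\xi_\theta)$ coordinates, sidestepping the coordinate degeneration and instead handling the only genuine issue --- transversality of $\mathbb{T}_{s_+}$ to the fibres $S^*_{(s_0,\theta)}M$ when $s_+\to s_0$ --- by an explicit $\e$-cushion.
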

\begin{proof}

Suppose $\rho\in S^*M$ with $s_+(\rho)>b$, and {let $t\in \re$ be such that}
\begin{equation}\label{e:rhoLoops}
\varphi_t(B\sub{S^*M}(\rho,R))\cap B\sub{S^*M}(\rho,R)\neq \emptyset.
\end{equation}
Then, there is $|t_1|\leq R$ such that 
$
d(\varphi_{t+t_1}(\rho),\rho)< (1+C (|t|+|t_1|))R.
$
Now, for some $0\leq t_2\leq c$, we have
$
{s}(\varphi_{t_2}(\rho))=0
$
and 
$$
d(\varphi_{t+t_1+t_2}(\rho),\varphi_{t_2}(\rho))< (1+C(|t|+|t_1|+t_2))R.
$$

Let $s_+$ be the maximal $s$ value for the trajectory through $\rho$. 
Then, there are $p,q\in \mathbb{Z}$ with $|p|,|q|\leq C(1+|t|)$, $|q|\geq c(1+|t|)$ such that 
\begin{equation}
\label{e:nearlyRational}
\Big|\Theta_0(s_+)-2 \pi\, p/q\Big|<C(1+C (|t|+|t_1|+t_2))R/q\leq CR.
\end{equation}

We have shown that if $\rho \in S^*M$ is such that \eqref{e:rhoLoops} holds, then $\rho \in \bigcup_{s_+ \in A(t)} \mathbb T_{s_+}$,
where 
 $$A(t):=\{s_+ \in (b, \tfrac{\pi}{2}]:\; \exists p,q \in \Z,\; |p|,|q|\leq C(1+|t|), \; \eqref {e:nearlyRational}\; \text{holds}, \}.$$
Next, we claim
\begin{equation}\label{e:vol s+}
|A(t)| \leq C(1+|t|)^2R.
\end{equation}
Indeed, ${\#}\{r \in [0,1]:\; \exists p,q \in \Z,\; r=p/q,\;\; |p|,|q|\leq C(1+|t|)\} \leq C(1+|t|)^2$ and hence, the volume of possible values of $\Theta_0(s_+)$ 
such that \eqref{e:nearlyRational} holds is bounded by
$
C(1+|t|)^2R.
$
The claim in \eqref{e:vol s+} then follows from the assumption $\partial_{s_+} \Theta_0(s_+)\neq 0$ on $s_+\geq b$. % the volume of $s_+$ such that~\eqref{e:nearlyRational} holds with $|p|,|q|<C(1+|t|)$ satisfies, $$V(t)\leq C(1+|t|)^3R.$$

Our next goal is to show that the bound in \eqref{e:vol s+} translates to a bound on the set of $\rho$ with \eqref{e:rhoLoops}.
 To see this, note that $\mathbb{T}_{s_+}=\{ |\xi_\theta|=\alpha(s_+)\}{\cap S^*M}$ where we work in the cotangent bundle with coordinates $(s, \theta, \xi_s,\xi_\theta)$. Therefore, when $\alpha(s_+)<\alpha(s_0)$, the intersection $\mathbb{T}_{s_+}\cap S^*_{(s_0,\theta)}M$ is transversal for any $\theta$. In particular, for any $\e>0$ and $s_0\geq 0$, there exists $C_\ep>0$ such that for any $A\subset [s_0+\e, \pi/2]$
$$
\mu\sub{S^*_{(s_0,\theta)}M}\Big(\bigcup_{s_+\in A}\mathbb{T}_{s_+}\cap S^*_{(s_0,\theta)}M\Big)\leq C_\e |A|.
$$
Moreover, since there is $T>0$ such that the restriction of the map $ (t,q)\mapsto \varphi_t(q)$ 
$$[-T,T]\times 
\Big( \bigcup_{\substack{s_+\geq s_0+\e\\{\theta\in[0,2\pi]}}}S^*_{(s_0,{\theta})}M\cap\mathbb{T}_{s_+}\Big) \to \bigcup_{s_+{\geq} s_0+\e}\mathbb{T}_{s_+}
$$
is a surjective local diffeomorphism,
\begin{equation}
\label{e:awayTangent}
 \mu\sub{S^*M}\Big(\bigcup_{s_+\in A}\mathbb{T}_{s_+}\cap S^*M\Big)\leq C_\ep |A|.
\end{equation}
In particular, by \eqref{e:vol s+}, since $b>0$, there exists $C_b>0$ such that
$$
\mu\sub{S^*M}\Big(\bigcup_{s_+ \in A(t)}\mathbb{T}_{s_+}\cap S^*M\Big)\leq C_b \,|A(t)| \leq C_b (1+|t|)^2R.
$$
%
%
%Since all of the Lagrangian tori passing over $\{s>b\}\cup\{s<s_-(b)\}$ are well parametrized by $s_+$, i.e. there volume is controlled by the volume of the corresponding $s_+$, we have that $\rho$ belongs to a set of integrable tori with volume bounded by$C(1+|t|)^3R. $
%
Hence, for $U\subset \{s>b\}\cup \{s<s_-(b)\}$, 
$$
\mu\sub{S^*\!M}\!\Big(B\sub{S^*\!M}\!\big({\smalleq{ \mc{P}^R_U(t_0,\Ti(R))}}, R\big)\Big)\leq C(1+|\Ti(R)|)^2R. $$
So, provided $\Ti(R)\leq R^{-1/3}$, $U$ is $\Ti(R)$ non-periodic with constant $\Cnp=C/2$.
\end{proof}

{
\begin{lemma}
\label{l:nonloopSurf}
Suppose $x_0$ is a pole, and $x_1=(s_1,\theta_1)$ for $-\pi/2<s_1<\pi/2$. Then, there is $\Cnl>0$ such that $(x_0,x_1)$ is a $\Ti(R)=R^{-1}$ non-looping pair.
\end{lemma}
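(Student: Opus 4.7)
The plan is to verify that $(x_0, x_1)$ is a $(t_0, R^{-1})$ non-looping pair by bounding both factors
$$\mu_1 := \mu\sub{S^*_{x_0}M}\bigl(B\sub{S^*_{x_0}M}(\mc{L}_{x_0,x_1}^R(t_0,R^{-1}),R)\bigr),\qquad \mu_2 := \mu\sub{S^*_{x_1}M}\bigl(B\sub{S^*_{x_1}M}(\mc{L}_{x_1,x_0}^R(t_0,R^{-1}),R)\bigr)$$
by $O(R)$; since $\Ti(R)^2 = R^{-2}$, this yields $\mu_1\mu_2\Ti(R)^2 = O(1)$ as required.

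For $\mu_2$, I would exploit that every geodesic from the non-pole $x_1$ carries a conserved angular momentum $J = \eta_\theta = \alpha(s_1)\sin\phi$, where $\phi$ is the angle of $\eta \in S^*_{x_1}M$ from $\partial_s$. A trajectory from $x_1$ can enter the $R$-neighborhood of the pole only if it visits the region $\{\alpha \leq CR\}$; by conservation of $|J|$ this forces $|J| \leq CR$, i.e.\ $|\sin \phi| \leq CR/\alpha(s_1)$. Since $-\pi/2 < s_1 < \pi/2$ ensures $\alpha(s_1) > 0$, this confines $\phi$ to two arcs of total measure $O(R)$ in $S^*_{x_1}M$, independently of the time horizon, giving the claimed bound on $\mu_2$.

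For $\mu_1$, I would start from the observation that $x_0$ being a pole forces every geodesic from $x_0$ to have $J = 0$ and hence to be a meridian; parametrizing $S^*_{x_0}M$ by a Cartesian direction angle $\theta_0$ at the pole, the meridian at $\theta_0$ passes through $x_1 = (s_1,\theta_1)$ if and only if $\theta_0 \equiv \theta_1 \pmod{\pi}$. For $\rho' \in B(\rho, R)$, working in smooth Cartesian coordinates at the pole yields $|J(\rho')| \leq CR$ and that the Cartesian direction of $\xi'$ lies within $CR$ of $\theta_0$. The next step is to track the $\theta$-coordinate of the near-meridian trajectory from $\rho'$ at each crossing of $\{s = s_1\}$: at the $k$-th crossing the value is $\theta_0 + k\pi$ modulo an accumulated perturbation that must be controlled.

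The main obstacle is bounding this accumulated perturbation by $O(R)$ over the full window $t \in [t_0, R^{-1}]$: a naive estimate gives a per-period drift $|\Theta_0(J)|-2\pi$ of size $O(|J|) = O(R)$, which accumulates to $O(1)$ over $\sim R^{-1}$ periods and is therefore useless. I would address this by exploiting the rotational and reflection symmetries of the surface of revolution together with the smooth Cartesian structure at the pole, where each pole pass contributes exactly $\pi$ in angle to leading order, to improve the per-period drift bound to $O(R^2)$, so that the cumulative drift over $\sim R^{-1}$ periods is $O(R)$. Granting this, the trajectory's $\theta$-coordinate at every crossing of $\{s = s_1\}$ stays within $O(R)$ of $\theta_0$ or $\theta_0 + \pi$, so the trajectory reaching $B(x_1, R)$ forces $\theta_0$ into an $O(R)$-neighborhood of $\{\theta_1, \theta_1 + \pi\}$ in $S^*_{x_0}M$, giving $\mu_1 = O(R)$.
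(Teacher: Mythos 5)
Your reading of Definition~\ref{d:non looping through} is scrupulous: the bound is on the product $\mu_1\mu_2\,\Ti(R)^2$, so with $\Ti(R)=R^{-1}$ one does need both factors to be $O(R)$, and your bound $\mu_2=O(R)$ via conservation of $\xi_\theta$ together with $\alpha(s_1)>0$ is precisely the paper's argument. The gap is in the proposed bound $\mu_1=O(R)$. You correctly identify the obstruction---a per-period precession $\Theta_0(s_+)-2\pi$ of size $O(|J|)$ accumulates to $O(1)$ over $\sim R^{-1}$ periods---but the claimed upgrade of the drift to $O(J^2)$ is unjustified and in fact false for the very surfaces the lemma is used for. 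Symmetry at the pole only gives $\Theta_0(s_+)\to 2\pi$ as $s_+\to\pi/2$, not that the convergence is quadratic in $J=\alpha(s_+)$. The perturbed spheres constructed in \S\ref{s:perturbEx} satisfy the aperiodicity hypothesis $\partial_{s_+}\Theta_0(s_+)\neq 0$ of Lemma~\ref{l:non-periodSurf} for all $s_+\geq b$, in particular up to $s_+=\pi/2$, and since $\alpha$ vanishes linearly there this gives $\Theta_0(s_+)-2\pi=cJ+O(J^2)$ with $c\neq 0$. Over the time window $[t_0,R^{-1}]$ the total $\theta$-drift is then $O(1)$, so a set of starting directions $\xi_0\in S^*_{x_0}M$ of measure $O(1)$ admits a nearby $\rho'\in B\bigl((x_0,\xi_0),R\bigr)$ whose trajectory reaches $B(x_1,R)$; hence $\mu_1=O(1)$, not $O(R)$.

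For comparison, the paper's proof establishes only the bound $\mu\sub{S^*_{x_1}M}\bigl(B(\mathcal{L}_{x_1,x_0}^{R},R)\bigr)\leq CR$ and pairs it implicitly with the trivial estimate $\mu\sub{S^*_{x_0}M}(\cdot)=O(1)$. That combination gives $\mu_1\mu_2\,\Ti(R)^2=O(1)$ only for $\Ti(R)=R^{-1/2}$; the stated $\Ti(R)=R^{-1}$ looks like an overstatement, sharp on the round sphere (where the precession vanishes identically) but not on the aperiodic perturbations. The discrepancy is inconsequential for the paper's applications because Table~\ref{ta:ex} only requires the pair to be $(t_0,\log R^{-1})$ non-looping, which already follows from $\Ti(R)=R^{-1/2}$ via the monotonicity noted in Remark~\ref{r:rate function}. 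But it does mean the literal claim you set out to prove is not reachable by the route you sketch and, for generic surfaces of revolution in this class, is not true.
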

\begin{proof}
Suppose $x_0$ is the pole with $s=\pi/2$. Suppose $\rho\in S^*_{x_1}M$ and there exists $\rho_1\in S^*_{x_1}M$ such that $d(\rho,\rho_1)<R$ and $\varphi_t(B(\rho_1,R))\cap B(S^*_{x_0}M,R)\neq \emptyset$. Then, there is $\rho_2\in B(\rho_1,R)$ such that $s_+(\rho_2)>\pi/2-R$. Therefore, there is $C>0$ such that $s_+(\rho)>\pi/2-CR$ and (since $|s_1|<\pi/2$), 
$$
\mu\sub{S^*_{x_1}M}\Big(\bigcup_{s_+>\pi/2-CR}\mathbb{T}_{s_+}\cap S^*_{x_1}M\Big)\leq CR.
$$
In particular, for any $t_0>0$, $T>0$,
$$
\mu\sub{S^*_{x_1}M}\Big(B(\smalleq{\mc{L}_{x_1,x_0}^{{R,1}}(t_0,T)},R)\Big)\leq CR
$$
and hence $(x_0,x_1)$ is a $\Ti(R)=R^{-1}$ non-looping pair.
\end{proof}
}

\begin{lemma}
Suppose the assumptions of Lemma~\ref{l:non-periodSurf} hold and $x_0=(s_0,\theta_0)$ with $s\in (-\pi/2, s_-(b))\cup (b,\pi/2).$ Then there is $\delta>0$ such that $x_0$ is $\Ti(R)=R^{-\delta}$ non-looping.
\end{lemma}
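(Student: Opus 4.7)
The plan is to verify Definition~\ref{d:non looping through} for the pair $(x_0,x_0)$ by bounding the measure of the looping set in $S^*_{x_0}M$. Parametrize $S^*_{x_0}M$ by the oriented angle $\alpha\in \mathbb{R}/2\pi\mathbb{Z}$ between $\xi$ and $\partial_s$, and let $s_+(\alpha)\in[s_0,\pi/2]$ denote the maximal latitude of the geodesic through $x_0$ determined by $\alpha$. By the Clairaut relation $|\xi_\theta|=\alpha(s_+)$, the map $\alpha\mapsto s_+(\alpha)$ is smooth off the two directions tangent to the parallel $\{s=s_0\}$, with uniformly non-degenerate derivative off any fixed neighborhood of those directions. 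Since $s_0\neq 0$ forces $\alpha'(s_0)\neq 0$, the parallel through $x_0$ is not a geodesic, so $s_+(\alpha)-s_0\sim (\alpha\mp \tfrac{\pi}{2})^2$ near the tangent directions. By the symmetry $s\mapsto -s$, we may assume $s_0\in(b,\pi/2)$.

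For such $\alpha$, the geodesic oscillates in $s$ with period $\tau(s_+)$ and gains exactly $\Theta_0(s_+)$ in $\theta$ each period. Since the flow is completely integrable, Jacobi fields grow at most linearly, so $\varphi_t(\rho')$ shadows $\varphi_t(\rho)$ within $C(1+|t|)R$ for $\rho'\in B(\rho,R)\subset T^*M$. Arguing as in the proof of Lemma~\ref{l:non-periodSurf}, if $\rho\in B\sub{S^*_{x_0}M}\!\big(\mc{L}^R_{x_0,x_0}(t_0,\Ti(R)),R\big)$ with $\Ti(R)=R^{-\delta}$, there exist integers $k,n$ with $1\leq k\leq CR^{-\delta}$ such that
$$
\big|k\,\Theta_0(s_+(\rho))-2\pi n\big|\leq C(1+\Ti(R))R\leq CR^{1-\delta}.
$$

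Using the hypothesis $|\partial_{s_+}\Theta_0|\geq c>0$ on $s_+\geq b$, for each $k\leq CR^{-\delta}$ the set of admissible $s_+\in[b,\pi/2]$ has measure $\leq C R^{1-\delta}$ (at most $Ck$ choices of $n$, each contributing a set of size $\leq CR^{1-\delta}/k$). Summing over $k$ yields a total measure of at most $CR^{1-2\delta}$ in $s_+$. Translating back to $\mu\sub{S^*_{x_0}M}$ via the coordinate change $\alpha\mapsto s_+$ (bounded Jacobian off the tangent directions, the remaining set contributing at most $CR^{1/2}$ and being absorbed for $\delta<1/3$), we obtain
$$
\mu\sub{S^*_{x_0}M}\!\Big(B\sub{S^*_{x_0}M}\!\big(\mc{L}^R_{x_0,x_0}(t_0,\Ti(R)),R\big)\Big)\leq C\,R^{1-2\delta}.
$$
Therefore
$$
\mu\sub{S^*_{x_0}M}\!\Big(B\sub{S^*_{x_0}M}\!\big(\mc{L}^R_{x_0,x_0}(t_0,\Ti(R)),R\big)\Big)^{\!2}\,\Ti(R)^2\leq C\,R^{2-4\delta}R^{-2\delta}=CR^{2-6\delta},
$$
which stays bounded as $R\to 0^+$ for every $\delta<1/3$, verifying that $x_0$ is $\Ti(R)=R^{-\delta}$ non-looping for any such $\delta$.

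The main obstacle is the quadratic degeneracy of $\alpha\mapsto s_+(\alpha)$ at the two directions tangent to the parallel through $x_0$: there the $s_+$ coordinate change has an $R^{1/2}$ exceptional set, and the period $\tau(s_+)$ can become large as $s_+\to s_0^+$. Both issues are controlled because $s_0\neq 0$ makes the parallel a non-geodesic, so orbits with $s_+$ near $s_0$ immediately enter the region where $\partial_{s_+}\Theta_0\neq 0$, and the exceptional $R^{1/2}$ set is dominated by $R^{1-2\delta}$ as soon as $\delta<1/4$ (a condition weaker than $\delta<1/3$).
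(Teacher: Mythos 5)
The paper's own proof is a one-line citation to \cite[Lemma 5.1]{CG19a}, so only the internal correctness of your argument can be assessed. Your reduction to the near-rationality condition $|k\Theta_0(s_+(\rho))-2\pi n|\lesssim R^{1-\delta}$ with $k\leq CR^{-\delta}$, and the resulting $s_+$-measure bound $m\lesssim R^{1-2\delta}$, mirror Lemma~\ref{l:non-periodSurf} and are fine. The gap is in the translation to $\mu\sub{S^*_{x_0}M}$. You claim the $\alpha$-measure is $\lesssim R^{1-2\delta}$, invoking ``bounded Jacobian off the tangent directions'' plus an exceptional $R^{1/2}$-set that is ``absorbed''. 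Neither part holds: $|d\alpha/ds_+|\sim(s_+-s_0)^{-1/2}$ is \emph{unbounded} off any shrinking neighborhood $\{|\alpha\mp\tfrac{\pi}{2}|\geq\eta\}$ of the tangent directions (it is only $O(\eta^{-1})$ there), and the absorption inequality is reversed --- for $\delta<1/4$ one has $1/2<1-2\delta$, hence $R^{1/2}>R^{1-2\delta}$ as $R\to 0^+$, so the exceptional term is \emph{not} dominated by $R^{1-2\delta}$; and ``$\delta<1/4$'' is stronger, not weaker, than ``$\delta<1/3$''. The elementary correct bound --- optimize the split at $\eta\sim\sqrt{m}$, using $s_+-s_0\sim(\alpha\mp\tfrac{\pi}{2})^2$ --- gives $\alpha$-measure $\lesssim\sqrt{m}\lesssim R^{1/2-\delta}$ and hence $\mu^2\Ti^2\lesssim R^{1-4\delta}$, which closes the argument only for $\delta<1/4$. (A finer number-theoretic step, noting that for $\delta<1/3$ at most one reduced fraction $n/k$ with $k\leq CR^{-\delta}$ can be $R^{1-\delta}/k$-close to $\Theta_0(s_0)$, so the bad intervals near $s_0$ nest, could recover $\delta<1/3$; but that is not what your estimate does, and as written the $R^{1-2\delta}$ bound and the $\delta<1/3$ threshold do not follow.)

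A second, independent omission: since $S^*_{x_0}M$ contains both signs of $\dot s$, the geodesic may return to the fiber after an \emph{odd} number of turning points. These half-period returns give the separate congruence $k\Theta_0(s_+)+\Theta_{\mathrm{half}}^{(s_0)}(s_+)\approx 2\pi n$, where $\Theta_{\mathrm{half}}^{(s_0)}$ is the $\theta$-advance over one half-oscillation measured from the parallel $\{s=s_0\}$. The derivative of $\Theta_{\mathrm{half}}^{(s_0)}$ blows up like $(s_+-s_0)^{-1/2}$, which only improves the measure bound, but your reduction to $k\Theta_0\approx 2\pi n$ alone does not cover this case. Both issues are reparable and the lemma's conclusion --- existence of \emph{some} $\delta>0$ --- is true, but the proof as written does not close.
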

\begin{proof}
The proof is identical to~\cite[Lemma 5.1]{CG19a}.
\end{proof}

\subsubsection{Perturbed spheres}
\label{s:perturbEx}

Next, we construct examples which have large (positive measure) periodic sets as well as large non-periodic sets. In particular, we find examples where the assumptions of Lemma~\ref{l:non-periodSurf} hold and such that there is $c>0$ {with the property that} the flow is periodic on $-c<s<c$. If $s_0>0$, we will call $(s_0,\theta_0)$ \emph{aperiodic} if 
$$
\partial_{s_+}\Theta_0(s_+)\neq 0\text{ on }\{s_+\geq s_0\}.
$$
In the case $s_0<0$, we require the same condition on $\{\alpha(s_+)\leq \alpha(s)\}$. We define the \emph{aperiodic set} to be the set of aperiodic points and Theorem~\ref{t:laplaceWeyl} holds for any $U$ inside this set.

In order to do this, we make a small perturbation of the round metric ($\alpha(s)=\cos s$). 
First, we compute 
\begin{align*}
\partial_{s_+}\theta_+&=2\alpha'(s_+)\int_{a}^{s_+}\big[\alpha^2(w)-2\alpha^2(s_+)\big]\frac{2(\alpha'(w))^2+\alpha(w)\alpha''(w)}{\sqrt{\alpha^2(w)-\alpha^2(s_+)}\alpha^3(w)(\alpha'(w))^2}dw\\
&\qquad -2\alpha'(s_+)\frac{\alpha^2(b)-2\alpha^2(s_+)}{\sqrt{\alpha^2(b)-\alpha^2(s_+)}\alpha^2(b)\alpha'(b)}+2\alpha'(s_+)\int_0^b\frac{\alpha(w)}{(\alpha^2(w)-\alpha^2(s_+))^{3/2}}dw.
\end{align*}
Let $0<a<b<\pi/2$ and $\alpha_\e=\alpha_0+\e (f_++f_-)$, with $\supp f_+\subset (a,b)$ and $\supp f_- \subset (-\pi/2,0)$. We have for $s_+\geq b$, 
$$
\partial_\e \partial_{s_+}\theta_+\Big|_{\e=0}=-2\alpha_0'(s_+)\int_0^b f_+(w)\frac{2\alpha_0^2(w)+\alpha_0^2(s_+)}{(\alpha_0^2(w)-\alpha_0^2(s_+))^{5/2}}dw.
$$

Arguing identically for $\theta_-$, if $\alpha_\e=\alpha_0+\e(f_++f_-)$ with $\supp f_-\subset (s_-(b),s_-(a))$ and $\supp f_+\subset (0,\pi/2)$, then 
$$
\partial_{\e}\partial_{s_-}\theta_-\Big|_{\e=0}=-2\alpha_0'(s_-)\int_{-b}^0f_-(w)\frac{2\alpha_0^2(w)+\alpha_0^2(s_-)}{(\alpha_0^2(w)-\alpha_0^2(s_-))^{5/2}}dw.
$$

To construct an example where the assumptions of Lemma~\ref{l:non-periodSurf} hold, let $\alpha_0(s)=\cos(s)$ so that $\alpha_0$ induces the standard round metric. Let $0<a<b<\frac{\pi}{2}$, $f_+$ not identically 0 and $f_+\geq 0$ with $\supp f_+\subset (a,b)$, and let $f_-\geq 0$ with $\supp f_-\subset (s_-(b),s_-(a))$. Then, we have for $s_+\geq b$, and $\Theta_{0,\e}$ corresponding to the perturbed metric with $\alpha_\e$,
$$
\partial_{\e}\partial_{s_+}\Big(\Theta_{0,\e}(s_+)\Big)>0,\qquad s_+\geq b.
$$
In particular, we may choose $\e_0>0$ small enough such that for $0<\e<\e_0$ and $\alpha=\alpha_{\e}$, we have $-s\alpha_\e'(s)>0$ when $s\neq 0,$
and
$$
\partial_{s_+}\Big(\Theta_{0,\e}(s_+)\Big)>0,\qquad s_+\geq b.$$

Moreover, since $\alpha_0$ is the round metric on the sphere, the flow is periodic for trajectories not leaving $(s_-(a),a)$. (See Figure~\ref{f:perturb})

\subsubsection{The spherical pendulum}
\label{s:spherePend}
{We now recall the spherical pendulum on $S^2$ whose Hamiltonian is given in the $(s,\theta)$ coordinates by
$$
q(s,\theta,\xi_s,\xi_\theta)=\xi_s^2+\cos^{-2}(s)\xi^2_\theta+2\sin s -E.
$$
 This Hamiltonian describes the movement of a pendulum of mass $1$ moving without friction on the surface of a sphere of radius $1$. When $E>2$, up to reparametrization of the integral curves, the dynamics for the spherical pendulum are equivalent to those for the Hamiltonian $p=|\xi|^2_{\iota^*g}$ and $g$ is given by 
$$
g=(E-2\sin(s))ds^2+(E-2\sin(s))\cos^2(s)d\theta^2.
$$
Making a further change of variables in the $s$ variable, we can put the metric in the form~\eqref{e:revolveMetric} and, moreover, by~\cite{Horozov} for $E\geq \frac{14}{\sqrt{17}}$, $|\partial_{s_+}\Theta_0|> c>0$ for $s_+\in(0,\pi/2]$. Note that the failure of this condition at the torus $\mathbb{T}_{0}$ is due to the fact that this torus is singular, consisting of the two curves $\{s=0,\theta\in \mathbb{R}/2\pi \mathbb{Z}, \xi_r=0,|\xi_\theta|=\alpha(0)\}$. In fact, it is easy to see that $|\Theta_0(s_+)|>cs_+^{1/2}$ for $s_+$ near $0$. This, together with Lemmas~\ref{l:non-periodSurf} and~\cite[Lemma 5.1]{CG19a} are enough to obtain the results in Table~\ref{ta:ex} and that Theorem~\ref{t:laplaceWeyl} applies to the spherical pendulum with $U=M$.}

 \subsection{Submanifolds of manfiolds with Anosov geodesic flow}
 
 \label{s:anosov}
 We next recall some examples when $(M,g)$ has Anosov geodesic flow. The geodesic flow is Anosov if there is ${\bf{B}}>0$ such that for all $\rho\in T^*M$ there is a splitting 
 $$
 T_\rho T^*M=E_+(\rho)\oplus E_-(\rho)\oplus \mathbb{R}H_p(\rho)
 $$
 such that 
\begin{equation*}
 |d\varphi_t({\bf{v}})|\leq {\bf{B}} e^{\mp \frac{t}{{\bf{B}}}}|{\bf{v}}|,\qquad{ {\bf{v}}\in E_\pm(\rho),\quad t\to \pm\infty,}
\end{equation*}
where $|\cdot|$ is the norm induced by a Riemannian metric on $T^*M$. Here, $E_+(\rho)$ is called the stable space and $E_-(\rho)$, the unstable space.

 We also note (see~\cite{Eberlein73,Kling}) that a manifold with non-positive sectional curvature has no conjugate points and that 
 $$\text{negative sectional curvature}\quad \Rightarrow\quad\text{Anosov geodesic flow}\quad\Rightarrow\quad\text{no conjugate points}.
 $$
 Note that these implications are \emph{not} equivalences. Indeed, there exist manifolds with Anosov geodesic flow containing sets with strictly positive sectional curvature as well as manifolds with no conjugate points which do not have Anosov geodesici flow. 
 
 One of the main goals of~\cite{CG19dyn} was to prove that various submanifolds of manifolds with the Anosov or non-focal property are non-recurrent via coverings. We will review only some of these results here, referring the reader to~\cite{CG19dyn} for further examples. In what follows we present several dynamical lemmas which yield the statements from Table~\ref{ta:ex}.

Define for a submanifold $H\subset M$, and for every $\rho \in \SNH$
 $$
 m_{\pm}(H,\rho):=\dim ( E_\pm(\rho)\cap T_\rho \SNH).
 $$
 Note that in two dimensions $m_{\pm}(H,\rho)\neq 0$ is equivalent to $H$ being tangent to, and having the same curvature as, a stable/unstable horosphere with conormal $\rho$. 
In fact, in any dimension, a generic $H \subset M$ satisfies $m_{\pm}(H,\rho)=0$ for all $\rho \in \SNH$.

 \begin{lemma} Let $H\subset M$ be a smooth submanifold .
 Suppose $(M,g)$ is a manifold with Anosov geodesic flow and for all $\rho \in \SNH$ 
 $$
 m_{+}(H,\rho) +m_-(H,\rho)<n-1\qquad\text{or}\qquad m_-(H,\rho)m_+(H,\rho)=0.
 $$
 %or that $(M,g)$ has no focal points and for all $\rho \in \SNH$
 %$$
 % m_{+}(H,\rho) +m_-(H,\rho)<n-1.
 % $$
 Then there are $c,\delta,\tau>0$ such that {for all $0<a<b$,} $H$ is $c\log h^{-1}$ non-recurrent via $(\tau,R(h))$ coverings {for the symbol $p(x,\xi)=|\xi|_{g(x)}$ in the window $[a,b]$}. 
 \end{lemma}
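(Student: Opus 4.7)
The plan is to reduce to verifying the non-recurrence condition of Definition~\ref{d:non rec} at some scale $R_0>0$ with resolution function $\Ti(R)=c\log R^{-1}$, after which Proposition~\ref{p:non rec implies cov} directly upgrades this to non-recurrence via $\tau_0$-coverings, preserving a logarithmic rate (up to a constant) by the last assertion of that proposition. The geometric content is that under the hypothesis $m_++m_-<n-1$ or $m_+m_-=0$, the submanifold $\SNH$ is transverse at every point to either the weak stable or the weak unstable foliation of the Anosov flow, so the measure of short-scale returns of $\SNH$ to $\SNH$ decays exponentially at rate governed by $\mathbf{B}^{-1}$.

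For each $\rho_0\in\SNH$, the core step is to establish, for an appropriate choice of sign, the measure estimate
\[
\mu\sub{\SNH}\!\Big(B\sub{\SNH}\!\big(\mc{R}^{rR}\sub{A,\pm}(t_0,\Ti(r)),\,rR\big)\Big) \leq C_0\,r^{\kappa}\,\mu\sub{\SNH}\!\big(B\sub{\SNH}(A,R)\big),
\]
valid for all $A\subset B\sub{\SNH}(\rho_0,R_0)$, $0<R<R_0$, and $r$ small, with $\kappa>0$ depending on $\mathbf{B}$ and on the constant $c$ defining $\Ti$. In the case $m_+(H,\rho_0)m_-(H,\rho_0)=0$, say $m_+=0$, one chooses $\pm=+$; since $T_{\rho_0}\SNH\cap E_+(\rho_0)=\{0\}$, introducing Lyapunov coordinates adapted to the continuous Anosov splitting shows that the preimage in $\SNH$ of a forward orbit tube of radius $rR$ and length $\Ti(r)$ is squeezed exponentially in the unstable direction, yielding the estimate after a direct calculation. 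In the complementary case $m_++m_-<n-1$, both signs must be exploited simultaneously along a direction in $T_{\rho_0}\SNH$ transverse to $E_+\oplus E_-$; this is the main technical obstacle, and is precisely the content of the arguments carried out in~\cite{CG19dyn}, whose output I would invoke. Uniformity in $E\in[a,b]$ is immediate since the flow of $p=|\xi|_g$ on $\{p=E\}$ is a reparametrization of the unit-speed geodesic flow, preserving both the Anosov splitting and its exponents.

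Once $\kappa$ is fixed and $c$ is chosen small enough that $C_0 r^{\kappa}<\e$ holds whenever $\Ti(r)>\ti(\e)$ for any given resolution function $\ti$ with $\ti(\e)\to\infty$ sufficiently slowly as $\e\to 0$, Definition~\ref{d:non rec} is satisfied at scale $R_0$. A compactness argument on $\SNH$ produces a uniform $R_0$ working for every $\rho_0$, and Proposition~\ref{p:non rec implies cov} then yields non-recurrence via $\tau$-coverings with $\Ti(R)=c'\log R^{-1}$ for some $c'>0$ and some $\tau>0$, completing the proof.
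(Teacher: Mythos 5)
The paper's proof of this lemma is a one-line citation to the companion paper: the covering statement is exactly \cite[Theorem 6]{CG19dyn}, proved in \cite[Section 5.1]{CG19dyn}, which establishes non-recurrence via coverings (Definition~\ref{d:non rec cov}) directly. Your route — verify the non-covering Definition~\ref{d:non rec} at scale $R_0$ and then invoke Proposition~\ref{p:non rec implies cov} — is genuinely different, and it has two real problems.

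First, the geometric claim is off. The condition $m_++m_-<n-1$ does not say that $\SNH$ is transverse to the weak stable or weak unstable foliation of the flow; transversality to, say, the weak stable foliation (whose tangent is $E_+\oplus\R H_p$) forces $m_+=0$, while the hypothesis $m_++m_-<n-1$ permits both $m_+>0$ and $m_->0$. The actual content is weaker: the sum $(E_+\cap T\SNH)\oplus(E_-\cap T\SNH)$ is a proper subspace of $T\SNH$, leaving a direction on which \emph{neither} forward nor backward propagation alone is expanding. That is precisely why, when $m_+m_-\neq 0$, you write that ``both signs must be exploited simultaneously.'' But this is incompatible with Definition~\ref{d:non rec} as stated: for each $\rho\in\SE^H$ you must commit to a \emph{single} sign that works for every $A\subset B(\rho,R_0)$, so an argument that intrinsically mixes $+$ and $-$ near a fixed $\rho$ cannot verify Definition~\ref{d:non rec} for that $\rho$. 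The covering framework (Definition~\ref{d:non rec cov}) was introduced exactly to absorb this: different subfamilies $\mc{G}_{E,\ell}(r)$ of tubes can be declared non-self-looping for different signs and time windows, and the paper even records that Definitions~\ref{d:non rec gral} and~\ref{d:non rec cov} are not known to be equivalent. You cannot take the stronger, less flexible definition for granted.

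Second, the fallback on \cite{CG19dyn} ``whose output I would invoke'' is doing all the work and is misattributed: what \cite[Theorem 6]{CG19dyn} provides is the covering conclusion itself. If you are going to cite it for the hard case, the detour through Definition~\ref{d:non rec} and Proposition~\ref{p:non rec implies cov} is not only circular but superfluous — you may as well cite the covering statement directly, which is what the paper does. A salvageable version of your route does exist: the remark at the end of \S\ref{s:meas-then-dyn} notes that the proofs in~\cite{CG19dyn} in fact establish a modified $(\e_0,\ti_0,\digamma,f)$-controlled condition with $f(T)=CT$, which is equivalent to Definition~\ref{d:non rec gral}. But proving that equivalence, and verifying the controlled condition in the Anosov case (which is not a matter of transversality to foliations), is substantially more delicate than what you have sketched, and you would still owe the reader the argument for what happens when a single choice of $\pm$ per point does not suffice.
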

 \begin{proof}
 The proof of this result is that of~\cite[Theorem 6]{CG19dyn}, see~\cite[Section 5.1]{CG19dyn}.
 \end{proof}

 \begin{lemma}
 Suppose $(M,g)$ is a manifold with Anosov geodesic flow and $H_1,H_2\subset M$ are a smooth submanifolds such that for $i=1,2,$
 $
 \sup_{\rho \in \SNH_i}m_{\pm}(H_i,\rho)=0.
 $
 Then there are $c,t_0>0$ such that for all $0<a<b$, $(H_1,H_2)$ is a $(t_0,c\log R)$ non-looping pair for $p(x,\xi)=|\xi|_{g(x)}$ in the window $[a,b]$.
 \end{lemma}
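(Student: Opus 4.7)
The plan is to extend the Anosov transversal-intersection analysis underlying the immediately preceding non-recurrence lemma (and \cite[Section 5.1]{CG19dyn}) from the single-submanifold setting to the two-submanifold non-looping setting.

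First I would upgrade the pointwise condition $m_\pm(H_i, \rho) = 0$ to a uniform transversality statement: since $\SNH_i$ is compact and the Anosov splitting $E_\pm$ is continuous on $S^*M$, there is $c_0 > 0$ such that the angle between $T_\rho \SNH_i$ and $E_\pm(\rho)$ is at least $c_0$ for every $\rho \in \SNH_i$ and $i = 1, 2$. By homogeneity of the geodesic flow in $|\xi|_g$, the analogous uniform transversality holds for $\SE^{H_i}$ with constants independent of $E \in [a, b]$.

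Next I would invoke the transversal-intersection argument underlying \cite[Theorem 6]{CG19dyn}, applied with source $\SE^{H_1}$ and target $\SE^{H_2}$. That argument combines the Anosov Lipschitz bound $\varphi_t(B(\rho, R)) \subset B(\varphi_t(\rho), C R e^{|t|/\mathbf{B}})$ with a count of transversal near-intersections of $\varphi_t(\SE^{H_1})$ with $R$-tubes around $\SE^{H_2}$; each near-intersection contributes a ball to the loop set, and the number of such balls grows controllably with $t$. The only inputs are (i) Anosov hyperbolicity of $\varphi_t$, (ii) uniform transversality of the \emph{source} to $E_\pm$, and (iii) uniform transversality of the \emph{target} to $E_\pm$. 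The coincidence $H_1 = H_2$ is never used, exactly as observed earlier in this appendix that \cite[Proposition 2.2]{CG19dyn} extends to $H_1 \neq H_2$. Running the argument with $T(R) = c \log R^{-1}$ for $c > 0$ sufficiently small, one obtains constants $t_0, C > 0$, $\alpha > 0$, and $R_0 > 0$ such that for every $R \in (0, R_0)$ and $E \in [a, b]$,
\[
\mu_{\SE^{H_1}}\!\Big( B_{\SE^{H_1}}\!\big( \mc{L}^{R,E}_{H_1,H_2}(t_0, T(R)), R \big) \Big) \leq C R^{\alpha},
\]
with the symmetric estimate for the roles of $H_1$ and $H_2$ swapped. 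Multiplying the two estimates and $T(R)^2$, the product in Definition~\ref{d:non loop gral} is bounded by $C^2 R^{2\alpha} (\log R^{-1})^2 \to 0$ as $R \to 0^+$, yielding the $(t_0, c \log R^{-1})$ non-looping pair condition with any desired $\Cnl > 0$.

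The main obstacle is verifying that the intersection-counting argument of \cite[Section 5.1]{CG19dyn} does indeed transfer verbatim from $H$ to a pair $(H_1, H_2)$, which requires checking that every tangent-space computation in that proof factors symmetrically through the separate source and target transversality hypotheses. Once this is confirmed, the remaining verification --- uniformity over the compact energy window $[a,b]$ --- follows from continuity of the Anosov splitting and of the transversality conditions, together with homogeneity of the geodesic flow.
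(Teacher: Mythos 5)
Your proposal takes a genuinely different route from the paper. The paper's own proof has a two-step modular structure: first it invokes \cite[Proposition 2.2, Lemma 5.1]{CG19dyn} (adapting the tube treatment on page 38 of that reference) to show that $(H_1,H_2)$ is a $(t_0,c\log h^{-1})$ non-looping pair in the sense of Definition~\ref{d:non loop cov} --- that is, \emph{via $\tau$-coverings}, where the output is a splitting of tube indices and a count of the ``bad'' tubes --- and then applies Lemma~\ref{l:4minutes} to translate that covering-based conclusion into the measure-theoretic non-looping condition of Definition~\ref{d:non loop gral}. You instead try to establish a direct polynomial measure bound $\mu\sub{\SE^{H_1}}(B(\mc{L}^{R,E}_{H_1,H_2}, R)) \le CR^{\alpha}$ and feed it into Definition~\ref{d:non loop gral} by hand, bypassing both the covering formulation and Lemma~\ref{l:4minutes}.

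The trade-off is worth noting. The paper's route is more conservative: \cite{CG19dyn} is formulated in the covering language (as seen also in the non-recurrence lemma just above, which concludes ``non-recurrent via coverings''), so composing the covering result with the pre-established Lemma~\ref{l:4minutes} uses existing machinery without strengthening any intermediate claim. Your route is shorter and, if it works, gives more --- the product $\mu_1\mu_2\Ti(R)^2$ tends to $0$, so any $\Cnl>0$ suffices --- but the polynomial measure bound $CR^{\alpha}$ is \emph{not} what the cited results of \cite{CG19dyn} state; they produce tube counts, not measure estimates, and the conversion from a tube count to a measure bound on the fattened loop set is precisely the content of Lemma~\ref{l:4minutes} (or the observation that $B\sub{\SE^{H_i}}(\mc{L},R/2)$ is covered by the bad tubes $\T_\ell(3R)$). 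You have in effect inlined that lemma without saying so. If you want the proof to stand as written, you should either cite Lemma~\ref{l:4minutes} explicitly to get the measure bound from the tube count, or genuinely re-derive the Hölder-continuity-based estimate that yields $R^{\alpha}$ directly --- merely gesturing at ``the transversal-intersection argument underlying \cite[Theorem 6]{CG19dyn}'' does not establish a measure bound that the referenced theorem does not itself assert.
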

\begin{proof}
By~\cite[Proposition 2.2, Lemma 5.1]{CG19dyn} (in particular, adapting the arguments in~\cite[``Treatment of $D\in \{D_i\}_{i\in \mc{I}_K}$", page 38]{CG19dyn}) there exist $\tau>0$, $\delta>0$, $\Cnl>0$, $C>0$, such that the pair $(H_1,H_2)$ is a $(t_0,T(h))$ non-looping via $(\tau,h^\delta)$ coverings with constant $\Cnl$ in the window $[a,b]$ for any $0<a<b$, where
$
T(h)=c\log h^{-1}
$
for some $c>0$ depending on $(M,g,\alpha)$. Combining this result with Lemma~\ref{l:4minutes} yields the claim.
\end{proof}

Recall that a stable/unstable horosphere is defined by the property that $T_\rho\SNH=E_\pm(\rho)$ for all $\rho\in \SNH$.

 \begin{lemma}
 \label{l:horrorsphere}
 Suppose $(M,g)$ is a manifold with Anosov geodesic flow, $H_{\pm}\subset M$ is a {compact} subset of a stable/unstable horosphere and $H_2\subset M$ is a submanifold with $m_{\pm}(H_2,\rho)<n-1$ for all $\rho \in \SNH_2$. Then, there are $c,t_0>0$ such that for all $0<a<b$, $(H_\pm,H_2)$ is a $(t_0,c\log R)$ non-looping pair for $p(x,\xi)=|\xi|_{g(x)}$ in the window $[a,b]$.
 \end{lemma}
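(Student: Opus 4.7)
The plan is to verify the measure condition in Definition~\ref{d:non looping through} and then pass to the covering version via Proposition~\ref{p:non loop implies cov} together with Lemma~\ref{l:4minutes}. By symmetry between $H_+$ and $H_-$ (swapping forward and backward flow), I will treat the stable case $H_+$ and bound both $\mu_{SN^*\!H_+}\!\bigl(B(\mathcal{L}_{H_+,H_2}^{R,1}(t_0,T),R)\bigr)$ and $\mu_{SN^*\!H_2}\!\bigl(B(\mathcal{L}_{H_2,H_+}^{R,1}(t_0,T),R)\bigr)$ separately, then multiply and check that the product times $T^2$ is uniformly controlled for $T(R)=c\log R^{-1}$.

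The key geometric input is that for a stable horosphere, $T_\rho(SN^*\!H_+) = E_+(\rho)$ at every $\rho\in SN^*\!H_+$. Combined with the hypothesis $m_\pm(H_2,\rho)<n-1$—which says $T_\rho(SN^*\!H_2)$ is never entirely contained in $E_\pm(\rho)$—this yields the transversality statement that $T_\rho(SN^*\!H_2)$ has a nontrivial component in the unstable subspace $E_-(\rho)$. Under $d\varphi_t$ that component expands by $e^{t/\mathbf{B}}$, so for every $\rho\in SN^*\!H_2$ the pushforward $d\varphi_t(T_\rho(SN^*\!H_2))$ develops a transverse direction to $E_+(\varphi_t(\rho))=T_{\varphi_t(\rho)}(SN^*\!H_+)$ inside the energy shell $\{p=1\}$, with a transversality angle bounded below by a uniform constant once $|t|\geq t_0$ is large enough for the exponential growth to dominate the possibly large initial $E_+$ component of $T(SN^*\!H_2)$.

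Given this transversality, the main dynamical estimate will read: for $|t|\in[t_0,c\log R^{-1}]$ and $R$ small,
$$
\mu_{SN^*\!H_+}\!\bigl(B(\varphi_{-t}(SN^*\!H_2),CR)\cap SN^*\!H_+\bigr) \leq C\,R,
$$
because the intersection is of codimension at least one inside the $(n-1)$-dimensional submanifold $SN^*\!H_+$. Summing over dyadic scales $|t|\in[2^k t_0,2^{k+1}t_0]$ up to $|t|\leq c\log R^{-1}$, as in the proof of~\cite[Theorem 6]{CG19dyn} and the horosphere arguments of~\cite[\S 5.1]{CG19dyn}, gives
$$
\mu_{SN^*\!H_+}\!\bigl(B(\mathcal{L}_{H_+,H_2}^{R,1}(t_0,c\log R^{-1}),R)\bigr) \leq C'\bigl/\log R^{-1}.
$$
An analogous bound for $\mu_{SN^*\!H_2}(B(\mathcal{L}_{H_2,H_+}^{R,1}(t_0,c\log R^{-1}),R))$ follows from the same transversality, now applied with the roles exchanged: a point of $SN^*\!H_2$ whose orbit meets $SN^*\!H_+$ must lie on the preimage under $\varphi_t$ of the $(n-1)$-dimensional strong stable leaf $SN^*\!H_+$, which cuts $SN^*\!H_2$ in codimension at least one by the $m_\pm<n-1$ hypothesis. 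Multiplying both bounds by $T^2=(c\log R^{-1})^2$ produces a uniform $\Cnl$, proving that $(H_+,H_2)$ is a $(t_0,c\log R^{-1})$ non-looping pair.

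The principal obstacle is quantitative control of the transversality constant over long times: while the $m_\pm(H_2,\rho)<n-1$ assumption guarantees an $E_-$ component of $T(SN^*\!H_2)$ at every point, the magnitude of that component can be arbitrarily small, and any initial tangential component in $E_+$ is likewise only contracted rather than eliminated. Choosing $t_0$ sufficiently large compared to $\mathbf{B}$ and the reciprocal of the minimal $E_-$ component of $T(SN^*\!H_2)$ (which is positive by compactness of $SN^*\!H_2$ and the pointwise hypothesis), and taking $c<1/\mathbf{B}$, ensures the transversality is uniform in $\rho$ and $t\in[t_0,c\log R^{-1}]$. This is exactly the role of the analogous thresholds in~\cite[Lemma 5.1]{CG19dyn}, whose partition-into-covers scheme I expect to import with only cosmetic changes.
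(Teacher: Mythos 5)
Your proposal captures one correct ingredient — that $T_\rho(SN^*H_\pm)=E_\pm(\rho)$ is the whole stable/unstable subspace, so the curvature hypothesis on $H_2$ must produce a transverse direction — but the geometric deduction you make from $m_\pm(H_2,\rho)<n-1$ is wrong, and the paper's proof runs through a different (and tighter) reduction.

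First, the transversality claim. You write that $m_\pm(H_2,\rho)<n-1$ ``yields the transversality statement that $T_\rho(SN^*H_2)$ has a nontrivial component in the unstable subspace $E_-(\rho)$.'' This does not follow. The hypothesis $m_+(H_2,q)=\dim\bigl(E_+(q)\cap T_q SN^*H_2\bigr)<n-1$ only says $E_+(q)\not\subset T_q SN^*H_2$; it is perfectly consistent with $T_q SN^*H_2\subset E_+(q)\oplus\mathbb{R}H_p(q)$ (take $T_q SN^*H_2$ to be a graph over $E_+$ into $\mathbb{R}H_p$ whose kernel has dimension $n-2$, so $m_+=n-2<n-1$ yet there is no $E_-$ component at all). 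Your entire expansion argument rests on $d\varphi_t$ stretching this alleged $E_-$ piece, so the chain breaks here. What the paper actually extracts from $m_+<n-1$ is the dual statement: there exists $\mathbf{u}\in E_+(q)$ with $|dF(q)\mathbf{u}|\geq c>0$, where $F$ is a defining function for $SN^*H_2$. That is a vector \emph{in $E_+$} transverse to $SN^*H_2$ — no $E_-$ component is produced or needed — and it is pulled back by $(d\varphi_t)^{-1}$ to a vector $\mathbf{w}\in E_+(\rho)=T_\rho SN^*H_+$ with controlled norm, which is the quantitative transversality the argument requires.

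Second, the reduction. The paper does not estimate the two measures in Definition~\ref{d:non looping through} by hand and multiply. It invokes Lemma~\ref{l:4minutes} and then \cite[Proposition~2.2]{CG19dyn}, which converts non-looping via coverings into a single pointwise condition: wherever $(t,\rho)\in[t_0,T_0]\times SN^*H_+$ satisfies $d(\varphi_t(\rho),SN^*H_2)\leq e^{-\alpha|t|}/\alpha$, the map $\psi(t,\rho)=F\circ\varphi_t(\rho)$ must admit, on a 2-plane $\mathbb{R}\partial_t\oplus\mathbb{R}\mathbf{w}$, a left inverse of norm $\leq\alpha e^{\alpha|t|}$. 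Verifying this single criterion handles both sides of the loop; you never have to bound $\mu_{SN^*H_+}$ and $\mu_{SN^*H_2}$ separately. Your dyadic-scale estimate $\mu_{SN^*H_+}\bigl(B(\varphi_{-t}(SN^*H_2),CR)\cap SN^*H_+\bigr)\leq CR$ is also not justified: because $\mathcal{L}^R_{H_+,H_2}$ is defined through $\varphi_t(B(\rho,R))$, the relevant set is the intersection of $SN^*H_+$ with a tube around $\varphi_{-t}(SN^*H_2)$ whose width in the $E_+$ directions is $\sim Re^{t/\mathbf{B}}$, not $R$; the clean $O(R)$ bound per time scale does not hold without incorporating this expansion (which is exactly what the $e^{\alpha|t|}$ in the left-inverse bound is for).

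Third, a necessary input you omit. The $m_+$ hypothesis gives transversality at points $q\in SN^*H_2$, but the criterion must hold at $\tilde q=\varphi_t(\rho)$, which is only $e^{-\alpha t}$-close to $SN^*H_2$. Transferring the transverse vector $\mathbf{u}\in E_+(q)$ to a vector $\tilde{\mathbf{u}}\in E_+(\tilde q)$ with comparable $|dF\tilde{\mathbf{u}}|$ requires the $\nu$-H\"older continuity of the stable bundle $\rho\mapsto E_+(\rho)$ \cite[Theorem~19.1.6]{KatokHasselblatt}; your compactness argument gives uniformity only on $SN^*H_2$ itself. Without the H\"older estimate one cannot choose $\alpha$ so that the error $C\,d(q,\tilde q)^\nu$ stays below the transversality constant $c$ uniformly in $t$.
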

For simplicity, we prove only Lemma~\ref{l:horrorsphere} but point out that the arguments similar to those in~\cite[Lemma 5.1]{CG19dyn} can be used to obtain much more general statements.
 \begin{proof}
 We consider the case $H_+$. The other case following identically. By Lemma~\ref{l:4minutes} it suffices to show $(H_{+},H_2)$ is a non-looping pair via coverings. Thus, by~\cite[Proposition 2.2]{CG19dyn} and Lemma~\ref{l:4minutes} it suffices to show there exists $\alpha>0$ such that {for all} $(t, \rho) \in [t_0, T_0]\times \SNH_+$ such that $d(\varphi_t(\rho),\SNH_2)\leq \, e^{-{\alpha}|t|}/\alpha$, there exists ${\bf w} \in T_{\rho}\SNH_+$ for which the restriction
$$ 
d\psi_{(t,\rho)}: \R \partial_t \times \R {\bf w} \to T_{\psi(t,\rho)}\re^{n+1} 
$$ 
has left inverse $L_{(t, \rho)}$ with $\|L_{(t, \rho)}\|\leq \alpha e^{ \alpha |t|}$. Here, $\psi:\re \times \SNH_+\to \re^{n+1}$ is given by
$
\psi(t,\rho)=F\circ\varphi_t(\rho)
$
and $F:T^*M \to \R^{n+1}$ is a defining function for $\SNH_2=F^{-1}(0)$.

Note that $T_\rho\SNH_+=E_+(\rho)$ and there is ${\bf{D}}>0$ such $d\varphi_t:E_+(\rho)\to E_+(\varphi_t(\rho))$ is invertible with inverse satisfying
$$
\|(d\varphi_t)^{-1}\|\leq e^{-{\bf{D}}|t|}/{\bf{D}}.
$$

Since $H_2$ is compact, and $m_+(H_2,q)<n-1$ for all $q\in \SNH_2$, there is $c>0$ such that for all $q\in \SNH_2$ there is ${\bf{u}}\in E_+(q)$ with $|{\bf{u}}|=1$ such that $|dF{\bf{u}}|\geq c|{\bf{u}}|.$

Since $\rho\mapsto E_+(\rho)$ is $\nu$-H\"older continuous for some $\nu>0$ \cite[Theorem 19.1.6]{KatokHasselblatt}, there is $C_M>0$ and ${\bf{\tilde{u}}}\in E_+(\tilde{q})$ with 
$$
d({\bf{\tilde{u}}},{\bf{u}})< C_M d(q,\tilde{q})^\nu ,\qquad |{\bf{\tilde{u}}}|=1.
$$
Therefore, 
$$
|dF{\bf{\tilde{u}}}|\geq (c-C d(q,\tilde{q})^\nu)|{\bf{\tilde{u}}}|.
$$

Let $\tilde{q}=\varphi_t(\rho)$, so that $d(q,\tilde{q})<e^{-\alpha t}/\alpha$ and set
${\bf w}=(d\varphi_t)^{-1}({\bf{\tilde{u}}})$. The claim follows provided $\alpha>1$ is large enough (depending on ${\bf{D}},\nu, c,C$).
 \end{proof}
 
 \begin{lemma}
 Suppose $(M,g)$ has Anosov geodesic flow and non-positive curvature. Then if $H\subset M$ is a totally geodesic submanifold, $m_\pm(H,\rho)\equiv 0$.
 \end{lemma}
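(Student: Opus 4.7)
Fix $\rho \in SN^*H$ and, via the metric, identify $\rho$ with $(x,v)\in SNH$ where $x\in H$, $v\perp T_xH$, $|v|=1$; let $\gamma(t)=\exp_x(tv)$. The plan is to identify $T_\rho SN^*H$ with a concrete space of Jacobi fields along $\gamma$, then invoke the Riccati formalism to conclude the intersection with $E_\pm(\rho)$ is trivial.

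First I would show that under the totally geodesic assumption, $T_\rho SN^*H$ corresponds exactly to Jacobi fields $J$ along $\gamma$ with $J(0)\in T_xH$ and $J'(0)\in T_xH^\perp\cap v^\perp$. To see this, take any curve $s\mapsto (x(s),v(s))\in SNH$ with $(x(0),v(0))=(x,v)$; the associated Jacobi field is $J(t)=\partial_s\exp_{x(s)}(tv(s))|_{s=0}$, giving $J(0)=x'(0)\in T_xH$ and $J'(0)=\nabla^M_{x'(0)}v$. The key input is the Weingarten equation: since $H$ is totally geodesic, its shape operator vanishes, so $\nabla^M_X V \in T_xH^\perp$ whenever $V$ is a local normal field along $H$ and $X\in T_xH$; hence $J'(0)\in T_xH^\perp$. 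Combined with $|v(s)|=1$, we get $J'(0)\in T_xH^\perp\cap v^\perp$. A dimension count ($\dim T_xH+\dim(T_xH^\perp\cap v^\perp)=(n-k)+(k-1)=n-1=\dim SN^*H$) shows this describes all of $T_\rho SN^*H$.

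Next I would use that $(M,g)$ has non-positive curvature and Anosov geodesic flow to invoke the Riccati picture for perpendicular Jacobi fields along $\gamma$: there exist symmetric operators $U_\pm(\rho):v^\perp\to v^\perp$ such that a perpendicular Jacobi field $J$ lies in $E_\pm(\rho)$ iff $J'(0)=U_\pm(\rho)J(0)$. Non-positive curvature makes $|J|^2$ convex along $\gamma$, giving negative/positive semi-definiteness of $U_\pm$; the Anosov assumption excludes nontrivial parallel perpendicular Jacobi fields and so upgrades these operators to being strictly negative/positive definite. Finally, suppose $J\in E_\pm(\rho)\cap T_\rho SN^*H$. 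Then $J(0)\in T_xH$ and $J'(0)\in T_xH^\perp$, so $\langle J(0),J'(0)\rangle=0$. But $J'(0)=U_\pm J(0)$, so $\langle U_\pm J(0),J(0)\rangle=0$; definiteness of $U_\pm$ forces $J(0)=0$, hence $J'(0)=0$, hence $J\equiv 0$. Thus $m_\pm(H,\rho)=0$.

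\noindent\textbf{Main obstacle.} The delicate step is the identification in the first paragraph: one must verify that for a totally geodesic $H$, prescribing $J'(0)\in T_xH^\perp\cap v^\perp$ really exhausts the vertical freedom in $T_\rho SN^*H$ (via the Weingarten/second fundamental form computation), and that the flow direction and the energy direction---which have Jacobi data $(v,0)$ and $(0,v)$ respectively---are excluded since $v\notin T_xH$ and $v\notin v^\perp$. Once this local description is in place, the Riccati argument is immediate. A minor secondary point is justifying the strict definiteness of $U_\pm$ (as opposed to semi-definiteness) from the Anosov hypothesis via the absence of nontrivial bounded perpendicular Jacobi fields.
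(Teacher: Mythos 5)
Your proposal is correct and, at the mathematical core, follows the same route as the paper's: the mechanism is exactly that the second fundamental form of the totally geodesic $H$ vanishes while that of the stable/unstable horosphere is definite (by the Anosov assumption and non-positive curvature), and a vector in the intersection $T_\rho SN^*H\cap E_\pm(\rho)$ would force the horosphere's second fundamental form to vanish on a nonzero vector — a contradiction. You simply phrase this in the Jacobi-field/Riccati language (the operator $U_\pm$ \emph{is} the shape operator of the horosphere, so $\langle U_\pm J(0),J(0)\rangle$ is precisely the paper's $\langle\rho^\sharp,\Pi_{\mc{W}_\pm}(d\pi v,d\pi v)\rangle$, and your orthogonality $\langle J(0),J'(0)\rangle=0$ is the paper's vanishing of $\langle\rho^\sharp,\Pi_H(d\pi v,d\pi v)\rangle$). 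The paper is terser, deferring the key identity and the definiteness of the horosphere's shape operator to~\cite{CG19dyn}; your version is more self-contained. One small point worth stating explicitly in your write-up is that the graph description $J'(0)=U_\pm J(0)$ of $E_\pm$ already presupposes transversality of $E_\pm$ to the vertical subspace, which is a standard consequence of the Anosov property and is also implicitly used in the paper's inequality $>0$.
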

 \begin{proof}
 We need only show that for a totally geodesics submanifold $m_+(H,\rho)=m_-(H,\rho)=0$. It is easier to work on the tangent space side, so we will do so, denoting $E^\sharp_{\pm}(\rho^\sharp)$ for the dual stable and unstable bundles. 
 
Suppose $\rho^{\sharp} \in SNH$.
Then, arguing as in~\cite[Proof of Theorem 4.C]{CG19dyn}, and using that $H$ is totally geodesic, we have for all $v\in T_{\rho^\sharp}SNH$
$$
-\langle \tilde{\nabla}_{d\pi v}N,d\pi v\rangle =\langle \rho^\sharp, \Pi_H(d\pi v,d\pi v)\rangle =0.
$$
Here $N:(-\e,\e)\to NH$ is a smooth vectorfield with $N(0)=\rho^\sharp$ and $N'(0)=v$, $\tilde{\nabla}$ is the Levi-Civita connection on $M$, and $\Pi_H$ is the second fundamental form to $H$. 
On the other hand, by~\cite[(5.46)]{CG19dyn}, for $v_\pm\in E^\sharp_\pm(\rho^\sharp)$, 
$$
|-\langle \tilde{\nabla}_{d\pi v_\pm}N, d\pi v_\pm\rangle| =|\langle \rho^\sharp, \Pi_{\mc{W}_\pm}(d\pi v,d\pi v)\rangle|>0,
$$
where $\mc{W}_{\pm}$ is a stable/unstable horosphere with normal vector $\rho^\sharp$. Therefore, $T_{\rho^\sharp}SNH\cap E^\sharp_\pm(\rho^\sharp)=\emptyset$ and in particular $m_{\pm}(H,\rho)=0$. 
 \end{proof}

\FloatBarrier
\bibliography{biblio}
\bibliographystyle{abbrv}

\end{document}